\documentclass{amsart}
\usepackage{graphicx,amsmath,amssymb,amsxtra,tikz,tikz-cd,anysize,hyperref,comment,pgfplots} % Required for inserting images
\usepackage{enumerate}
\usepackage{adjustbox}

\newcommand{\UU}{\mathbf{U}}
\newcommand{\F}{\mathbb{F}}
\newcommand{\w}{\mathbf{w}}
\newcommand{\z}{\mathbf{z}}
\newcommand{\Z}{\mathbb{Z}}

\newcommand{\Ker}{\mathrm{Ker}}
\newcommand{\Coker}{\mathrm{Coker}}
\newcommand{\Imm}{\mathrm{Im}}

\newcommand{\cB}{\mathcal{B}}
\newcommand{\cD}{\mathcal{D}}
\newcommand{\cZ}{\mathcal{Z}}

\newcommand{\gr}{\mathrm{gr}}

\newcommand{\HFL}{\mathcal{HFL}}
\newcommand{\CFL}{\mathcal{CFL}}
\newcommand{\Hom}{\mathrm{Hom}}

\newcommand{\Br}{\mathrm{Br}}
\newcommand{\Am}{\mathfrak{A}^{-}}

\newcommand{\HFLhat}{\widehat{\mathrm{HFL}}}
\newcommand{\HFLminus}{\mathrm{HFL}^{-}}

\newcommand{\CFLminus}{\mathrm{CFL}^{-}}
\newcommand{\CFLhat}{\widehat{\mathrm{CFL}}}

\newcommand{\HH}{\mathbb{H}}
\newcommand{\lk}{\mathrm{lk}}

\newcommand{\Kh}{\mathrm{Kh}}
\newcommand{\FL}{\mathcal{FL}}

\newcommand{\jj}{\mathbf{j}}

\DeclareMathOperator{\Coinv}{Coinv}
\DeclareMathOperator{\Inv}{Inv}
\newcommand{\lvl}{\mathrm{level}}

\newcommand{\cHFL}{\widehat{\mathrm{cHFL}}}

\newcommand{\alg}{\mathrm{alg}}
\newcommand{\diag}{\mathrm{diag}}

\newcommand{\scU}{U}
\newcommand{\scV}{V}

\newcommand{\norm}{\mathrm{norm}}

\numberwithin{equation}{section}

\newtheorem{theorem}{Theorem}[section]
\newtheorem{proposition}[theorem]{Proposition}
\newtheorem{corollary}[theorem]{Corollary}
\newtheorem{lemma}[theorem]{Lemma}

\theoremstyle{definition}
\newtheorem{remark}[theorem]{Remark}

\newtheorem{convention}[theorem]{Convention}
\newtheorem{notation}[theorem]{Notation}

\newtheorem{definition}[theorem]{Definition}
\newtheorem{problem}[theorem]{Problem}
\newtheorem{example}[theorem]{Example}
\newtheorem{conjecture}[theorem]{Conjecture}
\newtheorem{assumption}[theorem]{Assumption}

\renewcommand{\AA}[1]{{\color{blue}#1}}

\newcommand{\BL}[1]{{\color{purple}#1--Beibei}}

\title{Floer lasagna modules for some 2-handlebodies}

\author{Akram Alishahi}
\thanks{\texttt{AA was partly supported by NSF Grant DMS-2238103}}
\address{Department of Mathematics, University of Georgia, Athens, GA 30602 USA}
\email{\href{mailto:akram.alishahi@uga.edu}{akram.alishahi@uga.edu}}

\author{Eugene Gorsky}
\thanks{\texttt{EG was partly supported by NSF Grant DMS-2302305}}
\address{Department of Mathematics, University of California Davis\\ One Shields Avenue, Davis CA 95616 USA}
\email{\href{mailto:egorskiy@ucdavis.edu}{egorskiy@ucdavis.edu}}

\author{Beibei Liu}
\thanks{\texttt{BL was partly supported by NSF Grant DMS-2417229}}
\address{Department of Mathematics, The Ohio State University, 100 Math Tower, 231 W 18th Ave, Columbus, OH 43201 USA}
\email{\href{mailto:liu.11302@osu.edu}{liu.11302@osu.edu}}

\begin{document}

\begin{abstract}
Morrison, Walker and Wedrich used Khovanov homology to define the skein lasagna module for a 4-manifold with a link in its boundary. The skein lasagna module was shown to be a deep and powerful invariant; in particular, Ren and Willis proved that it detects exotic smooth structures. Still, computing the skein lasagna module outside some carefully chosen grading range remains an important open problem.

Motivated by this construction, Chen defined the Floer lasagna module using Heegaard Floer link homology. In this paper, we give a complete, but partially conjectural description of  Floer lasagna modules for a class of 4-manifolds obtained by adding 2-handles along plumbed L-space knots (with certain framing constraints). 

In particular, we prove that for $\overline{\mathbb{CP}^2}$ it is infinite-dimensional, and conjecture that for $\mathbb{CP}^2$ it is infinite-dimensional as well. We compute this module for $\overline{\mathbb{CP}^2}$ (and conjecturally for $\mathbb{CP}^2$) in all Alexander and Maslov gradings.
This strengthens and extends the recent results of McCulloch.
\end{abstract}

\maketitle

\section{Introduction}

\subsection{Skein lasagna modules}

Given a smooth 4-manifold $W$ with a link $L\subset \partial W$, Morrison, Walker and Wedrich  \cite{MWW} defined  the \emph{skein lasagna module} $S_2(W,L)$ using Khovanov homology. For $W=B^4$ and $L\subset \partial W=S^3$ this recovers the usual Khovanov homology of $L$. In general,  $S_2(W,L)$ is a vector space with homological and quantum gradings, as well as an additional grading by the second relative homology $H_2(W,L)$. By computing $S_2(W,L)$ in certain gradings, Ren and Willis proved in \cite{RW} that the skein lasagna module detects exotic smooth structures on $W$.

In this paper we will be interested in the case when $W=W_m(K)$ is obtained from the 4-ball by adding a 2-handle along some framed, oriented knot $K\subset S^3$, and $L$ is empty. In this case, Manolescu and Neithalath \cite{MN} showed that the general definition can be greatly simplified  as follows.  

Consider the cable of $K$ with $n$ parallel components (for some chosen framing $m$) such that $k_{+}$ components are oriented  positively and $k_{-}$ are oriented negatively. We denote such link by $K_m(k_{+},k_{-})$. Braids on $k_{+}$ and $k_{-}$ strands define cobordisms from $K_m(k_{+},k_{-})$ to itself. Furthermore, there is a natural ribbon cobordism from $K_m(k_{+},k_{-})$ to $K_m(k_{+}+1,k_{-}+1)$ which adds a pair of oppositely oriented strands. All these cobordisms define maps in Khovanov homology. The main result of \cite{MN} identifies the skein lasagna module of $W$ with the quotient
\begin{equation}
\label{eq: cabled skein lasagna intro}
S_2(W_m(K),\emptyset)=\bigoplus_{k_{+},k_{-}}\Kh\left(K_m(k_{+},k_{-})\right)/\sim
\end{equation}
where the equivalence relation $\sim$ is defined by the above cobordism maps. The homological and quantum gradings are the same as in Khovanov homology, up to some normalization, while the additional grading by $H_2(W)=\Z$ is given by $\alpha=k_{+}-k_{-}$.
To use \eqref{eq: cabled skein lasagna intro}, one needs to compute Khovanov homologies for infinitely many links with an arbitrarily large number of components, and all of the above cobordism maps between them. Even for $K$ being the $(-1)$-framed unknot, corresponding to $W=\overline{\mathbb{CP}^2}$, this remains an open problem for most gradings.

In \cite{Chen} Chen used Heegaard Floer link homology to define the \emph{Floer lasagna module} $\FL(W,L)$. The construction follows the ideas in \cite{MWW,MN} with several modifications. Most importantly, the cobordism maps in Khovanov homology are replaced by the more complicated \emph{decorated cobordisms} developed by Zemke \cite{Zemke}. Another crucial difference is that the link cobordism maps of Zemke are only defined over $\F=\Z/2\Z$ which complicates many computations with braid groups and symmetric groups. Nevertheless, Chen was able to prove the following analogue of \eqref{eq: cabled skein lasagna intro}:
\begin{equation}
\label{eq: cabled floer lasagna intro}
\FL(W_m(K),\emptyset)=\bigoplus_{k_{+},k_{-}}\HFLhat\left(K_m(k_{+},k_{-})\right)/\sim
\end{equation}
where the precise equivalence relation $\sim$ is defined in Section \ref{sec: lasagna}. %In this paper we compute \eqref{eq: cabled floer lasagna intro} when $K$ is a plumbed L-space knot with framing 
%$$
%m>2g(K)-1,
%$$
%or its mirror.

\subsection{$0$-framed unknot}

To describe the main results, we start with the case of the unknot $O$. Let $W_m=W_m(O)$ denote the result of attaching a 2-handle along the $m$-framed unknot, equivalently, it is a disk bundle over $S^2$ with the Euler number $m$. 
%We have the following three results depending on the sign of $m$. 
First, we consider the case $m=0$ 
which corresponds to $W_0=S^2\times D^2$.

\begin{theorem}
\label{thm: unknot zero intro}
Suppose $m=0$ and $\alpha\in H_2(W_0)=\Z$, then:
\begin{enumerate}
\item[(a)]  For $\alpha$ odd, $\FL(W_0,\emptyset;\alpha)$ has one generator in each odd normalized Maslov grading $1,3,\ldots$.  
\item[(b)] For $\alpha$ even, $\FL(W_0,\emptyset;\alpha)$ has one generator in each even normalized Maslov grading $2,4,\ldots$ and an additional generator in normalized Maslov grading $1$.
\end{enumerate}
All these generators have Alexander grading 0.
\end{theorem}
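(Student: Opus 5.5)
We compute the right-hand side of \eqref{eq: cabled floer lasagna intro} directly for $K=O$ with framing $0$. The cable $O_0(k_+,k_-)$ is the unlink of $n=k_++k_-$ components. Our convention for the unlink is that $\HFLhat$ of the $n$-component unlink is an exterior algebra $\Lambda^*(\F^{n-1})$, i.e.\ $\F^{2^{n-1}}$ with Maslov gradings given by binomial coefficients. Equivalently, it is $V^{\otimes (n-1)}$, where $V=\F\langle \theta_0,\theta_{-1}\rangle$ and all Alexander gradings are zero. Since every component bounds a disk, the Alexander multigrading vanishes, which already gives the final clause of the statement. The work is therefore to determine the colimit/quotient under the cobordism maps.

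The first step is to understand the three types of maps in $\sim$ on these groups.
\begin{enumerate}
\item[(i)] \emph{Braid maps.} With framing $0$, braids among the $k_+$ (resp.\ $k_-$) strands act on the unlink, and the action factors through the symmetric group. Over $\F$, it permutes the $n$ generators $x_1,\dots,x_n$ of $H_1$ of the complement. The unlink group is naturally $\Lambda^*(\F^n/\langle x_1+\dots+x_n\rangle)$, or dually the kernel version.
\item[(ii)] \emph{Ribbon map.} The map $K_0(k_+,k_-)\to K_0(k_++1,k_-+1)$ is a birth of an unknot followed by a band. In terms of Zemke's TQFT it is computed by a quasi-stabilization. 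Concretely, it should be the inclusion $\Lambda^*\to\Lambda^*\otimes V$ tensoring with the top (or bottom) generator, possibly composed with multiplication by the new variable.
\end{enumerate}
I would verify these formulas using Zemke's computations for births and saddles on unlinks.

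The second step is to compute the coinvariants under $S_{k_+}\times S_{k_-}$ in each fixed level, working over $\F$. Grading $\Lambda^j$ by exterior degree, which corresponds to Maslov grading $-j$ up to normalization, the coinvariants of $\Lambda^j(\F^n/\text{diag})$ under $S_{k_+}\times S_{k_-}$ over $\F_2$ are small but characteristic-dependent. Here the parities of $k_\pm$ matter, since $x_1+\dots+x_{k_+}$ is fixed by $S_{k_+}$ and the relation $\sum x_i=0$ interacts with parity. This is where the parity of $\alpha=k_+-k_-$ enters.

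The third step is to take the direct limit under the ribbon maps and apply the normalization shift of Maslov grading, which presumably depends on $n$. The expected outcome is one class in each degree $j$, surviving as $k_\pm\to\infty$ with $k_+-k_-=\alpha$ fixed. After normalization this gives degrees $1,3,5,\dots$ for odd $\alpha$. For even $\alpha$ it gives $2,4,\dots$ together with an extra class in degree $1$, which comes from the element $\sum_{i\le k_+}x_i$, nonzero only when the parity allows.

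The main obstacle is the mod-2 coinvariant computation for exterior powers of permutation modules modulo the diagonal, together with the precise form of the ribbon map. I would first try to identify explicit spanning sets given by elementary symmetric-type classes $e_j(x_1,\dots,x_{k_+})\wedge\cdots$. I would then show that they stabilize under ribbon maps, and detect nonvanishing by pairing with invariant dual classes.
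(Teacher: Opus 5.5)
Your outline has the same architecture as the paper's proof: coinvariants of $S_{k_+}\times S_{k_-}$ on an exterior algebra, then the pair-of-pants relations, then the shift by $n$. However, the two inputs that actually determine the answer are either left open or guessed incorrectly.

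\textbf{The braid action.} The ``quotient by the diagonal'' and ``kernel'' models are not interchangeable over $\F$. Already for $k_-=0$ and $n\ge 4$ even, Lemma~\ref{lem: W1 W2} gives $\Coinv(W_1)=0$ but $\Coinv(W_2)=\F$. The paper fixes the action by lifting the permutation action on $\HFL(O_0(k_+,k_-))=R/(U_1V_1=\cdots=U_nV_n)$ to its Koszul resolution $\CFL(O_0(k_+,k_-))$. This lift is unique up to homotopy by Lemma~\ref{lem: lifting to resolutions}, and it produces the kernel model: $\xi_j,\eta_k$ behave like $e_j-e_1,e_k-e_1$, with $(1\,2)\xi_j=\xi_j+\xi_2$ and $(1\,2)\eta_k=\eta_k+\xi_2$. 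This gives the explicit relations $g(a,b)\sim(a+1)g(a,b)+b\,g(a+1,b-1)$ on the representatives of monomials with $|A_+|=a$, $|A_-|=b$.

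\textbf{The ribbon maps.} Your map $\Lambda^*\to\Lambda^*\otimes V$ is just the birth map to $L\sqcup O$. It shifts the Maslov grading by $0$ or $-1$, whereas the dotted relation must shift it by exactly $-2$ to respect the normalization $M+n$. What is needed are the formulas $F_P(v\otimes B)=v\,\xi_{n+1}\eta_{n+2}$ and $F_P(v\otimes T)=v\,(\xi_{n+1}+\eta_{n+2})$ in the $(n+2)$-component unlink. The paper obtains these by capping off either new component with a death cobordism (the composite is isotopic to a product) and using that $F_P$ has Maslov degree $-1$.

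\textbf{The missing relation.} The definition of $\cHFL$ imposes two relations, $v\sim F_P(v\otimes B)$ and $0\sim F_P(v\otimes T)$. Your plan, a direct limit along a single map, never imposes the second. In the paper it reads $g_{n+2}(a,b+1)+g_{n+2}(a+1,b)\sim 0$, which identifies all classes on an antidiagonal $a+b=d$. In any case, a nonvanishing argument by pairing with invariant dual classes must use functionals that also vanish on $\Imm F_P(-\otimes T)$. Examples are the sum of all monomial coefficients in even exterior degree, and the coefficient of the top monomial. Your proposal does not address this.

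\textbf{The extra class.} Your source for the class in normalized grading $1$ is wrong. An element of $\Lambda^1$ such as $\sum_{i\le k_+}x_i$ has normalized grading $n-1$, which goes to infinity. The class in normalized grading $1$ is the top monomial $g(k_+-1,k_-)=\xi_2\cdots\xi_{k_+}\eta_1\cdots\eta_{k_-}$, of exterior degree $n-1$. It survives because braids act trivially on it, $F_P(-\otimes B)$ maps top degree to top degree, and $F_P(-\otimes T)$ never reaches top degree. The remaining survivors come from the even antidiagonals, giving normalized gradings $n-d\equiv\alpha \pmod 2$.
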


 The action of the braid groups on $k_{+}$ and $k_{-}$ strands clearly factors through the corresponding symmetric groups $S_{k_{+}}$ and $S_{k_{-}}$. Thus the main step in computing \eqref{eq: cabled floer lasagna intro} reduces to the computation of {\emph coinvariants} of a certain $S_k$-representation in characteristic 2 which requires some caution. We recall all necessary facts from representation theory in Section \ref{sec: rep theory}.

\subsection{The case of $\overline{\mathbb{CP}^2}$.}

Next, we consider the case $m=-1$ which corresponds to $\overline{\mathbb{CP}^2}$. In this case we get the complete description of the lasagna module, generalizing the main result of \cite{McCulloch}.

\begin{theorem}
\label{thm: unknot negative intro}
Suppose $m=-1$, and $\alpha\in H_2(W_m)=\Z$, then:
\begin{enumerate}
\item[(a)]  Given $\alpha$ odd and $\jj\in \Z$, the space $\FL(W_m,\emptyset;\alpha)$ has generators in Alexander grading $-\jj\alpha$ and normalized Maslov gradings $$
\left(\frac{\alpha+1}{2}-\jj\right)\left(\frac{\alpha-1}{2}-\jj\right)+\{1,3,\ldots\}
$$  
\item[(b)] Given $\alpha$ even and $\jj\in \Z+\frac{1}{2}$, the space $\FL(W_m,\emptyset;\alpha)$ has generators in Alexander grading $-\jj\alpha$ and normalized Maslov gradings  
$$
\left(\frac{\alpha+1}{2}-\jj\right)\left(\frac{\alpha-1}{2}-\jj\right)+\{1,2,4,\ldots\}.
$$ 
\end{enumerate}
\end{theorem}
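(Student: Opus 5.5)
We plan to compute the quotient \eqref{eq: cabled floer lasagna intro} for $K=O$ with framing $m=-1$. Here $K_{-1}(k_+,k_-)$ is the link of $n=k_++k_-$ fibers of the Hopf fibration with some strands reversed, i.e.\ the torus link $T(n,-n)$ with mixed orientations. The first step is to write down $\HFLhat$ of these links explicitly. We would use the description of $T(n,n)$-type links as (mirrors of) plumbed L-space links, so that their link Floer homology is determined by the $H$-function, or equivalently by the Alexander polynomial. Reversing orientations shifts Alexander gradings, so each summand is a free module over the exterior algebra $\Lambda^*(\F^{n})$ tensored with a one-dimensional piece in each multi-Alexander grading on a lattice. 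We record the multi-Alexander grading $\mathbf{s}=(s_1,\dots,s_n)$ and Maslov grading explicitly. The total Alexander grading is $\sum s_i$, and fixing the framing normalization should produce the quadratic shift $\left(\frac{\alpha+1}{2}-\jj\right)\left(\frac{\alpha-1}{2}-\jj\right)$, where $\jj$ is essentially the common value of the $s_i$ (after normalization).

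Second, we determine the braid actions. These factor through $S_{k_+}\times S_{k_-}$ permuting the components, and hence permuting the coordinates of $\mathbf{s}$ and the exterior generators. We then take coinvariants, using the characteristic-$2$ facts from Section \ref{sec: rep theory}. In the $0$-framed case (Theorem \ref{thm: unknot zero intro}) this produced a polynomial-type tower $1,3,\ldots$ or $1,2,4,\ldots$. The same computation should apply here degree-wise on each $\mathbf{s}$-orbit. Only orbits that survive the stabilization maps will matter.

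Third, we analyze the ribbon maps $K_{-1}(k_+,k_-)\to K_{-1}(k_++1,k_-+1)$ via Zemke's decorated cobordism maps. A pair of oppositely oriented strands links the others with total linking zero, so multi-gradings map to $(\mathbf{s},s',s'')$ for suitable new entries. We would show that, in the direct limit, only orbits where all $s_i$ are balanced around a single value $\jj$ persist. The parity of $\alpha$ forces $\jj\in\Z$ for $\alpha$ odd and $\jj\in\Z+\frac12$ for $\alpha$ even. Non-balanced multi-gradings should die or be identified through the relation with the balanced ones. Computing the Maslov and Alexander gradings of the surviving classes then gives the shift. Within a fixed orbit, the tower of generators comes from the stabilized coinvariant computation, exactly as in Theorem \ref{thm: unknot zero intro}. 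The parity-dependent extra generator in degree $1$ comes from the $k_+=k_-$-type contributions present only for $\alpha$ even.

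The main obstacle is the third step. We must identify Zemke's ribbon cobordism maps concretely on the L-space link models and show that the direct system stabilizes, with injectivity on the surviving pieces. I would first try to factor the ribbon cobordism as a birth followed by a saddle, and compute each piece using the $\Lambda^*$-module structure and the $H$-function. If that is intractable, I would compare with the known $\overline{\mathbb{CP}^2}$ lower bounds of \cite{McCulloch} to pin down the generators, and use the coinvariant computation to give the matching upper bound.
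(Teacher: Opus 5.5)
Your outline guesses the right shape of the answer (in each balanced Alexander grading, a reduction to the $0$-framed cube computation). However, the step you flag as the main obstacle is exactly where the essential idea is missing, and the model in your first step is not correct.

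$\HFLhat$ of these torus links is not a free $\Lambda^*(\F^n)$-module with a one-dimensional piece at each lattice point. By \cite{GHom}, each balanced Alexander grading generally carries two diagonal blocks, a bottom one and a top one, each a truncated cube. The unbalanced gradings carry off-diagonal blocks. The braid group acts only in a level-filtered way, and on the associated graded it acts through the modules $V_{n,i}$ of Section \ref{sec: rep theory}.

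More importantly, you have no way to compute the pants maps on the mirror side. What makes cobordism maps computable is that $\CFL$ is a free resolution of $\HFL$, so a map is determined by its effect on $\HFL$ (Lemma \ref{lem: lifting to resolutions}). That holds for $T(n,n)$, not for $T(n,-n)$. So ``computing the saddle via the $H$-function'' has nothing to stand on there, and the results of \cite{McCulloch} only cover specific gradings.

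The paper's key step is Juh\'asz--Zemke duality \cite{JZ} at the level of cobordism maps, not just homology: $F_P(-\otimes T)$ and $F_P(-\otimes B)$ for $T(n,-n)$ are the duals of $F_D\circ F_{-P}$ and $F_D\circ\mathcal{B}\circ F_{-P}$ for $T(n,n)$. On the $T(n,n)$ side, $F_{-P}$ is pinned down on $\HFL$ by $f_{-P}\circ f_{P_1}\simeq U_{n+1}$, $f_{-P}\circ f_{P_2}\simeq V_{n+1}$ and $\Phi_1(X'_j)=U_{n+1}X'_{j+1}$. Comparing with the unlink through the level-preserving lifts $\mathfrak{f}_j$ then computes it on the bottom diagonal block. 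Dualizing gives exactly $F_P(g(a,b)\otimes B)=g(a+1,b+1)$ and $F_P(g(a,b)\otimes T)=g(a,b+1)+g(a+1,b)$ on the dual bottom block. That identification is what justifies ``the same computation applies''.

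The second missing ingredient is the level grading, transported to $T(n,-n)$ by duality. Your survival criterion (only balanced multi-gradings persist) cannot separate the dual bottom block from the dual top block, since they sit in the same balanced Alexander grading, so it would overcount. The paper shows that $F_P(-\otimes B)$ raises the level by exactly $2$ on the dual top block, while the highest level of that block grows only by $1$ under $(j,n)\mapsto (j+1,n+2)$; hence iterates vanish. Off-diagonal classes are killed by a Maslov grading count (Lemma \ref{lem: Fp from off-diagonal} plus duality).

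Levels also control the braid action and the basepoint moving maps of relation \eqref{eq: lasagna 3}. You do not address the latter, and they are nontrivial for torus links (Remark \ref{rem: basepoint}). Modulo the dual top block, the braid group acts by the expected $S_n$-representations and the basepoint moving maps act trivially.

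Finally, your account of the parity-dependent extra generator is off. For even $\alpha$ it is the $i=0$ class, i.e.\ the trivial representation $V_{n,n-1}$, corresponding to $g(k_+-1,k_-)$ in Lemma \ref{lem: symmetrized basis unknot}. It occurs for every even $n$ and does not come from $k_+=k_-$. For odd $n$ the same class lies on an even diagonal and is simply the bottom of the regular tower.
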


We have the following. 

\begin{corollary}
\label{cor: finite dimensional}
For $m=-1$ and any given $\alpha$, the space $\FL(W_m,\emptyset;\alpha)$ is finite-dimensional in each bigrading. Furthermore, the normalized Maslov gradings  are bounded below and $\FL(W_m,\emptyset;\alpha)$ is finite-dimensional in each Maslov grading.
%\EG{Compare with \cite{MN}}
%\EG{Cite \cite{McCulloch} in detail}
\end{corollary}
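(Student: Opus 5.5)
The corollary follows directly from Theorem \ref{thm: unknot negative intro}, read as a multiplicity statement: for fixed $\alpha$, each admissible $\jj$ contributes exactly one generator in each listed normalized Maslov grading, all in Alexander grading $-\jj\alpha$. The plan is to fix $\alpha$, fix a bigrading (Alexander grading $A$, normalized Maslov grading $M$), and count the pairs $(\jj, s)$ that land there. Here $\jj\in\Z$ if $\alpha$ is odd and $\jj\in\Z+\frac12$ if $\alpha$ is even, and $s$ ranges over $\{1,3,\ldots\}$ or $\{1,2,4,\ldots\}$ respectively.

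First set
$$q(\jj)=\left(\tfrac{\alpha+1}{2}-\jj\right)\left(\tfrac{\alpha-1}{2}-\jj\right)=\left(\jj-\tfrac{\alpha}{2}\right)^2-\tfrac14 .$$
This is a quadratic in $\jj$ with minimum value $-\tfrac14$, attained at $\jj=\alpha/2$. For $\alpha$ odd, $\jj\in\Z$ and $\alpha/2\in\Z+\tfrac12$, so $q(\jj)\ge 0$. For $\alpha$ even, $\jj\in\Z+\tfrac12$ and $\alpha/2\in\Z$, so again $q(\jj)\ge 0$. Since $s\ge 1$, every generator has $M\ge 1$, which gives the lower bound on normalized Maslov gradings.

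For finite-dimensionality in each Maslov grading, fix $M$. A generator in grading $M$ requires $q(\jj)\le M-1$, i.e. $|\jj-\alpha/2|\le\sqrt{M-3/4}$. Only finitely many $\jj$ in the relevant lattice satisfy this. For each such $\jj$ there is at most one $s$ with $q(\jj)+s=M$, so grading $M$ is finite-dimensional.

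Finite-dimensionality in each bigrading then follows, since a bigraded piece is a subspace of a single Maslov-graded piece. For $\alpha\neq0$ it is even more direct: the Alexander grading determines $\jj=-A/\alpha$, and $M$ then determines $s$, so each bigraded piece has dimension at most one. The case $\alpha=0$ is covered by the Maslov argument.

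The only real obstacle is interpretive. One must confirm that Theorem \ref{thm: unknot negative intro} gives exactly one generator per listed grading (no hidden multiplicities) and that distinct $\jj$ give independent generators. This is how the theorem is phrased, and the argument above needs only that the multiplicity in each listed grading is finite.
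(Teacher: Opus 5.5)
Your argument is correct and is essentially the paper's: both read the answer off Theorem~\ref{thm: unknot negative intro} and use that, for a fixed Maslov grading $M$, only finitely many $\jj$ satisfy $q(\jj)<M$. Your completion of the square, $q(\jj)=(\jj-\alpha/2)^2-\tfrac14\ge 0$ on the relevant lattice, makes explicit the lower bound $M\ge 1$ and treats Maslov-finiteness uniformly in $\alpha$; the paper spells out the latter only for $\alpha=0$ and calls the remaining claims clear.
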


\begin{proof}
%For $m\ge 0$ this is clear from Theorems \ref{thm: unknot zero intro} and \ref{thm: unknot positive intro}. 

For   $\alpha\neq 0$ this is clear from Theorem \ref{thm: unknot negative intro}. 
For $\alpha=0$ all generators have Alexander grading zero, so one needs to be careful. However we note that for a given Maslov grading $M$ there are only finitely many half-integers $\jj$ such that 
$$
\left(\frac{\alpha+1}{2}-\jj\right)\left(\frac{\alpha-1}{2}-\jj\right)<M
$$
and the result follows.
\end{proof}

\begin{example}
For $m=-1$ we show the generator in $(\jj,M)$ coordinates in Figure \ref{fig: negative unknot} for $\alpha=1$ (left) and $\alpha=0$ (right).
\begin{figure}[ht!]
\begin{tikzpicture}[scale=0.8]
\draw [dotted] (0.5,0.75) parabola (3,7);
\draw [dotted] (0.5,0.75) parabola (-2,7);
\draw[dotted,->] (0,0)--(0,7.2);
\draw[dotted,->] (-3,0)--(4,0);
\draw (-2,-0.1)--(-2,0.1);
\draw (-1,-0.1)--(-1,0.1);
\draw (0,-0.1)--(0,0.1);
\draw (1,-0.1)--(1,0.1);
\draw (2,-0.1)--(2,0.1);
\draw (3,-0.1)--(3,0.1);
\draw (-0.1,2)--(0.1,2);
\draw (-0.1,4)--(0.1,4);
\draw (-0.1,6)--(0.1,6);
\draw (4,-0.3) node {$\jj$};
\draw (0.2,7) node {$M$};
\draw (-2,-0.3) node {$\scriptstyle{-2}$};
\draw (-1,-0.3) node {$\scriptstyle{-1}$};
\draw (0,-0.3) node {$\scriptstyle{0}$};
\draw (1,-0.3) node {$\scriptstyle{1}$};
\draw (2,-0.3) node {$\scriptstyle{2}$};
\draw (3,-0.3) node {$\scriptstyle{3}$};
\draw (-2,7) node {$\bullet$};
\draw (-1,3) node {$\bullet$};
\draw (-1,5) node {$\bullet$};
\draw (-1,7) node {$\bullet$};
\draw (0,1) node {$\bullet$};
\draw (0,3) node {$\bullet$};
\draw (0,5) node {$\bullet$};
\draw (0,7) node {$\bullet$};
\draw (1,1) node {$\bullet$};
\draw (1,3) node {$\bullet$};
\draw (1,5) node {$\bullet$};
\draw (1,7) node {$\bullet$};
\draw (2,3) node {$\bullet$};
\draw (2,5) node {$\bullet$};
\draw (2,7) node {$\bullet$};
\draw (3,7) node {$\bullet$};
\end{tikzpicture}
\quad\quad
\begin{tikzpicture}[scale=0.8]
\draw [dotted] (0.5,0.75) parabola (3,7);
\draw [dotted] (0.5,0.75) parabola (-2,7);
\draw[dotted,->] (0.5,0)--(0.5,7.2);
\draw[dotted,->] (-3,0)--(4,0);
\draw (-1.5,-0.1)--(-1.5,0.1);
\draw (-0.5,-0.1)--(-0.5,0.1);
\draw (0.5,-0.1)--(0.5,0.1);
\draw (1.5,-0.1)--(1.5,0.1);
\draw (2.5,-0.1)--(2.5,0.1);
\draw (3.5,-0.1)--(3.5,0.1);
\draw (0.4,1)--(0.6,1);
\draw (0.4,2)--(0.6,2);
\draw (0.4,3)--(0.6,3);
\draw (0.4,4)--(0.6,4);
\draw (0.4,5)--(0.6,5);
\draw (0.4,6)--(0.6,6);
\draw (4,-0.3) node {$\jj$};
\draw (0.2,7) node {$M$};
\draw (-1.5,-0.3) node {$\scriptstyle{-2}$};
\draw (-0.5,-0.3) node {$\scriptstyle{-1}$};
\draw (0.5,-0.3) node {$\scriptstyle{0}$};
\draw (1.5,-0.3) node {$\scriptstyle{1}$};
\draw (2.5,-0.3) node {$\scriptstyle{2}$};
\draw (3.5,-0.3) node {$\scriptstyle{3}$};
\draw (-2,7) node {$\bullet$};
\draw (-1,3) node {$\bullet$};
\draw (-1,4) node {$\bullet$};
\draw (-1,6) node {$\bullet$};
\draw (0,1) node {$\bullet$};
\draw (0,2) node {$\bullet$};
\draw (0,4) node {$\bullet$};
\draw (0,6) node {$\bullet$};
\draw (1,1) node {$\bullet$};
\draw (1,2) node {$\bullet$};
\draw (1,4) node {$\bullet$};
\draw (1,6) node {$\bullet$};
\draw (2,3) node {$\bullet$};
\draw (2,4) node {$\bullet$};
\draw (2,6) node {$\bullet$};
\draw (3,7) node {$\bullet$};
\end{tikzpicture}
\caption{The generators for the $(-1)$-framed unknot for $\alpha=1$ (left) and $\alpha=0$ (right).}
\label{fig: negative unknot}
\end{figure}
The pictures for general odd (resp. even) $\alpha$ are obtained from this one by a horizontal shift.
\end{example}

The cable $O_1(k_{+},k_{-})$ is the positive torus link $T(n,n)$ (where $n=k_{+}+k_{-}$) with appropriately oriented components. The homology $\HFLhat(T(n,n))$ was computed  in \cite{GHom}. Furthermore, the ``full" homology $\HFL(T(n,n))$ was computed in \cite{BLZ}
and it was proved in \cite{BLZ} that the chain complex $\CFL(T(n,n))$ is quasi-isomorphic to the free resolution of $\HFL(T(n,n))$. This result equips $\CFL(T(n,n))$ and $\HFLhat(n,n)$ with the additional \emph{level grading}.

The cable $O_{-1}(k_{+},k_{-})$, by contrast, is the negative torus link $T(n,-n)$   with appropriately oriented components.  To prove Theorem \ref{thm: unknot negative intro}, we use the above results and the duality between the Heegaard Floer homology of $T(n,n)$ and $T(n,-n)$, see Section \ref{sec: -1 framed unknot}.

\subsection{Negative framings for L-space knots}

The above results can be generalized to the case when $K$ is an L-space knot. Recall that a rational homology 3-sphere $M$ is an L-space if $\widehat{\mathrm{HF}}(M)$ has smallest possible rank \cite{OSlens}. A link $L$ is an L-space link if all sufficiently large Dehn surgeries of $S^3$ along $L$ yield L-spaces. Note that the definition of an L-space link does not depend on the orientation of its components. 

If $K$ is an L-space knot and $m>2g(K)-1$ then by \cite[Theorem 2]{GHom} all cables of $K$ with framing $m$ and arbitrary number of components $n=k_{+}+k_{-}$ are L-space links. 
Furthermore, \cite{GHom} gives an explicit (although somewhat involved) recipe for computing $\HFLhat$ for such cables of $K$ in terms of the Alexander polynomial of $K$. This generalizes the computation for $T(n,mn)$. Assuming additionally that $K$
 is a \emph{ plumbed} L-space knot, the cables of $K$ with framing $m>2g(K)-1$ are plumbed L-space links, and the results of \cite{BLZ} apply. 

 The duality argument used in the proof of Theorem \ref{thm: unknot negative intro} can be generalized as well. The mirror of the $(n,mn)$ cable of a knot $K$ is the $(n,-mn)$ cable of its mirror $\overline{K}$, and we prove the following.  Consider the equivalence relation on $\Z_{>0}\times \Z_{>0}$ given by 
 $$
 (j,n)\sim (j+m,n+2).
 $$

\begin{theorem}
\label{thm: L space negative intro}
Let $K$ be a plumbed L-space knot, $\overline{K}$ its mirror and $m>\max\{0, 2g(K)-1\}$.  Then the Floer lasagna module   $\FL\left(W_{-m}(\overline{K}\right),\emptyset,\alpha)$ has  generators indexed by the equivalence classes of $(j,n), n=\alpha\mod 2$, and an additional integer $i$  such that:
\begin{enumerate}
\item[(a)] For $\alpha$ odd, $i\in \{0,2,4,\ldots\}$
\item[(b)] For $\alpha$ even, $i\in \{0,1,3,\ldots\}$.
\end{enumerate}
Such a generator has Alexander grading $-\alpha(g_n(K)-j)$ and normalized Maslov grading 
$
-M(Y'_{j,n})+1+i.
$
%\begin{enumerate}
%\item[(a)] For $\alpha$ odd, we have normalized gradings
%$$
%-M(Y'_{j,n})+1+\{0,2,4,\ldots\}
%$$
%\item[(a)] For $\alpha$ even, we have normalized gradings
%$$
%-M(Y'_{j,n})+1+\%{0,1,3,\ldots\}.
%$$
%\end{enumerate}
Here $\alpha=k_{+}-k_{-}$, $g_n(K)$ is defined by \eqref{eq: def gn} and $M(Y'_{j,n})$ is defined by \eqref{eq: Maslov Y' L space}.
\end{theorem}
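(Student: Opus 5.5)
We will compute the lasagna module through Chen's description \eqref{eq: cabled floer lasagna intro}, applied to the mirror setting, and follow the strategy of the $\overline{\mathbb{CP}^2}$ case (Theorem \ref{thm: unknot negative intro}). The cable $\overline{K}_{-m}(k_+,k_-)$ is the mirror of the $(n,mn)$ cable $L_n=K_m(k_+,k_-)$ with $n=k_++k_-$. Since $m>2g(K)-1$ and $K$ is a plumbed L-space knot, $L_n$ is a plumbed L-space link by \cite{GHom}. Hence \cite{BLZ} applies: $\CFL(L_n)$ is quasi-isomorphic to a free resolution of $\HFL(L_n)$, and this equips everything with a level grading. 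The first step is the duality: $\HFLhat$ of the mirror is the graded dual of $\HFLhat(L_n)$, with Alexander and Maslov gradings negated. Zemke's cobordism maps for the mirror are the duals of the maps for the reversed cobordisms. Thus the quotient in \eqref{eq: cabled floer lasagna intro} for $\overline{K}$ becomes the dual of an \emph{inverse limit} (invariants) of $\HFLhat(L_n)$ under the transposed maps: symmetric group actions and the maps removing a pair of oppositely oriented strands. For this reason coinvariants on the mirror side correspond to invariants on the $L_n$ side, as in Section \ref{sec: -1 framed unknot}.

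The second step analyzes $\HFLhat(L_n)$ Alexander-grading-wise. Using the \cite{GHom} recipe, $\HFL(L_n)$ is determined by the $H$-function of the L-space link. For fixed total Alexander data, the relevant piece of $\HFLhat$ is the homology of a Koszul-type complex on the $n$ strands. I expect it to be described as $\widehat{\mathrm{HF}}$ of a $3$-manifold $Y'_{j,n}$ (a surgery on $L_n$, or on part of it) tensored with an exterior algebra, with top grading $M(Y'_{j,n})$. The $S_{k_+}\times S_{k_-}$ action permutes the exterior generators. I will then compute invariants in characteristic $2$ using the facts from Section \ref{sec: rep theory}. This should leave a one-parameter family of classes, indexed by $i\ge 0$, in each summand. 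The parity of $\alpha$ should enter via the sign-type representation in characteristic $2$, as in Theorem \ref{thm: unknot zero intro}. This produces $i\in\{0,2,4,\dots\}$ for odd $\alpha$ and $\{0,1,3,\dots\}$ for even $\alpha$.

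The third step handles the stabilization maps $n\to n+2$. Adding a pair of opposite strands to the $(n,mn)$ cable shifts the relevant Alexander index by $m$, which gives the identification $(j,n)\sim(j+m,n+2)$. I need to show these maps are isomorphisms on the invariant classes once $n$ is large, or at least eventually injective with compatible images. Then the colimit has exactly one copy per equivalence class. Finally I will track gradings: the Alexander grading $-\alpha(g_n(K)-j)$ comes from the normalization of the $H_2$-grading relative to the symmetrized Alexander grading, and the Maslov grading is $-M(Y'_{j,n})+1+i$ after dualizing and normalizing.

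The main obstacle is the second and third steps together: identifying Zemke's decorated cobordism maps (the pair-creation ribbon cobordism and the braid actions) on the free resolution model from \cite{BLZ} explicitly enough to compute characteristic-$2$ invariants and show stabilization is eventually an isomorphism. I would first try to show these maps respect the level grading and act on the associated graded as the obvious maps of exterior algebras, reducing everything to representation theory. The general L-space case would then be checked against the unknot computation of Theorem \ref{thm: unknot negative intro}.
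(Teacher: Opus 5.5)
Your starting point matches the paper: pass to the positive cable $K_m(k_+,k_-)$, which is a plumbed L-space link, then use the duality of \cite{JZ} and the level grading from \cite{BLZ}. But the plan has a genuine gap. It never explains why most of $\HFLhat$ disappears, and Step 3 aims at a statement that is false.

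\textbf{The three-part structure of $\HFLhat$.} First, $Y'_{j,n}$ is not a $3$-manifold. $M(Y'_{j,n})$ is just the Maslov grading of the generator $Y_j$ of $\HFL(K_{n,mn})$ after reversing $k_-$ strands, see \eqref{eq: Maslov Y' L space}. By \cite{GHom} and duality, near the Alexander grading of $Y_j$ the hat homology of the mirror is not one exterior-algebra-type piece. It consists of:
\begin{itemize}
\item a dual bottom block, with $V_{n,n-1-i}$ in level $n-i$ for $0\le i\le m_j$;
\item a dual top block, with $V_{n,i}$ in level $i$ for $0\le i\le n-M_j$;
\item off-diagonal summands.
\end{itemize}
Only the dual bottom block survives in $\FL$.

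\textbf{Why Step 3 fails.} You want the stabilization maps to become isomorphisms, or injective, on (co)invariant classes for large $n$. This cannot hold. For example, the level-$0$ piece $V_{n,0}=\F$ of the dual top block has nonzero coinvariants at every stage, yet it is zero in $\FL$.

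\textbf{The missing mechanism.} What kills these classes is a level count.
\begin{itemize}
\item $F_P(-\otimes B)$ is dual to $F_D\circ\mathcal{B}\circ F_{-P}$, so it raises level by at most $2$ (Lemma \ref{lem: levels for dual maps}). A Maslov-grading comparison shows it raises level by exactly $2$ on the dual top block.
\item On the other hand, $\beta_{n+2}(j+m)=\beta_n(j)+1$ gives $M_{j+m,n+2}=M_{j,n}+1$ (Corollary \ref{cor: size blocks grows L space}). So the top of the dual top block rises by only $1$ per step.
\item Hence the iterates $F_P(-\otimes B)^r$ are eventually forced into the off-diagonal part.
\item There $F_P(-\otimes B)$ vanishes, because both possible target blocks have relative Maslov shift $-1$ rather than $-2$ (Lemma \ref{lem: Fp from off-diagonal L space} plus duality).
\end{itemize}
Since $v\sim F_P(v\otimes B)$, all these classes vanish.

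\textbf{The surviving block still needs work.} Computing (co)invariants on the dual bottom block does not by itself give the answer. Only $S_{k_+}\times S_{k_-}$ acts, and its coinvariants on the level-$(n-i)$ piece are spanned by many classes $g(a,b)$ with $a+b=n-1-i$ (Lemma \ref{lem: symmetrized basis unknot}). One class per admissible $i$ appears only after imposing both pair-of-pants relations. In particular you need $F_P(v\otimes T)\sim 0$, which identifies the classes along each antidiagonal; your plan never uses it.

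To impose these relations you need the explicit formulas $F_P(g_n(a,b)\otimes B)=g_{n+2}(a+1,b+1)$ and $F_P(g_n(a,b)\otimes T)=g_{n+2}(a,b+1)+g_{n+2}(a+1,b)$. The paper gets them as in Lemmas \ref{lem: F minus P} and \ref{lem: Fp for dual bottom}:
\begin{itemize}
\item compute $F_{-P}$ on full homology of the positive cable from $f_{-P}\circ f_{P_1}\simeq U_{n+1}$ and the absence of $U$-torsion;
\item transport this to the bottom block through level-preserving lifts $\mathfrak{f}_j$ of the full-twist maps from the unlink, which are unique up to homotopy by Lemma \ref{lem: lifting to resolutions};
\item dualize the undotted and dotted death compositions.
\end{itemize}
This identifies the system with a copy of the $0$-framed unknot system of Theorem \ref{thm: zero framed unknot}, truncated at $m_{j,n}$. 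Since $m_{j,n}$ grows by one per step, $i$ becomes unbounded only in the limit. The parity pattern then comes from $\dim\Coinv(V_{n,n-1-i})$ with $n\equiv\alpha \pmod 2$ (Lemma \ref{lem: Vni}). It does not come from a sign representation, which is trivial in characteristic $2$.

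\textbf{Remaining ingredients.} You would also need $M(Y'_{j+m,n+2})=M(Y'_{j,n})$ (Corollary \ref{cor: M stable L space}), the basepoint-moving relation, and a separate treatment of $j=1$ and $j=2g_n(K)-1$.
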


Again, this is closely related to the work of McCulloch \cite{McCulloch} who computed the component of  the lasagna module $\FL\left(W_{-m}(\overline{K}),\emptyset;\alpha\right)$ in certain specific Alexander and Maslov gradings.

\subsection{The case of $\mathbb{CP}^2$.}

Next, we consider the case of $m=1$, then $W_1=W_1(O)=\mathbb{CP}^2$.

\begin{conjecture}
\label{conj: unknot positive intro}
Suppose $m=1$, then $\FL(\mathbb{CP}^2,\emptyset)$ is described as follows:
\begin{enumerate}
\item[(a)]  For $\alpha$ odd and $\jj\in \Z$ the space $\FL(\mathbb{CP}^2,\emptyset;\alpha)$ has generators in Alexander grading $\jj\alpha$ and normalized Maslov gradings 
$$
-\left(\frac{\alpha+1}{2}-\jj\right)\left(\frac{\alpha-1}{2}-\jj\right)+2+\{0,2,4,\ldots\}
$$
\item[(b)] For $\alpha$ even and $\jj\in \Z+\frac{1}{2}$ the space $\FL(\mathbb{CP}^2,\emptyset;\alpha)$ has generators in Alexander grading $\jj\alpha$ and normalized Maslov gradings  
$$
-\left(\frac{\alpha+1}{2}-\jj\right)\left(\frac{\alpha-1}{2}-\jj\right)+2+\{0,1,3,\ldots\}
$$
\end{enumerate}
Here $\alpha=k_{+}-k_{-}$ corresponds to the grading by $H_2(W_1(O))=H_2(\mathbb{CP}^2)$.
\end{conjecture}

We provide a substantial evidence for Conjecture \ref{conj: unknot positive intro} in Section \ref{sec: 1 framed unknot} and Appendices \ref{sec: appendix A},\ref{sec: appendix B}.   In Appendix \ref{sec: appendix A} we propose an explicit, but quite complicated algebraic model for $\CFL(T(n,n))$ building on results of \cite{BLZ,GHom}, see Conjecture \ref{conj: algebraic complex}.

%In Appendix \ref{sec: appendix C}
%we use the results of \cite{BLZ,GHom} and the interaction between the Alexander, Maslov and level gradings, to confirm some parts of this conjecture.

We also describe unconditionally the action of the braid group  and basepoint moving maps for $\HFLhat(n,n)$. Assuming Conjecture \ref{conj: algebraic complex}, we can also describe the pair-of-pants cobordism maps between $\HFLhat(n,n)$ and $\HFLhat(n+2,n+2)$ and prove the following, see Theorem \ref{thm: 1 framed unknot} for more details.

\begin{theorem}
\label{thm: unknot positive intro}
Algebraic Conjecture \ref{conj: algebraic complex} implies Conjecture \ref{conj: unknot positive intro}. 
\end{theorem}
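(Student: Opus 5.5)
The plan is to compute the quotient \eqref{eq: cabled floer lasagna intro} for $W_1=\mathbb{CP}^2$ directly. The cable $O_1(k_{+},k_{-})$ is the positive torus link $T(n,n)$, $n=k_{+}+k_{-}$, with $k_{-}$ components reversed. Fix $\alpha=k_{+}-k_{-}$, so that $n\equiv \alpha \pmod 2$.

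\textbf{Step 1: the symmetric group coinvariants.} For each $n$ I start from the explicit description of $\HFLhat(T(n,n))$ from \cite{GHom}, refined by the level grading of \cite{BLZ}. The braid group acts on it through $S_{k_{+}}\times S_{k_{-}}$, and this action is described unconditionally together with the basepoint moving maps. Using the characteristic-2 representation theory of Section \ref{sec: rep theory}, I compute the coinvariants $\Coinv_{S_{k_{+}}\times S_{k_{-}}}\HFLhat(T(n,n))$ in each Alexander multi-grading. After symmetrization, the Alexander multi-grading collapses to a single parameter $\jj$ with total Alexander grading $\jj\alpha$ (integer or half-integer according to the parity of $n$). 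The generators in each Alexander grading should then come in towers, with Maslov gradings starting at the bottom value $-\left(\frac{\alpha+1}{2}-\jj\right)\left(\frac{\alpha-1}{2}-\jj\right)+2$ after normalization. The parity phenomenon (steps of $2$ for $\alpha$ odd; an extra class at shift $1$ for $\alpha$ even) should come from the mod-2 coinvariants of the sign-type summands, exactly as in Theorem \ref{thm: unknot zero intro} and Theorem \ref{thm: unknot negative intro}.

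\textbf{Step 2: the stabilization maps.} Next I assume Conjecture \ref{conj: algebraic complex}. This gives an explicit algebraic model of $\CFL(T(n,n))$, and with it I write down the decorated pair-of-pants (ribbon) cobordism map $\HFLhat(T(n,n))\to \HFLhat(T(n+2,n+2))$ that adds a pair of oppositely oriented strands. Concretely, I would realize the map as a composition of a saddle and a birth on the free-resolution model, and track its effect on the lowest-level generators. I would then show that these maps are compatible with the coinvariants from Step 1 and induce maps between them. Taking the colimit over $(k_{+},k_{-})\mapsto(k_{+}+1,k_{-}+1)$ then computes $\FL(\mathbb{CP}^2,\emptyset;\alpha)$.

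\textbf{Step 3: stabilization of the colimit.} To finish I must show that, in each fixed Alexander grading $\jj\alpha$ and Maslov grading, the stabilization maps on coinvariants are eventually isomorphisms. Equivalently, they should be injective, and surjective for $n$ large. The surviving classes are then precisely the towers described in Conjecture \ref{conj: unknot positive intro}. Grading bookkeeping under the normalization (the shift by the framing term $\alpha^2$ and by $n$) should convert the Maslov gradings of $\HFLhat(T(n,n))$ into the stated quadratic formula.

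The hard part is Step 3. Unlike the $m=-1$ case, there is no duality argument available, so I must control the pair-of-pants maps from the conjectural model directly. I would first try to show that the cobordism map sends the minimal-level representative of each coinvariant class to the analogous minimal-level class for $n+2$, arguing by level and Maslov degree that higher-level corrections vanish in coinvariants. This would reduce the statement to a combinatorial matching of bases.
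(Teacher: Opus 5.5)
There are two genuine gaps.

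\textbf{How the quotient is modeled.} The lasagna module is not a colimit of coinvariants along $F_P(-\otimes B)$ alone. Its definition also imposes $F_P(v\otimes T)\sim 0$, which raises the normalized Maslov grading by $1$ and which your plan never uses.
\begin{itemize}
\item This relation is indispensable. At a fixed level of the top diagonal block, the $S_{k_{+}}\times S_{k_{-}}$-coinvariants have a basis of classes $g(a,b)$ with $a+b$ fixed (Lemma \ref{lem: symmetrized basis unknot}). There is generally more than one such class, and $F_P(-\otimes B)$ does not merge them.
\item The single generator per normalized Maslov grading comes from the $T$-relations, which make all $g(a,b)$ on a diagonal equal. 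Theorem \ref{thm: Fp top diagonal} also only determines $F_P(-\otimes B)$ modulo $\Imm F_P(-\otimes T)$.
\end{itemize}

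Your Step 3 criterion is false as stated.
\begin{itemize}
\item For $\alpha=0$ all diagonal blocks sit in Alexander grading $0$. The answer you want is infinite-dimensional in every odd Maslov grading (Corollary \ref{cor: infinite dimensional}), and no eventually constant system of finite-dimensional spaces can produce that. At minimum you must fix the block index $\jj$, which $F_P$ preserves.
\item More importantly, the off-diagonal and bottom diagonal blocks have nonzero coinvariants at every stage (e.g.\ $X'_j$). These do not stabilize; they die. $F_P(-\otimes B)$ kills off-diagonal classes outright, because every block it could map into has Maslov grading $M(x)-1$ while the map has degree $-2$ (Lemma \ref{lem: Fp from off-diagonal}). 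On the bottom block it raises the level by exactly $2$ while the block height grows by only $1$, so its iterates vanish.
\item Only the top block survives. There the principal part of $F_P(-\otimes B)$ raises the level by exactly $2$ (level $n-i$ to level $(n+2)-i$), with corrections landing in lower, dying blocks. So the heuristic of ``minimal-level representatives with vanishing higher-level corrections'' is backwards. The lowest level of the top block, where the formulas fail, is handled by applying $F_P(-\otimes B)$ once.
\end{itemize}

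\textbf{Computing $F_P$.} You give no way to compute $F_P$. There is no description of a saddle map on the algebraic resolution, and Lemma \ref{lem: lifting to resolutions} cannot pin it down. With both $U_{n+1}=U_{n+2}$ and $V_{n+1}=V_{n+2}$ identified, the target has homology in positive levels, and the relevant components of $F_P(-\otimes T)$ and $F_P(-\otimes B)$ raise the level by $1$ and $2$.

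The paper's key input is Lemma \ref{lem:h-PP-1}: $U_{n+1}f_P(-\otimes T)\simeq h\circ\varphi_1$. It then proceeds as follows.
\begin{itemize}
\item $\varphi_1=f_{P_1}\circ f_B$ lands in a complex that is still a resolution, since only $U_{n+1}=U_{n+2}$ is identified (Lemma \ref{lem: identify U not V}). So, up to homotopy, its level-preserving part is determined by the map on full homology in Proposition \ref{prop: Phi1 full homology}.
\item Under Conjecture \ref{conj: algebraic complex} this part is $\varphi_1^{\alg}$. This gives $\varphi_1(D^j_n(J))=U_{n+1}D^{j+1}_{n+2}(J)$ modulo the other blocks (Corollary \ref{cor: Phi1 appendix}).
\item Lemma \ref{lem: h from off diagonal to top diagonal} prevents $h$ from feeding off-diagonal terms into the top block above its lowest level.
\item Cancelling $U_{n+1}$ gives Theorem \ref{thm: Fp top diagonal}: on the top block both maps agree with the truncated cube maps of the $0$-framed unknot. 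Theorem \ref{thm: zero framed unknot} then finishes.
\end{itemize}

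A minor point: the normalized grading is just $M+n$. The quadratic term comes from the orientation-reversal shift of $M(X'_j)$ in Lemma \ref{lem: change orientation n n}, not from a framing term $\alpha^2$.
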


We would like to emphasize that, if true, Conjecture \ref{conj: unknot positive intro} is in stark contrast with its Khovanov homology analogue, where by \cite[Theorem 1.4]{RW} the skein lasagna module for $\mathbb{CP}^2$ vanishes in all gradings. We also highlight another issue with the proposed answer for $\mathbb{CP}^2$, compare with Corollary \ref{cor: finite dimensional}.

\begin{corollary}
\label{cor: infinite dimensional}
Assuming Conjecture \ref{conj: unknot positive intro}, for $\alpha\neq 0$ the space $\FL(\mathbb{CP}^2,\emptyset;\alpha)$  is finite-dimensional in each bigrading.  

However, for $\alpha=0$ the space $\FL(\mathbb{CP}^2,\emptyset;\alpha)$  is concentrated in Alexander grading zero, is at most two-dimensional in even Maslov gradings and is infinite-dimensional in each odd Maslov grading.
%Furthermore, the normalized Maslov gradings  are unbounded below and $\FL(W_m,\emptyset;\alpha)$ is infinite-dimensional in each Maslov grading.
\end{corollary}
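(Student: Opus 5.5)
The plan is to read everything off the explicit description in Conjecture \ref{conj: unknot positive intro}, following the proof of Corollary \ref{cor: finite dimensional} but with the sign of the quadratic term reversed. Write
$$
Q(\jj)=\left(\frac{\alpha+1}{2}-\jj\right)\left(\frac{\alpha-1}{2}-\jj\right)=\left(\jj-\frac{\alpha}{2}\right)^2-\frac14 .
$$
By the conjecture, for each admissible $\jj$ the generators sit in Alexander grading $\jj\alpha$ and normalized Maslov gradings $-Q(\jj)+2+S$. Here $S=\{0,2,4,\ldots\}$ for $\alpha$ odd ($\jj\in\Z$), and $S=\{0,1,3,5,\ldots\}$ for $\alpha$ even ($\jj\in\Z+\frac12$). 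Each value in $S$ contributes exactly one generator.

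For $\alpha\neq 0$, fixing the Alexander grading $A$ forces $\jj=A/\alpha$, so at most one $\jj$ contributes. Fixing also the Maslov grading $M$ forces $i=M+Q(\jj)-2$, so at most one generator lies in each bigrading. This gives finite-dimensionality.

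For $\alpha=0$, the Alexander grading $\jj\alpha$ vanishes identically, so the module is concentrated in Alexander grading zero. Here $\jj$ ranges over $\Z+\frac12$ and $Q(\jj)=\jj^2-\frac14$. Writing $\jj=k+\frac12$ gives $Q=k(k+1)$, which is always even. Hence $-Q(\jj)+2$ is an even integer, equal to $2$ for $\jj=\pm\frac12$ and tending to $-\infty$ as $|\jj|\to\infty$. The strand for $\jj$ therefore contributes:
\begin{itemize}
\item one generator in the even grading $2-k(k+1)$, from $i=0$;
\item one generator in every odd grading $\geq 3-k(k+1)$, from $i=1,3,5,\ldots$;
\item nothing else.
\end{itemize}

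Consider first a fixed even $M$. A contribution requires $k(k+1)=2-M$. The map $k\mapsto k(k+1)$ is two-to-one on $\Z$ (via $k\leftrightarrow -1-k$, that is, $\jj\leftrightarrow-\jj$) onto the set of pronic numbers. So there are at most two such $\jj$, and the space has dimension at most two.

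Now consider a fixed odd $M$. Every $\jj$ with $3-k(k+1)\le M$ contributes one generator, and this holds for all but finitely many $\jj$. Hence that graded piece is infinite-dimensional.

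The only delicate points are bookkeeping, and I would check them carefully. First, the parity claim: $Q(\jj)$ is even for half-integer $\jj$, so that the $i=0$ generators land in even degrees and the $i$-odd ones in odd degrees. Second, the multiplicity convention: each element of the grading set corresponds to one generator, as in Corollary \ref{cor: finite dimensional}.
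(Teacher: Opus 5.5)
Your argument is correct and is essentially the paper's proof. You substitute $\jj=k+\frac12$, so that for $\alpha=0$ the gradings become $-k(k+1)+2+i$; you note that $i=0$ lands in even gradings and odd $i$ in odd gradings; and for a fixed odd $M$ you take $i=M-2+k(k+1)$ for all sufficiently large $|k|$. Beyond what the paper writes, you also spell out the $\alpha\neq 0$ case and the ``at most two-dimensional'' bound in even gradings, the latter via the two-to-one map $k\mapsto k(k+1)$ (equivalently $\jj\leftrightarrow -\jj$); the paper leaves both of these as evident.
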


\begin{proof}
The case $\alpha\neq 0$ is clear. For $\alpha=0$ we have for each $\jj=\frac{1}{2}+\jj'\in \Z+\frac{1}{2},i\in \{0,1,3,\ldots\}$ we have a generator of normalized Maslov grading
$$
M_{\norm}(\jj,i)=-(\frac{1}{2}-\jj)(-\frac{1}{2}-\jj)+2+i=-\jj'(\jj'+1)+2+i.
$$
Note that for $i=0$ the value of $M_{\norm}(\jj,i)$  is even while for $i>0$ it is odd.
For each odd $M$ and sufficiently large $|\jj|$ we have 
$$
-\jj'(\jj'+1)+2<M
$$
and hence we can a positive odd  $i=M+\jj'(\jj'+1)-2$ such that $M_{\norm}(\jj,i)=M$.
\end{proof}

%The proof of Theorem \ref{thm: unknot positive intro}  is considerably more complicated than Theorem \ref{thm: unknot zero intro}. For $m>0$, the cable $O_m(k_{+},k_{-})$ is the torus link $T(n,mn)$ (where $n=k_{+}+k_{-}$) with appropriately oriented components. The homology $\HFLhat(T(n,mn))$ was computed  in \cite{GHom}. Furthermore, the ``full" homology $\HFL(T(n,mn))$ was computed in \cite{BLZ}
%and it was proved in \cite{BLZ} that the chain complex $\CFL(T(n,mn))$ is quasi-isomorphic to the free resolution of $\HFL(T(n,mn))$. This result equips $\CFL(T(n,mn))$ and $\HFLhat(n,mn)$ with the additional \emph{level grading}. 

%Next, we use the above results to study the cobordism maps. First, we prove that just as in 0-framed case the braid group actions factors through the symmetric groups, and determine the corresponding representations over $\F$. Next, we study the connecting maps  between $\HFLhat(T(n,mn))$ and $\HFLhat(T(n+2,m(n+2)))$ by comparing and constrasting various facts about $\HFL$ and $\HFLhat$ for these links from \cite{BLZ,GHom}, understanding the interaction of connecting maps with the Alexander, Maslov and level gradings, and using some relations between the cobordism maps from Section \ref{sec: cobordisms}. This allows us to prove Theorem \ref{thm: unknot positive intro} as Theorem \ref{thm: 1 framed unknot} for $m=1$. The case $m>1$ is similar and is proved in Theorem \ref{thm: L space positive}.

\begin{example}
For $m=1$ we get the generators in $(\jj,M)$ coordinates in Figure \ref{fig: positive unknot} for $\alpha=1$ (left) and $\alpha=0$ (right).
\begin{figure}[ht!]
\begin{tikzpicture}[scale=0.8]
\draw [dotted] (0.5,2.25) parabola (3,-4);
\draw [dotted] (0.5,2.25) parabola (-2,-4);
\draw[dotted,->] (0,-4)--(0,4.2);
\draw[dotted,->] (-3,0)--(4,0);
\draw (-2,-0.1)--(-2,0.1);
\draw (-1,-0.1)--(-1,0.1);
\draw (0,-0.1)--(0,0.1);
\draw (1,-0.1)--(1,0.1);
\draw (2,-0.1)--(2,0.1);
\draw (3,-0.1)--(3,0.1);
\draw (-0.1,2)--(0.1,2);
\draw (-0.1,4)--(0.1,4);
%\draw (-0.1,6)--(0.1,6);
\draw (4,-0.3) node {$\jj$};
\draw (0.2,4) node {$M$};
\draw (-2,-0.3) node {$\scriptstyle{-2}$};
\draw (-1,-0.3) node {$\scriptstyle{-1}$};
\draw (0,-0.3) node {$\scriptstyle{0}$};
\draw (1,-0.3) node {$\scriptstyle{1}$};
\draw (2,-0.3) node {$\scriptstyle{2}$};
\draw (3,-0.3) node {$\scriptstyle{3}$};
\draw (-2,-4) node {$\bullet$};
\draw (-2,-2) node {$\bullet$};
\draw (-2,0) node {$\bullet$};
\draw (-2,2) node {$\bullet$};
\draw (-2,4) node {$\bullet$};
\draw (-1,0) node {$\bullet$};
\draw (-1,2) node {$\bullet$};
\draw (-1,4) node {$\bullet$};
\draw (0,2) node {$\bullet$};
\draw (0,4) node {$\bullet$};
%\draw (0,5) node {$\bullet$};
%\draw (0,7) node {$\bullet$};
%\draw (1,0) node {$\bullet$};
\draw (1,2) node {$\bullet$};
\draw (1,4) node {$\bullet$};
%\draw (1,7) node {$\bullet$};
\draw (2,0) node {$\bullet$};
\draw (2,2) node {$\bullet$};
\draw (2,4) node {$\bullet$};
\draw (3,-4) node {$\bullet$};
\draw (3,-2) node {$\bullet$};
\draw (3,-0) node {$\bullet$};
\draw (3,2) node {$\bullet$};
\draw (3,4) node {$\bullet$};
\end{tikzpicture}
\quad\quad
\begin{tikzpicture}[scale=0.8]
\draw [dotted] (0.5,2.25) parabola (3,-4);
\draw [dotted] (0.5,2.25) parabola (-2,-4);
\draw[dotted,->] (0.5,-4)--(0.5,4.2);
\draw[dotted,->] (-3,0)--(4,0);
\draw (-1.5,-0.1)--(-1.5,0.1);
\draw (-0.5,-0.1)--(-0.5,0.1);
\draw (0.5,-0.1)--(0.5,0.1);
\draw (1.5,-0.1)--(1.5,0.1);
\draw (2.5,-0.1)--(2.5,0.1);
\draw (3.5,-0.1)--(3.5,0.1);
\draw (-0.1,2)--(0.1,2);
\draw (-0.1,4)--(0.1,4);
%\draw (-0.1,6)--(0.1,6);
\draw (4,-0.3) node {$\jj$};
\draw (0.7,4) node {$M$};
\draw (-1.5,-0.3) node {$\scriptstyle{-2}$};
\draw (-0.5,-0.3) node {$\scriptstyle{-1}$};
\draw (0.5,-0.3) node {$\scriptstyle{0}$};
\draw (1.5,-0.3) node {$\scriptstyle{1}$};
\draw (2.5,-0.3) node {$\scriptstyle{2}$};
\draw (3.5,-0.3) node {$\scriptstyle{3}$};
\draw (-2,-4) node {$\bullet$};
\draw (-2,-3) node {$\bullet$};
\draw (-2,-1) node {$\bullet$};
\draw (-2,1) node {$\bullet$};
\draw (-2,3) node {$\bullet$};
\draw (-1,0) node {$\bullet$};
\draw (-1,1) node {$\bullet$};
\draw (-1,3) node {$\bullet$};
\draw (0,2) node {$\bullet$};
\draw (0,3) node {$\bullet$};
%\draw (0,5) node {$\bullet$};
%\draw (0,7) node {$\bullet$};
%\draw (1,0) node {$\bullet$};
\draw (1,2) node {$\bullet$};
\draw (1,3) node {$\bullet$};
%\draw (1,7) node {$\bullet$};
\draw (2,0) node {$\bullet$};
\draw (2,1) node {$\bullet$};
\draw (2,3) node {$\bullet$};
\draw (3,-4) node {$\bullet$};
\draw (3,-3) node {$\bullet$};
\draw (3,-1) node {$\bullet$};
\draw (3,1) node {$\bullet$};
\draw (3,3) node {$\bullet$};
\end{tikzpicture}
\caption{The generators for the $1$-framed unknot for $\alpha=1$ (left) and $\alpha=0$ (right).} 
\label{fig: positive unknot}
\end{figure}
The pictures for general odd (resp. even) $\alpha$ are obtained from this one by a horizontal shift.
\end{example}

%\subsection{Positive framings for L-space knots}

Finally, for positively framed L-space knots we propose the following generalization of Conjecture \ref{conj: unknot positive intro}. As above,  we consider the equivalence relation on $\Z_{>0}\times \Z_{>0}$ given by 
 $$
 (j,n)\sim (j+m,n+2).
 $$

\begin{conjecture}
\label{conj: L space positive intro}
Suppose $K$ is a plumbed L-space knot and $m>\max(2g(K)-1,0)$. Then the Floer lasagna module $\FL\left(W_m(K),\emptyset;\alpha\right)$ has generators indexed by an equivalence class of $(j,n), n=\alpha\mod 2$ and an integer $i$ satisfying constraints from Theorem \ref{thm: L space negative intro}. Such a generator has Alexander grading 
$\alpha(g_n(K)-j)$ 
and Maslov grading
$M(Y'_{j,n})+2+i$.
%we have the following description of the cabled Floer homology:
%\begin{enumerate}
%\item[(a)]  For $\alpha$ odd and an equivalence class $(j,n), n=\alpha\mod 2$ the space $\FL(W_m(K),\emptyset;\alpha)$ has generators in Alexander degree $\alpha(g_n(K)-j)$ and normalized Maslov degrees 
%$$
%M(Y'_{j,n})+2+\{0,2,4,\ldots\}
%$$
%\item[(b)] For $\alpha$ even and and an equivalence class $(j,n), n=\alpha\mod 2$ the space $\FL(W_m(K),\emptyset;\alpha)$ has generators in Alexander degree $\alpha(g_n(K)-j)$ and normalized Maslov degrees  
%$$
%M(Y'_{j,n})+2+\{0,1,3,\ldots\}
%$$
%\end{enumerate}
\end{conjecture}

\subsection{Wall stabilization and future directions}

In the future, we aim to prove Conjectures \ref{conj: unknot positive intro} and \ref{conj: L space positive intro} by constructing explicit algebraic models for the full Heegaard floer complexes of cables of L-space links as in Appendix \ref{sec: appendix A}, and using these to compute various cobordism maps.

We would also like to address the potential topological issues raised by Conjecture \ref{conj: unknot positive intro} and Corollary \ref{cor: infinite dimensional}. 
If $(X,X')$ are an exotic pair of simply connected 4-manifolds then by a result of Wall \cite{Wall} they become diffeomorphic after stabilization:
$$
X\sharp k\mathbb{CP}^2\sharp \ell\overline{\mathbb{CP}^2}\simeq X'\sharp k\mathbb{CP}^2\sharp \ell\overline{\mathbb{CP}^2},\ k,\ell\gg 0.
$$
Similarly to \cite[Theorem 1.4]{MN}, one would expect that the connect sum of 4-manifolds corresponds to the tensor product of Floer lasagna modules. Since the answers for $\mathbb{CP}^2$ and $\overline{\mathbb{CP}^2}$ are infinite-dimensional (and the proposed answer for $\mathbb{CP}^2$ is even infinite-dimensional in infinitely many bigradings), this does not immediately obstruct a possibility of $\FL(X)\neq \FL(X')$ for such an exotic pair. 

Also, since the proposed answers for $\mathbb{CP}^2$ and $\overline{\mathbb{CP}^2}$ are structurally different, the Floer lasagna module seems to be sensitive to the orientation which might be a good sign for a potential exotica detection. 

Still, the contrast with Khovanov homology is somewhat disconcerting. As part of the proof of Theorem \ref{thm: 1 framed unknot}, we note that none of the classes in $\HFLhat(T(n,n))$ which might survive in $\cHFL(O_1)$ correspond to the generators of $\HFLminus(T(n,n))$. Motivated by this observation, we propose the following:

\begin{problem}
Define and study variants of Floer lasagna modules for other flavors of link Floer homology such as $\HFLminus$ or $\HFL$. Compute these invariants for $W_m(K)$ when $K$ is an L-space knot or its mirror.
\end{problem}

\subsection{Organization of the paper}

The paper is organized as follows.  Sections \ref{sec: HF background}, \ref{sec: cobordisms}, \ref{sec: lasagna} and \ref{sec: rep theory} cover the necessary background.

In Sections \ref{sec: HF background} and \ref{sec: cobordisms} we give some background on various flavors of Heegaard Floer homology, in particular recall some useful facts about L-space links. We also specify some decorated cobordisms relating  cable links $K_m(k_{+},k_{-})$ and the corresponding maps in Heegaard Floer homology, and establish various relations between these. In Section \ref{sec: lasagna} we define Floer lasagna modules following \cite{Chen}. In Section \ref{sec: rep theory} we review some useful facts about representation theory of $S_n$ in characteristic 2. 

Next, we start proving our main results. In Section \ref{sec: unknot} we study the Floer lasagna module for the 0-framed unknot and prove Theorem \ref{thm: unknot zero intro}. 

In Section \ref{sec: Tnn} we review the results of \cite{BLZ,GHom} on Heegaard Floer homology of $T(n,n)$, as well as various gradings and cobordism maps.

In Section \ref{sec: -1 framed unknot} we utilize the duality between the Heegaard Floer homologies of $T(n,n)$ and $T(n,-n)$ to study the latter and prove Theorem \ref{thm: unknot negative intro} for $m=-1$.

In Section \ref{sec: 1 framed unknot} we discuss the case $m=1$ and prove Theorem \ref{thm: unknot positive intro}. 

In Section \ref{sec: L space knots} we generalize these results to arbitrary L-space knots, provide some evidence for Conjecture \ref{conj: L space positive intro} and prove Theorem \ref{thm: L space negative intro}.  Many of the proofs in Section \ref{sec: L space knots} are similar to the ones in Sections \ref{sec: -1 framed unknot} and \ref{sec: 1 framed unknot}, but require heavy notations and, in particular, a detailed study of the so-called $h$-functions for L-space links. The notations for the $\pm 1$ framed unknot are easier and  more explicit. This justifies first proving the special case and then the general case along the same lines. 

In the Appendices \ref{sec: appendix A},\ref{sec: appendix B}  we discuss an algebraic model for the full Floer complex of $T(n,n)$
and its implications for the Floer lasagna module of $\mathbb{CP}^2$.
\subsection*{Acknowledgments}

We are grateful to Daren Chen, Ciprian Manolescu, Qiuyu Ren, Ian Sullivan, Paul Wedrich and Melissa Zhang for many useful discussions about lasagna modules. We thank Monica Vazirani for explaining the subtleties of representation theory in characteristic 2.

The paper is partially based upon work supported by the National Science Foundation under Grant DMS-2424139, while B. L. and E. G. were in the residence at the Simons Laufer Mathematical Sciences Institute in Berkeley, California, during the Spring 2026 and Fall 2026 semesters. 
%\EG{Beibei + Eugene - thank MSRI}

%The work of E. G. was partially supported by the NSF grant DMS-2302305.

\subsection*{AI declaration}

All of the results and the proofs are due to the authors. We have used OpenAI's ChatGPT (GPT-5.6 Sol) and Claude Opus 5.5 for proofreading and checking some technical details of the proofs.

\section{Background on link Floer homology}
\label{sec: HF background}

\subsection{Variants and coefficients}

We refer to \cite{OSKnots,OSProperties,OSLinks} for definitions and foundational results on Heegaard Floer homology.
We will be working with links in $S^3$ and the ``full” version of Heegaard Floer complex $\CFL(L)$ for links, defined in \cite{Zemke}. Let
$L = L_1\cup \ldots\cup L_n$ be an oriented link with $n$ components. Unless stated otherwise, we will
assume that each component $L_i$ has   two marked points $\z_i$ and $\w_i$.
We define $\ell_i=\sum_{j\neq i}\lk(L_i,L_j)$ and 
$$
\HH_L=\Z^n+\frac{1}{2}(\ell_1,\ldots,\ell_n).
$$
We consider the ring $R_n=\F[U_1,\ldots,U_n,V_1,\ldots,V_n]$. If the number of components is clear from context we will write $R$ instead of $R_n$.
The complex $\CFL(L)$ consists of free $R$-modules which have $\Z$-valued Maslov  grading and $\HH_L$-valued Alexander  grading, the differential preserves the Alexander grading and decreases the Maslov grading by 1. The variables $U_i$ have Maslov grading $-2$ and Alexander grading $-e_i$ while the variables $V_i$ have Maslov grading $0$ and Alexander grading $e_i$. Here $e_i$ is the $i$-th basis vector in $\Z^n$ which acts on $\HH_L$ by a shift. %\AA{We use $\delta_i$ in section 4 instead of $e_i$, I think}
For $s\in \HH_L$ we define $\mathfrak{A}^-(L,s)$ as the subcomplex of $\CFL(L)$ with Alexander grading $s$, so that
$$
\CFL(L)=\bigoplus_{s\in \HH_L}\mathfrak{A}^-(L,s).
$$
There are chain maps
$$
U_i:\mathfrak{A}^-(L,s)\to \mathfrak{A}^-(L,s-e_i),\ V_i:\mathfrak{A}^-(L,s)\to \mathfrak{A}^-(L,s+e_i).
$$
We also define rings
$$
R_{U}=R/(V_1=\ldots=V_n=0)\simeq \F[U_1,\ldots,U_n],\ R_V=R/(U_1=\ldots=U_n=0)\simeq \F[V_1,\ldots,V_n].
$$

\begin{definition}
We define the ``full" link Floer homology
$$
\HFL(L)=H_*(\CFL(L))=\bigoplus_{s\in \HH_L}H_*(\mathfrak{A}^-(L,s)),
$$
``minus"   homology
$$
\HFLminus(L)=H_*\left(\CFL(L)\otimes_{R}R_U\right)
$$
and ``hat" homology
$$
\HFLhat(L)=H_*\left(\CFL(L)\otimes_{R} 
\F\right).
$$
\end{definition}

The complex $\CFL(L)$, viewed in the homotopy category of $\Z\oplus \HH_L$-graded $R$-modules, is an invariant of $L$. Correspondingly, all three versions of homology $\HFL(L),\HFLminus(L)$ and $\HFLhat(L)$ are invariants of $L$.

\subsection{L-space links}
\label{sec: L space background}

A rational homology sphere $M$ is an L-space if and only if $\widehat{\mathrm{HF}}(M)$ has minimal possible rank. A link $L=L_1\cup\cdots\cup L_n$ is called an L-space link if the Dehn surgery $S^3_{d_1,\ldots,d_n}(L)$ is an L-space for all $d_1,\ldots,d_n\gg 0$.

Equivalently, by \cite{Yajing} a link $L$ is an L-space link if and only if for all $s\in \HH_L$ we have 
$$
H_*(\mathfrak{A}^{-}(L,s))\simeq \F[\UU][-2h(s)]
$$
where $h:\HH_L\to \Z$ is the so-called $h$-function of the link $L$. In particular, $\HFL(L)$ is determined as an $\F$-vector space by the $h$-function, and one can check (see e.g. \cite{BLZ}) that the $R$-module structure is determined by the $h$-function as well. 
By \cite{GN} for an L-space link $L$ the $h$-function is determined by the Alexander polynomial of $L$ and all of its sublinks.

\begin{theorem}\cite{BLZ}
\label{thm: resolution}
Assume $L$ is a \emph{plumbed} L-space link. Then $\CFL(L)$ is homotopy equivalent to a free resolution of $\HFL(L)$ over $R$.
\end{theorem}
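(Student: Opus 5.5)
The plan is to prove formality: construct a quasi-isomorphism of $\Z\oplus\HH_L$-graded $R$-modules
$$
\CFL(L)\longrightarrow \HFL(L),
$$
where $\HFL(L)$ is regarded as a complex with zero differential. Once this exists, choose a free resolution $F_\bullet\to\HFL(L)$ over $R$ in the graded category. Both $\CFL(L)$ and $F_\bullet$ are complexes of free graded $R$-modules, bounded in the appropriate sense: in each Alexander grading the Maslov gradings are bounded above, and $R_n$ has finite global dimension. The standard lifting argument then produces a chain map $\CFL(L)\to F_\bullet$ over the identity of $\HFL(L)$. A quasi-isomorphism between such complexes of free (hence projective) modules is a homotopy equivalence, which gives the statement. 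Everything therefore reduces to formality.

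To get formality, I would first replace $\CFL(L)$ by a combinatorial model available precisely because $L$ is plumbed. I would use Zemke's link surgery formula together with the identification, due to Ozsváth--Stipsicz--Szabó and Zemke, of the Heegaard Floer complex of a plumbing with N\'emethi's lattice complex. This presents $\CFL(L)$, up to homotopy equivalence over $R$, as a \emph{lattice-type complex} $\mathbb{CF}(\Gamma,L)$. Its generators are cubes $(x,I)$ in a lattice $\Z^{V}$ indexed by the vertices of the plumbing graph. The differential is a sum of face maps weighted by monomials in the $U_i,V_i$, determined by a weight function.

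Such a complex carries an auxiliary grading, which I will call the level grading: the cube dimension $|I|$, suitably corrected by the Maslov grading so that it is $R$-equivariant. With respect to this grading the differential has degree exactly $-1$. The key claim is \emph{purity}: for an L-space link, in each Alexander grading $s$ the homology of $\Am(L,s)$, which is $\F[\UU][-2h(s)]$, is supported entirely in level $0$. Granting purity, the map
$$
\mathbb{CF}_{\text{level }0}\longrightarrow \mathbb{CF}_{\text{level }0}/\partial\left(\mathbb{CF}_{\text{level }1}\right)\cong \HFL(L)
$$
defines an $R$-linear quasi-isomorphism from the full complex (projecting away positive levels) to its homology. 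This is the desired formality, provided one checks that the truncation is a chain map, which follows because the differential lowers level by exactly one.

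I expect the main obstacle to be purity together with the $R$-equivariance of the level grading. The $V_i$ act with Maslov degree $0$, so a naive cube-dimension grading may not be preserved by the $R$-action. My first attempt would be to define the level as $\tfrac{1}{2}\bigl(M - M_{\min}(s)\bigr)$ relative to the bottom of the tower $\F[\UU][-2h(s)]$, corrected by $|I|$, and to check $R$-compatibility face by face. For purity itself, I would argue by induction on the plumbing graph, using N\'emethi's contractibility argument for almost-rational graphs. The L-space condition, meaning that each $H_*(\Am(L,s))$ is a single tower, should force the relevant sublevel sets of the weight function to be connected and contractible. That contractibility is exactly what kills homology in positive levels.
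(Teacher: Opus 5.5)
Your reduction is fine as far as it goes. If some complex of free $R$-modules homotopy equivalent to $\CFL(L)$ has a grading on a basis for which $\partial$ has degree exactly $-1$ and the homology sits in degree $0$, then that complex is already a free resolution of $\HFL(L)$, and no lifting argument is needed. Your worry that cube dimension might not be $R$-equivariant is misplaced: any grading on a basis is $R$-equivariant once $R$ is put in degree $0$. By contrast, a correction of the form $\tfrac12(M-M_{\min}(s))$ is not preserved by the $U_i,V_i$, which shift it by $0$ or $-1$ according to the jumps of $h$.

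The paper does not prove this statement; it imports it from \cite{BLZ}. The genuine gap in your proposal is that all of the content lies in the purity claim. You support it only heuristically, and the model and grading you propose cannot carry it.

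First, a complex with one free generator per cube of $\Z^{V}$ and monomial face maps is never homotopy equivalent over $R$ to $\CFL(L)$ when $L$ is an L-space link with at least two components. Here is why.
\begin{itemize}
\item Since $\partial^2=0$ over $\F$, the two monomial composites from a cube to each codimension-two face coincide.
\item The cubical decomposition of $\mathbb{R}^{V}$ is simply connected, so you can assign Laurent-monomial degrees to the generators that make $\partial$ and the $R$-action homogeneous. The homology is then graded by Laurent monomials.
\item The generator $z$ of the tower $H_*(\Am(L,s))\simeq \F[\UU][-2h(s)]$ spans a one-dimensional space, so it is homogeneous.
\item The relation $U_aV_az=U_bV_bz$ holds because $U_aV_a\simeq U_bV_b$. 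Since the two sides lie in different homogeneous degrees, it forces $U_aV_az=0$, contradicting the injectivity of $\UU$.
\end{itemize}
So any correct lattice-type model for a link must contain extra structure implementing the homotopies $h_{a,b}$.

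Second, lattice cube dimension cannot serve as the level. $T(3,3)$ is plumbed on a single $(-1)$-vertex with three arrows, so cube dimension is at most $1$. Purity would then make $\CFL(T(3,3))$ a free resolution of length at most $1$, giving $\HFLhat=\mathrm{Tor}^R_0\oplus\mathrm{Tor}^R_1$ of $\HFL(T(3,3))$. In Alexander grading $(0,0,0)$ this has rank $3$: the generator $X_2$ and the two minimal relations $(U_aV_a+U_bV_b)X_2$. Theorem \ref{thm: GH} gives rank $4$ there; the missing class $e$ sits in level $3$. Consistently, the level the paper actually uses (Lemma \ref{lem: levels}, Remark \ref{rem: cube model ss}) is a sum of cube gradings indexed by subsets of the \emph{link components}, not of the plumbing vertices.

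Third, N\'emethi's contractible-sublevel-set argument concerns complexes over a single variable. Here $\Am(L,s)$ lives over $\F[U_1V_1,\ldots,U_nV_n]$, and these variables agree only up to homotopy, so there is no sublevel-set description to appeal to.

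What is missing is therefore threefold: a model of $\CFL(L)$ for plumbed links over the full ring $R$; a level grading on it that includes the directions along the link components; and an actual proof that the L-space condition makes its homology pure. This is the substance of \cite{BLZ}.
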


The immediate consequence of Theorem \ref{thm: resolution} is that for plumbed L-space links the complex $\CFL(L)$ acquires an additional {\bf level} grading such that $\HFL(L)$ has level $0$, differential decreases the level by $1$, and the action of $R$ preserves the level.  
By definition, the homologies $\HFLminus(L)$ and $\HFLhat(L)$ also inherit the level grading from $\CFL(L)$ in this case.

Next, we recall a standard result from homological algebra (see for example \cite[Chapter 17, Proposition 4]{DummitFoote}) which will be used repeatedly below. 
\begin{lemma}
\label{lem: lifting to resolutions}
Suppose $A_{\bullet}$ and $B_{\bullet}$ are two chain complexes of projective modules such that $H(B_{\bullet})$ is concentrated at level zero. Given a linear map $f:H_0(A_{\bullet})\to H_0(B_{\bullet})$, there exists a level-preserving chain map $F:A_{\bullet}\to B_{\bullet}$ which induces $f$ in homology (for $i>0$ the homology $H_i(A_{\bullet})$ is necessarily mapped to 0). Furthermore, such $F$ is unique up to a chain homotopy. 
\end{lemma}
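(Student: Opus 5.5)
We prove Lemma \ref{lem: lifting to resolutions} by the standard comparison argument, constructing $F$ degree by degree and using projectivity of each $A_i$ together with acyclicity of $B_\bullet$ in positive levels. We assume both complexes are concentrated in levels $\geq 0$, with differentials $d:A_{i}\to A_{i-1}$ and $d:B_i\to B_{i-1}$. Then $H_0(A_\bullet)=A_0/d(A_1)$ and $H_0(B_\bullet)=B_0/d(B_1)$. Let $\pi_A$ and $\pi_B$ denote the quotient maps.

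\textbf{Construction.} In level $0$, the composite $f\circ\pi_A:A_0\to H_0(B_\bullet)$ lifts along the surjection $\pi_B:B_0\to H_0(B_\bullet)$ because $A_0$ is projective. This gives $F_0$ with $\pi_B F_0=f\pi_A$.

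In level $1$, we have $\pi_B F_0 d=f\pi_A d=0$, so $F_0d(A_1)\subset d(B_1)$. Projectivity of $A_1$ lets us lift $F_0d:A_1\to d(B_1)$ along the surjection $d:B_1\to d(B_1)$, producing $F_1$ with $dF_1=F_0d$.

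Inductively, suppose $F_{i-1}$ and $F_{i-2}$ are constructed with $i\geq 2$. Then
$$d(F_{i-1}d)=F_{i-2}dd=0,$$
so $F_{i-1}d(A_i)\subset \Ker(d:B_{i-1}\to B_{i-2})$. Since $H_{i-1}(B_\bullet)=0$, this kernel equals $d(B_i)$. Projectivity of $A_i$ then gives $F_i$ with $dF_i=F_{i-1}d$.

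By construction $F$ is a level-preserving chain map inducing $f$ on $H_0$. The maps $H_i(A)\to H_i(B)=0$ for $i>0$ are automatic.

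\textbf{Uniqueness.} Let $F$ and $F'$ be two such lifts and set $G=F-F'$. This is a chain map with $\pi_BG_0=0$, that is, $G_0(A_0)\subset d(B_1)$.

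We build $h_i:A_i\to B_{i+1}$ with $G_i=dh_i+h_{i-1}d$. Start by lifting $G_0$ along $d:B_1\to d(B_1)$ using projectivity of $A_0$; this gives $h_0$ with $dh_0=G_0$.

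For the inductive step, consider $G_i-h_{i-1}d$. Using $dG_i=G_{i-1}d$ and the inductive identity $G_{i-1}=dh_{i-1}+h_{i-2}d$, we compute
$$d(G_i-h_{i-1}d)=G_{i-1}d-dh_{i-1}d=h_{i-2}dd=0.$$
So $G_i-h_{i-1}d$ lands in cycles of $B_i$ with $i\geq1$, which are boundaries by the vanishing of $H_i(B_\bullet)$. Projectivity of $A_i$ gives $h_i$ with $dh_i=G_i-h_{i-1}d$. Hence $G$ is null-homotopic.

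\textbf{Gradings.} In our applications the complexes are graded modules over $R$ (Maslov and Alexander gradings, with $R$-action preserving level), and $f$ is homogeneous. All lifts above can be chosen homogeneous, since graded free modules are projective in the graded category. Thus $F$ respects the extra gradings as well.

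The only subtle point is ensuring that the image lies in the boundaries at each stage; this is exactly where the hypothesis that $H(B_\bullet)$ is concentrated in level zero is used.
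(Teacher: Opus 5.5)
Your proof is correct and is the standard comparison-theorem argument; the paper gives no proof of its own and simply cites this standard result from Dummit--Foote. You build $F$ level by level from projectivity of each $A_i$ and exactness of $B_\bullet$ in positive levels, and you get the null-homotopy of $F-F'$ by the same induction; you never use exactness of $A_\bullet$, which is exactly what lets the lemma apply to arbitrary complexes of projectives rather than only resolutions.
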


We will need to consider more general maps which do not necessarily preserve the level. 
Assume that $A_{\bullet}$ and $B_{\bullet}$ are bounded complexes. 
We say that $F:A_{\bullet}\to B_{\bullet}$ is a chain map if $[\partial, F]=0.$
We can write $F=\sum_{k}F_k$ where $F_k$ changes the level by $k$, and only finitely many $F_k$ are nonzero.

\begin{definition}
We say that $F$ is level-filtered if $F_k=0$ for $k>0$.
\end{definition}

The following corollary will be useful for the topological applications. 

\begin{corollary}
\label{cor: homotopic to level filtered}
Suppose that $A_{\bullet}$ and $B_{\bullet}$ are two bounded complexes of projective modules such that $H(B_{\bullet})$ is  concentrated at level zero. Then:

a) Any chain map $F:A_{\bullet}\to B_{\bullet}$ which changes the level by $k>0$ is null-homotopic.

b) Any chain map $F:A_{\bullet}\to B_{\bullet}$ is homotopic to a level-filtered chain map. Its level-preserving component $F_0$ is determined by the map $F_{\ast}:H_0(A_{\bullet})\to H_0(B_{\bullet})$ and $F_0$ is homotopic to the map from Lemma \ref{lem: lifting to resolutions}.
\end{corollary}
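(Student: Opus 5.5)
We prove (a) first and then deduce (b) from it. Write $B_\bullet=\bigoplus_{\ell\ge 0} B_\ell$ by level, with differential $\partial:B_\ell\to B_{\ell-1}$. Let $F$ be a chain map that raises the level by exactly $k>0$, so $F:A_\ell\to B_{\ell+k}$. We build a homotopy $H$ raising the level by $k+1$ with $F=\partial H+H\partial$, by induction on the level of the source. Since $A_\bullet$ is bounded, the induction starts at the lowest level $\ell_0$ where $A_{\ell_0}\neq 0$, and we put $H=0$ on $A_\ell$ for $\ell<\ell_0$.

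\textbf{Inductive step.} Suppose $H$ has been defined on $A_{\ell'}$ for all $\ell'<\ell$. Consider
$$
G=F-H\partial:A_\ell\to B_{\ell+k}.
$$
We check that $\partial G=0$. Using $\partial F=F\partial$ and the inductive relation $F=\partial H+H\partial$ on $A_{\ell-1}$, we get
$$
\partial G=F\partial-\partial H\partial=(F-\partial H)\partial=H\partial\partial=0 .
$$
So $G$ takes values in the cycles $Z_{\ell+k}(B)$. Because $\ell+k>0$ and $H(B_\bullet)$ vanishes in positive levels, these cycles equal the boundaries $\partial B_{\ell+k+1}$. The module $A_\ell$ is projective, so $G$ lifts through the surjection $\partial:B_{\ell+k+1}\to \partial B_{\ell+k+1}$ to a map $H:A_\ell\to B_{\ell+k+1}$ with $\partial H=G$. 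This gives $F=\partial H+H\partial$ on $A_\ell$, completing the step.

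Boundedness of $A_\bullet$ makes the induction terminate, so $H$ is a finite sum and (a) follows. The hypotheses also let us work one Alexander/Maslov grading at a time, so gradings cause no complications.

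\textbf{Part (b).} Write $F=\sum_k F_k$; the sum is finite by boundedness. Let $k_{\max}$ be the largest $k$ with $F_k\neq 0$, and assume $k_{\max}>0$. The relation $[\partial,F]=0$ splits by level change, since $\partial$ lowers the level by exactly $1$: the component changing level by $k-1$ reads
$$
\partial F_k+F_k\partial=0 .
$$
In particular $F_{k_{\max}}$ is itself a chain map raising the level by $k_{\max}>0$. By (a) it equals $[\partial,H]$ for some $H$ raising the level by $k_{\max}+1$.

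Replace $F$ by $F'=F-[\partial,H]$, which is homotopic to $F$. Its component raising the level by $k_{\max}$ is zero, and since $[\partial,H]$ changes the level only by $k_{\max}$ (because $\partial$ lowers it by exactly $1$), every other component of $F'$ agrees with that of $F$. Thus each such step lowers $k_{\max}$ without creating new positive components. Since there are finitely many components, repeating this gives a level-filtered map homotopic to $F$.

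\textbf{Identifying $F_0$.} For a level-filtered map the $k=0$ relation says $F_0$ is a level-preserving chain map. The components $F_k$ with $k<0$ send $A_0$ to negative levels, which are zero, so they vanish on $A_0$; hence on $H_0$ the induced map is that of $F_0$. Homotopies do not change induced maps, so $F_{0*}=F_*$ on $H_0$. The uniqueness statement in Lemma \ref{lem: lifting to resolutions} then gives that $F_0$ is homotopic to the lift of $F_*:H_0(A_\bullet)\to H_0(B_\bullet)$.

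\textbf{Main subtlety.} This uniqueness step requires the homotopy between two level-preserving lifts to be level-compatible. The standard comparison-theorem argument provides such a homotopy, namely one raising the level by $1$. With this choice the homotopy-modified $F_0$ stays level-preserving, and we interpret "determined by $F_*$" as up to such homotopies.
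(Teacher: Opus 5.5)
Your proof is correct and follows the paper's argument. Part (b) matches the paper's proof: write $F=\sum_k F_k$, observe that $[\partial,F_k]=0$ for each $k$, and kill the positive components using (a). For (a), you spell out the inductive projective-lifting (comparison theorem) argument that the paper compresses into a citation of Lemma \ref{lem: lifting to resolutions}; this is if anything more careful, since that lemma is stated only for level-preserving maps, and one needs your observation that a level-raising chain map lands in cycles of positive level, which are boundaries.
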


\begin{proof}
Part (a) follows from Lemma \ref{lem: lifting to resolutions} since the corresponding map on homology vanishes. 

For part (b), we have $[\partial,F]=\sum_{k} [\partial,F_k]=0$, and $[\partial,F_k]$ changes the level by $k-1$. Therefore $[\partial,F_k]=0$ for all $k$, so  $F_k$ is a chain map and by part (a) it is null-homotopic for $k>0$. We conclude that $F$ is homotopic to $\sum_{k\le 0} F_k$.
\end{proof}

The following example shows that the components $F_{k}$ do not need to be null-homotopic for $k<0$.

\begin{example}
The following diagram defines a chain map which decreases the level by 1:
$$
\begin{tikzcd}
A_{\bullet}: & R_n \arrow{r}{U_1} \arrow{dr}{1} & R_n\\
B_{\bullet}: &   & R_n
\end{tikzcd}
$$
It is easy to check that it is not null-homotopic.
\end{example}

\subsection{Spectral sequences}

The free resolution of $R_U$ over $R$ is given by 
$$
R_{U}\simeq (R\otimes\wedge(\mu_1,\ldots,\mu_n),\partial_{\mu}),\ \partial_{\mu}(\mu_i)=V_i.
$$
Here $\mu_i$ have Alexander grading $e_i$ and Maslov grading $1$.
Similarly, the free resolution of $R_V$ over $R$ is given by 
$$
R_{V}\simeq (R\otimes\wedge(\nu_1,\ldots,\nu_n),\partial_{\nu}),\ \partial_{\nu}(\nu_i)=U_i.
$$
Here $\nu_i$ have Alexander grading $-e_i$ and Maslov grading $-1$.
Finally, the free resolution of $\F$ over $R$ is given by 
$$
\F\simeq (R\otimes\wedge(\mu_1,\ldots,\mu_n)\otimes\wedge(\nu_1,\ldots,\nu_n),\partial_{\mu}+\partial_{\nu}).
$$

The next lemma is a reformulation of some useful spectral sequences considered in \cite{GN,GHom}.

\begin{lemma}
\label{lem: spectral sequences}
Assume $L$ is a plumbed L-space link. Then:

a) There is a spectral sequence
$$
\HFL(L)\otimes\wedge(\mu_1,\ldots,\mu_n)\Rightarrow \HFLminus(L)
$$
with the first differential given by $\partial_{\mu}$. All higher differentials vanish and the spectral sequence collapses at the $E_2$ page.

b)  There is a spectral sequence
$$
\HFLminus(L)\otimes\wedge(\nu_1,\ldots,\nu_n)\Rightarrow \HFLhat(L)
$$
with the first differential given by $\partial_{\nu}$.

The differentials in the spectral sequences preserve Alexander grading and lower Maslov grading by 1.
Moreover, they decrease the level grading by 1 if we assume that $\mu_i,\nu_i$ have level 1. 
\end{lemma}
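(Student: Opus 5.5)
The plan is to realize each spectral sequence as the spectral sequence of a filtered complex built from the Koszul resolutions just written down, using Theorem \ref{thm: resolution} to control the pages.

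\textbf{Part (a).} Since $R_U$ is resolved by the Koszul complex $(R\otimes\wedge(\mu_1,\ldots,\mu_n),\partial_\mu)$ and $\CFL(L)$ is free, there is a quasi-isomorphism
$$
\CFL(L)\otimes_R R_U\simeq \CFL(L)\otimes\wedge(\mu_1,\ldots,\mu_n),
$$
with total differential $\partial+\partial_\mu$. We filter this by the exterior degree in the $\mu_i$ (equivalently by level, with $\mu_i$ of level 1). The $E_1$ page is then $H_*(\CFL(L))\otimes\wedge(\mu)=\HFL(L)\otimes\wedge(\mu)$, and the first differential is induced by $\partial_\mu$, i.e. multiplication by the $V_i$.

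For collapse we use Theorem \ref{thm: resolution}: replace $\CFL(L)$ by a free resolution $P_\bullet$ of $\HFL(L)$ with its level grading. The double complex $P_\bullet\otimes\wedge(\mu)$ then has two filtrations. Taking $\partial$-homology first gives $\HFL(L)\otimes\wedge(\mu)$, concentrated in $P$-level $0$. All higher differentials $d_r$ for $r\ge 2$ must change the $P$-level by $r-1\neq 0$ while landing in a page concentrated in $P$-level $0$, so they vanish. Hence the sequence degenerates at $E_2$. Equivalently, $E_2=\mathrm{Tor}$-type homology of the Koszul complex on $\HFL(L)$ computes $H_*(P\otimes_R R_U)$ by the balancing of Tor.

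\textbf{Part (b).} In the same way,
$$
\CFL\otimes_R\F\simeq(\CFL\otimes_R R_U)\otimes_{R_U}\F,
$$
and $\F$ is resolved over $R_U$ by $\wedge(\nu)$ with $\partial_\nu(\nu_i)=U_i$. Filtering by $\nu$-degree gives $E_1=\HFLminus(L)\otimes\wedge(\nu)$ with $d_1$ induced by $\partial_\nu$. No collapse is claimed here, so nothing further is needed.

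\textbf{Gradings.} Both $\partial_\mu$ and $\partial_\nu$ preserve Alexander grading: for example $\mu_i$ has Alexander grading $e_i$ and so does $V_i$. Both lower Maslov grading by $1$: $\mu_i$ has Maslov grading $1$ and $V_i$ has Maslov grading $0$, while $\nu_i$ has Maslov grading $-1$ and $U_i$ has Maslov grading $-2$. Hence every page differential has the same behaviour. Assigning $\mu_i,\nu_i$ level $1$ and $P$ its resolution level, the total differential lowers total level by exactly $1$, so all induced differentials do as well.

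The main obstacle is the collapse argument in (a). One must keep the two filtrations straight and check that the level grading on $\HFLminus$ inherited from $\CFL$ matches the total level in the double complex. I would handle this by working entirely in the double complex $P_\bullet\otimes\wedge(\mu)$.
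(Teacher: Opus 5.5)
Your proof is correct.

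\textbf{Part (a).} This is the paper's argument. The paper simply cites the balancing of $\mathrm{Tor}$: both $\CFL(L)\otimes_R R_U$ and $(\HFL(L)\otimes\wedge(\mu_1,\ldots,\mu_n),\partial_\mu)$ compute $\mathrm{Tor}^R(\HFL(L),R_U)$. Your double-complex argument is exactly the proof of that balancing: $E_1$ is concentrated in $P$-level $0$, and $d_r$ shifts $P$-level by $r-1$. It also justifies the degeneration at $E_2$ more explicitly than the paper does. One wording point: the filtration must be by $\mu$-degree alone, not by total level. The total differential lowers total level by exactly one, so that filtration would be trivial.

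\textbf{Part (b).} Here you take a different route.
\begin{itemize}
\item You apply the Koszul resolution of $\F$ over $R_U$ directly to $\CFLminus(L)$. This gives the standard spectral sequence for any link, without Theorem \ref{thm: resolution} or part (a).
\item The paper instead filters $\HFL(L)\otimes\wedge(\mu)\otimes\wedge(\nu)$, with differential $\partial_\mu+\partial_\nu$, by $\nu$-degree. It uses (a) to identify the first page with $\HFLminus(L)\otimes\wedge(\nu)$. It then identifies the abutment with $\mathrm{Tor}^R(\HFL(L),\F)=\HFLhat(L)$.
\end{itemize}

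Your version is more elementary and more general. The paper's version puts both spectral sequences on a single complex built only from the $R$-module $\HFL(L)$. This has two benefits:
\begin{itemize}
\item The level is literally $\mu$-degree plus $\nu$-degree, i.e. the two cube gradings of Remark \ref{rem: cube model ss} used in Lemma \ref{lem: levels}.
\item The construction is visibly functorial in $\HFL(L)$, which the later $S_n$-equivariance argument uses.
\end{itemize}
In your version the level on $\HFLminus(L)$ is the resolution level of $P\otimes_R R_U$. Matching it with $\mu$-degree requires exactly the total-degree comparison in the double complex that you flag at the end, and that comparison does go through.
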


\begin{proof}
a) By Theorem \ref{thm: resolution} we can compute $\HFLminus(L)$ in two ways. First, we consider the resolution of $\HFL(L)$ over $R$ (given by $\CFL(L)$) and tensor it with $R_U$. Second, we can consider the resolution of $R_U$ and tensor it with $\HFL(L)$. This is exactly
$$
\left(\HFL(L)\otimes\wedge(\mu_1,\ldots,\mu_n),\partial_{\mu}\right).
$$
By the properties of the $\mathrm{Tor}$ functor, the homologies of the resulting complexes are the same, and the result follows.

b) We have a spectral sequence with the $E_2$ page
$$
H_*\left(H_*\left(\HFL(L)\otimes\wedge(\mu_1,\ldots,\mu_n)\otimes\wedge(\nu_1,\ldots,\nu_n),\partial_{\mu}\right),\partial_{\nu}\right)
$$
and the $E_{\infty}$ page
$$
H_*\left(\HFL(L)\otimes\wedge(\mu_1,\ldots,\mu_n)\otimes\wedge(\nu_1,\ldots,\nu_n),\partial_{\mu}+\partial_{\nu}\right).
$$
By part (a), the $E_2$ page can be rewritten as
$$
H_*\left(\HFLminus(L) \otimes\wedge(\nu_1,\ldots,\nu_n),\partial_{\nu}\right).
$$
On the other hand, the $E_{\infty}$ page computes  $\HFLhat(L)$ using the resolution of $\F$ over $R$ (similarly to (a)).
\end{proof}

\begin{remark}
\label{rem: cube model ss}
For a fixed Alexander grading $s\in \HH_L$ the spectral sequences decompose into cubes
$$
\bigoplus_{A\subset \{1,\ldots,n\}}H_*(\mathfrak{A}^{-}(L,s-e_A))\otimes \mu_A\Rightarrow \HFLminus(L,s)
$$
and
$$
\bigoplus_{A\subset \{1,\ldots,n\}}\HFLminus(L,s+e_A)\otimes \nu_A\Rightarrow \HFLhat(L,s).
$$
Here $\mu_A=\prod_{a\in A}\mu_a$ and $\nu_A=\prod_{a\in A}\nu_a.$
This is the form in which these spectral sequences appeared in \cite{GHom}, and the level grading $|A|$ was referred to as the ``cube grading". 
\end{remark}

Recall that for all $a\neq b$ the actions of $U_aV_a$ and $U_bV_b$ on $\CFL(L)$ are homotopic, so there is a map 
$$
h_{a,b}:\CFL(L)\to \CFL(L),\ [\partial,h_{a,b}]=U_aV_a-U_bV_b.
$$
These homotopies can be chosen to anticommute, square to zero and satisfy $h_{a,b}+h_{b,c}=h_{a,c}$.
In particular, it is sufficient to consider $h_{1,a}$ for $a>1$. After specializing $V_i=0$ or $U_i=V_i=0$ we get chain maps 
$$
h_{a,b}:\CFLminus(L)\to \CFLminus(L),\quad  \CFLhat(L)\to \CFLhat(L)
$$
which descend to the action of the exterior algebra $\wedge\left(h_{1,2},\ldots,h_{1,n}\right)$ in homology $\HFLminus(L)$ or $\HFLhat(L)$. %We will refer to this as to unlink action in homology, 
See Lemma \ref{lem: UV homotopy arc} for a more topological interpretation.

The next lemma describes the action of $\wedge(h_{1,2},\ldots,h_{1,n})$ in terms of spectral sequences of Lemma \ref{lem: spectral sequences}. 

\begin{lemma}
\label{lem: unknot action ss}
Assume that $L$ is a plumbed L-space link. The action of $\wedge(h_{1,2},\ldots,h_{1,n})$ on $\HFLminus(L)$ commutes with the spectral sequence 
$$
\HFL(L)\otimes\wedge(\mu_1,\ldots,\mu_n)\Rightarrow \HFLminus(L)
$$ 
where the action on the left is given by 
$$
h'_{a,b}=U_a\mu_a-U_b\mu_b.
$$
\end{lemma}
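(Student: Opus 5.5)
We prove the lemma by a double complex comparison, the same one used for Lemma \ref{lem: spectral sequences}(a). By Theorem \ref{thm: resolution} we may replace $\CFL(L)$ by a free resolution $P_\bullet\to \HFL(L)$ over $R$. We then form the double complex
$$
D=P_\bullet\otimes_R \left(R\otimes\wedge(\mu_1,\ldots,\mu_n),\partial_{\mu}\right).
$$
Taking homology in the $P$-direction first gives $\HFL(L)\otimes\wedge(\mu)$ with differential $\partial_\mu$; this is the spectral sequence in the statement. Taking homology in the Koszul direction first gives $P_\bullet\otimes_R R_U=\CFLminus(L)$. The plan is to write down an explicit endomorphism of the total complex $D$ that restricts to $h_{a,b}$ on one side and to $h'_{a,b}$ on the other, and that commutes (up to homotopy) with both differentials.

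The first step is to observe that on the Koszul resolution of $R_U$ the element $U_aV_a-U_bV_b$ is null-homotopic via $U_a\mu_a-U_b\mu_b$. Indeed, since $\partial_\mu$ is a derivation with $\partial_\mu(\mu_a)=V_a$, we have
$$
[\partial_\mu, U_a\mu_a-U_b\mu_b]=U_aV_a-U_bV_b.
$$
On $P_\bullet$ the same element is null-homotopic via $h_{a,b}$. Consider the total differential $\partial_P+\partial_\mu$ on $D$ and set $H=h_{a,b}\otimes 1+1\otimes(U_a\mu_a-U_b\mu_b)$. Over $\F$ the two commutators add, so
$$
[\partial_P+\partial_\mu,H]=2(U_aV_a-U_bV_b)=0.
$$
Hence $H$ is a chain endomorphism of the total complex. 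Moreover, it is compatible with the bicomplex filtrations: $h_{a,b}$ preserves the Koszul degree, and $U_a\mu_a$ raises it by one while preserving the $P$-degree.

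The second step computes the induced maps in each direction. Filter $D$ by Koszul degree, so that $E_1=H_*(P_\bullet)\otimes\wedge(\mu)=\HFL(L)\otimes\wedge(\mu)$. On $E_1$ the term $h_{a,b}\otimes1$ induces the action of $h_{a,b}$ on $\HFL(L)$. This action is zero: $\HFL(L)$ is concentrated in level $0$, while $h_{a,b}$ has nonzero level degree, and by Corollary \ref{cor: homotopic to level filtered} we may take $h_{a,b}$ level-filtered. The surviving term is multiplication by $U_a\mu_a-U_b\mu_b$, which is $h'_{a,b}$, and it commutes with $\partial_\mu$ on $E_1$ because the $U$'s act on $\HFL(L)$ and $\mu$'s anticommute, over $\F$ commute. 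Since $H$ is a filtered chain map, it induces maps on every page compatible with the differentials and converging to its action on the total homology. For the other identification, the quasi-isomorphism $D\to P_\bullet\otimes_R R_U$ kills all $\mu_i$ of positive degree. It therefore sends $H$ to $h_{a,b}\otimes 1$ on $\CFLminus(L)$, which induces the action of $h_{a,b}$ on $\HFLminus(L)$.

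The main obstacle is the claim that the $h_{a,b}$ term contributes nothing on $E_1$ and beyond, so that the action on the $E_\infty=E_2$ page, which collapses by Lemma \ref{lem: spectral sequences}, is exactly the one induced by $h'_{a,b}$. To handle it, we choose $h_{a,b}$ on the resolution $P_\bullet$ via Lemma \ref{lem: lifting to resolutions}: a homotopy between two lifts of the same map $U_aV_a-U_bV_b$ on $\HFL(L)$ can be taken to raise level by exactly $1$. We then verify that, under the level grading in which $\mu_i$ have level $1$, both summands of $H$ raise total level by $1$. Consequently the induced map on associated graded pieces is $h'_{a,b}$ on $E_2$. Any discrepancy between different choices of $h_{a,b}$ is a homotopy of higher filtration degree, so it does not change the induced map on $E_\infty$.
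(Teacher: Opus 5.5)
Your proof is the paper's proof: your $H=h_{a,b}\otimes 1+1\otimes(U_a\mu_a-U_b\mu_b)$ is exactly the chain map $h''_{a,b}$ on $(\CFL(L)\otimes\wedge(\mu_1,\ldots,\mu_n),\partial+\partial_\mu)$. It becomes $h_{a,b}$ on $\CFLminus(L)$ after taking $\partial_\mu$-homology, and $h'_{a,b}$ on $(\HFL(L)\otimes\wedge(\mu_1,\ldots,\mu_n),\partial_\mu)$ on the other side.

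The level-grading discussion in your last paragraph is not needed. Because $H$ raises the Koszul filtration by one only through $U_a\mu_a-U_b\mu_b$, the term $h_{a,b}\otimes 1$ is invisible on the associated graded; in any case $h_{a,b}$ alone induces nothing on $\HFL(L)$, since it is not a chain map. Also, $h'_{a,b}$ commutes with $\partial_\mu$ on $E_1$ because $U_aV_a=U_bV_b$ on $\HFL(L)$, not because $U_a\mu_a$ commutes with $\partial_\mu$.
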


\begin{proof}
Consider the chain complex 
$$
\left(\CFL(L)\otimes \wedge(\mu_1,\ldots,\mu_n),\partial+\partial_{\mu}\right),
$$
which carries chain maps
$$
h''_{a,b}=h_{a,b}-U_a\mu_a+U_b\mu_b,\ [\partial+\partial_{\mu},h''_{a,b}]=(U_aV_a-U_bV_b)-(U_aV_a-U_bV_b)=0. 
$$
If we compute the homology with respect to $\partial_{\mu}$,
we get $\CFLminus(L)$ with chain maps $h_{a,b}$.
On the other hand, the complex is quasi-isomorphic to 
$$
(\HFL(L)\otimes \wedge(\mu_1,\ldots,\mu_n),\partial_{\mu}),
$$
where $h''_{a,b}$ correspond to $h'_{a,b}$ and the result follows. 
\end{proof}

\begin{remark}
In terms of the cube model from Remark \ref{rem: cube model ss} the action of $U_a\mu_a$ corresponds to the map
$$
U_a: H_*\left(\mathfrak{A}^{-}(L,s-e_A)\right)\to H_*\left(\mathfrak{A}^{-}(L,s-e_A-e_a)\right),\ a\notin A
$$
and similarly for $U_b\mu_b$.
\end{remark}

\subsection{Reversing orientation}

We will often need to reverse orientation on some of the link components. The following result describes some useful facts about this operation. It is a variant of  \cite[Proposition 5.4]{MOST}, see also \cite{BaldwinGrigsby}.

\begin{proposition}
\label{prop: reversing orientation}
Consider the link $L'$ obtained from $L$ by reversing orientation on the $i$-th component $L_i$. We have the following:

a) The chain complexes $\CFL(L)$ and $\CFL(L')$ can be chosen to be isomorphic. The isomorphism reverses the $i$-th Alexander grading $A_i$ and preserves $A_j$ for $j\neq i$. The Maslov grading is changed by 
$$
M\mapsto M-2A_i+\ell_i.
$$

b) The actions of $R$ on $\CFL(L)$ and $\CFL(L')$ differ by an automorphism of $R$ which swaps $U_i$ and $V_i$.

c) As a consequence, we have isomorphisms
$$
\HFL(L)\simeq \HFL(L'),\ \HFLhat(L)\simeq \HFLhat(L')
$$
which change the gradings as in (a).

d) If $L$ is an L-space link then $L'$ is also an L-space link. If, moreover, they are plumbed, then the isomorphism  in (a) preserves the level grading on $\CFL$. 
\end{proposition}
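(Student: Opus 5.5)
The plan is to work at the level of Heegaard diagrams, as in \cite[Proposition 5.4]{MOST}. Fix a multi-pointed Heegaard diagram $(\Sigma,\alpha,\beta,\w,\z)$ for $L$, with basepoints $\w_i,\z_i$ on the component $L_i$. Reversing the orientation of $L_i$ amounts to swapping the roles of $\w_i$ and $\z_i$. So the same diagram with $\w_i\leftrightarrow\z_i$ represents $L'$. The generators (intersection points $\mathbb{T}_\alpha\cap\mathbb{T}_\beta$) and the holomorphic disks counted are identical. The only change is in the bookkeeping: in $\CFL(L)$ a disk $\phi$ contributes $\prod_j U_j^{n_{\w_j}(\phi)}V_j^{n_{\z_j}(\phi)}$, while in $\CFL(L')$ the exponents of $U_i$ and $V_i$ are swapped. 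Hence the identity map on generators intertwines the two differentials once we twist by the automorphism $\sigma_i$ of $R$ exchanging $U_i$ and $V_i$. This gives (b) directly, and also the isomorphism of underlying complexes in (a).

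Next we compute the gradings. For $A_j$ with $j\neq i$, the relative Alexander grading $A_j(x)-A_j(y)=n_{\z_j}(\phi)-n_{\w_j}(\phi)$ is unchanged. For $A_i$ it changes sign. The absolute normalization is pinned down by the symmetry of $\HFLhat$ (or by the $\HH_L$ shift $\frac12\ell_i$, which is symmetric under $s_i\mapsto -s_i$), giving $A_i'=-A_i$. For Maslov, use $M_{\w}$ versus $M_{\z}$. The Maslov grading on $\CFL(L)$ is $M_{\w}$, while that on $\CFL(L')$ is computed with basepoint set $\w'=(\w\setminus\w_i)\cup\z_i$. The standard relation is $M_{\w}-M_{\z}=2A$ for knots. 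For links, applied to a single component, the relative version is $M_{\w}-M_{\w'}=2(n_{\z_i}-n_{\w_i})$ relatively. This gives $M'=M-2A_i+c$, and the constant $c=\ell_i$ is fixed by checking a single generator, or by comparing the top gradings of $\HFLhat$ using the Thurston-norm/symmetry normalization. Fixing this absolute constant, with its dependence on linking numbers through the $\HH_L$ normalization, is the main technical step I expect to require care. I would verify it on the Hopf link and then argue by the additivity of the relative formula.

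Part (c) then follows by taking homology. Since $\sigma_i$ preserves the ideals $(U_1,\ldots,V_n)$, the isomorphism descends to $\CFL\otimes_R\F$. Note that for $\HFLminus$ this fails, because $\sigma_i$ does not preserve the ideal $(V_1,\ldots,V_n)$; this explains why only $\HFL$ and $\HFLhat$ are listed.

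For (d) there are two pieces. First, the L-space property: the definition only involves surgeries on the unoriented link, so it is orientation-independent. Alternatively, $\Am(L',s)$ equals $\Am(L,s')$ with $s_i'=-s_i$ as complexes, so its homology is again $\F[\UU]$ up to shift. Second, the level grading: the plumbed property is also orientation-independent. Under the isomorphism, a free resolution of $\HFL(L)$ over $R$ becomes, after twisting by $\sigma_i$, a free resolution of $\HFL(L')$. By the uniqueness in Lemma \ref{lem: lifting to resolutions}, the level grading (defined via Theorem \ref{thm: resolution}) is carried to the level grading, up to homotopy equivalence compatible with levels.
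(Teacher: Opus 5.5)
Your diagrammatic argument (swap the roles of $w_i$ and $z_i$, keep the same generators and disks, twist the $R$-action by the automorphism exchanging $U_i$ and $V_i$) is the standard one behind the references the paper cites. It does give (b), (c), (d), the relative gradings, and the absolute Alexander grading: the transported grading $(\ldots,-A_i,\ldots)$ has symmetric support, and two lifts of the same relative grading whose supports are finite, nonempty and symmetric must coincide.

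The gap is the absolute Maslov constant $c$ in $M'=M-2A_i+c$. You flag it as the crux but do not determine it, and none of your three suggested routes can.
\begin{itemize}
\item Symmetry carries no information about $c$. The symmetry $\HFLhat_d(L',s')\cong\HFLhat_{d-2\sum_j s'_j}(L',-s')$ holds for the transported grading for \emph{every} $c$. If $x\in\HFLhat_M(L,s)$, its symmetric partner lies in $\HFLhat_{M-2\sum_j s_j}(L,-s)$. Their transported gradings are $M-2s_i+c$ and $M-2\sum_j s_j+2s_i+c$, which differ by exactly $2\sum_j s'_j$ with $s'_i=-s_i$, so $c$ cancels.
\item Thurston-norm information only constrains the Alexander support.
\item A check on the Hopf link determines $c$ only for the Hopf link. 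Indeed $c=\ell_i$ depends on $L$, while relative gradings on a fixed diagram are blind to additive constants. So there is no ``additivity'' that transports the value to other links.
\end{itemize}

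What is missing is a direct comparison of the two absolute normalizations.
\begin{itemize}
\item By definition, $M$ is the absolute grading of $CF^-(S^3,\w)$ pulled back along $V_1=\cdots=V_n=1$.
\item Likewise, $M'$ is the absolute grading of $CF^-(S^3,\w')$ with $\w'=(\w\setminus\{w_i\})\cup\{z_i\}$, pulled back along $V_j=1$ for $j\neq i$ and $U_i=1$.
\item Setting $V_j=1$ for $j\neq i$ is homogeneous for both gradings. It reduces you to the knot $L_i$, with the $w_j$ for $j\neq i$ becoming free basepoints.
\item For that knot, the identity $\gr_{w}-\gr_{z}=2A^{L_i}$ gives $M-M'=2A^{L_i}$.
\item Combine this with the standard shift $A^{L_i}=A_i-\tfrac12\ell_i$ between the Alexander grading of the sublink $L_i$ and the $i$-th Alexander grading of $L$ \cite{OSLinks}. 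This yields $M'=M-2A_i+\ell_i$.
\end{itemize}
Alternatively, in a grid diagram both absolute Maslov gradings and the Alexander multi-grading are given by explicit formulas \cite{MOST}, so the effect of exchanging the $\mathbb{O}$- and $\mathbb{X}$-markings on $L_i$ can be computed directly. Your Hopf link computation then serves as a consistency check rather than a proof.
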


\begin{corollary}
\label{cor: change orientation many components}
Consider the link $L'$ obtained from $L$ by reversing orientation on components indexed by a subset $A\subset \{1,\ldots,n\}.$ Then the isomorphism between $\CFL(L)$ and $\CFL(L')$ shifts the Maslov grading by 
$$
-2\sum_{i\in A}A_i+\sum_{i\in A,j\in \overline{A}}\lk(L_i,L_j).
$$
Here the linking numbers are computed in the original link $L$.
\end{corollary}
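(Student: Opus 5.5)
The plan is to iterate Proposition \ref{prop: reversing orientation}(a) one component at a time over the elements of $A=\{i_1,\ldots,i_r\}$. At each step the Maslov shift depends on the \emph{current} link: on the current $i$-th Alexander grading and on the current quantity $\ell_i$. So the work is to track how both change as earlier components get reversed.

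First, I record the effect of a single reversal on the other data. Reversing $L_i$ negates $\lk(L_i,L_j)$ for every $j\neq i$ and leaves all other linking numbers unchanged. By part (a), the isomorphism sends $A_i\mapsto -A_i$ and fixes $A_j$ for $j\neq i$.

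Next, I set up an induction on $r=|A|$. Let $L^{(k)}$ be the link obtained by reversing $L_{i_1},\ldots,L_{i_k}$. Under the composite isomorphism $\CFL(L)\simeq \CFL(L^{(k-1)})$, the $i_k$-th Alexander grading of $L^{(k-1)}$ is still $A_{i_k}$, since only other components have been reversed. The step from $L^{(k-1)}$ to $L^{(k)}$ then shifts the Maslov grading by
$$
-2A_{i_k}+\ell_{i_k}^{(k-1)}, \qquad \ell_{i_k}^{(k-1)}=\sum_{j\neq i_k}\lk^{(k-1)}(L_{i_k},L_j)=\sum_{j\neq i_k}\epsilon_j\,\lk(L_{i_k},L_j).
$$
Here $\epsilon_j=-1$ if $j\in\{i_1,\ldots,i_{k-1}\}$ and $\epsilon_j=+1$ otherwise.

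Summing these shifts over $k$ gives $-2\sum_{i\in A}A_i$ plus a linking term. In the linking term:
\begin{itemize}
\item each pair $i\in A$, $j\in\overline{A}$ appears exactly once, with sign $+1$;
\item each pair $\{i_a,i_b\}\subset A$ with $a<b$ appears twice, once as $+\lk$ at step $a$ and once as $-\lk$ at step $b$, so these contributions cancel.
\end{itemize}
This yields exactly $\sum_{i\in A,j\in\overline{A}}\lk(L_i,L_j)$, computed in the original link $L$.

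The only point needing care is sign bookkeeping: the linking numbers and Alexander gradings in each step must be those of the intermediate link, and those have to be expressed back in terms of the original $L$. Once that is done, the cancellation of pairs inside $A$ is immediate. The answer is symmetric in the order of reversal, which serves as a check.
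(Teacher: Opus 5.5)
Your proposal is correct and follows essentially the same route as the paper. Both iterate Proposition~\ref{prop: reversing orientation}(a) one component at a time and use that reversing a component negates its linking numbers with the others, so pairs inside $A$ cancel while pairs between $A$ and $\overline{A}$ survive with sign $+1$; the paper packages this as an induction with an explicit formula for the partial shift $S_k$, whereas you sum the step-by-step shifts directly, which is the same bookkeeping.
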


\begin{proof}
We can reindex the components so that $A=\{1,\ldots,k\}$ and prove the statement by induction on $k$. 
Let $L^{(k)}$ be the link with the orientation on the first $k$ components reversed, denote its components by $L^{(k)}_i$.

The base case $k=1$ follows from Proposition \ref{prop: reversing orientation}. For the step of the induction, assume that we already changed the orientation on the first $k$ strands to get the shift
$$
S_k:=-2\sum_{i=1}^{k}A_i+\sum_{i\le k,j>k} \lk(L_i,L_j).
$$
Now we change the orientation on the $(k+1)$-st strand and get the overall shift
$$
S_{k+1}=S_k-2A_{k+1}+\sum_{i=1}^{k}\lk\left(L_i^{(k)},L_{k+1}^{(k)}\right)+\sum_{j=k+2}^{n}\lk\left(L_j^{(k)},L_{k+1}^{(k)}\right)=
$$
$$
S_k-2A_{k+1}-\sum_{i=1}^{k}\lk\left(L_i,L_{k+1}\right)+\sum_{j=k+2}^{n}\lk\left(L_j,L_{k+1}\right)=
$$
$$
-2\sum_{i=1}^{k+1}A_i+\sum_{i\le k+1,j>k+1} \lk(L_i,L_j).
$$
Note that since the orientation of $L_i^{(k)}$ is reversed for $i\le k$, the sign of the linking number $\lk\left(L_i^{(k)},L_{k+1}^{(k)}\right)$ is reversed as well.
\end{proof}

\section{Some cobordism maps}
\label{sec: cobordisms}

%\EG{Probably can borrow some details from Juhasz-Marengon}

%\EG{Introduce notations $K(n),K(n+2)$ with specific orientations}
In this section, we recall some elementary cobordisms and establish the properties necessary for subsequent computations.

All the links below are in $S^3$ and cobordisms in $S^3\times [0,1]$. We will be using the language of decorated link cobordisms, see \cite{Zemke, Zemke2} for definitions and details.

\subsection{Conventions} Given a link $L$, we assume every component of $L$ has exactly one $z$ and one $w$  basepoint unless stated otherwise. Moreover, if for a decorated link cobordism $(W,\Sigma)$ we have $W\cong S^3\times [0,1]$ then we will drop $W$ from the notation and denote the cobordism by $\Sigma$. 

Given a decorated cobordism $(W,\Sigma)$ from $L_1$ to $L_2$ there is a universal coloring (corresponding to associating distinct colors to the components of dividing curves on $\Sigma$) and we denote the cobordism map associated to this coloring by  $f_{(W,\Sigma)}$ or $f_{\Sigma}$ if $W=S^3\times [0,1]$. Moreover, we denote the induced maps on either $\HFL$ or $\HFLhat$ by $F_{(W,\Sigma)}$ or $F_{\Sigma}$.

%\EG{Maybe add a reference to Zemke on decorated cobordisms and colorings}

In the literature on Heegaard Floer homology for $3$-manifolds with multiple basepoints, two distinct conventions for the Maslov grading have been used. Depending on the convention, the top-grading generator in  $\widehat{\mathrm{HF}}(S^3, \w)$ has a grading of either $0$ or $\frac{1}{2}(|\w|-1)$. In order to distinguish these two grading conventions, we denote the former grading by $M$, and the latter grading by $\gr_{\w}$. Note that 
$$\gr_{\w}(x)=M(x)+\dfrac{1}{2}(|\w|-1).$$

%In this paper, we use the first convention and assume 
%$$\widehat{HF}(S^3, {\bf{w}})\cong \bigotimes_{i=1}^{|\bf{w}|-1} ((\F)_{-1}\oplus (\F)_{0})$$

%In Heegaard Floer homology for $3$-manifolds with multi basepoint, two different conventions for Maslov grading has been used in the literature. The top degree generator in $\widehat{HF}(S^3, {\bf{w}})$ has grading $0$ in one convention, and grading $\frac{1}{2}(|\bf{w}|-1)$ in the other. 

 %We use the following grading convention in the paper to require that the relatively graded $\F$-module 
%$$\widehat{HF}(S^3, {\bf{w}})\cong \bigotimes_{i=1}^{|\bf{w}|-1} ((\F)_{-1}\oplus (\F)_{0})$$
%have top degree generator in grading $0$, regardless of how many basepoints are in $\bf{w}$. Another grading convention is by setting the Maslov grading of the top element of $\widehat{HF}(S^3, \bf{w})$ to be $\dfrac{1}{2}(|{\bf{w}}|-1)$. In order to distinguish these two grading convention, we denote the former grading by $M$, referring to it as Maslov grading, and the latter grading by $\gr_w$. Note that  
%$$\gr_w(v)=M(v)+\dfrac{1}{2}(|\bf{w}|-1).$$
By \cite[Theorem 1.4]{Zemke2}, for a decorated cobordism $(W,\Sigma)$ from $L_1$ to $L_2$ the cobordism map $F$  shifts $\gr_{\w}$ by:
$$\gr_{\w}(F(x))-\gr_{\w}(x)=\dfrac{c_1^{2}(\mathfrak{s})-2\chi(W)-3\sigma(W)}{4}+\chi(\Sigma_{\w})-\dfrac{1}{2}(|\w_1|+|\w_{2}|)$$
and so it shifts $M$ by 
$$M(F(x))-M(x)=\dfrac{c_1^{2}(\mathfrak{s})-2\chi(W)-3\sigma(W)}{4}+\chi(\Sigma_{\w})-|\w_2|$$
%where $F: \HFLhat(S^3, L_1, {\bf{w_{1}}}, {\bf{z_{1})}}\rightarrow \HFLhat(S^3, L_2, {\bf{w_2}}, {\bf{z_2}})$. 
Here, $\Sigma_{\w}\subset \Sigma$ denotes the union of the components of the dividing curve complement that contain the $\w$ basepoint on their boundaries. In this paper, we will use the first convention, i.e. $M$ for Maslov grading.

\subsection{Birth and death cobordisms} 
Given a link $L$, the \emph{birth cobordism} is the decorated cobordism $B$ from $L$ to $L\sqcup O$ where $B\subset S^3\times [0,1]$ is an unknotted disk bounding the unknot $O$ and decorated with one dividing arc. Similarly, the \emph{death cobordism} is the decorated cobordism $D$ from $L\sqcup O$ to $L$ where $D\subset S^3\times [0,1]$ is the unknotted disk bounding $O$ and decorated with one dividing arc.  

Assume that a link $L$ has $n$ components, then the link $L\sqcup O$ has $(n+1)$ components, and we denote the variables corresponding to the basepoints on $O$ by $U_{n+1}$ and $V_{n+1}$. 
Recall that  
\[\CFL(L\sqcup O)\simeq \CFL(L)\otimes_{\F} \left(B\xrightarrow{U_{n+1}V_{n+1}-U_nV_n}T\right)\]
where $M(T)=0,\ M(B)=-1$ and 
\[A_i(x\otimes T)=A_{i}(x\otimes B)=A_i(x)\ \ \text{for}\ \ 1\le i\le n\]
while $A_{n+1}(x\otimes T)=A_{n+1}(x\otimes B)=0$ for all $x\in \CFL(L)$. Thus, 
$$\HFLhat(L\sqcup O)\cong\HFLhat(L)\otimes_{\F} V$$
where $V=\langle B, T\rangle$.
%is a free 2-dimensional module with 
%$$M(T)=0,\ M(B)=-1, \quad A(T)=A(B)=0.$$
Then, under this identification, birth and death cobordism maps 
$$f_B: \CFL(L)\otimes_{\F} \F[U_{n+1},V_{n+1}]\rightarrow \CFL(L\sqcup O), \quad \quad f_D: \CFL(L\sqcup O)\rightarrow \CFL(L)\otimes_{\F} \F[U_{n+1},V_{n+1}]$$
are given by 
 $f_B(x)=x\otimes T$, and $f_D(x\otimes T)=0, f_D(x\otimes B)=x$. 

\begin{remark}
The above model for $\CFL(L\sqcup O)$ depends on the choice of a component of the link $L$, in this case we used $L_n$. However, for $1\le i\le n$ the action of $U_iV_i$ on $\CFL(L)$ is homotopic to $U_nV_n$, so all such choices yield homotopy equivalent complexes for $\CFL(L\sqcup O)$.

\end{remark}

\subsection{Quasi-(de)stablization cobordisms} Given a  multi-based link $(L,\w,\z)$, let $(L,\w\cup \{w'\},\z\cup \{z'\})$ be the multi-based link obtained by stabilizing $L$ i.e. adding an extra pair of basepoints $\{w',z'\}$ in the same connected component of $L\setminus (\w\cup\z)$. Suppose the endpoints of the component containing $\{w',z'\}$ are $z_i$ and $w_i$ and for simplicity assume that they are on the $i$-th component of $L$, denoted by $L_i$. Then, by \cite{ZemkeBasept,OSLinks}   we have
\[\CFL(L,\w\cup \{w'\},\z\cup \{z'\})\simeq \CFL(L)\otimes \left( \begin{tikzcd}
\theta \arrow[r, bend left, "V'-V_i"] & \xi \arrow[l, bend left, "U'-U_i"]
\end{tikzcd}\right) \]
as curved complexes. Here, $U'$, $V'$, $U_i$ and $V_i$ denote the variables for $w'$, $z'$, $w_i$ and $z_i$, respectively. In particular, if we consider $\HFL$ with a coefficient ring in which $U'=U_i$ and $V'=V_i$ then 
\[\HFL(L,\w\cup \{w'\},\z\cup \{z'\})\cong \HFL(L)\otimes \F\langle \theta,\xi\rangle\]
where 
   $$M(\theta)=0, \quad M(\xi)=-1, \quad A(\theta)=1/2, \quad A(\xi)=-1/2. $$
   
Quasi-stabilization cobordisms from $(L,\w,\z)$ to  $(L,\w\cup \{w'\},\z\cup \{z'\})$ are the cobordisms obtained from $(S^3\times I, L\times I)$ by decorating the $i$-th annulus (i.e. from $L_i$ to $L_i$) as in $S^+$ or $T^+$ in Figure \ref{fig:quasi-stabilization}. Similarly, decorations $S^-$ and $T^-$ give the quasi-destabilization cobordisms from  $(L,\w\cup \{w'\},\z\cup \{z'\})$ to $(L,\w,\z)$. These cobordisms induce the quasi-(de)stabilization maps $S^+:=f_{S^+}, T^+:=f_{T^+}$ (resp. $S^-:=f_{S^-}$, $T^-:=f_{T^-}$) which are defined as follows:
$$S^+(x)=x\otimes \theta, \quad T^+(x)=x\otimes \xi$$
$$S^-(x\otimes \theta)=0, \quad S^-(x\otimes \xi)=x$$
$$T^-(x\otimes \theta)=x, \quad T^-(x\otimes \xi)=0.$$

\begin{figure}[ht!]
%\def\svgwidth{8cm}
%\centering
 \begin{tikzpicture}
\node[anchor=south west,inner sep=0] at (4,0) {\includegraphics{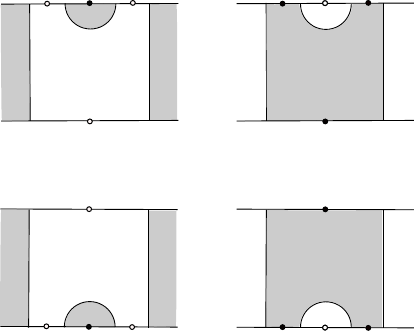}};
\node[label=right:$S^+$] at (5.1,3){};
\node[label=right:$S^-$] at (5.1,-0.5){};
\node[label=right:$T^+$] at (9.1,3){};
\node[label=right:$T^-$] at (9.1,-0.5){};
 \end{tikzpicture}
\caption{Top: Quasi-stabilization cobordisms, Bottom: Quasi-destabilization cobordisms}\label{fig:quasi-stabilization}
\end{figure} 
Consequently, they satisfy the following relations:
\begin{lemma}\label{lem:stab-relations}\cite[Lemma 14.13]{Zemke}
The quasi-(de)stabilization maps satisfy  the following relations:
\[S^-\circ S^+\simeq 0,\ \ T^-\circ T^+\simeq 0,\ \ T^-\circ S^+\simeq \mathrm{id},\ \ S^-\circ T^+\simeq\mathrm{id}\]
\end{lemma}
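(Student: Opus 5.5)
Since the quasi-(de)stabilization maps have been written down explicitly in the model
\[
\CFL(L,\w\cup\{w'\},\z\cup\{z'\})\simeq \CFL(L)\otimes\langle\theta,\xi\rangle,
\]
I would verify the four relations by composing the explicit formulas. The statement is a chain-homotopy statement; on the level of the model each relation will in fact hold on the nose.

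First I would recall the model: $S^+(x)=x\otimes\theta$, $T^+(x)=x\otimes\xi$, $S^-$ is projection onto the $\xi$-coefficient and $T^-$ is projection onto the $\theta$-coefficient. Composing on an element $x\in\CFL(L)$ then gives
\begin{align*}
S^-S^+(x)&=S^-(x\otimes\theta)=0, & T^-T^+(x)&=T^-(x\otimes\xi)=0,\\
T^-S^+(x)&=T^-(x\otimes\theta)=x, & S^-T^+(x)&=S^-(x\otimes\xi)=x.
\end{align*}
These identities extend $R$-linearly, since all four maps are $R$-linear.

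The one point requiring care is that the tensor factor is the curved complex with arrows $V'-V_i$ and $U'-U_i$. The formulas above are only meaningful as chain maps after checking compatibility with this differential, or after passing to the coefficient ring where $U'=U_i$ and $V'=V_i$, in which case the factor becomes a complex with zero differential. I would note that Zemke's theorem \cite{Zemke} guarantees that the maps induced by the decorated cobordisms $S^\pm, T^\pm$ of Figure~\ref{fig:quasi-stabilization} agree up to chain homotopy with these formulas. Composition of cobordism maps is well defined up to homotopy. Hence the on-the-nose identities in the model yield the stated relations $\simeq$ for the actual cobordism maps.

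As a topological sanity check, the composite cobordisms agree with expectations. In $T^-\circ S^+$ the dividing arcs cancel to give the identity-decorated annulus. In $S^-\circ S^+$ the composite contains a closed dividing-curve component bounding a disk, which forces the map to vanish, consistent with \cite[Lemma 14.13]{Zemke}. The main obstacle is purely bookkeeping: making sure that the identification of $\theta,\xi$ with the top and bottom generators matches the decorations in Figure~\ref{fig:quasi-stabilization}. Once that identification is fixed, the result is immediate, and one may also simply cite \cite[Lemma 14.13]{Zemke} directly.
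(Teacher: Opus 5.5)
Your proposal is correct and follows the paper's route. The paper takes the explicit formulas $S^+(x)=x\otimes\theta$, $T^+(x)=x\otimes\xi$, together with the corresponding projections $S^-$ and $T^-$ in the model $\CFL(L)\otimes\langle\theta,\xi\rangle$, as the quasi-(de)stabilization maps, and states that the four relations follow from them, citing \cite[Lemma 14.13]{Zemke}; this is exactly your direct composition, and your dividing-curve remark is a heuristic consistency check that the argument does not need.
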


\begin{lemma}\cite[Lemma 7.1]{Zemke2} \label{lem:quasi-shitfs}
    The quasi-stabilization maps are graded and satisfy 
    $$A_i(S^+(x))-A_i(x)=\dfrac{1}{2}, \quad A_i(T^+(x))-A_i(x)=-\dfrac{1}{2}$$
    $$M(S^+(x))-M(x)=0, \quad M(T^+(x))-M(x)=-1.$$
  Moreover, these cobordisms do not change $A_j$ for $j\neq i$.    
  %  Here, $A$ is the component of the Alexander multi-grading corresponding to the link component where we add/remove $w'$ and $z'$.  
    
%    \BL{task: write the composition maps for $S^\pm, T^\pm$.}
\end{lemma}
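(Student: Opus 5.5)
The plan is to read off the grading shifts directly from the local model for the stabilized complex and then use the grading formula of \cite[Theorem 1.4]{Zemke2} as a consistency check. Under the identification
\[\CFL(L,\w\cup \{w'\},\z\cup \{z'\})\simeq \CFL(L)\otimes \left( \begin{tikzcd}
\theta \arrow[r, bend left, "V'-V_i"] & \xi \arrow[l, bend left, "U'-U_i"]
\end{tikzcd}\right),\]
we have $S^+(x)=x\otimes\theta$ and $T^+(x)=x\otimes \xi$. The gradings are additive under the tensor product. So the statement reduces to two facts: that $\theta$ and $\xi$ have Maslov gradings $0,-1$ and $i$-th Alexander gradings $\frac12,-\frac12$, and that both have Alexander grading $0$ in every other component $j\neq i$.

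First I would justify the gradings of $\theta,\xi$. The arrows $V'-V_i$ and $U'-U_i$ must be homogeneous.
\begin{enumerate}
\item[(1)] $V'$ and $V_i$ have Maslov grading $0$ and Alexander grading $e_i$. Hence the arrow from $\theta$ to $\xi$ forces $M(\xi)=M(\theta)-1$ and $A_i(\xi)=A_i(\theta)+1$ in the curved convention, up to the sign convention for the differential.
\item[(2)] Similarly, $U'-U_i$ has Maslov grading $-2$ and $A_i$-grading $-1$. This fixes the relative gradings.
\item[(3)] The absolute normalization comes from the symmetry of the Alexander grading, which puts $\theta,\xi$ at $\pm\frac12$, together with the convention that the top generator has $M=0$.
\end{enumerate}
Since both arrows only involve variables of the $i$-th component, the $A_j$-gradings of $\theta$ and $\xi$ for $j\neq i$ vanish.

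Next I would cross-check the Maslov shift via the formula
$$M(F(x))-M(x)=\dfrac{c_1^{2}(\mathfrak{s})-2\chi(W)-3\sigma(W)}{4}+\chi(\Sigma_{\w})-|\w_2|.$$
Here $W=S^3\times I$, so the first term vanishes, and $|\w_2|=|\w_1|+1$. For $S^+$, the dividing arcs split off a $\w$-region whose Euler characteristic increases $\chi(\Sigma_{\w})$ by exactly one relative to the product decoration. This gives a shift of $0$. For $T^+$, the new region is a $\z$-region, so $\chi(\Sigma_{\w})$ is unchanged and the shift is $-1$. The Alexander shift similarly follows from Zemke's formula involving $\chi(\Sigma_{\w})-\chi(\Sigma_{\z})$ divided by two, applied to the $i$-th component only.

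The main obstacle I expect is bookkeeping the regions $\Sigma_{\w}$ in Figure~\ref{fig:quasi-stabilization} correctly, and matching the sign conventions between the tensor model and Zemke's grading formula. I would rely primarily on the algebraic model and treat the topological formula as confirmation.
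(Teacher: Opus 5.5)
Your route matches the paper's, which cites Zemke. In the paper the lemma is read off from $S^+(x)=x\otimes\theta$ and $T^+(x)=x\otimes\xi$, using the gradings $M(\theta)=0$, $M(\xi)=-1$, $A(\theta)=\frac12$, $A(\xi)=-\frac12$ recorded with the stabilization model just before the lemma.

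However, your re-derivation of those gradings in step (1) has a sign error, and the sign is exactly what the lemma asserts. The differential preserves the Alexander grading, and $V',V_i$ have Alexander grading $+e_i$. So the arrow $\partial\theta=(V'-V_i)\xi$ forces $A_i(\xi)+1=A_i(\theta)$, i.e.\ $A_i(\xi)=A_i(\theta)-1$, not $A_i(\theta)+1$. This is also what your step (2) gives from $\partial\xi=(U'-U_i)\theta$, so (1) and (2) as written contradict each other. Over $\F$ there is no ``sign convention for the differential'' to absorb the discrepancy; only the direction of the arrow matters.

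Taken literally, (1) together with the symmetric normalization in (3) yields $A_i(\theta)=-\frac12$ and $A_i(\xi)=\frac12$, the opposite of the statement. Moreover, (3) as phrased (``puts $\theta,\xi$ at $\pm\frac12$'') never decides which generator gets which sign. With the corrected relative grading, symmetry gives $A_i(\theta)=\frac12$ and $A_i(\xi)=-\frac12$, as required. The Maslov part of your argument is fine: $M(\xi)=M(\theta)-1$, with $\theta$ the top generator normalized to $0$.

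For $j\neq i$, homogeneity of the arrows only gives $A_j(\theta)=A_j(\xi)$. Showing the common value is $0$ needs the same normalization, or the product-annulus computation.

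Your topological cross-check is correct, and it is what actually pins the sign down, so carry out the Alexander half rather than saying it ``similarly follows''. On the $i$-th component the shift is $\frac12\bigl(\widetilde\chi(\Sigma_{\w})-\widetilde\chi(\Sigma_{\z})\bigr)=\frac12\bigl(\chi(\Sigma_{\w})-\chi(\Sigma_{\z})\bigr)$; the basepoint corrections cancel because $|\w|$ and $|\z|$ grow together. This equals $\frac12(2-1)=\frac12$ for $S^+$ (extra $\w$-disk) and $\frac12(1-2)=-\frac12$ for $T^+$ (extra $\z$-disk). The product annuli on the other components contribute $0$. Your Maslov bookkeeping ($\chi(\Sigma_{\w})-|\w_2|$ unchanged for $S^+$, down by one for $T^+$) is right.

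In Zemke's paper, Lemma 7.1 is one of the elementary computations used to prove the general grading formula. So this check is a consistency check between black boxes, not independent evidence.
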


\subsection{Pair of pants cobordisms.}\label{sec:PoPCob}
%In this subsection, we discuss cobordism maps induced by pair of pants but with different decorations. 
Let $K$ be an oriented knot and $L=K_m(k_+,k_-)$. Recall that $K_m(k_+,k_-)$ denotes the $n$-cable of $K$ ($n=k_++k_-$) with framing $m$, such that $k_+$ components are oriented positively and $k_-$ components are oriented negatively. In this subsection, we introduce multiple decorated pair of pants cobordisms from $L$ to $L'=K_m(k_++1,k_-+1)$ and discuss their properties.

There is a natural pair of pants cobordism from $L\sqcup O$ to $L'$ defined as follows. Note that $L'$ is obtained from $L$ by adding two components with opposite orientation, denote the newly added components by $L'_{n+1}$ and $L'_{n+2}$. These components co-bound an annulus $\mathcal{A}$. The pair of pants cobordism from $L\sqcup O$ to $L'$ is the disjoint union of product cobordisms from $L_j$ to $L'_j$ for $1\le j\le n$, and the pair of pants cobordism from $O$ to $L'_{n+1}\sqcup L'_{n+2}$, obtained by removing a disk from $\mathcal{A}$.

The product components will be decorated as product cobordisms, while we equip the pair of pants component with one of the decorations in Figure \ref{fig:pair-of-pants}. Note that in $P_z$ and $P_w$ the unknot has two pairs of $w$ and $z$ basepoints and these cobordisms correspond to band attachments to the unknot over $z$ and $w$ basepoints, respectively.

\begin{figure}[ht!]
    \centering
     \begin{tikzpicture}
\node[anchor=south west,inner sep=0] at (0,0)
    {\includegraphics[width=1\linewidth]{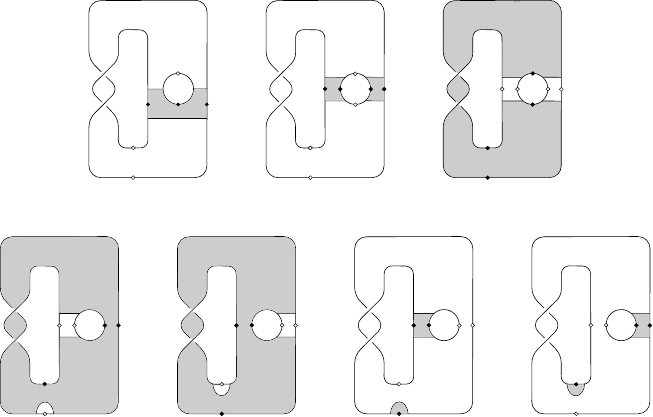}};
\node[label=right:$P$] at (3.1,5.5){};
\node[label=right:$P_z$] at (7.6,5.5){};
\node[label=right:$P_w$] at (12.1,5.5){};
\node[label=right:$P_1$] at (1.1,-0.4){};
\node[label=right:$P_1'$] at (5.5,-0.4){};
\node[label=right:$P_2$] at (9.7,-0.4){};
\node[label=right:$P_2'$] at (14.1,-0.4){};

     \end{tikzpicture}

    \caption{Different decorations on the pair of pants cobordism}
    \label{fig:pair-of-pants}
    
\end{figure}

%We denote the cobordism maps induced by $C=P,\ P_z,\ P_w,P_1,P_1',P_2, P_2'$ by $F_{*}$. 
%{\AA{there is a choice involved with $\Phi_i$ maps. Suggested change of notation: how about we use $P_{w1}$, $P_{w2}$ and $P_{z1}$, $P_{z2}$ instead?} 

\begin{lemma}\label{lem:def-Phi-i}
For $i=1,2$, we have $f_{P_i}\circ f_{B}\simeq f_{P_i'}\circ f_B$. 
\end{lemma}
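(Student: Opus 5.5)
We reduce the statement to a local computation on the unknot component and then use the relations for quasi-(de)stabilization maps. Since the cobordism maps are functorial under composition of decorated cobordisms, we decompose $f_{P_i}\circ f_B$ and $f_{P_i'}\circ f_B$ as the composite of the birth map $f_B$, which creates an unknot $O$ with one pair of basepoints, followed by the pair of pants on $O$. We then rewrite the pair of pants with decoration $P_i$ (resp. $P_i'$) as a composite of a quasi-stabilization of $O$ (adding a second pair $w',z'$) followed by a band attachment $P_w$ or $P_z$. The point of this factorization is that $P_i$ and $P_i'$ should differ only in which quasi-stabilization is used ($S^+$ versus $T^+$), or in the position of the extra dividing arc relative to the new basepoints. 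Isotoping the dividing curves on the pair of pants, we expect $P_i$ and $P_i'$ to become identical after the birth disk is capped on: the disk $B\cup$(pair of pants) is an annulus with one boundary on the empty side, and the two decorations differ by sliding a dividing arc across the capped disk region.

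Concretely, the steps are as follows. First, draw the surface $\Sigma=D\cup_O P$, an annulus from $\emptyset$ to $L'_{n+1}\sqcup L'_{n+2}$ (together with the product pieces on $L$), with the dividing set induced from $B$ and from $P_i$ (resp. $P_i'$). Second, check that the two dividing sets on this annulus are isotopic rel boundary, or that they differ by moving a dividing arc past a disk containing no basepoints. By Zemke's invariance of decorated cobordism maps under isotopy of the decorated surface rel boundary, this would give $f_{P_i}\circ f_B\simeq f_{P_i'}\circ f_B$ directly. Third, if the dividing sets are genuinely not isotopic, fall back on an algebraic verification. Use the model $\CFL(L\sqcup O)\simeq\CFL(L)\otimes(B\to T)$, where $f_B(x)=x\otimes T$. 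Write the band maps in terms of the quasi-stabilization maps of Lemma \ref{lem:stab-relations}, and show that both composites send $x\otimes T$ to the same class. To do this, compare how $S^+$ and $T^+$ act on $T$ and use relations such as $T^-\circ S^+\simeq\mathrm{id}$ to identify the outputs up to homotopy. Finally, cross-check with gradings: the shift formula from \cite[Theorem 1.4]{Zemke2} and Lemma \ref{lem:quasi-shitfs} should give the same Maslov and Alexander shift for both composites. This is a necessary consistency check.

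The main obstacle is the second step. From the figures alone it is unclear whether the decorations $P_i$ and $P_i'$ differ by an isotopy that becomes available only after capping with the birth disk. In particular, the dividing arcs separating the $w$- and $z$-regions must pass through the disk region; this is exactly where the birth disk supplies the freedom. I would first try to exhibit this isotopy explicitly. The capped region is a disk containing a single dividing arc, so arcs meeting it can be swung around it. If this fails, I would do the algebraic computation, where the care needed is in tracking the curved-complex terms $U'-U_i$ and $V'-V_i$.
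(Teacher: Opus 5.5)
Your main route is the same as the paper's proof. The paper simply observes, via Figure~\ref{fig:varphi-1}, that once the incoming unknot is capped by the birth disk, $P_i\circ B$ and $P_i'\circ B$ are the same decorated annulus, so $f_{P_i}\circ f_B\simeq f_{P_i'}\circ f_B$ by isotopy invariance of the decorated cobordism maps. Your step two does go through, and the algebraic fallback is unnecessary: each boundary circle of the capped annulus meets the dividing set in exactly two points, and the grading shifts make $\Sigma_w$ (for $i=1$, resp.\ $\Sigma_z$ for $i=2$) the annulus region, so both dividing arcs are forced to be boundary-parallel arcs cutting off the $z$- (resp.\ $w$-) basepoints, and the two decorations are isotopic rel boundary. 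Also, $P_i$ and $P_i'$ do not differ by $S^+$ versus $T^+$, since that would change the Maslov shift; they differ only in where the same quasi-stabilization is composed with $P_w$ (resp.\ $P_z$), as in Lemma~\ref{lem:map-relations}.
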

\begin{proof}
As depicted in Figure \ref{fig:varphi-1}, cobordisms $P_1\circ B$ and $P_1'\circ B$ are the same, and so the cobordism maps are chain homotopic. The argument for $P_2$ and $P_2'$ is similar. 
\end{proof}

\begin{figure}[ht!]
    \centering
     \begin{tikzpicture}
\node[anchor=south west,inner sep=0] at (0,0)
    {\includegraphics[width=0.5\linewidth]{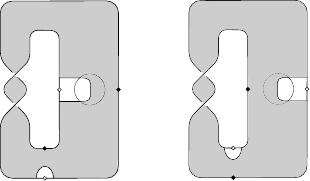}};
\node[label=right:{$=$}] at (3.6,2.5){};
\end{tikzpicture}
\caption{Left: Cobordism $P_1\circ B$, Right: Cobordism $P_1'\circ B$}
    \label{fig:varphi-1}
    \end{figure}

\begin{notation}
We denote the cobordism map $f_{P_i}\circ f_{B}$ by $\varphi_i$, and the map it induces on $\HFL$ or $\HFLhat$ by $\Phi_i$.
\end{notation}

\begin{lemma}\label{lem:map-relations}
The following relations hold between the aforementioned pair of pants cobordism maps:
\begin{enumerate}
\item $f_P\simeq f_{P_z}\circ T^+\simeq f_{P_w}\circ S^+$.
\item $f_{P_1}\simeq f_{P_w}\circ_1 T^+$, $f_{P_1'}\simeq f_{P_w}\circ_2 T^+$, $f_{P_2}\simeq f_{P_z}\circ_1 S^+$ and $f_{P_2'}\simeq f_{P_z}\circ_2 S^+$.

%\item $\Phi_1\simeq F_{P_w}\circ T^+$, $\Phi_2\simeq F_{P_z}\circ S^+$.
%\item $F_{P_1}\simeq(\Psi_z)_1\circ F_P$, $F_{P_1'}\simeq(\Psi_z)_2\circ F_P$, $F_{P_2}\simeq(\Phi_w)_1\circ F_P$ and $F_{P_2'}\simeq(\Phi_w)_2\circ F_P$.

%\item $\Phi_1\simeq(\Psi_z)_1\circ F_P$, $\Phi_1'\simeq(\Psi_z)_2\circ F_P$, $\Phi_2\simeq(\Phi_w)_1\circ F_P$ and $\Phi_2'\simeq(\Phi_w)_2\circ F_P$.
\end{enumerate}
In (2) subscripts $1$ and $2$ refer to two ways of composing $f_{P_w}$ (resp. $f_{P_z}$) with $T^+$ (resp. $S^+$).
%Similarly, in (3) The subscripts $1$ and $2$ refer to adding $\Psi_z$ or $\Phi_w$ on the components $K(n+1)_{n+1}$ and $K(n+1)_{n+2}$, respectively. 
%In number (2) similar relations hold of $\Phi_1'$ and $\Phi_2'$ as well.
\end{lemma}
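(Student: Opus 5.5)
The plan is to prove each relation by showing that the two decorated cobordisms are isotopic as decorated surfaces rel boundary. Zemke's functoriality \cite{Zemke} then gives homotopic cobordism maps. We use the convention that a quasi-stabilization cobordism (decorated as $S^+$ or $T^+$ in Figure \ref{fig:quasi-stabilization}) is the identity cylinder on $O$ with one extra dividing arc. This arc cuts off a bigon containing the new basepoint $z'$ (for $S^+$) or $w'$ (for $T^+$).

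For (1), we look at $P_z\circ T^+$. The unknot $O$ first acquires an extra pair of basepoints $(w',z')$, with the $z'$-region or $w'$-region split off as $T^+$ dictates. Then the band of $P_z$ is attached over a $z$ basepoint, which separates the two $z$-regions onto the two new components $L'_{n+1}$ and $L'_{n+2}$. Gluing the two pieces gives a pair of pants whose dividing set consists of two arcs, one running into each outgoing boundary circle. We will check from Figure \ref{fig:pair-of-pants} that this is exactly the decoration $P$: each outgoing component carries one $w$ and one $z$, and the regions glue up correctly. The composite $P_w\circ S^+$ is handled symmetrically, with the roles of $w$ and $z$ exchanged. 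The product components over $L_1,\dots,L_n$ are untouched, so the decorated surfaces agree up to isotopy fixing the boundary, and $f_P\simeq f_{P_z}\circ T^+\simeq f_{P_w}\circ S^+$.

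For (2), the argument is the same with the roles reversed. Composing $P_w$ with $T^+$ adds a dividing arc near one of the two $w$ basepoints of the doubled unknot. The subscript $1$ or $2$ records which of the two arcs of $O$ (equivalently, which outgoing leg $L'_{n+1}$ or $L'_{n+2}$) receives the extra arc. Gluing and comparing with Figure \ref{fig:pair-of-pants}, this yields the decorations $P_1$ and $P_1'$ respectively. Similarly, $P_z\circ_1 S^+$ and $P_z\circ_2 S^+$ yield $P_2$ and $P_2'$.

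The main obstacle is the careful bookkeeping of the dividing sets. We need to verify that the glued surface has the same number and combinatorics of dividing arcs, and the same assignment of $w$-type and $z$-type regions, as the target pictures. In particular, the new bigon created by the quasi-stabilization must merge with the band region in the expected way. Two decorations with the same such data on a pair of pants are isotopic rel boundary, since the mapping class group of the pair of pants rel boundary acts only by Dehn twists along boundary-parallel curves, and these can be absorbed. As a consistency check, we will compare gradings using Lemma \ref{lem:quasi-shitfs} and the grading formula of \cite[Theorem 1.4]{Zemke2} through $\chi(\Sigma_{\w})$; both sides of each relation should shift $M$ and $A$ identically.
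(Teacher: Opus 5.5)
Your overall strategy (realize each composite as the same decorated surface and apply functoriality) is the paper's; its proof just exhibits the identifications in Figure \ref{fig:maps-relations}. However, two steps in your bookkeeping would fail as written.

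\textbf{The quasi-stabilization convention is reversed.} By the grading formula recalled in Section \ref{sec: cobordisms}, the Maslov shift is $\chi(\Sigma_{\w})-|\w_2|$. On the stabilized annulus this shift is $0$ exactly when its $w$-region has two components. So $S^+$ (shift $0$ by Lemma \ref{lem:quasi-shitfs}) must cut off a bigon containing $w'$, and $T^+$ (shift $-1$) a bigon containing $z'$. With your convention the gluings produce the wrong surfaces:
\begin{itemize}
\item A $w'$-bigon followed by the $z$-band of $P_z$ gives two $w$-disks and a $z$-annulus. That is the decoration $P_2$ or $P_2'$ (Maslov shift $0$, Alexander shift $+1$), not $P$.
\item $P_w$ after a $w'$-bigon gives $P$ rather than $P_1$.
\end{itemize}
The grading check you mention at the end would catch this. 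Also, $P$ has three dividing arcs, not two, since there are six endpoints. One runs from $O$ to each of $L'_{n+1}$ and $L'_{n+2}$, and one joins $L'_{n+1}$ to $L'_{n+2}$. Then $\Sigma_{\w}$ and $\Sigma_{\z}$ are both disks, as the shifts $(-1,0)$ of $f_P$ require.

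\textbf{Matching combinatorial data does not give equivalent decorations.} Your claim that boundary-parallel Dehn twists ``can be absorbed'' is false. Twisting the dividing arcs once around a boundary circle amounts to composing with the basepoint moving map on that link component, $F_m=1+\Phi_w\Psi_z$. This map is not the identity in general:
\begin{itemize}
\item Remark \ref{rem: basepoint} gives $F_{m_1}(e)=e+Z_1$ for $T(3,3)$.
\item Relation \eqref{eq: lasagna 3} is imposed precisely because these maps are nontrivial.
\end{itemize}
Decorated cobordisms are only identified up to isotopy rel boundary. So you must check that the glued dividing set has no extra winding around $L'_{n+1}$ or $L'_{n+2}$. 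The paper's picture does exactly this: it draws $P_z\circ T^+$, and $P_w$ composed with $T^+$ in the two possible ways. It observes that these are literally the decorated surfaces $P$, $P_1$ and $P_1'$. Functoriality then gives the homotopies.
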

%\BL{Finish reading and tailor in the end.}

\begin{proof}
See Figure \ref{fig:maps-relations} for a pictorial proof that $P$ is the composition $P_z\circ T^+$, while  $P_1$ and $P_1'$ are obtained from composing $P_w$ with $T^+$ in two different ways. Then, half of the relations follow from the functoriality of link Floer homology. The proof of the remaining relations is similar.
\begin{figure}[ht!]
    \centering
    \includegraphics[width=0.7\linewidth]{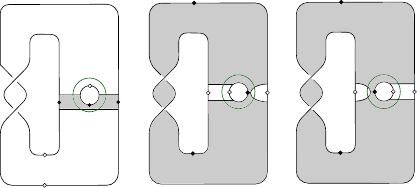}
    \caption{Inside the green circle in all three pictures is the cobordism $T^+$, while the outside cobordism is ${P_z}$ in the left and $P_{w}$ in the middle and right.}
    \label{fig:maps-relations}
\end{figure}
\end{proof}

\begin{lemma}\label{lem:PoP-grading} These pair of pants cobordism maps shift the Maslov and Alexander multi-gradings as in the following table. 

\begin{table}[ht!]
\centering
\begin{tabular}{|c|c|c|c|}
\hline
 & Maslov & $A_{i}$&$A_{n+1}$ \\ \hline
 $f_P$  & -1 &0&  0 \\ \hline
$f_{P_z}$ & 0&0 & 1/2 \\ \hline
  $f_{P_w}$ & -1&0 & -1/2 \\ \hline
$f_{P_1}$ & -2 &0& -1 \\ \hline
$f_{P_2}$ & 0 &0& 1 \\ \hline
\end{tabular}
\caption{Grading shifts}
\end{table}

Here, $1\le i\le n$, and the $A_{n+1}$-column records the shift $(A_{n+1}+A_{n+2})(f_{C}(x))-A_{n+1}(x)$ for $C$ equal to any of the given decorated pairs of pants. Furthermore, Maslov and Alexander multi-grading shifts for $f_{P_1'}$ and $f_{P_2'}$ are the same as $f_{P_1}$ and $f_{P_2}$, respectively.
\end{lemma}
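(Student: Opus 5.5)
The plan is to derive the table from Zemke's grading formulas: the Maslov shift formula from \cite[Theorem 1.4]{Zemke2} recalled above, and the analogous Alexander shift formula. Two facts simplify this. All cobordisms here live in $W=S^3\times[0,1]$, so $c_1^2=0$, $\chi(W)=0$, $\sigma(W)=0$. Each of $f_P, f_{P_z},\dots$ is a disjoint union of identity cobordisms on $L_1,\dots,L_n$ with one decorated pair of pants $O\to L'_{n+1}\sqcup L'_{n+2}$. The identity pieces shift neither $M$ nor $A_i$ for $i\le n$. One point needs a check here: the linking numbers $\ell_i$ between $L_i$ and the new components cancel, since $L'_{n+1},L'_{n+2}$ are oppositely oriented parallel cables. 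This gives the $A_i$ column, zero for all rows. It remains to compute the contribution of the pair of pants component.

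I would do the base case $f_P$ directly. The Maslov shift is $\chi(\Sigma_{\w})-|\w_2|$, computed with basepoint counts on both ends and Euler characteristics of the $\w$-regions read off Figure \ref{fig:pair-of-pants}. For the Alexander shift I would use Zemke's formula $A(F(x))-A(x)=\frac{\langle c_1(\mathfrak s),\Sigma\rangle-[\Sigma]\cdot[\Sigma]}{2}+\frac{\chi(\Sigma_{\w})-\chi(\Sigma_{\z})}{2}$. The homological terms vanish in $S^3\times I$, and the summed grading $A_{n+1}+A_{n+2}$ is the one attached to the pants component. For $P$ the $\w$- and $\z$-regions are symmetric, giving Alexander shift $0$, and one expects Maslov shift $-1$.

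Rather than recomputing each Euler characteristic, I would then use Lemma \ref{lem:map-relations} together with Lemma \ref{lem:quasi-shitfs}. From $f_P\simeq f_{P_z}\circ T^+$ and $T^+$ shifting $(M,A)$ by $(-1,-\tfrac12)$, we get that $f_{P_z}$ shifts by $(0,\tfrac12)$. From $f_P\simeq f_{P_w}\circ S^+$ and $S^+$ shifting by $(0,\tfrac12)$, we get that $f_{P_w}$ shifts by $(-1,-\tfrac12)$. Then $f_{P_1}\simeq f_{P_w}\circ_1 T^+$ gives $(-2,-1)$, and $f_{P_2}\simeq f_{P_z}\circ_1 S^+$ gives $(0,1)$. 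The primed versions $f_{P_1'}, f_{P_2'}$ are the same composites with the stabilization on the other boundary component, so they have the same shifts.

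This deduction requires the maps to be homogeneous and nonzero, so that a grading shift is well defined and differences can be taken. Homogeneity follows from Zemke's theorem, which says cobordism maps are graded, together with the fact that $S^+,T^+$ are injective on homology; the latter follows from Lemma \ref{lem:stab-relations}. I expect the main obstacle to be the direct computation for $P$. The $\chi(\Sigma_{\w})$ and $|\w_2|$ bookkeeping depends on the exact dividing arcs in the figure and on which boundary basepoints count toward $\w_2$. A consistency check is that $P_w$ and $P_z$, computed directly, should differ by $(1,1)$, matching the band over $w$ versus over $z$.
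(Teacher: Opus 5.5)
Your argument is correct, but it runs the paper's deduction in the opposite direction.

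\textbf{The paper's route.} It takes $f_{P_z}$ and $f_{P_w}$ as the base case, quoting Zemke's grading computation for band maps \cite[Lemma 7.2]{Zemke2}. It then only \emph{adds} shifts: $f_P\simeq f_{P_z}\circ T^+$ gives the row for $f_P$, and $f_{P_1}\simeq f_{P_w}\circ_1 T^+$, $f_{P_2}\simeq f_{P_z}\circ_1 S^+$ give the remaining rows.

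\textbf{Your route.} You make $f_P$ the base case, computed from the general absolute-grading formula. You then \emph{subtract} the shifts of $T^+$ and $S^+$ to recover $f_{P_z}$ and $f_{P_w}$. Your base computation is in fact complete, not just an expectation. Since $\chi(\Sigma)=-1$ and there are three dividing arcs, $\chi(\Sigma_{\w})+\chi(\Sigma_{\z})=2$. The symmetric decoration of $P$ then gives $\chi(\Sigma_{\w})=\chi(\Sigma_{\z})=1$, hence Maslov shift $1-2=-1$ and Alexander shift $0$. The cost relative to the paper is that you must read the decoration of $P$ off the figure, which the citation avoids.

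\textbf{The imprecise step.} Your justification of the subtraction does not work as stated. Injectivity of $S^+,T^+$ on homology is not the relevant fact: it says nothing about whether $f_{P_z}\circ T^+$ vanishes. Comparing degrees of homotopic maps would instead require $f_P\not\simeq 0$, which you do not establish. The clean fix is to note that Zemke's shifts $\chi(\Sigma_{\w})-|\w_2|$ and $\tfrac12\bigl(\chi(\Sigma_{\w})-\chi(\Sigma_{\z})\bigr)$ are additive under gluing of decorated surfaces. Gluing along the middle link lowers $\chi$ of the $\w$-region by $|\w_{\mathrm{mid}}|$ and that of the $\z$-region by the same number. So the subtraction can be done at the level of the formula, with no nonvanishing input.

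Your remark that the half-integral offsets $\ell_i$ are unchanged, because the two new strands are oppositely oriented, is a useful check on the $A_i$ column that the paper leaves implicit.
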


%It is straightforward to check that we $P$ can be presented as the composition of a quasi-stabilization cobordism with $P_z$ or $P_w$ and so 
%\[F_P=F_{P_w}\circ S^+=F_{P_z}\circ S^+.\]

%\begin{lemma} \label{lem:PoP-grading}
%These maps shift the Maslov and Alexander grading (for the component corresponding to the pair of pants) as in the following table: {\AA{Can we briefly check the Alexander multi-grading?}}
% \begin{table}[h!]
% \centering
% \begin{tabular}{|c|c|c|}
% \hline
%  & Maslov & Alexander \\ \hline
%  $F_{P_z}$ & 0 & 1/2 \\ \hline
%   $F_{P_w}$ & -1/2 & -1 \\ \hline
% $F_P$  & -1 &  0 \\ \hline
% $\Phi_1$ & -2 & -1 \\ \hline
% $\Phi_2$ & 0 & 1 \\ \hline
% \end{tabular}
% \caption{Grading shifts}
% \end{table}
% \end{lemma}
\begin{proof}
The grading shifts for $f_{P_z}$ and $f_{P_w}$ are by \cite[Lemma 7.2]{Zemke2}. Consequently, the grading shifts for $f_P$, $f_{P_1}$ and $f_{P_2}$ can be computed from Lemmas \ref{lem:map-relations} and \ref{lem:quasi-shitfs}.
\end{proof}

Finally, let $-P$ denote the reverse of the cobordism $P$ (i.e. reverse the orientation on $S^3\times I$ and $\Sigma$) from $L'$ to $L\sqcup O$.

\begin{lemma}\label{lem:relations}
The cobordism map $f_{-P}$ satisfies the following relations:
%\[\Phi\circ\Phi_1\simeq U_{n+1}\ \ \ \Phi\circ \Phi_2\simeq V_{n+1},\ \ \Phi\circ F_P\simeq 0 \]
\[f_{-P}\circ f_{P_1}\simeq U_{n+1}\mathrm{id}\ \ \ f_{-P}\circ f_{P_2}\simeq V_{n+1}\mathrm{id},\ \ f_{-P}\circ f_P\simeq 0\]
%\AA{AA:I want to discuss this. The second line is the new version that is definitely true}
\end{lemma}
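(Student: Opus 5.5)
The plan is to use functoriality of Zemke's link cobordism maps and identify each composite surface as a cobordism from $L\sqcup O$ to itself that we can recognize. Gluing $-P$ to $P$ (or to $P_1$, $P_2$) along $L'$ produces, on the component containing the unknot $O$, a surface obtained by gluing two pairs of pants along the two new components $L'_{n+1}\sqcup L'_{n+2}$. The result is a punctured torus bounding $O$ at both ends, i.e. an annulus from $O$ to $O$ with a tube attached. On the remaining components the composite is the product cobordism with product decorations. So each composite equals the map of a genus-one annulus $O\times I\,\#\,T^2$ carrying a decoration read off from the two pair-of-pants pictures in Figure \ref{fig:pair-of-pants}.

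The next step is to simplify these decorated tori using Zemke's relations. A tube attached to an annulus can be written as a pair of bands, i.e. a saddle that splits $O$ into two unknots followed by a saddle that merges them back. Equivalently, it is a handle attachment, whose map is computed from the dividing set. I would invoke the standard result from \cite{Zemke} that adding a tube whose dividing arcs cross it (so that the tube contains both $w$- and $z$-regions) induces multiplication by $U_{n+1}V_{n+1}$ plus a homotopy term, or $0$, depending on the decoration. The decisive invariant is how the dividing curves sit on the glued torus.

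For $-P\circ P$, the product decoration on $P$ glued to its mirror gives a torus on which the dividing curves are two parallel arcs running along the annulus direction. Neither the $\w$-subsurface nor the $\z$-subsurface acquires genus; instead a region becomes a connected sum with a torus whose dividing set cuts it into annuli. By the genus/grading formula, $M$ shifts by $-2$ and $A_{n+1}$ by $0$. The composite is then a tube attached along a meridian of the dividing annulus, which by Zemke's neck-cutting relation is homotopic to $U_{n+1}V_{n+1}$ times something of degree $-2+2$. I would check more directly that it factors through $S^-\circ S^+\simeq 0$ or $T^-\circ T^+\simeq 0$ from Lemma \ref{lem:stab-relations}. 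Concretely, I would write $f_P\simeq f_{P_z}\circ T^+$ by Lemma \ref{lem:map-relations}(1) and compute $f_{-P}\circ f_{P_z}$ as a decorated annulus. Its decoration should isotope to $T^-$, since the $z$-band of $P_z$ is undone by the reversed band. This gives $T^-\circ T^+\simeq 0$.

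Similarly, using Lemma \ref{lem:map-relations}(2), write $f_{P_1}\simeq f_{P_w}\circ_1 T^+$ and $f_{P_2}\simeq f_{P_z}\circ_1 S^+$. For $P_1$, the composite $-P\circ P_w$ is a decorated annulus whose dividing set, after isotopy, differs from $S^-$ by a $w$-region disk swept around. By Zemke's relation that moving a dividing arc across a $w$-basepoint disk multiplies by $U$, this yields $U_{n+1}\circ (T^-\circ T^+)$-type terms. The correct bookkeeping should give $U_{n+1}\,S^-\circ T^+\simeq U_{n+1}\mathrm{id}$. The $P_2$ case is symmetric, with $z$ and $w$ swapped, giving $V_{n+1}\,T^-\circ S^+\simeq V_{n+1}\mathrm{id}$. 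The grading table in Lemma \ref{lem:PoP-grading} serves as a consistency check: $f_{-P}$ must shift $(M,A)$ by $(-1,0)$ (reversal), so the three composites shift by $(-2,-1)$, $(0,1)$ and $(-2,0)$. These match $U_{n+1}$, $V_{n+1}$, and a map that must vanish unless it is divisible by a degree-$(-2,0)$ element such as $U_{n+1}V_{n+1}$ acting trivially on the relevant classes.

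The main obstacle is the honest identification of the glued decorated surfaces up to isotopy rel boundary, and in particular the claim $-P\circ P_z\simeq T^-$ and its analogues. The gluing creates a torus, not an annulus, so one genuinely needs Zemke's genus-reduction relation. This is the fact that a tube attached inside a single $\w$- or $\z$-region contributes $U$ or $V$, while a tube meeting the dividing set in the pattern produced here contributes a map factoring through $S^-S^+$ or $T^-T^+$. I would first draw the composite explicitly and try to exhibit a compressing disk for the tube that meets the dividing set in at most two points, reducing to the stabilization relations.
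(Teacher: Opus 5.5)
There is a genuine gap. You have the right ingredients: Lemma \ref{lem:map-relations}, Lemma \ref{lem:stab-relations}, and a ``tube gives $U$ or $V$'' principle. But the steps you actually write down are wrong, and the assembly that would make the proof work is missing.

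\textbf{The composites are not annuli.} The composite $-P\circ P_z$ (or $-P\circ P_w$) is not a decorated annulus. Gluing two pairs of pants along $L'_{n+1}\sqcup L'_{n+2}$ gives a genus-one surface, so it cannot be isotopic to $T^-$. Likewise ``the band is undone by the reversed band'' is false: a band followed by its reverse adds a tube. What is true is $f_{-P}\circ f_{P_z}\simeq V_{n+1}T^-$. For $f_{-P}\circ f_P\simeq 0$ the lost factor is harmless, since $T^-T^+\simeq 0$. For the first two relations, however, that factor is the entire content.

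\textbf{The $P_1$ case does not close.} Your bookkeeping produces ``$U_{n+1}(T^-\circ T^+)$-type terms'', which would give $0$. You then simply assert $U_{n+1}S^-T^+$. The relation ``moving a dividing arc across a $w$-disk multiplies by $U$'' is not something you can invoke as stated.

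\textbf{The gradings are off.} By Zemke's formula, $f_{-P}$ preserves the Maslov grading: the $\w$-region of the pants piece has $\chi=1$, as for $P$, but the outgoing end now carries only one $w$. Hence $f_{-P}\circ f_P$ has bidegree $(-1,0)$, not $(-2,0)$. Your three composite degrees are also inconsistent with the $(-1,0)$ you assign to $f_{-P}$. In particular, no grading argument can force $f_{-P}\circ f_P\simeq 0$: a map such as $x\otimes T\mapsto x\otimes B$ has exactly this bidegree.

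\textbf{The missing idea.} Decompose $-P$ itself by reversing Lemma \ref{lem:map-relations}(1). Since $S^-$ and $T^-$ are the reverses of $S^+$ and $T^+$, one gets
\[
f_{-P}\simeq S^-\circ f_{-P_w}\simeq T^-\circ f_{-P_z}.
\]
Choose the version whose band matches the one in $P_1$, $P_2$ or $P$. This isolates a band followed by its reverse, which is $U_{n+1}$ (resp.\ $V_{n+1}$) by \cite[Lemma 2.1]{AE}:
\begin{align*}
f_{-P}\circ f_{P_1}&\simeq S^-\circ(f_{-P_w}\circ f_{P_w})\circ T^+\simeq U_{n+1}S^-T^+\simeq U_{n+1}\mathrm{id},\\
f_{-P}\circ f_{P_2}&\simeq T^-\circ(f_{-P_z}\circ f_{P_z})\circ S^+\simeq V_{n+1}T^-S^+\simeq V_{n+1}\mathrm{id},\\
f_{-P}\circ f_{P}&\simeq S^-\circ(f_{-P_w}\circ f_{P_w})\circ S^+\simeq U_{n+1}S^-S^+\simeq 0.
\end{align*}
This is the paper's argument. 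It never requires identifying the decorated torus up to isotopy or finding compressing disks.
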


\begin{proof}
Note that the composition of $f_{P_w}$ and its reverse (resp. $f_{P_z}$ and its reverse) are chain homotopic to multiplication by $U_{n+1}$ (resp. $V_{n+1}$) \cite[Lemma 2.1]{AE} while $S^-T^+\simeq \mathrm{id}$, $T^-S^+\simeq \mathrm{id}$ and $S^-S^+\simeq 0$, by Lemma \ref{lem:stab-relations}. So the claim follows from these properties and Lemma \ref{lem:map-relations}.
\end{proof}

\subsection{X-shape cobordism}

Let $\gamma$ be a (framed) arc connecting $L_i$ and $L_j$ such that $\gamma\cap L=\{z_i,z_j\}$. Corresponding to $\gamma$ we define a cobordism $H_{\gamma}$ from $L$ to $L$ by first adding a band along $\gamma$ connecting $L_i$ to $L_j$ to get a component denoted by $L_{ij}$ with four basepoints. Next, compose it with $T^+T^-$ and then add a dual band to split $L_{ij}$ and get $L$, as in Figure \ref{fig:X-shape}. The cobordism $H_{\gamma}$ induces a map 
\[f_{H_\gamma}:\CFL\left(L;R/(U_i-U_j,V_i-V_j)\right)\to \CFL\left(L;R/(U_i-U_j,V_i-V_j)\right).\]
Abusing notation, we denote the induced maps on $\HFL(L;R/(U_i-U_j,V_i-V_j))$ and $\HFLhat(L)$ by $h_{\gamma}$.

\begin{figure}[ht!]
\includegraphics[width=0.6\linewidth]{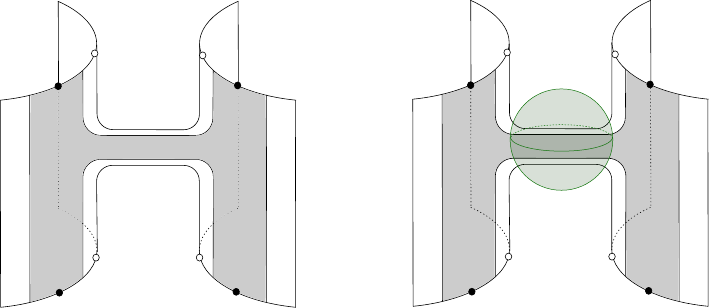}
\caption{Left: local picture of an X-shape cobordism, Right: splitting along the green sphere results in an annulus cobordism with horizontal dividing arcs from $\emptyset$ to $O(2)$, inside the green ball, composed with a cobordism from $(S^3,L)\sqcup (S^3,O(2))$ to $(S^3,L)$, outside the green ball. }\label{fig:X-shape}
\end{figure} 

%\begin{lemma}
%The cobordism map $h_{\gamma}$ is equal to the action of $\xi_i+\eta_j\in \HFLhat(O(n))$ on $\HFLhat(L)$.
%\end{lemma}
%\begin{proof}
%We can decompose $H_\gamma$ as $(W_i\sqcup W_j)\circ A$ where the cobordism $A$ from the empty set to $O(n)_i\cup O(n)_j$ is an annulus along $\gamma$ with two parallel dividing arcs, see Figure \ref{fig:cross-cob}. Then, $f_{A}(1)=\xi_i+\eta_j$ and so $h_{\gamma}$ is equal to the action of $\xi_i+\eta_j$ on $\HFLhat(L)$.
%\end{proof}
%Consequently, $h_{\gamma}$ is independent of the choice of $\gamma$ so we denote it by $h_{ij}$ or simply $h$ if the indices are clear from the context.

% \begin{lemma} 
% There is a chain homotopy $\widetilde{h}_{ij}$ between $U_iV_i$ and $U_jV_j$ in $\CFL(L)$ such that the induced maps by $\widetilde{h}_{ij}$ on $\HFL(L;R/(U_i-U_j,V_i-V_j))$ and $\HFLhat(L)$ are $h_{ij}$. (Note that setting $U_i=U_j$ and $V_i=V_j$ or $U_i=U_j=V_i=V_j=0$ will turn $\widetilde{h}_{ij}$ into a chain map.)
% \end{lemma}

\begin{example}\label{ex:simple-X}
Let $L'=L\sqcup O$ be a link obtained by putting an unknot with one pair of basepoints next to the $i$-th component of $L$ and consider a local  unknotted arc $\gamma$ connecting the $z$ basepoints on $L_i$ and $O$. Then, there is an isomorphism
\begin{equation}\label{eq:add unknot}
\HFL(L';R_{n+1}/(U_i-U_{n+1},V_i-V_{n+1}))\cong \HFL(L)\otimes \F\langle T,B \rangle.
\end{equation}
and under this isomorphism $h_{\gamma}(x\otimes T)=x\otimes B$ and $h_{\gamma}(x\otimes B)=0$. The same statement holds for $h_{\gamma}$ on $\HFLhat(L')$.

To show this, we use the decomposition of this cobordism as $P'\circ (-P')$ where $P'$ is a pair of pants cobordism from $L$ to $L\sqcup O$ with a similar decoration as $P$ in Figure \ref{fig:pair-of-pants}. Let 
\[f_1:\HFL(L)\otimes \F\langle T,B \rangle \to \HFL(L)\ \ \text{and}\ \ f_2:\HFL(L)\to \HFL(L)\otimes \F\langle T,B \rangle\]
be the cobordism maps induced by $-P'$ and $P'$. First, composing $-P'$ with the birth cobordism is the identity cobordism, so $f_1(a\otimes T)=a$. Second, we show that $f_1(a\otimes B)=0$. Let $(W,\mathcal{F}_1)$ be the connected sum cobordism (\cite[Proposition 5.1]{Zemke2019}) from $(S^3,L)\sqcup (S^3,O)$ to $(S^3,L)$ where $W$ from $S^3\sqcup S^3$ to $S^3$ is defined by attaching a $4$-dimensional $1$-handle, and $\mathcal{F}_1$ is a disjoint union of product cylinders with a pair of pants corresponding to taking the connected sum of $L_i$ and $O$ by a band that goes over the $1$-handle, and decorated analogous to $-P'$. Similarly, let $(W,\mathcal{F}_2)$ be the connected sum cobordism from $(S^3,L)\sqcup (S^3,O(2))$ to $(S^3,L\sqcup O)$ corresponding to taking the connected sum of $L_i$ and the second component of $O(2)$. Then, $(W,\mathcal{F}_1)\circ (-P'_O)=-P'\circ (W,\mathcal{F}_2)$, where $-P'_O$ is the disjoint union of the product cobordism $(S^3\times [0,1],L\times[0,1])$ with the pair of pants cobordism in $S^3\times [0,1]$ from $O(2)$ to $O$, as in Figure \ref{fig:sum-PoP} .

\begin{figure}[ht!]
\includegraphics[width=0.9\linewidth]{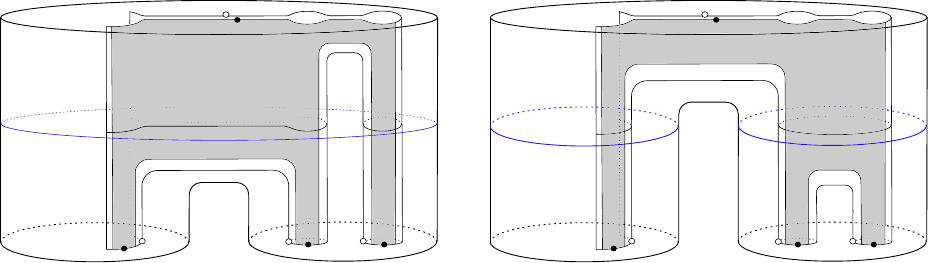}
\caption{Left: Cobordism $-P'\circ (W,\mathcal{F}_2)$, Right: Cobordism $(W,\mathcal{F}_1)\circ (-P'_O)$ }\label{fig:sum-PoP}
\end{figure} 

By the connected sum formula \cite[Proposition 5.1]{Zemke2019}, 
\[F_{(W,\mathcal{F}_2)}:\CFL(L)\otimes_{\F} \CFL(O(2))/(U_i-U_{n+2},V_i-V_{n+2})\to \CFL(L\sqcup O)\]
is a chain homotopy equivalence, and after identifying $U_{n+1}=U_{n+2}$ and $V_{n+1}=V_{n+2}$ it gives the identification \eqref{eq:add unknot} on homology. Similarly, after identifying $U_{i}=U_{n+1}$ and $V_{i}=V_{n+1}$, the map $F_{(W,\mathcal{F}_1)}$ is identity. Furthermore, 
\[F_{-P'_O}:\F[U,V]\langle T,B\rangle \to \F[U,V]\]
preserves Maslov and total Alexander grading, and its composition with the birth cobordism is identity. So, $F_{-P'_O}(T)=1$, while $F_{-P'_O}(B)$ has Maslov grading $-1$. Since every element of $\F[U,V]$ has even Maslov grading, $F_{-P'_O}(B)=0$. Consequently, functoriality of link Floer homology implies that $f_{1}(a\otimes B)=0$.

Finally, we need to show that $f_2(a)=a\otimes B$. Composing $P'$ with the death cobordism is the identity cobordism. So, $f_2(a)=a\otimes B+ \textit{other terms}$ where other terms are in $\HFL(L)\otimes \F\langle T\rangle$. A similar argument as in Lemma \ref{lem:relations} we have $f_1\circ f_2=0$. So, $f_1(f_2(a))=0$ and consequently $f_2(a)=a\otimes B$.

\end{example}

\begin{comment}

To show this, we use the decomposition of this cobordism as $P'\circ (-P')$ where $P'$ is a pair of pants cobordism from $L$ to $L\sqcup O$ with a similar decoration as $P$ in Figure \ref{fig:pair-of-pants}. Let 
\[f_1:\HFL(L)\otimes \F\langle T,B \rangle \to \HFL(L)\ \ \text{and}\ \ f_2:\HFL(L)\to \HFL(L)\otimes \F\langle T,B \rangle\]
be the cobordism maps induced by $-P'$ and $P'$. Note that composing $-P'$ with the birth cobordism is the identity cobordism, so $f_1(a\otimes T)=a$. Similarly, composing $P'$ with the death cobordism is also the identity cobordism. So, $f_2(a)=a\otimes B+ \textit{other terms}$ where other terms are in $\HFL(L)\otimes \F\langle T\rangle$. A similar argument as in Lemma \ref{lem:relations} we have $f_1\circ f_2=0$. So, $f_1(f_2(a))=0$ and consequently $f_2(a)=a\otimes B$.

Moreover, on $\HFLhat(L')$ we have $h_{\gamma}(x\otimes B)=0$. That is because the cobordism $H_{\gamma}\circ H_{\gamma}\circ B$ from $L$ to $L\sqcup O$ is the same as $H_{\gamma}\circ B_1$ as depicted in Figure \ref{fig:HHBirth}. However, it follows from \cite[Figure 17]{JM} that the induced map by $B_1$ i.e. $F_{B_1}=0$. Therefore, 
\[h_{\gamma}h_{\gamma}f_B(x)=h_{\gamma}(x\otimes B)=0.\]

\textcolor{red}{do we need to know that $f_1(a\otimes B)=0?$}

\begin{figure}[ht!]
%\def\svgwidth{8cm}
\includegraphics[width=0.7\linewidth]{figs/HHBirth.pdf}
\caption{Left: Cobordism $H_{\gamma}\circ H_{\gamma}\circ B$, Right: Cobordism $H_{\gamma}\circ B_1$ }\label{fig:HHBirth}
\end{figure} 
\end{comment}

\begin{definition}\label{def:dot map}
We call the X-shape cobordism of Example \ref{ex:simple-X} the \emph{dot} cobordism, and its induced map on $\HFLhat$ the \emph{dot map}, denoted by $\mathcal{B}$. Note that 
\[\mathcal{B}:\HFLhat(L\sqcup O)\to \HFLhat(L\sqcup O)\]
and $\mathcal{B}(x\otimes T)=x\otimes B$, $\mathcal{B}(x\otimes B)=0$.
\end{definition}

\begin{lemma}
\label{lem: UV homotopy arc}
For any framed arc $\gamma$ connecting $L_i$ and $L_j$, there exists a chain homotopy $\widetilde{h}_{ij}$ between $U_iV_i$ and $U_jV_j$ in $\CFL(L)$ such that the induced maps by $\widetilde{h}_{ij}$ on $\HFL(L;R/(U_i-U_j,V_i-V_j))$ and $\HFLhat(L)$ are equal to $h_{\gamma}$. (Note that setting $U_i=U_j$ and $V_i=V_j$ or $U_i=U_j=V_i=V_j=0$ will turn $\widetilde{h}_{ij}$ into a chain map.)
\end{lemma}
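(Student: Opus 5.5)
The plan is to define $\widetilde{h}_{ij}$ topologically from the X-shape cobordism itself and to reuse the decomposition of $H_\gamma$ shown in Figure \ref{fig:X-shape}. Cut $H_\gamma$ along the green sphere. Inside the ball we get an annulus $A$ with two horizontal dividing arcs, viewed as a cobordism from $\emptyset$ to $O(2)$. Outside the ball we get a cobordism $\Sigma_\gamma$ from $(S^3,L)\sqcup(S^3,O(2))$ to $(S^3,L)$. The map $\Sigma_\gamma$ is obtained by two connected sums, of $L_i$ with the first component of $O(2)$ and of $L_j$ with the second, performed along bands that follow $\gamma$.

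First I will compute $\CFL(O(2))$ with full coefficients. For the two-component unlink,
$$
\CFL(O(2))\simeq \left(B\xrightarrow{U_1V_1-U_2V_2}T\right)\otimes \F[U_1,V_1,U_2,V_2],
$$
as recalled in the birth/death subsection. The annulus $A$ with its decoration gives a chain element $f_A(1)$, and I expect $f_A(1)=B$ up to homotopy. Two checks support this. Capping one component of $A$ with a disk gives the birth cobordism, which picks out $T$ or $B$ after accounting for dividing arcs. Also, the Maslov shift computed from the grading formula of \cite[Theorem 1.4]{Zemke2} is $-1$. The point is that $B$ is not a cycle: $\partial B=(U_1V_1-U_2V_2)T$.

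Second, by the connected sum formula \cite[Proposition 5.1]{Zemke2019} (used as in Example \ref{ex:simple-X}), $f_{\Sigma_\gamma}$ gives an $R$-linear map
$$
\Psi:\CFL(L)\otimes_\F\CFL(O(2))\to\CFL(L),
$$
with the variables of the first unknot identified with $U_i,V_i$ and those of the second with $U_j,V_j$. On the summand $\CFL(L)\otimes T$, the cobordism is isotopic to the composition of two death-type cappings, so $\Psi(x\otimes T)=x$. This is the argument for $f_1(a\otimes T)=a$ in Example \ref{ex:simple-X}. I then define
$$
\widetilde{h}_{ij}(x):=\Psi(x\otimes B).
$$
Since $\Psi$ is a chain map,
$$
[\partial,\widetilde{h}_{ij}](x)=\Psi\bigl(x\otimes\partial B\bigr)=\Psi\bigl(x\otimes(U_iV_i-U_jV_j)T\bigr)=(U_iV_i-U_jV_j)x .
$$
So $\widetilde{h}_{ij}$ is a homotopy between $U_iV_i$ and $U_jV_j$.

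Finally, after specializing to $R/(U_i-U_j,V_i-V_j)$ or to $\F$, the element $B$ becomes a cycle. The composition $\Psi\circ(\mathrm{id}\otimes f_A)$ is then, by functoriality, the map of the glued cobordism $H_\gamma$. Hence the induced map of $\widetilde{h}_{ij}$ equals $h_\gamma$ on both $\HFL(L;R/(U_i-U_j,V_i-V_j))$ and $\HFLhat(L)$.

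The main obstacle is the first step, identifying $f_A(1)$ with $B$ on the nose in the full unspecialized complex rather than only in $\HFLhat$. In the full complex $A$ is not a closed-coefficient cobordism, since the two dividing regions carry different variables. I would handle this by building $A$ as a birth composed with a decorated pair of pants, that is, $P$ from Figure \ref{fig:pair-of-pants} precomposed with $f_B$. I would then use Lemma \ref{lem:map-relations} together with $S^\pm,T^\pm$ (Lemma \ref{lem:stab-relations}) to compute $f_P(T)$ explicitly, which should give $B$ plus possibly a cycle term. A cycle correction would only change $\widetilde{h}_{ij}$ by a chain map. Since $\Psi(x\otimes T)=x$ and gradings force any correction to be a multiple of $T$ of the wrong Maslov parity, the correction should vanish.
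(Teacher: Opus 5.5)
Your proposal is correct and follows essentially the same route as the paper. You cut $H_\gamma$ along the green sphere, define the unspecialized homotopy as $F_{\mathcal{C}_2}(x\otimes B)$ using $\partial B=(U_1V_1-U_2V_2)T$ (the paper phrases this as composing $1\mapsto T$ with the homotopy $T\mapsto B$ on $\CFL(O(2))$), and identify it with $h_\gamma$ after specialization via functoriality and a grading argument showing that the annulus sends $1\mapsto B$.

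The obstacle you flag is not really one. The annulus map is only needed after specialization, where $\CFL(O(2))$ has zero differential and $B$ is the only nonzero element in its bigrading. So the grading argument, together with your capping check for nonvanishing, suffices, and the pants decomposition is unnecessary.
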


\begin{proof}
As depicted in Figure \ref{fig:X-shape}, the cobordism $H_{\gamma}$ is the composition of two cobordisms, $\mathcal{C}_1$ from the empty set to the $2$-component unlink $O(2)$ inside the green ball, and $\mathcal{C}_2$ from $(S^3,L)\sqcup (S^3,O(2))$ to $(S^3,L)$ outside of the green ball.
Recall that 
\[\CFL(O(2))=\left(B\xrightarrow{U_2V_2-U_1V_1} T\right),
\]
so
\[
\CFL\left(O(2);R_2/(U_2-U_1,V_2-V_1)\right)\cong R_2/(U_2-U_1,V_2-V_1)\langle T,B\rangle\] A similar grading argument as in the proof of \cite[Theorem 5.21]{JM} implies that $\mathcal{C}_1$ induces the chain map $F_{\mathcal{C}_1}(1)=B$ from $\F$ to $\CFL\left(O(2);R_2/(U_2-U_1,V_2-V_1)\right)$. Before identifying the variables, the map $F'_{\mathcal{C}_1}(1)=B$ from $\F$ to $\CFL(O(2))$, is not a chain map; however, it is the composition of the chain map $1\mapsto T$ from $\F$ to $\CFL(O(2))$ with the chain homotopy 
\[h:\CFL(O(2))\to \CFL(O(2)),\ \ h(T)=B,\ h(B)=0\]
between $U_1V_1$ and $U_2V_2$. Moreover, the cobordism $\mathcal{C}_2$ induces a chain map 
\[\CFL(L)\otimes_{R_2} \CFL(O(2))\to \CFL(L)\]
where $R_n$ is an $R_2$-module with the homomorphism that sends $U_1, V_1$ and $U_2,V_2$ to $U_i,V_i$ and $U_j,V_j$, respectively. Consequently, $F_{\mathcal{C}_2}\circ F'_{\mathcal{C}_1}$ gives a chain homotopy between $U_iV_i$ and $U_jV_j$ in $\CFL(L)$ that induces the chain map $F_{H_{\gamma}}$ after identifying $U_i=U_j$ and $V_i=V_j$. A similar argument works for the hat version as well. 
\end{proof}

\begin{lemma}\label{lem:h-PP-1}
Consider the pair of pants cobordisms $P$, $P_1$ and $P_2$ from $L=K_m(k_+,k_-)\sqcup O$ to $L'=K_{m}(k_++1,k_-+1)$. We have
\begin{enumerate}
\item $f_P\circ f_{-P}\simeq f_{H_{\gamma}}$.
\item $U_{n+1}f_P\simeq f_{H_{\gamma}}\circ f_{P_1}$
\item $V_{n+1}f_P\simeq f_{H_{\gamma}}\circ f_{P_2}$
\item $f_P\circ f_{H_{\gamma'}}\simeq f_{H_{\gamma''}}\circ f_P$ 
\end{enumerate}
where in (1), (2) and (3) $\gamma$ is a small arc on $P$ connecting $L'_{n+1}$ and $L'_{n+2}$. In (4), $\gamma'$ and $\gamma''$ are small local unknotted arcs connecting $L_n$ to $O$ in $L$ (as in Example \ref{ex:simple-X}) and $L'_n$ to $L'_{n+1}$ in $L'$, respectively.  
\end{lemma}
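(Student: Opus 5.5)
The plan is to prove all four relations by cobordism-level (isotopy) arguments and then invoke functoriality of Zemke's decorated link cobordism maps. Where necessary, the relations already established for quasi-stabilizations and pairs of pants (Lemmas \ref{lem:stab-relations}, \ref{lem:map-relations}, \ref{lem:relations}) supply the algebraic input.

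For (1), I would draw the composite $P\circ(-P)$ from $L'$ to itself. It is a product away from the components $L'_{n+1},L'_{n+2}$. On those two components it is an annulus $\mathcal{A}$ with a disk removed and reglued, i.e.\ a band along a small arc $\gamma$ joining $L'_{n+1}$ and $L'_{n+2}$ followed by the dual band. The key check is the dividing set. The dividing arc of $P$ together with that of $-P$ glue to a configuration on the tube. After isotopy rel boundary, this configuration is the one defining $H_\gamma$, namely band, then $T^+T^-$, then dual band. If instead the glued decoration differs from the $H_\gamma$ picture by a quasi-stabilization pattern, I would rewrite $f_P\simeq f_{P_z}\circ T^+$ and $f_{-P}$ accordingly using Lemma \ref{lem:map-relations}(1). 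I would then cancel the adjacent $S^\pm,T^\pm$ using Lemma \ref{lem:stab-relations}, reducing to exactly the $H_\gamma$ decoration.

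For (2) and (3), I would argue in the same way. The composite $H_\gamma\circ P_1$ is a pair of pants followed by the band, $T^+T^-$, and the dual band. Isotoping the dual band back through the pants shows that this surface is $P$ with an extra closed-off dividing region around the $w$ basepoint of the new unknot. By the relation $f_{P_w}\circ f_{-P_w}\simeq U_{n+1}$ from \cite[Lemma 2.1]{AE}, or equivalently by the standard fact that a dividing-curve tube enclosing a $w$-region acts by $U$, this extra region contributes multiplication by $U_{n+1}$. Alternatively, and more algebraically, (2) should follow from (1) and Lemma \ref{lem:relations}:
$$f_{H_\gamma}\circ f_{P_1}\simeq f_P\circ f_{-P}\circ f_{P_1}\simeq f_P\circ U_{n+1}=U_{n+1}f_P.$$
This is the route I would actually take, since it avoids new pictures. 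Relation (3) follows identically, using $f_{-P}\circ f_{P_2}\simeq V_{n+1}$. The grading shifts in Lemma \ref{lem:PoP-grading} provide a consistency check on both sides.

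For (4), the arc $\gamma'$ joins $L_n$ to $O$ near the point where the pair of pants attaches. Since $P$ is a product on $L_n$ and the birth of the pants happens away from a neighborhood of $\gamma'$, I would slide the X-shape cobordism $H_{\gamma'}$ through $P$. Its image is $H_{\gamma''}$, where $\gamma''$ now connects $L'_n$ to the component $L'_{n+1}$ emerging from $O$. Both composites are then isotopic decorated surfaces, and functoriality gives the homotopy.

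The main obstacle is (1): one must verify carefully that the decoration on $P\circ(-P)$ matches that of $H_\gamma$, including which dividing regions carry $w$ versus $z$. A mismatch here would give an extra $U$ or $V$ factor or zero. I would check this with the explicit pictures of Figures \ref{fig:pair-of-pants} and \ref{fig:X-shape}, and use Lemma \ref{lem:map-relations} to normalize.
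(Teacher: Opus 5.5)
Your proposal is correct and matches the paper's proof. In the paper, (1) and (4) come from identifying the decorated surfaces: $P\circ(-P)$ is $H_\gamma$, and sliding the feet of the band through $P$ gives $P\circ H_{\gamma'}=H_{\gamma''}\circ P$. Parts (2) and (3) are then derived formally from (1) and Lemma \ref{lem:relations} via exactly your chain $f_{H_\gamma}\circ f_{P_1}\simeq f_P\circ f_{-P}\circ f_{P_1}\simeq U_{n+1}f_P$ (and its $V_{n+1}$ analogue), so your fallback quasi-stabilization normalization and the alternative geometric argument for (2) are not needed.
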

\begin{proof}
Part (1): It is easy to see that the decorated cobordism $P\circ (-P)$ is $H_\gamma$ for a small arc $\gamma$ connecting $L'_{n+1}$ and $L'_{n+2}$.

Part (2) and Part (3) follow from Lemma \ref{lem:relations} in a completely formal way:
\[U_{n+1}f_p=f_PU_{n+1}\simeq f_p\circ f_{-P}\circ f_{P_1}\simeq f_{H{\gamma}}f_{P_1}.\]
 As for part (4), Figure \ref{fig:x-pair-of-pants} shows that by sliding the feet of the handle we have $P\circ H_{\gamma'}=H_{\gamma''}\circ P$. 
\begin{figure}[ht!]
\includegraphics[width=1\linewidth]{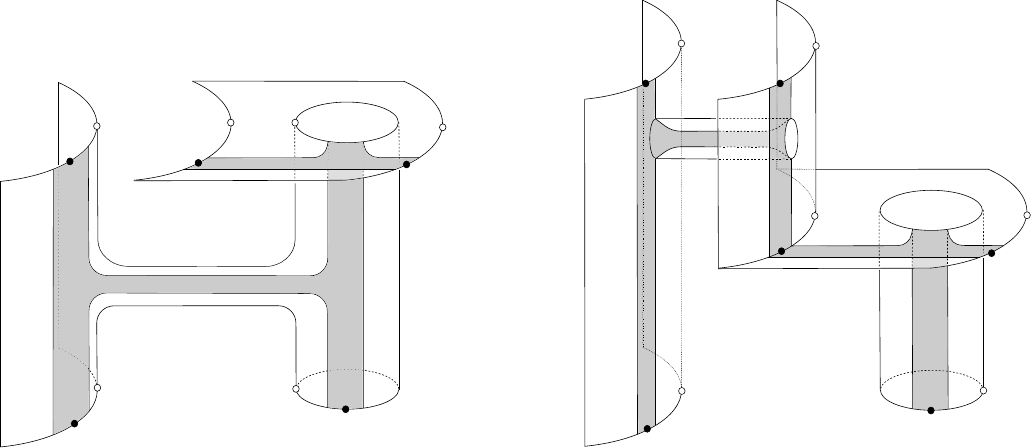}
\caption{Left: Cobordism $P\circ H_{\gamma'}$, Right: Cobordism $H_{\gamma''}\circ P$}\label{fig:x-pair-of-pants}
\end{figure} 
 
\end{proof}

Note that parts (2) and (3) of Lemma \ref{lem:h-PP-1} determine the map $f_P$ up to $(U_{n+1},V_{n+1})$-torsion in terms of $h=h_{\gamma}, f_{P_1}$ and $f_{P_2}$. In particular, if $L$ and $L'$ are  L-space links, Lemma \ref{lem:h-PP-1} determines the map $f_P$ completely.

\begin{corollary} The cobordism map $h_\gamma$ preserves Alexander grading and drops Maslov grading by $1$. 
\end{corollary}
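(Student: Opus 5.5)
The corollary claims two things: $h_\gamma$ preserves every Alexander grading, and it lowers the Maslov grading by one. I would deduce both from Lemma \ref{lem: UV homotopy arc}. That lemma identifies $h_\gamma$ with the map induced by a chain homotopy $\widetilde{h}_{ij}$ satisfying $[\partial,\widetilde{h}_{ij}]=U_iV_i-U_jV_j$.

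\emph{Step 1: gradings of the two sides.} The products $U_iV_i$ and $U_jV_j$ both have Alexander grading $-e_i+e_i=0$ (resp.\ $-e_j+e_j=0$) and Maslov grading $-2+0=-2$. The differential $\partial$ preserves Alexander grading and lowers Maslov grading by $1$. So any homogeneous $\widetilde h$ solving $[\partial,\widetilde h]=U_iV_i-U_jV_j$ must preserve the Alexander multi-grading and shift Maslov grading by $-2-(-1)=-1$.

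\emph{Step 2: homogeneity of the constructed homotopy.} The previous step only constrains a homotopy that is already homogeneous, so I need that the one from Lemma \ref{lem: UV homotopy arc} is. I would see this from its construction as $F_{\mathcal{C}_2}\circ F'_{\mathcal{C}_1}$.
\begin{itemize}
\item The map $F'_{\mathcal C_1}$ sends $1\mapsto B$. By the description of $\CFL(O(2))$, the element $B$ has Maslov grading $-1$ and Alexander grading $0$.
\item The map $F_{\mathcal{C}_2}$ is the cobordism map for the connected-sum-type surface, built from saddles and product pieces. I expect it to preserve Alexander grading and shift Maslov grading by $0$, normalized so that tensoring with the top generator $T$ gives the identity.
\item To confirm the shift of $F_{\mathcal C_2}$, I would use $F_{\mathcal{C}_2}\circ(\,\cdot\otimes T)\simeq \mathrm{id}$. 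Capping with the birth disk gives a product cobordism, and the map is graded by \cite[Theorem 1.4]{Zemke2}; together these force the shift to be $0$.
\end{itemize}
Hence the composite $F_{\mathcal{C}_2}\circ F'_{\mathcal{C}_1}$ has bidegree (Alexander $0$, Maslov $-1$).

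\emph{Alternative check on $\HFLhat$.} Use the decomposition $H_\gamma = P\circ(-P)$ from the proof of Lemma \ref{lem:h-PP-1}(1). By Lemma \ref{lem:PoP-grading}, $f_P$ shifts Maslov grading by $-1$ and all Alexander gradings by $0$. For $f_{-P}$, Example \ref{ex:simple-X} shows the reversed pants sends $T\mapsto 1$, so it shifts Maslov grading by $0$ and preserves Alexander grading. The total shift is therefore Maslov $-1$, Alexander $0$, agreeing with the first argument.

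The main obstacle is the normalization of Maslov grading shifts under the various basepoint conventions ($M$ versus $\gr_{\w}$) for the cobordism pieces. I would handle it via the Zemke grading formula $M(F(x))-M(x)=\frac{c_1^2-2\chi-3\sigma}{4}+\chi(\Sigma_{\w})-|\w_2|$, applied directly to $H_\gamma$ with $W=S^3\times I$.
\begin{itemize}
\item Since $W=S^3\times I$, we have $c_1^2=0$, $\chi(W)=0$ and $\sigma(W)=0$, so the first term vanishes.
\item This leaves $\chi(\Sigma_{\w})-|\w_2|$. Compared with the identity cobordism, the X-shape adds one saddle pair plus the $T^+T^-$ decoration. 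The computation should reduce to $\chi(\Sigma_{\w})$ dropping by one relative to the product, yielding a shift of $-1$.
\end{itemize}
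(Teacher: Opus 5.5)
Your proposal is correct, but your primary argument is not the paper's; the paper's proof is your ``alternative check.'' The paper combines Lemma \ref{lem:h-PP-1}(1), $f_{H_\gamma}\simeq f_P\circ f_{-P}$, with the shift of $f_P$ from Lemma \ref{lem:PoP-grading} (Maslov $-1$, Alexander $0$). The zero shift of $f_{-P}$ is left implicit there. It is most directly read off from Lemma \ref{lem:relations}: $f_{-P}\circ f_{P_1}\simeq U_{n+1}$, and $f_{P_1}$ and $U_{n+1}$ both have Maslov shift $-2$ and total Alexander shift $-1$. Your appeal to Example \ref{ex:simple-X} concerns the merge $-P'\colon L\sqcup O\to L$, not $-P\colon L'\to L\sqcup O$, so it transfers only by locality of the grading formula.

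Your main route instead uses the splitting $H_\gamma=\mathcal C_2\circ\mathcal C_1$ from Lemma \ref{lem: UV homotopy arc}. The whole bidegree is carried by $B\in\CFL(O(2))$, which has Maslov grading $-1$ and Alexander grading $0$. Capping $\mathcal C_2$ with two disks recovers the product cobordism, which forces $F_{\mathcal C_2}$ to have bidegree $(0,0)$. This is sound, given that Zemke's cobordism maps are homogeneous. Note that it already computes the bidegree directly, so your Step 1 is only a consistency check; as you observe, it cannot control the components of a non-homogeneous homotopy.

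What your route buys is generality. Lemma \ref{lem: UV homotopy arc} holds for an arbitrary framed arc $\gamma$ between $L_i$ and $L_j$. By contrast, Lemma \ref{lem:h-PP-1}(1) identifies $H_\gamma$ only for the small arc on the pair of pants, so the paper's one-line proof tacitly relies on the grading shift being local. Your $\chi(\Sigma_{\w})$ computation would justify that locality: for the two affected components, $\chi(\Sigma_{\w})$ drops from $2$ to $1$. The paper's route is shorter because the pants grading shifts were already tabulated.
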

\begin{proof}
Follows from part (1) of Lemma \ref{lem:h-PP-1} and Lemma \ref{lem:PoP-grading}.
\end{proof}

\begin{corollary}
Over $\HFLhat$ we have $F_P(-\otimes B)=h_{\gamma''}F_P(-\otimes T)$.
\end{corollary}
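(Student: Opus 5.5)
The plan is to use part (4) of Lemma \ref{lem:h-PP-1} together with the description of the dot map in Definition \ref{def:dot map}. On $\HFLhat(L\sqcup O)\cong \HFLhat(K_m(k_+,k_-))\otimes V$, with $V=\langle B,T\rangle$, take $\gamma'$ to be the small local unknotted arc joining $L_n$ to $O$, exactly as in Example \ref{ex:simple-X}. That example identifies $h_{\gamma'}$ with the dot map $\mathcal{B}$. Concretely, this gives
$$h_{\gamma'}(x\otimes T)=x\otimes B,\qquad h_{\gamma'}(x\otimes B)=0.$$
So every element of the form $x\otimes B$ can be written as $h_{\gamma'}(x\otimes T)$.

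Next apply $F_P$ and use Lemma \ref{lem:h-PP-1}(4). That lemma gives $f_P\circ f_{H_{\gamma'}}\simeq f_{H_{\gamma''}}\circ f_P$, where $\gamma''$ is the small arc joining $L'_n$ to $L'_{n+1}$. This relation descends to homology, and the $\HFLhat$ version holds because everything is a map of complexes after setting all $U_i,V_i$ to zero. Combining with the previous step,
$$F_P(x\otimes B)=F_P(h_{\gamma'}(x\otimes T))=h_{\gamma''}F_P(x\otimes T),$$
which is the claim.

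The only point requiring care is that $h_{\gamma'}$ and $h_{\gamma''}$ are a priori defined on $\HFL$ with coefficients in which $U_n=U_{n+1}$ and $V_n=V_{n+1}$. I need them, and the homotopy in (4), to make sense on the hat version. This is handled by the remark after the definition of $h_\gamma$ and by Lemma \ref{lem: UV homotopy arc}: after setting $U_i=V_i=0$ for all $i$, the homotopy $\widetilde h$ becomes a chain map inducing $h_\gamma$ on $\HFLhat$. Example \ref{ex:simple-X} also states explicitly that the formula for $h_\gamma$ holds on $\HFLhat(L')$. I expect this compatibility check to be the only mild obstacle; the rest is formal.
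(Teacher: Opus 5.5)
Your argument is correct and is exactly the paper's proof. The paper derives the corollary from Lemma \ref{lem:h-PP-1}(4) together with Example \ref{ex:simple-X}: you write $x\otimes B=h_{\gamma'}(x\otimes T)$ and then commute $h$ past $F_P$, which is the same route.
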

\begin{proof}
Follows from part (4) of Lemma \ref{lem:h-PP-1} and Example \ref{ex:simple-X}. 
\end{proof}

\section{Floer lasagna module}
\label{sec: lasagna}

We refer to \cite{Chen} for the general definition of the Floer lasagna module.  By 
\cite[Theorem 1.1]{Chen} we can  use ``cabled $\HFLhat$", defined below, as an alternative definition of the Floer lasagna module for 2-handlebodies.

Let $L=L_1\cup L_2\cup \cdots \cup L_r$ be a framed, oriented link in $S^3$. Fix a parametrization $\pi_i: S^1\times D^2 \rightarrow \text{nbd}(L_i)$ for each component $L_i$ of $L$. We will identify $S^1=\mathbb{R}/\Z$ and 
write the coordinates of some points on $S^1$ as real numbers modulo $\Z$.
Given any two $r$-tuples $k_+=(k^1_+, \cdots, k^r_+)$
 and $k_-=(k^1_-, \cdots, k^r_-)$ of positive integers, let $L(k_+, k_-)$ be the link obtained from $L$ by replacing each link component $L_i$ with $k^i_+$ many positively oriented and $k_-^i$ many negatively oriented parallel copies along the framing of $L_i$. That is, 
 $$L(k_+, k_-)=\bigcup_{i=1}^r \pi_i \left(S^1\times \left\{x^+_{i, 1}, \cdots, x^+_{i, k_{+}^{i}}, x^-_{i, 1}, \cdots, x^{-}_{i, k_{-}^{i}}\right\}\right)$$
 where $x^{-}_{i,  1}, \cdots, x^{-}_{i, k_{-}^{i}}, x^{+}_{i,1}, \cdots, x^{+}_{i, k_{+}^{i}}$ are distinct points in $D^2$. The link $L(k_+,k_{-})$ has 
$|k_{+}|+|k_{-}|=\sum_{i=1}^{r}(k_+^i+k_-^i)
 $
 components.
Denote the basepoints 
 $$w^\pm_{i,j}=\pi_i(0, x^{\pm}_{i,j}), \quad  z^{\pm}_{i,j}=\pi_i\left(\dfrac{1}{2}, x^{\pm}_{i,j}\right)$$
 on each component of $L(k_+,k_-)$. %\EG{What does $1/2$ mean here? Point $-1$ on $S^1$?} 
 Then we consider the link Floer homology of $(S^3,  L(k_+, k_-), \w, \z)$ denoted by $\HFLhat(L(k_+, k_-))$. Here, $\w=\cup_{i,j}w_{i,j}^{\pm}$ and $\z=\cup_{i,j}z_{i,j}^{\pm}$.

 \begin{definition}
 \label{def: cabled HF}
     The cabled link Floer homology of $L$ is defined as 
     $$\cHFL(L)=\bigoplus_{k_+, k_-\in \mathbb{N}^n} \HFLhat(L(k_+, k_-))\left[|k_+|+|k_-|\right]/ \sim$$
     where $\left[|k_+|+|k_-|\right]$ shifts the Maslov grading up by $|k_+|+|k_-|$. Here   $\sim$ denotes the equivalence relation which is linear closure of the following relations:
\begin{equation}
     \label{eq: lasagna 1}
        v\sim F_\tau(v),\ \tau\in \mathrm{Br}_{k^i_+}\times \mathrm{Br}_{k^i_-}
    \tag{\ref{def: cabled HF}.1}
 \end{equation}   
\begin{equation}
     \label{eq: lasagna 2}
         v\sim F_P(v\otimes B),\  0\sim F_P(v\otimes T)
    \tag{\ref{def: cabled HF}.2}
\end{equation} \begin{equation}
     \label{eq: lasagna 3}
     v\sim F_{m_{i,j}}(v)
    \tag{\ref{def: cabled HF}.3}
     \end{equation}
 \end{definition}

% \EG{Motivation: empty sphere, dotted sphere}

In relation \eqref{eq: lasagna 1}, $\tau\in \mathrm{Br}_{k^i_+}\times \mathrm{Br}_{k^i_-}$ is a braid group element and $F_{\tau}$ is the cobordism map that gives the action of $\tau$ on $\HFLhat(L (k_+, k_-))$. This relation holds for all $\tau$ and all $i$. In the third relation \eqref{eq: lasagna 3}, the map $F_{m_{i,j}}$ from $\HFLhat(L(k_+, k_-))$ to itself is the base point moving map \cite{ZemkeBasept} on the $j$-th parallel copy of $L_i$.  

Finally, in relation \eqref{eq: lasagna 2} the map
 \[F_P:\HFLhat(L(k_{+},k_{-})\sqcup O)\to \HFLhat(L( k_{+}+\delta_i, k_{-}+\delta_i))\]
is the pair of pants cobordism map as defined in Section \ref{sec:PoPCob}.
Here $\delta_i$ is the $r$-tuple with $1$ at the $i$-th position and $0$ elsewhere.
%Now we introduce the equivalence relations induced by the cobordism maps $F_\tau, F_P$ and $F_{m_{i,j}}$. $F_\tau$ is the braid group action induced by the symmetric group $S_{k^{-}_i,k^+_{i}}$, the map $F_{m_{i,j}}$ is the base moving map on the $j$-th parallel copy of $L_i$, that is,
 %$$F_{m_{i,j}}: \HFLhat(L, k^+, k^-)\rightarrow \HFLhat(L, k^+, k^-).$$
%The last map $F_P$ is the cobordim map induced by a band. We delete a disk from the band so that the cobordims is a pair of pants from the unknot to the a pair of parallel copy of $L_i$ with opposite orientation. 

%********************

%\textcolor{red}{Convention:}$\Phi_1, \Phi_2, \Phi$ are cobordism maps between $K(n)$ and $K(n+2)$. It is the composition between birth and $F_{P_1}$ or $F_{P_2}$ or $F_{-P}$. change the cobordism relation for this notation later. Use $\F$ instead of $\F_2$.

%********************

The gradings on $\cHFL(L)$ are defined as follows. The Alexander grading is just the total Alexander grading on $\HFLhat(L(k_+, k_-))$. The normalized Maslov grading is defined as $M+|k_{+}|+|k_{-}|$ where $M$ is the Maslov grading on $\HFLhat(L(k_+, k_-))$. Finally, the grading by $H_2(W)=\Z^r$ is given by 
$$
k_{+}-k_{-}=(k_{+}^1-k_{-}^{1},\ldots,k_{+}^{r}-k_{-}^{r})\in \Z^r.
$$
The maps $F_{\tau},F_{m_{i,j}}$ and $F_P(-\otimes B)$ preserve these gradings (the latter changes the normalized Maslov grading by $-2+2=0$), while $F_P(-\otimes T)$ preserves the Alexander and homological grading but shifts the normalized Maslov grading by $-1+2=+1$. This implies that equations \eqref{eq: lasagna 1}-\eqref{eq: lasagna 3} indeed yield well-defined gradings. 

\begin{theorem}\cite[Theorem 1.1]{Chen}
\label{thm: chen}
The cabled link Floer homology of $L$ is isomorphic to the Floer lasagna module of the 4-manifold $W(L)$ obtained by adding 2-handles along the components of $L$ with the corresponding framings:
$$
\FL(W(L))\simeq \cHFL(L).
$$
\end{theorem}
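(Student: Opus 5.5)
Theorem \ref{thm: chen} is quoted from \cite[Theorem 1.1]{Chen}, so in this paper we would simply cite it. If we had to reprove it, we would follow the strategy of Manolescu--Neithalath \cite{MN} for the Khovanov skein lasagna module and transplant each step to Zemke's decorated link cobordism TQFT.

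\textbf{Step 1: reduce lasagna fillings to cables.} We start from the definition of $\FL(W(L))$ as a colimit over lasagna fillings: surfaces in $W(L)$ minus balls, with $\HFLhat$-decorated input links on the boundary spheres. Since $W(L)=B^4\cup$ 2-handles, we isotope any filling so that its input balls lie in $B^4$. We then arrange that the surface meets each 2-handle $D^2\times D^2$ in a union of cocore-parallel disks $\{x\}\times D^2$, with signs according to orientation. Cutting along $\partial B^4=S^3$ then presents every filling as a cobordism in $B^4$ from the inputs to some cable $L(k_+,k_-)$, capped by the cocore disks. Evaluating the $B^4$ part through Zemke's cobordism maps yields a class in $\HFLhat(L(k_+,k_-))$. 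This gives a surjection from $\bigoplus\HFLhat(L(k_+,k_-))$ onto $\FL(W(L))$, with the Maslov shift $|k_+|+|k_-|$ accounting for the cocore disks.

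\textbf{Step 2: identify the kernel with the relations (\ref{def: cabled HF}.1)--(\ref{def: cabled HF}.3).} Two such presentations of the same filling differ by isotopies of the surface inside $W(L)$. Following \cite{MN}, we decompose these isotopies generically into elementary moves:
\begin{itemize}
\item moving cocore disks among themselves within the handle, which produces the braid actions and (\ref{def: cabled HF}.1);
\item rotating a cocore disk around the core circle, which produces the basepoint moving maps $F_{m_{i,j}}$ of \cite{ZemkeBasept} and (\ref{def: cabled HF}.3);
\item a tube or saddle passing through the handle, which creates or cancels a pair of oppositely oriented parallel disks, i.e.\ an annulus around the cocore.
\end{itemize}
Conversely, each listed relation must be realized by an isotopy in $W(L)$.

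\textbf{Step 3: the pair-creation move, which we expect to be the main obstacle.} Inside the 2-handle, the annulus bounding the two new strands is isotopic to a tube. Decomposing it as the pair of pants $P$ composed with a disk capping $O$ leads to the relations $v\sim F_P(v\otimes B)$ (the dotted cap) and $0\sim F_P(v\otimes T)$ (the sphere bounding a ball, which evaluates to zero). Establishing this requires care at three points:
\begin{itemize}
\item the dividing-curve decorations must be chosen consistently, using the neck-cutting and sphere evaluation relations of Zemke's TQFT, here the birth/death maps and Example \ref{ex:simple-X};
\item everything must be done over $\F$, where the braid actions need not factor as expected;
\item the decoration-moving ambiguities must all be absorbed by (\ref{def: cabled HF}.1)--(\ref{def: cabled HF}.3).
\end{itemize}
We would try to check these last ambiguities using Lemma \ref{lem:h-PP-1} and Lemma \ref{lem:map-relations}.
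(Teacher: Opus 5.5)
Your proposal matches the paper: the paper does not reprove this result but quotes it from \cite[Theorem 1.1]{Chen}. Your optional sketch follows the Manolescu--Neithalath strategy \cite{MN}, which the paper's introduction says Chen's construction follows, transplanted to Zemke's decorated cobordism maps.

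One correction to that sketch: a filling should be isotoped so that it meets each 2-handle $D^2\times D^2$ in parallel copies $D^2\times\{x\}$ of the \emph{core}, whose boundaries are the framed push-offs making up $L(k_+,k_-)$, and meets the \emph{cocore} $\{0\}\times D^2$ transversally in points. Cocore-parallel disks have boundary on the belt circle in $\partial W(L)$, not in $S^3$. With this fix, the motion, creation and cancellation of the intersection points in the cocore give \eqref{eq: lasagna 1} and \eqref{eq: lasagna 2}, and full rotations of the core disks give \eqref{eq: lasagna 3}.
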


Thanks to Theorem \ref{thm: chen}, we will use the notations $\FL(W(L))$ and $\cHFL(L)$ interchangeably.

\section{Representations in characteristic 2}
\label{sec: rep theory}

Before computing some examples of cabled Heegaard Floer homology, we need to review some representation theory of $S_n$ over $\F$ which might not be familiar to a topologically minded reader (as it was not to the authors). 

Suppose that a prime number $p$ does not divide the order of a finite group $G$, and a field $F$ has characteristic $p$.   Then by Maschke's theorem any finite-dimensional representation of $G$ over $F$ can be uniquely decomposed as a direct sum of irreducible representations. In our situation $p=2$ and $|G|=|S_n|=n!$, so Maschke's theorem does not apply. Indeed, many  other useful facts from the standard representation theory course fail in this situation as well, in particular:
\begin{itemize}
\item A finite-dimensional representation might not be isomorphic to its dual.
\item The spaces of invariants and coinvariants in a given representation do not need to be isomorphic. 
\end{itemize}

Below we define invariants and coinvariants, and give some explicit examples of representations illustrating the above phenomena. In Section \ref{subsec: cube} we give a slightly more general family of examples relevant for topological applications. Throughout this section we work over $\F$.

\subsection{Invariants and coinvariants}

Recall that in characteristic zero $S_n$ has two one-dimensional representations: sign and trivial. In characteristic two, they coincide and it is not hard to prove that the only one-dimensional representation of $S_n$ over $\F$ is trivial. We will denote it by $\F$ by abuse of notation.

\begin{definition}
Let $V$ be a representation of $S_n$ over $\F$. We define the space of \emph{invariants} $\Inv(V)$ as the  subspace of $S_n$-invariant elements:
$$
\Inv(V)=\{v\in V:\ \sigma(v)=v\ \mathrm{for\ all}\ \sigma\in S_n\}.
$$
Furthermore, we define the space of \emph{coinvariants} $\Coinv(V)$ as the quotient
$$
\Coinv(V)=V/\sim,\quad v\sim \sigma(v)\ \mathrm{for\ all}\ \sigma\in S_n.
$$
\end{definition}

The space of coinvariants plays an important role in the study of Floer lasagna module, since it is closely related to the equivalence relation \eqref{eq: lasagna 1} in Definition \ref{def: cabled HF}. In characteristic zero, both spaces of invariants and coinvariants are isomorphic to the trivial direct summand in $V$. As we will see below, this is not true over $\F$, and $\Inv(V)$ and $\Coinv(V)$ might have different dimensions. Nevertheless, one can still check the following easy facts using definitions.

\begin{proposition}
\label{prop: inv and coinv}
\begin{enumerate}
\item[(a)] We have $\Inv(V)=\Hom_{S_n}(\F,V)$. There is a natural embedding $\Inv(V)\hookrightarrow V$.
\item[(b)] We have $\Coinv(V)=\Hom_{S_n}(V,\F)^{\ast}$. There is a natural projection $V\twoheadrightarrow \Coinv(V)$.
\item[(c)] Given an $S_n$-equivariant map $f:V\to W$, one has induced maps 
$$
\Inv(V)\to \Inv(W),\ \Coinv(V)\to \Coinv(W).
$$
\item[(d)] For the dual representation the invariants and coinvariants are swapped:
$$
\Inv(V^{\ast})=\Coinv(V)^{\ast},\ \Coinv(V^{\ast})=\Inv(V)^{\ast}.
$$
\end{enumerate}
\end{proposition}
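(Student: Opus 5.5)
We prove the four parts directly from the definitions, using only that $V$ is finite-dimensional over $\F$ and that the $S_n$-action is linear.

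For (a), an $S_n$-equivariant map $\phi:\F\to V$ is determined by $v=\phi(1)$. Equivariance with respect to the trivial action on $\F$ says exactly that $\sigma(v)=v$ for all $\sigma$. Hence $\phi\mapsto\phi(1)$ identifies $\Hom_{S_n}(\F,V)$ with $\Inv(V)$, and the embedding into $V$ is the inclusion of this subspace.

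For (b), write $N\subset V$ for the span of all $\sigma(v)-v$, so that $\Coinv(V)=V/N$ and the natural projection is the quotient map. A linear functional $\lambda:V\to\F$ is $S_n$-equivariant, with $\F$ trivial, precisely when $\lambda(\sigma v)=\lambda(v)$ for all $\sigma,v$, that is, when $\lambda$ vanishes on $N$. Therefore
$$
\Hom_{S_n}(V,\F)=(V/N)^{\ast}=\Coinv(V)^{\ast}.
$$
Dualizing once more gives $\Coinv(V)=\Hom_{S_n}(V,\F)^{\ast}$, since $V/N$ is finite-dimensional.

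For (c), let $f:V\to W$ be equivariant. If $\sigma v=v$ for all $\sigma$, then $\sigma f(v)=f(\sigma v)=f(v)$, so $f$ maps $\Inv(V)$ into $\Inv(W)$. Also $f(\sigma v-v)=\sigma f(v)-f(v)$, so $f(N_V)\subset N_W$, and $f$ descends to a map $\Coinv(V)\to\Coinv(W)$.

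For (d), give $V^{\ast}$ the action $(\sigma\lambda)(v)=\lambda(\sigma^{-1}v)$. Then $\lambda\in\Inv(V^{\ast})$ if and only if $\lambda$ vanishes on $N$, so
$$
\Inv(V^{\ast})=N^{\perp}=(V/N)^{\ast}=\Coinv(V)^{\ast}.
$$
Next apply the same identity to $V^{\ast}$ in place of $V$, using $V^{\ast\ast}\cong V$ equivariantly in finite dimension:
$$
\Inv(V)=\Inv\left((V^{\ast})^{\ast}\right)=\Coinv(V^{\ast})^{\ast}.
$$
Taking duals gives $\Coinv(V^{\ast})=\Inv(V)^{\ast}$.

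The only point requiring care is that none of these steps uses averaging over $S_n$. That matters because $|S_n|=n!$ is even, so averaging is not available in characteristic 2. All four arguments rely only on annihilator duality for finite-dimensional vector spaces, so no genuine obstacle arises.
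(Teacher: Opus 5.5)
Your proof is correct and matches the paper's approach. The paper gives no argument beyond remarking that these facts can be checked directly from the definitions, and your verifications supply exactly that check. This includes correctly flagging the finite-dimensionality needed for the double-dual steps in (b) and (d).
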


\begin{remark}
Note that whenever $\dim V>1$ and either $\Inv(V)$ or $\Coinv(V)$ is nontrivial then $V$ is not irreducible.  Indeed, $V$  contains either $\Inv(V)$ or the kernel of the projection $V\to \Coinv(V)$, respectively.
\end{remark}

\subsection{Two examples}

Let $\F^n$ be the $n$-dimensional representation of $S_n$ with the  basis $e_i$ where $S_n$ permutes the indices. It is easy to check that $\F^n$ is self-dual, and 
$$
\Inv(\F^n)\simeq \Coinv(\F^n)\simeq \F.
$$
Now we define two interesting $(n-1)$-dimensional representations of $S_n$.

\begin{definition}
Consider the map $a: \F^n\to \F$ which sends $e_i$ to 1 for all $i$. We define $$W_1=\Ker(a).$$
\end{definition}

\begin{definition}
Consider the map $b: \F\to \F^n$ which sends $1$ to $e_1+\ldots+e_n$. We define $$W_2=\Coker(b)=\F^n/\Imm(b).$$
\end{definition}

\begin{lemma}
\label{lem: W1 W2}
Assume $n\ge 3$. We have the following:
\begin{enumerate}
\item[(a)] We have $\Coinv(W_1)=0$ and 
$$
\dim \Inv(W_1)=\begin{cases}
1 & n\ \mathrm{even}\\
0 & n\ \mathrm{odd}.
\end{cases}
$$
\item[(b)] We have $\Inv(W_2)=0$ and 
$$
\dim \Coinv(W_2)=\begin{cases}
1 & n\ \mathrm{even}\\
0 & n\ \mathrm{odd}.
\end{cases}
$$
\item[(c)] We have $W_2=W_1^{\ast}$.
\end{enumerate}
\end{lemma}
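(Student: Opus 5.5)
The plan is to prove (c) first, derive (b) from (a) by duality using Proposition \ref{prop: inv and coinv}(d), and concentrate the real work on (a).

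For (c), recall that $\F^n$ is self-dual via the standard pairing $\langle e_i,e_j\rangle=\delta_{ij}$. Under this identification the dual of $a:\F^n\to\F$ is $a^{\ast}:\F\to\F^n$, $1\mapsto e_1+\ldots+e_n$, which is exactly $b$. Dualizing the exact sequence $0\to W_1\to \F^n\xrightarrow{a}\F\to 0$ gives $0\to \F\xrightarrow{b}\F^n\to W_1^{\ast}\to 0$. Hence $W_1^{\ast}\simeq \Coker(b)=W_2$, equivariantly. Combined with Proposition \ref{prop: inv and coinv}(d), this gives $\Inv(W_2)=\Coinv(W_1)^{\ast}$ and $\Coinv(W_2)=\Inv(W_1)^{\ast}$, so (b) follows from (a).

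For (a) I would compute the invariants directly. An invariant vector of $\F^n$ is a multiple of $e_1+\ldots+e_n$. This vector lies in $\Ker(a)$ if and only if $n\equiv 0 \pmod 2$. So $\Inv(W_1)=\Inv(\F^n)\cap W_1$ is one-dimensional for $n$ even and zero for $n$ odd.

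For coinvariants I would use the basis $e_i+e_j$ spanning $W_1$. It suffices to show each $e_i+e_j$ is zero in $\Coinv(W_1)$, i.e. lies in the span of the vectors $\sigma(v)-v$. Since $n\ge 3$, pick $k\neq i,j$ and let $\sigma=(j\,k)$. Then
$$
\sigma(e_i+e_k)-(e_i+e_k)=e_i+e_j-e_i-e_k=e_j+e_k,
$$
so every $e_j+e_k$ with $j\neq k$ is killed. These vectors span $W_1$, so $\Coinv(W_1)=0$.

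The only delicate point is the role of the hypothesis $n\ge 3$, which is exactly what makes the choice of a third index $k$ possible. For $n=2$ the space $W_1$ is spanned by $e_1+e_2$ with trivial action, so $\Coinv(W_1)=\F$ and the statement fails. Otherwise the argument is routine, and I expect no substantial obstacle beyond checking that the duality identification in (c) is $S_n$-equivariant.
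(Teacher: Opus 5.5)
Your proof is correct, and for (a) it matches the paper. The invariant computation is identical. For $\Coinv(W_1)=0$, your explicit identity $(j\,k)(e_i+e_k)-(e_i+e_k)=e_j+e_k$ is a slightly more direct version of the paper's argument: all $e_i-e_j$ are $S_n$-equivalent, and $(e_i-e_j)+(e_j-e_k)+(e_k-e_i)=0$ forces $3[e_i-e_j]=0$. Both arguments rely on the third index that $n\ge 3$ provides.

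The genuine difference is the order in which (b) and (c) are handled. The paper computes $\Inv(W_2)$ and $\Coinv(W_2)$ by hand in the basis $e_2,\ldots,e_n$ of $W_2$, using the action of $(1\,2)$ on that basis. Only afterwards does it remark that (c) follows from self-duality of $\F^n$ and $\F$. You instead prove (c) first, by dualizing $0\to W_1\to\F^n\xrightarrow{a}\F\to 0$, and then deduce (b) from (a) via Proposition \ref{prop: inv and coinv}(d).

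Your route is shorter. It also makes the nonvanishing of $\Coinv(W_2)$ for even $n$ automatic, since that class is dual to the invariant vector $e_1+\ldots+e_n\in W_1$. The direct computation, by contrast, has to check that the relations it lists are the only ones. The paper's hands-on computation has its own advantage: it is self-contained and gives an explicit picture of $W_2$. That picture illustrates concretely how $\Inv$ and $\Coinv$ differ in characteristic $2$, which is the purpose of that subsection.
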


\begin{proof}
a) The only $S_n$-invariant vector in $\F^n$ is $\sum e_i$ and $a(\sum e_i)=n$. Therefore  $\sum e_i$ belongs to $\Ker(a)$ if and only if $n$ is even, this proves the statement about $\Inv(W_1)$. 

Next, $W_1$ is spanned by the vectors $e_i-e_j$, which are all equivalent under $S_n$ action. Since $(e_i-e_j)+(e_j-e_k)+(e_k-e_i)=0$, we have $3[e_i-e_j]=0$ and $\Coinv(W_1)=0$.

b) The space $W_2$ has the basis $e_2,\ldots,e_n$, and we have $(1\ 2)e_2=-(e_2+\ldots+e_n).$ If $v=a_2e_2+\ldots+a_ne_n$ is an invariant vector in $W_2$ then $a_2=\ldots=a_n$ and $v= (1\ 2)v$ implies
$$
a_2e_2+\ldots+a_ne_n=a_2(e_2+\ldots+e_n)+a_3e_3+\ldots+a_ne_n=a_2e_2+(a_2+a_3)e_3+\ldots+(a_2+a_n)e_n.
$$
We have $a_2+a_3=a_3$, so $a_2=0$ and all $a_i=0$, therefore $\Inv(W_2)=0$.

For $\Coinv(W_2)$, we have $e_i\sim e_j$ and $e_2\sim (e_2+\ldots+e_n)$, so $e_3+\ldots+e_n\sim (n-2)e_n\sim0$. If $n$ is even then this condition is vacuous and $\dim \Coinv(W_2)=1$.

c) This is clear from the fact that both $\F^n$ and $\F$ are self-dual.
\end{proof}

We conclude that when $n>2$ is even then $W_1$ and $W_2$ are not isomorphic and not irreducible. One can check (similarly to characteristic zero) that when $n$ is odd then actually $W_1$ and $W_2$ are   isomorphic and irreducible but we will not need this fact.

\begin{remark}
If $n=1$ then $W_1=W_2=0$. If $n=2$ then both $W_1$ and $W_2$ are trivial and have one-dimensional invariants and coinvariants.
\end{remark}

\subsection{More general examples}
\label{subsec: cube}

More generally, we can consider the \emph{cube complex} $C_n=(\bigoplus_{i=0}^{n}C_{n,i},\partial)$ defined as follows. 
The space $C_{n,i}$ is spanned by the generators $e_A$ for all $i$-element subsets $A\subset \{1,\ldots,n\}$. The differential is given by 
$$
\partial(e_A)=\sum_{a\in A} e_{A\setminus a}.
$$
We will write $\partial_i$ for the component of $\partial$ from $C_{n,i}$ to $C_{n,i-1}$.
We will often identify $C_n$ with the exterior algebra generated by $e_1,\ldots,e_n$, then $e_A$ corresponds to the product of $e_a$ for $a\in A$ (since we work over $\F$, the order does not matter).

We have $\partial^2=0$, and it is well known that the complex $(C_n,\partial)$ is acyclic. The complex $C_n$ has a natural action of $S_n$ which permutes $e_A$ and commutes with $\partial$. 

Similarly to the previous section one can check that 
$$
\Inv(C_{n,i})\simeq \Coinv(C_{n,i})\simeq \F
$$
and $C_{n,i}$ is self-dual as $S_n$ representation. Moreover, $C_{n,i}\simeq C_{n,n-i}$, the isomorphism sends $e_A$ to $e_{\overline{A}}$.

\begin{definition}
We define $V_{n,i}=\Ker \partial_i\subset C_{n,i}$. 
\end{definition}

Clearly, $V_{n,i}$ is an $S_n$-representation. Note that $V_{n,n}=0$, so below we will assume $i\le n-1$. Since $(C_n,\partial)$ is acyclic, we can also write
\begin{equation}
\label{eq: V as cokernel}
V_{n,i}=\Imm\partial_{i+1}\simeq \frac{C_{n,i+1}}{\Ker \partial_{i+1}}=\frac{C_{n,i+1}}{\Imm \partial_{i+2}}=\Coker\partial_{i+2}.
\end{equation}
The representations from the previous subsection can be written as
$$
W_1=\Ker \partial_1=V_{n,1},\ W_2=\Coker \partial_n\simeq V_{n,n-2}.
$$
In order to describe the properties of $V_{n,i}$ for general $i$, it is useful to introduce some special elements.

\begin{definition}
For $a\ge 2$ define $\xi_a=e_a-e_1$. For $A\subset \{2,\ldots,n\}$ we define 
$$
\xi_A=\prod_{a\in A}\xi_a=e_A+\sum_{a\in A}e_{(A\setminus a)\cup 1}.
$$
\end{definition}

\begin{proposition}
\label{prop: Sn action on xi}
The representation $V_{n,i}$ has a basis $\xi_A$ with $|A|=i$, and $\dim V_{n,i}=\binom{n-1}{i}$. The action of $S_n$ on $V_{n,i}$ in the basis $\xi_A$ can be described as following:
\begin{enumerate}
\item[(a)] $\sigma \xi_A=\xi_{\sigma(A)}$ if $\sigma(1)=1$.
\item[(b)] 
\begin{equation}
\label{eq: Sn action V}
(1\ 2)\xi_A=\begin{cases}
\xi_A & \mathrm{if}\ 2\in A\\
\xi_A+\sum_{a\in A}\xi_{(A\setminus a)\cup 2} & \mathrm{if}\ 2\notin A.
\end{cases}
\end{equation}
\end{enumerate}
\end{proposition}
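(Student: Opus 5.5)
We first check that each $\xi_A$ lies in $V_{n,i}$ and then show that these elements form a basis. Over $\F$ the differential $\partial$ is a derivation of the exterior algebra $C_n$ with $\partial(e_a)=1$. Hence $\partial(\xi_a)=\partial(e_a)+\partial(e_1)=0$, and by the Leibniz rule $\partial(\xi_A)=0$ for every $A\subset\{2,\ldots,n\}$. We also need the expansion of $\xi_A$ stated in the definition. Multiplying out $\prod_{a\in A}(e_a+e_1)$ and using $e_1^2=0$ leaves only the terms with at most one factor $e_1$, which gives $e_A+\sum_{a\in A}e_{(A\setminus a)\cup 1}$. Linear independence is then immediate: for $|A|=i$, the element $\xi_A$ is the only one among them containing the monomial $e_A$ with $1\notin A$, so the $\xi_A$ are unitriangular with respect to the basis $e_B$.

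For the dimension count, exactness of $C_n$ gives $\dim V_{n,i}=\dim\Imm\partial_{i+1}$. The usual contracting homotopy (multiplication by $e_1$) shows
$$\dim V_{n,i}=\sum_{j\le i}(-1)^{i-j}\binom{n}{j}=\binom{n-1}{i}.$$
Alternatively, split $C_{n,i}=e_1C'_{i-1}\oplus C'_{i}$, where $C'$ is the exterior algebra on $e_2,\ldots,e_n$. The projection of $\Ker\partial_i$ onto $C'_i$ is injective: if $x=e_1y\in\Ker\partial_i$ then $\partial x=y+e_1\partial y=0$, which forces $y=0$. This gives $\dim V_{n,i}\le\binom{n-1}{i}$. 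Combined with the independence of the $\binom{n-1}{i}$ elements $\xi_A$, this shows they form a basis.

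For the action, we use that $S_n$ acts on $C_n$ by algebra automorphisms permuting the $e_a$. For part (a), if $\sigma(1)=1$ then $\sigma\xi_a=e_{\sigma(a)}-e_1=\xi_{\sigma(a)}$, and multiplicativity gives $\sigma\xi_A=\xi_{\sigma(A)}$. For part (b), let $\tau=(1\ 2)$. We compute $\tau\xi_2=e_1-e_2=\xi_2$, and for $a\ge3$
$$\tau\xi_a=e_a-e_2=\xi_a+\xi_2.$$
If $2\in A$, write $\xi_A=\xi_2\prod_{a\in A\setminus 2}\xi_a$. Applying $\tau$ gives $\xi_2\prod(\xi_a+\xi_2)$, and since $\xi_2^2=0$ this equals $\xi_A$. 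If $2\notin A$, then
$$\tau\xi_A=\prod_{a\in A}(\xi_a+\xi_2)=\xi_A+\sum_{a\in A}\xi_2\,\xi_{A\setminus a},$$
because all terms with two or more factors of $\xi_2$ vanish. Over $\F$ signs are irrelevant, so $\xi_2\xi_{A\setminus a}=\xi_{(A\setminus a)\cup 2}$, which is \eqref{eq: Sn action V}.

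The only step needing care is the spanning, that is, the dimension bound. The triangularity argument above settles it without computing alternating binomial sums, so no real obstacle is expected.
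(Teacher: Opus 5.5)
Your proof is correct. The spanning step matches the paper's. The paper subtracts $\sum_{1\notin A}c_A\xi_A$ from a kernel element and notes that the remainder, supported on monomials $e_B$ with $1\in B$, must vanish because $\partial(e_B)$ contains $e_{B\setminus 1}$. That is exactly your statement that the projection $\Ker\partial_i\to C'_i$ is injective, via $\partial(e_1y)=y+e_1\partial y$.

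You differ from the paper in the other two steps.
\begin{itemize}
\item \textbf{Kernel membership.} The paper checks $\partial\xi_A=0$ by expanding $\xi_A$ in the $e$-basis and pairing off cancelling terms. You use instead that $\partial$ is a derivation of the (commutative, since we are over $\F$) exterior algebra, which reduces everything to $\partial\xi_a=0$.
\item \textbf{Action of $(1\ 2)$.} The paper applies $(1\ 2)$ to the expanded form of $\xi_A$. It then rewrites the result in the $\xi$-basis using \eqref{eq: xi span}, i.e.\ that a kernel element is determined by its coefficients on $e_A$ with $1\notin A$. You use that $S_n$ acts by algebra automorphisms, so you only need $\tau\xi_2=\xi_2$, $\tau\xi_a=\xi_a+\xi_2$ and $\xi_2^2=0$.
\end{itemize}

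What your route buys:
\begin{itemize}
\item Parts (a) and (b) become independent of the basis result.
\item The computation is the same multiplicative one the paper itself uses later for the mixed monomials $\xi_{A_+}\eta_{A_-}$ in \eqref{eq: transposition action unlink}.
\item The identity $\partial e_1+e_1\partial=\mathrm{id}$ also proves the acyclicity of $C_n$, which the paper only quotes as well known.
\end{itemize}

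What the paper's route buys: it works directly from the definition of $\partial$ and the permutation action on the $e_A$, with no appeal to the derivation or automorphism structure. The cost is some extra coefficient bookkeeping. The price of your route is small: you should note that the derivation property needs a one-line check. For $A\cap B=\{c\}$ the two Leibniz terms $e_{A\setminus c}e_B$ and $e_Ae_{B\setminus c}$ coincide and cancel mod 2, and when $|A\cap B|\ge 2$ every term vanishes.
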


\begin{proof}
Let us prove that $\xi_A\in \Ker \partial_i$. Indeed, 
$$
\partial_i(\xi_A)=\partial_i\left(e_A+\sum_{a\in A}e_{(A\setminus a)\cup 1}\right)=\sum_{a\in A}e_{A\setminus a}+\sum_{a\in A}\left(e_{A\setminus a}+\sum_{b\in (A\setminus a)}e_{(A\setminus \{a,b\})\cup 1}\right).
$$
The terms $e_{A\setminus a}$ cancel out, and the terms $e_{A\cup 1\setminus \{a,b\}}$ appear twice from $(a,b)$ and $(b,a)$ and cancel out as well.

The elements $\xi_A$ are clearly linearly independent, let us prove that they span $\Ker \partial_i$. Indeed, suppose $$c=\sum_{|A|=i} c_Ae_A\in \Ker \partial_i.$$ For $1\notin A$ the elements $\xi_A$ agree with $e_A$ modulo $\langle e_B:1\in B\rangle$, so we can consider the element 
$$
c'=c-\sum_{1\notin A}c_A\xi_A=\sum_{1\in B}c'_Be_B \in \Ker \partial_i.
$$
For $1\in B$ the boundary $\partial_i(e_B)$ has the term $e_{B\setminus 1}$ which cannot cancel with $e_{B'\setminus 1}$ for $B\neq B',1\in B'$. Therefore $c'=0$ and 
\begin{equation}
\label{eq: xi span}
c=\sum_{1\notin A}c_A\xi_A
\end{equation}
Finally, let us prove \eqref{eq: Sn action V}. If $2\in A$ then the action of $(1\ 2)$ swaps $e_A$ and $e_{(A\setminus 2)\cup 1}$, and fixes all other terms. If $2\notin A$ then 
$$
(1\ 2)\xi_A=e_A+\sum_{a\in A}e_{(A\setminus a)\cup 2}=\xi_A+\sum_{a\in A}\xi_{(A\setminus a)\cup 2},
$$
the last equation follows from \eqref{eq: xi span}.
\end{proof}

\begin{remark}
The elements $\xi_a$ are only defined when $n\ge 2$. If $n=1$ then $V_{1,1}=0$ and $V_{1,0}=\F$ with the basis vector $e_{\emptyset}=\xi_{\emptyset}$, so Proposition \ref{prop: Sn action on xi} holds in this case as well.
\end{remark}

\begin{remark}
A reader familiar with representation theory might recognize $V_{n,i}$ as the Specht module corresponding to the partition $\lambda=(n-i,1^i)$. It is a submodule of the permutation module $C_{n,i}$ corresponding to the same partition.
\end{remark}

Now we can compute invariants and coinvariants of $V_{n,i}$.

\begin{lemma}
\label{lem: Vni}
The representations $V_{n,i}$ have the following properties:
\begin{enumerate}
\item[(a)] $$
\dim \Inv(V_{n,i})=\begin{cases}
1 & \mathrm{if}\ n-i-1\ \mathrm{even\ or}\ i=0\\
0 & \mathrm{else}. 
\end{cases}
$$
\item[(b)] $$\dim \Coinv(V_{n,i})=\begin{cases}
1 & \mathrm{if}\ i\ \mathrm{even\ or}\ i=n-1\\
0 & \mathrm{else}.
\end{cases}
$$
\item[(c)] $V_{n,n-1-i}\simeq V_{n,i}^{\ast}$.
\end{enumerate}
\end{lemma}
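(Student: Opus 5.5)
I will use the explicit basis $\xi_A$ ($A\subset\{2,\ldots,n\}$, $|A|=i$) from Proposition \ref{prop: Sn action on xi}. Since $S_n$ is generated by $S_{n-1}=\mathrm{Stab}(1)$ together with the transposition $(1\ 2)$, an element is $S_n$-invariant exactly when it is fixed by $S_{n-1}$ and by $(1\ 2)$. Similarly, the coinvariants are obtained by first taking $S_{n-1}$-coinvariants and then imposing the single relation $v\sim(1\ 2)v$.

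For part (a), part (a) of Proposition \ref{prop: Sn action on xi} says that $S_{n-1}$ permutes the $\xi_A$ transitively. Hence the $S_{n-1}$-invariants are spanned by $\Xi=\sum_{|A|=i}\xi_A$. I then apply \eqref{eq: Sn action V} to $\Xi$: the terms with $2\in A$ are fixed, and the correction is $\sum_{2\notin A}\sum_{a\in A}\xi_{(A\setminus a)\cup 2}$. Each $\xi_B$ with $2\in B$ occurs once for every choice of $a\in\{3,\ldots,n\}\setminus(B\setminus 2)$, so its coefficient is $n-1-i$. Therefore $\Xi$ is invariant if and only if $n-1-i$ is even, or the correction sum is empty. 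The sum is empty when $i=0$, where $\Xi=\xi_\emptyset$; it is also empty when no $B$ exists, but that case is already covered by the parity condition. This gives (a).

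For part (b), the $S_{n-1}$-coinvariants are one-dimensional, with every $\xi_A$ mapping to a common class $[\xi]$. The relation $\xi_A\sim(1\ 2)\xi_A$ is trivial when $2\in A$. When $2\notin A$ it reads $\sum_{a\in A}[\xi_{(A\setminus a)\cup2}]=i[\xi]=0$. Such an $A$ exists exactly when $i\le n-2$. So $\Coinv(V_{n,i})$ is one-dimensional iff $i$ is even or $i=n-1$. When $i=0$ the set $A$ is empty and the relation is vacuous, which is consistent since $0$ is even.

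For part (c), I will use \eqref{eq: V as cokernel}, which gives $V_{n,i}\simeq\Coker\partial_{i+2}=C_{n,i+1}/\Imm\partial_{i+2}$. The cube complex $C_n$ is self-dual in the following sense. Via the self-dual permutation bases, the transpose of $\partial_j:C_{n,j}\to C_{n,j-1}$ is the coboundary $e_B\mapsto\sum_{b\notin B}e_{B\cup b}$. Composing with the complement isomorphism $e_B\mapsto e_{\overline B}$ turns this coboundary into $\partial_{n-j+1}:C_{n,n-j+1}\to C_{n,n-j}$, and all these maps are $S_n$-equivariant. Dualizing, $V_{n,i}^\ast=(\Coker\partial_{i+2})^\ast=\Ker(\partial_{i+2}^{T})$. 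Under the complement map this kernel is identified with $\Ker\partial_{n-i-1}\subset C_{n,n-i-1}$, which is $V_{n,n-1-i}$. The main point to check is this transpose/complement identification with the indices right; the rest is bookkeeping. As a consistency check, part (d) of Proposition \ref{prop: inv and coinv} swaps the answers in (a) and (b) under $i\mapsto n-1-i$, as it should.
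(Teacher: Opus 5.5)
Your proof is correct. Parts (b) and (c) follow the paper's argument.

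In (b), the paper likewise observes that all $\xi_A$ are identified, and that $(1\ 2)$ forces $i\,\xi_A\sim 0$ once some $A$ avoids $2$. Your remark that $S_n$ is generated by $\mathrm{Stab}(1)$ and $(1\ 2)$ shows these are the only relations, a step the paper leaves implicit.

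In (c), the paper dualizes the kernel description, $V_{n,i}^{\ast}=\Coker(\partial_i^{T})$. It then identifies this with $\Coker\partial_{n-i+1}\simeq V_{n,n-i-1}$ via the complement map. You dualize the cokernel description instead; it is the same self-duality of the cube complex, entered from the other end.

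Part (a) is where you genuinely differ. The paper does not use the $\xi$-basis there. It notes that $\Inv(V_{n,i})=\Inv(C_{n,i})\cap\Ker\partial_i$, that $\Inv(C_{n,i})$ is spanned by $\sum_{|A|=i}e_A$, and that the boundary of this element is $(n-i+1)\sum_{|B|=i-1}e_B$. This is a one-line count in the permutation module.

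Your route stays inside $V_{n,i}$. You restrict to $\mathrm{Stab}(1)$ to get the candidate $\Xi=\sum\xi_A$, then count the coefficient $n-1-i$ in $(1\ 2)\Xi-\Xi$. This costs more bookkeeping and depends on Proposition~\ref{prop: Sn action on xi}. In exchange, (a) and (b) are handled by the same generator-by-generator method.

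One small wording point: your aside that the correction sum is empty ``when no $B$ exists'' really concerns $i=n-1$, where no $A$ avoids $2$. This is harmless, since then $n-1-i=0$.
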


\begin{proof}
a) The only nonzero $S_n$-invariant element  in $C_{n,i}$ is $\sum_{|A|=i}e_A$. We have 
$$
\partial\left(\sum_{|A|=i}e_A\right)=(n-i+1)\sum_{|B|=i-1}e_B,
$$
so $\sum_{|A|=i}e_A\in \Ker \partial_i$ if and only if $n-i+1$ is even or $i=0$.

b) Clearly, all $\xi_A$ are equivalent under the $S_n$ action. If $i\le n-2$ then we can find a subset $A$ not containing $1$ and $2$, then  by \eqref{eq: Sn action V} we get
$$
\xi_A\sim (1\ 2)\xi_A\sim (i+1)\xi_A,
$$
so $i\xi_A\sim 0$. If $i$ is even then this relation is vacuous, and if $i$ is odd then $\xi_A\sim 0.$

c) We have 
$$V_{n,i}^{\ast}=\left(\Ker\left[C_{n,i}\xrightarrow{\partial_i}C_{n,i-1}\right]\right)^{\ast}=\Coker\left[C_{n,n-i+1}\xrightarrow{\partial_{n-i+1}}C_{n,n-i}\right]\simeq V_{n,n-i-1}.
$$
Alternatively, one can check that there is a perfect pairing $V_{n,i}\otimes V_{n,n-i-1}\to V_{n,n-1}\simeq \F$.
\end{proof}

\begin{example}
For $i=1$ we get $\dim\Inv(V_{n,1})=1$ if and only if $n-2$ is even, and $\dim\Coinv(V_{n,1})=1$ is and only if $1=n-1$, so $n=2$. 

For $i=n-2$ we get $\dim\Inv(V_{n,n-2})=1$ if and only if $n-2=0$, and $\dim\Coinv(V_{n,n-2})=1$ if and only if $n-2$ is even. This agrees with Lemma \ref{lem: W1 W2} for $W_1=V_{n,1},W_2=V_{n,n-2}$.
\end{example}

\subsection{Deformed cube complex}

In what follows we will need a deformation of the cube complex. Fix an integer $1\le k\le n$, and let $\UU$ be a formal variable. We consider a complex of free $\F[\UU]$-modules $C_n[\UU]=C_{n}\otimes \F[\UU]$ with the basis $e_A$ as above and the differential
$$
\partial^{(k)}(e_A)=\begin{cases}
\sum_{a\in A}e_{A\setminus a} & |A|>k\\
\UU\sum_{a\in A}e_{A\setminus a} & |A|\le k
\end{cases}
$$
The group $S_n$ acts on $C_n[\UU]$ as before and the differential $\partial^{(k)}$ is $S_n$-equivariant. We can now determine the action of $S_n$ in the homology of $\partial^{(k)}$ (compare with \cite[Lemma 4.14]{GHom}).

\begin{lemma}
\label{lem: deformed cube}
We have an isomorphism of $S_n$-representations:
$$
H_i(\partial^{(k)})\simeq \begin{cases}
V_{n,i} & i<k\\
0 & i\ge k.
\end{cases}
$$
\end{lemma}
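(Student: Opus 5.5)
We compare $\partial^{(k)}$ directly with the undeformed cube differential $\partial$. By definition, $\partial^{(k)}_i=\partial_i$ for $i>k$ and $\partial^{(k)}_i=\UU\partial_i$ for $i\le k$, where $\partial^{(k)}_i:C_{n,i}[\UU]\to C_{n,i-1}[\UU]$. Since $\UU$ is not a zero divisor on the free module $C_{n,i-1}[\UU]$, we get
$$
\Ker\partial^{(k)}_i=\Ker\partial_i\otimes\F[\UU]=V_{n,i}\otimes\F[\UU]
$$
in every degree. The argument is then a degree-by-degree comparison of images with these kernels. All identifications are $S_n$-equivariant, because $\partial$ commutes with the $S_n$-action and $\UU$ is central.

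\textbf{Case $i\ge k$.} Here the incoming map is $\partial^{(k)}_{i+1}$ with $i+1>k$, so it is the undeformed $\partial_{i+1}\otimes\F[\UU]$. The cube complex $C_n$ is acyclic over $\F$, and tensoring with the flat module $\F[\UU]$ keeps it acyclic, so
$$
\Imm\partial^{(k)}_{i+1}=\Imm\partial_{i+1}\otimes\F[\UU]=\Ker\partial_i\otimes\F[\UU]=\Ker\partial^{(k)}_i.
$$
Hence $H_i(\partial^{(k)})=0$. The edge degree $i=n$ is also covered, since $V_{n,n}=0$.

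\textbf{Case $i<k$.} Now the incoming map is $\UU\partial_{i+1}$, so its image is $\UU\cdot(\Imm\partial_{i+1}\otimes\F[\UU])=\UU(V_{n,i}\otimes\F[\UU])$, using acyclicity again. Therefore
$$
H_i(\partial^{(k)})=\frac{V_{n,i}\otimes\F[\UU]}{\UU\,(V_{n,i}\otimes\F[\UU])}\simeq V_{n,i},
$$
equivariantly.

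If the statement is meant in the graded sense, this $H_i$ is $V_{n,i}$ viewed as a $\UU$-torsion module, i.e. an $\F[\UU]/\UU$-module. For $i=0<k$ we get $V_{n,0}=\F$, which is consistent with this.

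There is no real obstacle. The only points needing care are that $\UU$-multiplication does not change kernels (freeness) and that acyclicity survives the base change to $\F[\UU]$ (flatness over a field).
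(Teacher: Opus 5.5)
Your proof is correct and follows the paper's argument. You identify $\Ker\partial^{(k)}_i=V_{n,i}[\UU]$ in every degree, then use acyclicity of $C_n$ to get $\Imm\partial^{(k)}_{i+1}=V_{n,i}[\UU]$ for $i\ge k$ and $\UU V_{n,i}[\UU]$ for $i<k$. Your remarks on freeness (so $\UU$ does not change kernels) and flatness (so acyclicity survives tensoring with $\F[\UU]$) make explicit what the paper uses tacitly.
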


\begin{proof}
For all $i$ we get $\Ker\partial^{(k)}_i=\Ker \partial_i\otimes \F[\UU]=V_{n,i}[\UU]$.

For $i<k$ we get $\Imm \partial^{(k)}_{i+1}=\UU \Imm \partial_{i+1}\otimes \F[\UU]=\UU V_{n,i}[\UU],$
therefore
$$
H_i(\partial^{(k)})=\Ker\partial^{(k)}_i/\Imm \partial^{(k)}_{i+1}=V_{n,i}[\UU]/\UU V_{n,i}[\UU]=V_{n,i}.
$$
For $i\ge k$ we get $\Imm \partial^{(k)}_{i+1}= \Imm \partial_{i+1}\otimes \F[\UU]=  V_{n,i}[\UU],$
therefore
$H_i(\partial^{(k)})=0.$

All of the above isomorphisms commute with the $S_n$-action, and the result follows. %\AA{The statement does not say anything about the $S_n$ action, so do we need this sentence?}
\end{proof}

\section{0-framed unknot}
\label{sec: unknot}

In this section we compute the Floer lasagna module for the $0$-framed unknot $O_0$.

\subsection{Symmetrization}

%\EG{Rewrite this using $V_{n,i}$ notations}

%Let us try to compute the 0-framed unknot more carefully. 
First, we introduce some notations. Suppose that we have an unlink with   positively oriented components indexed by a subset $\mathcal{P}$ and  negatively oriented components indexed by a subset $ \mathcal{N}$. Here
$$
|\mathcal{P}|=k_{+},\ |\mathcal{N}|=k_{-},\ \mathcal{P}\sqcup \mathcal{N}=\{1,\ldots,n\}.
$$
Without loss of generality we will make the following assumption. 

\begin{assumption}
Throughout this section  we will assume that $k_{+}\ge 2$ and the components labeled by $1$ and $2$ are positively oriented.
\end{assumption}

%We  put a marked point on the first strand. 
The full Heegaard Floer complex $\CFL(O_0(k_+,k_-))$ is the exterior algebra over $R_n$ generated by $\xi_j, j\in \mathcal{P}\setminus \{1\}$ and $\eta_{k},\ k\in \mathcal{N}$ with differential
$$
\partial(\xi_j)=U_jV_j-U_1V_1\ (j\in \mathcal{P}),\ \partial(\eta_{k})=U_{k}V_{k}-U_1V_1\ (k\in \mathcal{N}).
$$
The hat Heegaard Floer homology $\HFLhat(O_0(k_+,k_-))$ is the exterior algebra over $\F$ generated by $\xi_j,\eta_{k}$ as above with zero differential. 
It has a basis
$$
\xi_{A_+}\eta_{A_-}:\ A_{+}\subset \mathcal{P}\setminus\{1\}, A_{-}\subset \mathcal{N}.
$$

Note that $\HFL(O_0(k_+,k_-))\cong R/(U_1V_1=\cdots=U_{n}V_n)$ and the chain complex $\CFL(O_0(k_+,k_-))$ is a free resolution of $\HFL(O_0(k_+,k_-))$. So to compute the braid group action $\Br_{k_+}\times \Br_{k_-}$ on $\HFLhat(O_0(k_+,k_-))$ one can lift the braid group action on $\HFL(O_0(k_+,k_-))$ to $\CFL(O_0(k_+,k_-))$, which is unique up to chain homotopy by Lemma \ref{lem: lifting to resolutions}, and then compute the induced map on $\HFLhat(O_0(k_+,k_-))$. The action of $(\sigma_+,\sigma_-)\in \Br_{k_+}\times \Br_{k_-}$ on $R/(U_1V_1=\cdots=U_{n}V_n)$ is the isomorphism induced by permuting the indices, i.e. $(\sigma_+,\sigma_-)(1)=1$, 
\[(\sigma_+,\sigma_-)(U_i)=\begin{cases}
U_{\sigma_+(i)}& i\in \mathcal{P}\\ U_{\sigma_-(i)}& i\in \mathcal{N}.\end{cases}\]
The action $(\sigma_+,\sigma_-)(V_i)$ is similar. We lift this action to $\CFL(O_0(k_+,k_-))$ such that it factors through the action of $S_{k_{+}}\times S_{k_{-}}$.  This is generated by the transposition $(1\ 2)$, the action of $S_{k_{+}-1}$ which permutes the strands in $\mathcal{P}\setminus \{1\}$ and the action of $S_{k_{-}}$ which permutes the strands in  $\mathcal{N}$. The action of $S_{k_{+}-1}$ permutes the indices of $\xi_j$ and the action of $S_{k_{-}}$ permutes the indices of $\eta_{k}$. 
The action of the transposition $(1\ 2)$ is the most interesting and is given as follows:
$$
(1\ 2)\xi_j=\begin{cases}
\xi_2 & j=2\\
\xi_j+\xi_2 & j\neq 2,
\end{cases}
\quad
(1\ 2)\eta_{k}=\eta_{k}+\xi_2.
$$
For the action of $(1\ 2)$ on $\xi_{A_+}\eta_{A_-}$ we have two cases. If $2\in A_+$ then $(1\ 2)\xi_{A_+}\eta_{A_-}=\xi_{A_+}\eta_{A_-}$. If $2\notin A_+$ then
$$
(1\ 2)\xi_{A_+}\eta_{A_-}=\prod_{j\in A_{+}}(\xi_j+\xi_2)\prod_{k\in A_{-}}(\eta_k+\xi_2)=
$$
\begin{equation}
\label{eq: transposition action unlink}
\xi_{A_+}\eta_{A_-}+\sum_{j\in A_+}\xi_{(A_+\setminus j)\cup 2}\eta_{A_{-}}+\sum_{k\in A_{-}}\xi_{A_{+}\cup 2}\eta_{A_{-}\setminus k}.
\end{equation}
Compare with Proposition \ref{prop: Sn action on xi} which corresponds to the case $|A_-|=0$.

\begin{lemma}\label{lem: 0 framed relations}
The quotient of $\HFLhat(O_0(k_+,k_{-}))$ by the equivalence relation \eqref{eq: lasagna 1} is spanned by the elements 
$$
g(a,b),\ 0\le a\le k_{+}-1,\ 0\le b\le k_{-}
$$
modulo relations
\begin{equation}
\label{eq: equivalence}
g(a,b)\sim (a+1)g(a,b)+bg(a+1,b-1)\ \mathrm{for}\  a<k_{+}-1.
\end{equation}

\end{lemma}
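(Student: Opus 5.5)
The plan is to compute the coinvariants of $\HFLhat(O_0(k_+,k_-))$ under the group generated by $S_{k_+-1}$ (permuting $\mathcal{P}\setminus\{1\}$), $S_{k_-}$ (permuting $\mathcal{N}$) and the transposition $(1\ 2)$. Since this action on the basis $\xi_{A_+}\eta_{A_-}$ has already been described, this generates the action of $S_{k_+}\times S_{k_-}$ through which the braid action factors. The relation \eqref{eq: lasagna 1} thus amounts to passing to $\Coinv$ of this representation.

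First I use only the subgroup $S_{k_+-1}\times S_{k_-}$. It acts on the basis $\xi_{A_+}\eta_{A_-}$ by permuting the index sets $A_+\subset\mathcal{P}\setminus\{1\}$ and $A_-\subset\mathcal{N}$. Its orbits are exactly determined by the pair $(|A_+|,|A_-|)=(a,b)$ with $0\le a\le k_+-1$ and $0\le b\le k_-$. The quotient by this subgroup is therefore spanned by classes $g(a,b)$, namely the common class of all $\xi_{A_+}\eta_{A_-}$ with $|A_+|=a,|A_-|=b$. There are no further relations at this stage, since coinvariants of a permutation representation have one basis element per orbit.

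Next I impose $v\sim(1\ 2)v$. It suffices to do this for basis vectors $v=\xi_{A_+}\eta_{A_-}$, because the coinvariants of the whole group are the quotient of the subgroup coinvariants by the images of $v-(1\ 2)v$, together with conjugates. These conjugates are again of the same form after applying permutations, and they give the same relations on classes. If $2\in A_+$ the relation is trivial. If $2\notin A_+$, which is possible precisely when $a<k_+-1$, I apply \eqref{eq: transposition action unlink}. The term $\xi_{A_+}\eta_{A_-}$ contributes $g(a,b)$. Each of the $a$ terms $\xi_{(A_+\setminus j)\cup 2}\eta_{A_-}$ has sizes $(a,b)$, contributing $a\,g(a,b)$. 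Each of the $b$ terms $\xi_{A_+\cup 2}\eta_{A_-\setminus k}$ has sizes $(a+1,b-1)$, contributing $b\,g(a+1,b-1)$. Summing gives $g(a,b)\sim(a+1)g(a,b)+b\,g(a+1,b-1)$, which is \eqref{eq: equivalence}.

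The only subtle point is justifying that these relations suffice, that is, that the group generated is all of $S_{k_+}\times S_{k_-}$. This holds because $(1\ 2)$ and $S_{k_+-1}$ generate $S_{k_+}$. It also requires the relation $v\sim gv$ for arbitrary products $g$ to follow from the generator relations, which it does by transitivity, $v\sim g_1v\sim g_2g_1v$. One should also double-check the convention that the lifted action indeed factors through the symmetric groups, as asserted above. Granting that, the lemma follows.
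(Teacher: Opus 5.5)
Your proposal is correct and follows the paper's proof. Both first pass to orbit classes $g(a,b)$ under $S_{k_+-1}\times S_{k_-}$, and then impose $v\sim(1\ 2)v$ via \eqref{eq: transposition action unlink} on representatives with $2\notin A_+$ (trivial when $2\in A_+$, in particular when $a=k_+-1$). You also spell out why imposing the relation only for the generators $(1\ 2)$ and $S_{k_+-1}\times S_{k_-}$ suffices, which the paper leaves implicit.
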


\begin{proof}
Under the action of $S_{k_{+}-1}\times  S_{k_{-}}$ the monomials $\xi_{A_+}\eta_{A_-}$ are equivalent as long as they have the same $|A_+|$ and $|A_-|$. We denote the representative in this equivalence class by $g(a,b)$ for $|A_+|=a,|A_-|=b$. If $2\in A_+$, in particular when $a=k_{+}-1$, we do not impose any additional relations.

If $a<k_{+}-1$ then we can always choose a representative in an equivalence class with $A_+$ not containing 2, and \eqref{eq: transposition action unlink} implies \eqref{eq: equivalence}.
\end{proof}

Consequently, we can describe a symmetrized basis for $\HFLhat(O_0(k_{+},k_{-}))$ as follows.

\begin{lemma}
\label{lem: symmetrized basis unknot}
The quotient of $\HFLhat(O_0(k_{+},k_{-}))$ by the equivalence relation  \eqref{eq: lasagna 1} has a basis over $\F$ consisting of equivalence classes $[g(a,b)]$ where
\begin{itemize}
\item $a$ even and $b$ arbitrary.
\item $a=k_{+}-1$ odd and $b$ odd. 
\item $a=k_+-1$ odd and $b=k_-$ even.
\end{itemize}
In all cases the Maslov grading of $g(a,b)$ equals $-a-b$.
\end{lemma}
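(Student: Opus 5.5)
The quotient is a finite linear-algebra problem: by Lemma \ref{lem: 0 framed relations} it is the $\F$-span of the symbols $g(a,b)$, $0\le a\le k_{+}-1$, $0\le b\le k_{-}$, modulo the relations \eqref{eq: equivalence}. Over $\F$, relation \eqref{eq: equivalence} for $a<k_{+}-1$ reads
$$
a\,g(a,b)+b\,g(a+1,b-1)=0,
$$
with $g(a+1,-1)=0$. Both coefficients only matter mod $2$, and each relation links the two generators with the same value of $a+b$. So I would fix $s=a+b$ and analyze the chain $g(0,s),g(1,s-1),\ldots$ inside the rectangle, splitting into four cases for $a<k_{+}-1$.

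\emph{Case $a$ even, $b$ even:} the relation is vacuous. \emph{Case $a$ even, $b$ odd:} it gives $g(a+1,b-1)=0$, with $a+1$ odd. \emph{Case $a$ odd, $b$ even:} it gives $g(a,b)=0$. \emph{Case $a$ odd, $b$ odd:} it gives $g(a,b)=g(a+1,b-1)$.

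Next I check which classes survive.
\begin{itemize}
\item For $a$ even, $g(a,b)$ is never killed. A relation with odd first index only equates $g(a,b)$ with $g(a\pm1,\ldots)$, and every odd-index generator with $a<k_{+}-1$ is either killed outright or identified with an even-index one. So the even-$a$ generators survive independently.
\item For odd $a<k_{+}-1$, $g(a,b)$ is either zero ($b$ even) or equal to the even-index generator $g(a+1,b-1)$ ($b$ odd). Hence it contributes no new basis element.
\item For $a=k_{+}-1$ odd, no relation has this $a$ as its first index. The only relation involving it is the one at $a'=k_{+}-2$ (even), $b'=b+1$. This relation kills $g(k_{+}-1,b)$ exactly when $b'$ is odd, i.e. when $b$ is even, and $b'\le k_{-}$ requires $b<k_{-}$.
\end{itemize}
Therefore $g(k_{+}-1,b)$ survives iff $b$ is odd or $b=k_{-}$. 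When $b=k_{-}$ is odd it already falls under the odd case, so the genuinely new extra case is $b=k_{-}$ even, matching the statement.

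The step that needs care is verifying linear independence of the surviving classes, not just that they span. The relations are a sparse system: each involves at most two generators, and those generators lie on a common antidiagonal with adjacent $a$. I would treat the relations as edges of a graph on the generators, with one-term relations acting as "kill" vertices. In each connected component (a path along an antidiagonal) I check that at most one surviving class remains and that no component contains both a kill and a surviving even generator. This holds because the odd-odd identifications only pair $g(a,b)$ with $g(a+1,b-1)$ for even $a+1$. Consecutive relations along the path alternate between $a$ even (where $b$ is forced to have a fixed parity along the antidiagonal) and $a$ odd, so the components are short paths of length at most two. I expect this case bookkeeping, especially the boundary cases $a=k_{+}-1$, $b=0$ and $b=k_{-}$, to be the main obstacle.

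Finally, the Maslov grading follows because $\xi_j,\eta_k$ each lie in Maslov grading $-1$ in $\HFLhat$ of the unlink (the $B$-type generators), so $M(\xi_{A_+}\eta_{A_-})=-|A_+|-|A_-|=-a-b$.
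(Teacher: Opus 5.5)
Your proposal is correct and follows the paper's proof. It uses the same four-way parity case analysis of relation \eqref{eq: equivalence} and handles the boundary column $a=k_{+}-1$ through the relation at $(k_{+}-2,b+1)$; your component-by-component independence check and your justification of the Maslov grading $-a-b$ just make explicit two points the paper states without detail.
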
 

\begin{proof}
Based on the parity of $a$ and $b$ we have the following cases for \eqref{eq: equivalence}:

\begin{enumerate}[(a)]
\item If $a$ and $b$ even then \eqref{eq: equivalence} is vacuous.
\item If $a$ odd and $b$ even then \eqref{eq: equivalence} implies $g(a,b)\sim 0$.
\item If $a$ even and $b$ odd then \eqref{eq: equivalence} implies $g(a+1,b-1)\sim 0$. Note that $a+1$ is odd and $b-1$ is even, so we do not get any new relations, unless $a+1=k_+-1$. In that case, $g(k_+-1,b)\sim 0$ if $k_+-1$ is odd and $b$ is even.
\item If $a$ and $b$ odd then \eqref{eq: equivalence} implies $g(a,b)\sim g(a+1,b-1)$.
\end{enumerate}
Consequently, by (a), (b) and (c) the equivalence classes $[g(a,b)]$ where $a$ is even and $b$ is arbitrary are all non-zero and distinct. Moreover, any $[g(a,b)]$ with $a<k_+-1$ is equal to one of these classes. Suppose $a=k_+-1$ and odd. By (c) and (d) the equivalence classes $[g(k_+-1,b)]$ where $b$ is either odd or equal $k_-$ give rise to new distinct classes that survive.
\end{proof}

\begin{comment}

\begin{lemma}
\label{lem: symmetrized basis unknot}
The quotient of $\HFLhat(O_0(k_{+},k_{-}))$ by the equivalence relation  \eqref{eq: lasagna 1} has the following basis over $\F$:
\begin{itemize}
\item $g(a,b)$, $a$ even and $b$ arbitrary.
\item $g(a,b)$, $a=k_{+}-1$ odd and $b$ odd. 
\item $g(k_{+}-1,k_{-})$ in case both $k_{+}$ and $k_{-}$ are even. 
\end{itemize}
In all cases the Maslov degree of $g(a,b)$ equals $-a-b$.
\end{lemma} 

\begin{proof}
We have the following cases:
\begin{enumerate}[(a)]
\item If $a$ is even and $b$ is even then \eqref{eq: equivalence} is vacuous and $g(a,b)$ survives.
\item If $a$ is odd and $a<k_+-1$, and $b$ is even then \eqref{eq: equivalence} implies $g(a,b)=0$. 
\item If $a=k_{+}-1$  is odd and $b<k_{-}$ is even, then \eqref{eq: equivalence} for $a-1$ and $b+1$ implies that $g(a,b)=0$. 
\item If $a=k_{+}-1$  is odd but $b=k_{-}$ is even then \eqref{eq: equivalence} is vacuous.
\item If $a$ is even and $b$ is odd then \eqref{eq: equivalence} (if applicable) implies $g(a+1,b-1)=0$. Note that $a+1$ is odd and $b-1$ is even, so we do not get any new relations but $g(a,b)$ survives. 
\item If $a$ is odd and $b$ is odd then \eqref{eq: equivalence} (if applicable) implies $g(a,b)=g(a+1,b-1)$. Note that in this case $a+1$ and $b-1$ is even, so we reduce to case (a).
%\item Assume $a=k_{+}-1$  is odd but $b<k_{-}$ is even. By applying the argument in case (c) to $a-1$ and $b+1$, we get $g(a,b)=0$. 
%\item If $a=k_{+}-1$  is odd but $b=k_{-}$ is even then there is no relation.
\end{enumerate}

\end{proof}

\end{comment}

\begin{remark}
The case $b=0$ corresponds to computing the coinvariants $\Coinv V_{k_{+},a}$. In this case Lemma \ref{lem: symmetrized basis unknot} agrees with Lemma \ref{lem: Vni}.  
\end{remark}

%\textcolor{red}{If $b=k_{-}$, there seems no relations for $a=k_{+}-1.$}

%Overall, we get the following result.
%\AA{This proposition looks like the proof of the next Lemma, how about we just move this to the proof of the Lemma. Ignore this comment if it is going to disapear after rewriting}

We illustrate the basis $g(a,b)$  for $(k_{+},k_{-})=(3,2),(4,3),(5,4)$ in Figure \ref{fig: g ab}.

\begin{figure}[ht!]
\begin{tikzpicture}
\draw [dotted,->] (0,0)--(2.5,0);
\draw [dotted,->] (0,0)--(0,2.5);
\draw (2.5,-0.2) node {$a$};
\draw (-0.2,2.5) node {$b$};

\draw[dotted] (0,0)--(2,0)--(2,2)--(0,2)--(0,0);
\draw[dotted] (1,0)--(1,2);
\draw[dotted] (0,1)--(2,1);
\draw (0,0) node {$\bullet$};
\draw (0,1) node {$\bullet$};
\draw (0,2) node {$\bullet$};
\draw (2,0) node {$\bullet$};
\draw (2,1) node {$\bullet$};
\draw (2,2) node {$\bullet$};
\end{tikzpicture}
\qquad 
\begin{tikzpicture}
\draw [dotted,->] (0,0)--(3.5,0);
\draw [dotted,->] (0,0)--(0,3.5);
\draw (3.5,-0.2) node {$a$};
\draw (-0.2,3.5) node {$b$};

\draw[dotted] (0,0)--(3,0)--(3,3)--(0,3)--(0,0);
\draw[dotted] (1,0)--(1,3);
\draw[dotted] (2,0)--(2,3);
\draw[dotted] (0,1)--(3,1);
\draw[dotted] (0,2)--(3,2);
\draw (0,0) node {$\bullet$};
\draw (0,1) node {$\bullet$};
\draw (0,2) node {$\bullet$};
\draw (0,3) node {$\bullet$};
\draw (2,0) node {$\bullet$};
\draw (2,1) node {$\bullet$};
\draw (2,2) node {$\bullet$};
\draw (2,3) node {$\bullet$};
\draw (3,1) node {$\bullet$};
\draw (3,3) node {$\bullet$};
\end{tikzpicture}
\qquad
\begin{tikzpicture}
\draw [dotted,->] (0,0)--(4.5,0);
\draw [dotted,->] (0,0)--(0,4.5);
\draw (4.5,-0.2) node {$a$};
\draw (-0.2,4.5) node {$b$};

\draw[dotted] (0,0)--(4,0)--(4,4)--(0,4)--(0,0);
\draw[dotted] (1,0)--(1,4);
\draw[dotted] (2,0)--(2,4);
\draw[dotted] (3,0)--(3,4);
\draw[dotted] (0,1)--(4,1);
\draw[dotted] (0,2)--(4,2);
\draw[dotted] (0,3)--(4,3);
\draw (0,0) node {$\bullet$};
\draw (0,1) node {$\bullet$};
\draw (0,2) node {$\bullet$};
\draw (0,3) node {$\bullet$};
\draw (0,4) node {$\bullet$};
\draw (2,0) node {$\bullet$};
\draw (2,1) node {$\bullet$};
\draw (2,2) node {$\bullet$};
\draw (2,3) node {$\bullet$};
\draw (2,4) node {$\bullet$};
\draw (4,0) node {$\bullet$};
\draw (4,1) node {$\bullet$};
\draw (4,2) node {$\bullet$};
\draw (4,3) node {$\bullet$};
\draw (4,4) node {$\bullet$};
\end{tikzpicture}
\caption{The basis $g(a,b)$ for $(k_{+},k_{-})=(3,2)$ (left), $(4,3)$ (middle) and $(5,4)$ (right).}
\label{fig: g ab}
\end{figure}

\subsection{Cabled link Floer homology}

Next, we consider the cobordism maps between $L=O_0(k_{+},k_{-})$ and $L'=O_0(k_{+}+1,k_{-}+1)$ induced by the pair of pants.

\begin{lemma}
Suppose that the new $(n+1)$-st component $L'_{n+1}$ is oriented positively and $(n+2)$-nd component $L'_{n+2}$ is oriented negatively. Then,
\begin{eqnarray}
\label{eq: pants zero framing}
F_P(v\otimes T)=v\otimes (\xi_{n+1}+\eta_{n+2}),\\
F_P(v\otimes B)=v\otimes (\xi_{n+1}\eta_{n+2}). 
\end{eqnarray}
Note that after identification $U_{n+1}=U_{n+2}$ and $V_{n+1}=V_{n+2}$ we get $\partial(\xi_{n+1}+\eta_{n+2})=0$. 
\end{lemma}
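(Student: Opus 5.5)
The plan is to exploit the fact that $L=O_0(k_{+},k_{-})\sqcup O$ and $L'=O_0(k_{+}+1,k_{-}+1)$ are unlinks, so $\CFL$ of each is a free resolution of its homology, a Koszul-type complex. By Lemma \ref{lem: lifting to resolutions} and Corollary \ref{cor: homotopic to level filtered}, $f_P$ is then determined up to homotopy by its action on $\HFL$. That action is in turn pinned down, up to $(U_{n+1},V_{n+1})$-torsion, by parts (2) and (3) of Lemma \ref{lem:h-PP-1}. Since $\HFL$ of an unlink is torsion-free over these variables, the identities
\[ U_{n+1}f_P\simeq f_{H_\gamma}\circ f_{P_1}, \qquad V_{n+1}f_P\simeq f_{H_\gamma}\circ f_{P_2} \]
determine $f_P$ completely once we know $f_{P_1}$, $f_{P_2}$ and $h_\gamma$ on the new pair of components. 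The whole computation is local: the product components contribute $v\mapsto v$. So we reduce to the pair of pants from $O$ (with generators $T,B$ in $\CFL(O_0(k_+,k_-)\sqcup O)$, where $B\mapsto T$ under $U_{n+1}V_{n+1}-U_1V_1$) to the two new oppositely oriented components.

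The key steps, in order, are as follows.

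First, identify $h_\gamma$ for the small arc $\gamma$ joining $L'_{n+1}$ and $L'_{n+2}$. By Lemma \ref{lem: UV homotopy arc} it is induced by a homotopy between $U_{n+1}V_{n+1}$ and $U_{n+2}V_{n+2}$. In the exterior-algebra model, this homotopy is multiplication by $\xi_{n+1}+\eta_{n+2}$, whose boundary is $U_{n+1}V_{n+1}-U_{n+2}V_{n+2}$. Uniqueness up to homotopy follows from Lemma \ref{lem: lifting to resolutions} applied to the complex with $U_{n+1}=U_{n+2}$, $V_{n+1}=V_{n+2}$. This is consistent with the remark in the statement that $\partial(\xi_{n+1}+\eta_{n+2})=0$ after the identification.

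Second, compute $f_{P_1}$ and $f_{P_2}$ on $T$ and $B$ from their grading shifts in Lemma \ref{lem:PoP-grading}, namely Maslov $-2$ and $A_{n+1}$-shift $-1$ for $P_1$, and Maslov $0$ and $A_{n+1}$-shift $+1$ for $P_2$. Together with the reversal relations $f_{-P}f_{P_1}\simeq U_{n+1}$ and $f_{-P}f_{P_2}\simeq V_{n+1}$ of Lemma \ref{lem:relations}, these force $f_{P_1}(T)=U_{n+1}$ and $f_{P_2}(T)=V_{n+1}$ (the unit generator), up to terms that are killed. For $B$ we get $f_{P_1}(B)=U_{n+1}\xi_{n+1}$ or $U_{n+1}\eta_{n+2}$, the appropriate degree-one class; $f_{P_2}(B)$ is analogous. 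The birth composite $\varphi_i=f_{P_i}\circ f_B$ lets us check the $T$ case directly.

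Third, combine the previous steps. Multiplying by $\xi_{n+1}+\eta_{n+2}$ gives
\[ U_{n+1}f_P(v\otimes T)=U_{n+1}\,v\otimes(\xi_{n+1}+\eta_{n+2}). \]
For $B$ we get $U_{n+1}f_P(v\otimes B)=U_{n+1}\,v\otimes\xi_{n+1}\eta_{n+2}$, using $(\xi_{n+1}+\eta_{n+2})\xi_{n+1}=\eta_{n+2}\xi_{n+1}$ over $\F$. Dividing by $U_{n+1}$, which is legitimate because there is no torsion, yields both formulas on $\CFL$, and hence on $\HFLhat$ after setting $U=V=0$. As a sanity check, the $B$ formula should also follow from the corollary $F_P(-\otimes B)=h_{\gamma''}F_P(-\otimes T)$, together with Example \ref{ex:simple-X}.

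I expect the main obstacle to be the second step: determining $f_{P_1}(B)$ and $f_{P_2}(B)$ precisely, rather than just up to grading. Grading alone allows either $\xi_{n+1}$ or $\eta_{n+2}$. Here I would use the orientation-reversal isomorphism of Proposition \ref{prop: reversing orientation}, which swaps $U_{n+2}$ and $V_{n+2}$, to relate the two components. If that fails, I would bypass $P_i$ entirely: derive the $T$ formula by grading and the relation $f_{-P}\circ f_P\simeq0$, and obtain the $B$ formula from the corollary above.
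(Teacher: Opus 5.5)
Your route differs from the paper's. The paper never leaves $\HFLhat$. Capping $L'_{n+1}$ (resp.\ $L'_{n+2}$) with a death disk turns $P$ into a product cobordism, so $F_{D_{n+1}}\circ F_P$ and $F_{D_{n+2}}\circ F_P$ are identities (reading $B$ as $\eta_{n+2}$, resp.\ $\xi_{n+1}$). This reads off the $\xi_{n+1}$- and $\eta_{n+2}$-components of both images, and the Maslov shift $-1$ of $F_P$ finishes.

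Your strategy via Lemma~\ref{lem:h-PP-1}(2),(3) is the one the paper uses for $T(n,n)$ in Theorem~\ref{thm: Fp top diagonal}, where capping no longer gives a product. So it is viable, but as written it has a real hole. The target of $f_P$ and $f_{H_\gamma}$ is $\CFL(L')$ with \emph{both} $U_{n+1}=U_{n+2}$ and $V_{n+1}=V_{n+2}$ imposed, and that complex is not a resolution of its homology. It is the Koszul complex of an $(n+1)$-component unlink tensored with $\Lambda(\xi_{n+1}+\eta_{n+2})$ (zero differential on the second factor). So its homology has a second copy in level one, generated by the cycle $\xi_{n+1}+\eta_{n+2}$, and that is exactly where $F_P(v\otimes T)$ lives. (Were the homology concentrated in level zero, Corollary~\ref{cor: homotopic to level filtered}(a) would make that level-raising map null-homotopic.)

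Hence neither your opening claim that Lemma~\ref{lem: lifting to resolutions} determines $f_P$, nor your uniqueness argument for $h_\gamma$, applies. For $h_\gamma$ you must work on the unidentified $\CFL(L')$. Two homotopies between $U_{n+1}V_{n+1}$ and $U_{n+2}V_{n+2}$ differ by a chain map whose level-raising part is null-homotopic by Corollary~\ref{cor: homotopic to level filtered}(a), and whose remaining parts vanish or have coefficients in the ideal generated by the $U_iV_i$. So they agree on $\HFLhat$, though not up to homotopy: $U_1V_1$ times contraction by $\eta_{n+2}$ is a non-null-homotopic chain map of the right degrees.

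More seriously, on $\HFLhat$ the identity $U_{n+1}F_P=h_\gamma\circ F_{P_1}$ is vacuous, since $U_{n+1}$ acts by zero. Dividing by $U_{n+1}$ is fine on the full homology of the doubly identified complex, which is torsion-free. But you then need a reason why that homology map determines the map on $\HFLhat$ for a non-resolution target; ``hence on $\HFLhat$ after setting $U=V=0$'' is not automatic. A parity count supplies it. Every generator in level $\ell$ has Maslov grading $-\ell$ and Alexander grading $0$. So the only components of a Maslov-degree $-1$ map surviving $U=V=0$ have constant coefficients and raise level by exactly one, and such a component is determined up to homotopy by its effect on level-one homology. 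This argument is missing.

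The obstacle you single out in Step 2 is not one. Since $(\xi_{n+1}+\eta_{n+2})\xi_{n+1}=(\xi_{n+1}+\eta_{n+2})\eta_{n+2}=\xi_{n+1}\eta_{n+2}$, either choice gives the same answer. What grading does \emph{not} exclude is $f_{P_1}(v\otimes B)=U_{n+1}v\,(\xi_{n+1}+\eta_{n+2})$ or $0$, either of which would give $F_P(v\otimes B)=0$. That is ruled out by the chain-map condition $\partial f_{P_1}(v\otimes B)=f_{P_1}(\partial(v\otimes B))$ in the target where only $U_{n+1}=U_{n+2}$ is imposed, so that $V_{n+1}\neq V_{n+2}$. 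That target is a resolution (as in Lemma~\ref{lem: identify U not V}), so there Lemma~\ref{lem: lifting to resolutions} legitimately determines the relevant part of $f_{P_1}$ from $\Phi_1(1)=U_{n+1}$. Orientation reversal plays no role. With the parity count above, the $B$-formula is in fact forced by the $T$-formula through lifting, so this step can be bypassed.

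Finally, your sanity check via $F_P(-\otimes B)=h_{\gamma''}F_P(-\otimes T)$ matches only after noting that the $B$ of Example~\ref{ex:simple-X} is taken relative to $L_n$. It therefore differs from the $B$ here by $\xi_n\otimes T$ (or $\eta_n\otimes T$).
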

\begin{proof}
Let $D_{n+1}$ (resp. $D_{n+2}$) be the death cobordism from $L'$ to $O_{0}(k_+,k_-+1)$ (resp. $O_{0}(k_++1,k_-)$) defined by capping off $L'_{n+1}$ (resp. $L'_{n+2}$) by a disk. Then, both $D_{n+1}\circ P$ and $D_{n+2}\circ P$ are isotopic to the product cobordism. So, the cobordism map
\[F_{D_{n+1}}\circ F_P: \HFLhat(L\sqcup O)\cong \HFLhat(L)\otimes\langle T,B\rangle\to \HFLhat(O_{0}(k_+,k_-+1))\]
after identifying $T$ and $B$ with $1$ and $\eta_{n+2}$ in our algebraic model is equal identity. Similarly, $F_{D_{n+2}}\circ F_p$ is equal identity after identifying $T$ and $B$ with $1$ and $\xi_{n+1}$, respectively. Consequently, the claim follows from these identities and the fact that $F_P$ drops Maslov grading by $1$ (Lemma \ref{lem:PoP-grading}).
\end{proof}

\begin{theorem}
\label{thm: zero framed unknot}
For  $k_+-k_-=\alpha$ the cabled link Floer homology  $\cHFL(O_0)$  for the $0$-framed unknot $O_0$ (equivalently, the Floer lasagna module for $W_0(O)=S^2\times D^2$) has the following generators:
\begin{enumerate}
\item[(a)] For $\alpha$ odd, one generator of each odd normalized Maslov grading $1,3,\ldots$.
\item[(b)] For $\alpha$ even, one generator of each even normalized Maslov grading $2,4,\ldots$ and an additional generator of grading $1$.
\end{enumerate}
\end{theorem}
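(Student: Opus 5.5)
The plan is to compute $\cHFL(O_0)$ directly from Definition \ref{def: cabled HF}, in three steps. First quotient each $\HFLhat(O_0(k_+,k_-))$ by the braid relation \eqref{eq: lasagna 1}, using Lemma \ref{lem: symmetrized basis unknot}. Then check that the basepoint moving relation \eqref{eq: lasagna 3} is vacuous. Finally pass to the colimit over the stabilizations $(k_+,k_-)\to(k_++1,k_-+1)$ imposed by \eqref{eq: lasagna 2}, using the explicit pair of pants formulas \eqref{eq: pants zero framing}.

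\textbf{Basepoint moving maps.} For the unlink, the basepoint moving map on a component is $1+\Phi_j\Psi_j$, where $\Phi_j,\Psi_j$ are the formal derivatives of the differential with respect to $U_j,V_j$. Since $\partial\xi_j=U_jV_j-U_1V_1$, these derivatives are multiples of $V_j$ or $U_j$, which vanish in $\HFLhat$. So \eqref{eq: lasagna 3} imposes nothing, and the braid group acts through $S_{k_+}\times S_{k_-}$ as described in Section \ref{sec: unknot}.

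\textbf{Translating the pair of pants relations.} By Lemma \ref{lem: symmetrized basis unknot}, $g(a,b)$ has normalized Maslov grading $n-a-b$, where $n=k_++k_-$, and $\alpha=k_+-k_-$ is fixed throughout. By \eqref{eq: pants zero framing}, the relation $v\sim F_P(v\otimes B)$ says $g(a,b)\sim g(a+1,b+1)$, with $g(a,b)$ taken in $(k_+,k_-)$ and $g(a+1,b+1)$ in $(k_++1,k_-+1)$. This preserves the normalized grading $N=n-a-b$ and the "co-degrees" $p=k_+-1-a$ and $q=k_--b$. So every class is labelled by $(p,q)$ with $p,q\ge 0$ and $N=p+q+1$, and it stabilizes to arbitrarily large cables. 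Since $F_P$ commutes with the $S_{k_+}\times S_{k_-}$-action on the old strands, the symmetrized relations \eqref{eq: equivalence} are compatible with stabilization. After passing to the colimit they read
\[g_{p,q}\sim (a+1)g_{p,q}+b\,g_{p-1,q+1},\]
where $a$ and $b$ now carry parities determined by $p,q$ and the parities of $k_\pm$. The relation $0\sim F_P(v\otimes T)=v(\xi_{n+1}+\eta_{n+2})$ symmetrizes to $g_{p,q}\sim g_{p+1,q}$ in the target cable, viewed as relations among classes of grading $N+1$; in co-degree labels this means $g_{p,q+1}\sim g_{p+1,q}$ at grading $N+1$. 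I would double check this using the transposition formula \eqref{eq: transposition action unlink} to bring $\xi_{n+1}$ into the $\xi_{A_+}$ normal form.

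\textbf{The linear algebra in each grading.} For each fixed $N$, the space before the $T$-relations is spanned by $g_{p,q}$ with $p+q=N-1$, subject to the parity rules of Lemma \ref{lem: symmetrized basis unknot}. These rules are that $a$ is even, or $a=k_+-1$ (so $p=0$) with $b$ odd or $b=k_-$ (so $q=0$). As $k_+$ grows, the parity of $a$ alternates with $p$ fixed; this is exactly where the parity of $\alpha$ enters, through the parities of $k_+-1$ and $k_-$. The $T$-relations then identify all surviving $g_{p,q}$ along the antidiagonal $p+q=N-1$. I expect this to leave exactly one class when $N$ has the parity dictated by $\alpha$ (odd $N$ for $\alpha$ odd, even $N\ge 2$ for $\alpha$ even), together with the extra class $g_{0,0}$ at $N=1$ in the even case. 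The Alexander grading is $0$ throughout, since unlink generators have Alexander grading $0$.

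\textbf{Main obstacle.} The main difficulty is the bookkeeping of parities in the colimit. A class may vanish in $(k_+,k_-)$ by Lemma \ref{lem: symmetrized basis unknot} but be identified with a nonzero class in $(k_++1,k_-+1)$, or conversely. One must check that no extra relation kills the proposed generator in every large cable. To handle this, I would fix $N$, compute the quotient for $(k_+,k_-)$ and $(k_++1,k_-+1)$ explicitly, and show that the stabilization map between these finite quotients is an isomorphism for $k_\pm\ge N$.
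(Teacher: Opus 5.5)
Your proposal follows the paper's proof: quotient each cable by the braid relations via Lemma~\ref{lem: symmetrized basis unknot}, note that the basepoint maps act trivially, and then use that $F_P(-\otimes B)$ preserves the normalized grading while $F_P(-\otimes T)$ identifies all classes on an antidiagonal $a+b=d$; your co-degrees $(p,q)$ are just a stabilization-invariant relabelling of these antidiagonals. The parity step you leave as an expectation is settled directly in the paper: for $d$ odd, the antidiagonal $a+b=d$ (unless it is the single point $(k_+-1,k_-)$) contains a class already killed by case (b) or (c) in the proof of Lemma~\ref{lem: symmetrized basis unknot}, so the whole antidiagonal dies, while for $d$ even the relations only identify neighbours. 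The paper leaves nonvanishing implicit, and your stabilization check is aimed at it; it is cleanest via invariant functionals, namely the sum of the coefficients of all degree-$d$ monomials $\xi_{A_+}\eta_{A_-}$ for $d$ even, and the coefficient of $\xi_{\mathcal{P}\setminus\{1\}}\eta_{\mathcal{N}}$ for the extra class in grading $1$.
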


\begin{proof}

The basepoint moving map for the unlink is trivial, so the equivalence relation \eqref{eq: lasagna 3} is vacuous. The quotient by the equivalence relation \eqref{eq: lasagna 1} is given by Lemma \ref{lem: symmetrized basis unknot}, we will denote the respective generators by $g_n(a,b)$ for $n=k_{+}+k_{-}.$ 

It remains to consider the  equivalence relation \eqref{eq: lasagna 2}. It has the form 
\begin{equation}
\label{eq: B equivalence}
F_P(g_n(a,b)\otimes B)=g_{n+2}(a+1,b+1)\sim g_n(a,b)
\end{equation}
and
\begin{equation}
\label{eq: T equivalence}
F_P(g_n(a,b)\otimes T)=g_{n+2}(a,b+1)+g_{n+2}(a+1,b)\sim 0.
\end{equation}
From \eqref{eq: T equivalence} we get that the classes $g_{n}(a,b)$ for $(a,b)$ on the same diagonal $a+b=d$ are all equivalent. This agrees with case (d) in the proof of Lemma \ref{lem: symmetrized basis unknot}. Note that these classes have normalized Maslov grading $-d+k_{+}+k_{-}$ which ranges from $1$ (for $d=k_{+}-1+k_{-}$)  to $k_{+}+k_{-}$ (for $d=0$). 

The equivalence relation \eqref{eq: B equivalence} preserves the normalized Maslov grading, and sends an equivalence class of $g_n(a,b)$ with $a+b=d$ to the equivalence class of $g_{n+2}(a+1,b+1)$ with $(a+1)+(b+1)=d+2$. 

 If $a$ is odd and $b$ is even then $g_{n}(a,b)=0$ by the case (b) in the proof of Lemma \ref{lem: symmetrized basis unknot}. If $a$ is even and $b$ is odd then $g_n(a,b)\sim g_n(a+1,b-1)\sim 0$, unless $(a,b)=(k_{+}-1,k_{-})$.
 
Therefore, the cabled homology $\cHFL(O_0)$ is spanned by the equivalence classes of $g_n(a,b)$ with $a+b$ even, and the equivalence class of $g_n(k_{+}-1,k_{-})$.
This yields classes with positive normalized Maslov gradings of the same parity as $k_{+}+k_{-}=\alpha\ (\mathrm{mod}\ 2)$ (and any such class appears for sufficiently large $k_{+},k_{-})$, and an additional class of normalized Maslov grading 1.
\end{proof}

\section{Homology of $(n,n)$ torus links}
\label{sec: Tnn}

\subsection{Full homology}

In this subsection we recall the description of the full Heegaard Floer homology $\HFL(T(n,n))$ from \cite{BLZ}. This is a plumbed L-space link, so the results from Section \ref{sec: L space background} apply.

We first assume that all strands are oriented the same way. In this case $\HFL(T(n,n))$  is generated over $R_n=\F[U_1,\ldots,U_n,V_1,\ldots,V_n]$ by the elements $X_j$ for $j=1,\ldots,n$ of Alexander grading 
\begin{equation}
\label{eq: Alex Xj}
A(X_j)=\left(\frac{n+1}{2}-j,\ldots,\frac{n+1}{2}-j\right)
\end{equation}
 and Maslov grading $M(X_j)=-j(j-1)$. The relations have the form
\begin{equation}
\label{eq: HFL full relations}
U_AX_{j}=V_{\overline{A}}X_{j+1},\ U_iV_iX_j=U_kV_kX_j
\end{equation}
for all subsets $A\subset \{1,\ldots,n\},\ |A|=j$, and all $i\neq k$. Here $\overline{A}=\{1,\ldots,n\}\setminus A$.
Furthermore, Theorem \ref{thm: resolution} implies  the following.

\begin{theorem}
\label{thm: BLZ}\cite[Theorem 6.1]{BLZ}
The chain complex $\CFL(T(n,n))$ is homotopy equivalent to a free resolution of $\HFL(T(n,n))$ over $R_n$. 
\end{theorem}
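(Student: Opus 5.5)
This result is cited from \cite[Theorem 6.1]{BLZ}, and the plan is to derive it from Theorem \ref{thm: resolution} once we know that $T(n,n)$ is a plumbed L-space link and that its full homology is the module described above. The argument has three steps.

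\textbf{Step 1: $T(n,n)$ is a plumbed L-space link.} The link $T(n,n)$ is the link of the singularity $x^n=y^n$. Equivalently, it is the $n$-component cable of the unknot with framing $1>2g(O)-1=-1$. By \cite[Theorem 2]{GHom} it is therefore an L-space link. It is plumbed because it arises as a plumbing link: the boundary of a plumbing tree whose central vertex carries $n$ legs. This settles the hypotheses of Theorem \ref{thm: resolution}, which then gives directly that $\CFL(T(n,n))$ is homotopy equivalent to a free resolution of $\HFL(T(n,n))$ over $R_n$.

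\textbf{Step 2: identify $\HFL(T(n,n))$ with the stated module.} This step makes the resolution statement concrete. Since the link is an L-space link, each $H_*(\Am(L,s))\simeq\F[\UU]$, with $\UU$ acting as any $U_iV_i$, and the $h$-function is determined by the Alexander polynomials of the sublinks \cite{GN}. Every sublink of $T(n,n)$ is some $T(k,k)$, so these polynomials are explicit. I would proceed as follows.
\begin{enumerate}
\item Compute $h$ on the diagonal lines and check that the generators $X_j$ sit at the minimal Alexander gradings \eqref{eq: Alex Xj}.
\item Verify that $M(X_j)=-2h=-j(j-1)$.
\item Verify the relations \eqref{eq: HFL full relations}. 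Both $U_AX_j$ and $V_{\overline A}X_{j+1}$ land in the same Alexander grading, and there the homology is a single copy of $\F[\UU]$ with matching Maslov grading. Hence the two classes must coincide, since each is nonzero by non-torsionness.
\end{enumerate}

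\textbf{Step 3: show there are no further relations.} Finally one checks that these relations present the module. This amounts to a dimension count in each Alexander grading $s$, comparing the presented module with $\F[\UU][-2h(s)]$.

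\textbf{Main obstacle.} I expect the hardest part to be the plumbedness, which is the real input needed to invoke \cite{BLZ}. The L-space property and the module computation are comparatively formal.
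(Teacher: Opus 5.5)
Your Step 1 is exactly the paper's argument. $T(n,n)$ is a plumbed L-space link (the link of $x^n=y^n$, whose plumbing graph is a single $(-1)$-vertex with $n$ arrows), so Theorem \ref{thm: resolution} applies directly. Steps 2--3, which identify the presentation of $\HFL(T(n,n))$, are not needed for this statement (the paper quotes that presentation from \cite{BLZ} separately), and plumbedness is not the main obstacle you expect, since algebraic links are plumbed.
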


\begin{corollary}\label{cor:T(n,n)hat}
The generators in the  minimal free resolution of $\HFL(T(n,n))$ give a basis for the hat homology $\HFLhat(T(n,n))$.
\end{corollary}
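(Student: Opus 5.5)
The plan is to combine Theorem \ref{thm: BLZ} with the standard fact that, over a graded polynomial ring, the hat specialization of a minimal free resolution has zero differential. By Theorem \ref{thm: BLZ}, $\CFL(T(n,n))$ is homotopy equivalent to some free resolution $\mathcal{R}_\bullet$ of $\HFL(T(n,n))$ over $R_n$. Since $\HFLhat$ is computed by $\CFL\otimes_{R_n}\F$, and tensoring is compatible with homotopy equivalence of free complexes, we get
\[
\HFLhat(T(n,n))\cong H_*(\mathcal{R}_\bullet\otimes_{R_n}\F)=\mathrm{Tor}^{R_n}_*(\HFL(T(n,n)),\F).
\]
The second equality holds because $\mathcal{R}_\bullet$ is a free resolution. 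In particular, the answer does not depend on which resolution we use, so we may replace $\mathcal{R}_\bullet$ by the minimal one.

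Next I will argue that a minimal free resolution exists and has the required property. Here $R_n$ is graded by the Maslov and Alexander gradings; all variables have nonzero Alexander degree, and the module $\HFL(T(n,n))$ is finitely generated and bigraded. This setting is covered by the graded Nakayama lemma, applied to the irrelevant ideal $\mathfrak{m}=(U_1,\ldots,U_n,V_1,\ldots,V_n)$. A minimal resolution can then be built by choosing, at each step, a minimal homogeneous generating set of the kernel. By construction, the differential of the minimal resolution has all its matrix entries in $\mathfrak{m}$. After tensoring with $\F=R_n/\mathfrak{m}$ the differential vanishes. Hence $H_*(\mathcal{R}^{\min}_\bullet\otimes\F)=\mathcal{R}^{\min}_\bullet\otimes\F$, which has a basis given by the free generators of $\mathcal{R}^{\min}_\bullet$, with their Maslov and Alexander gradings. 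The level grading is also preserved, since the generators in level $i$ correspond to $\mathrm{Tor}_i$.

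I expect the main obstacle to be justifying Nakayama. The ring $R_n$ with the Maslov grading alone is not positively graded, because each $V_i$ has Maslov degree $0$, so some care is needed. I would first try the combined grading $-M+2A_{\mathrm{tot}}$ or a similar positive combination under which every variable has strictly positive degree. If that fails, I would use the fact that the bigraded pieces of $\HFL(T(n,n))$ relevant here are finite-dimensional, by the L-space structure of Section \ref{sec: L space background}. A further check is that the homotopy equivalence of Theorem \ref{thm: BLZ} respects gradings, so that the identification of bases is graded. This follows from the fact that the equivalence is one of graded $R_n$-complexes.
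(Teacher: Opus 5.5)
Your argument is correct and is exactly the (implicit) reasoning behind the paper's corollary. Replace $\CFL(T(n,n))$ by the minimal free resolution from Theorem \ref{thm: BLZ}; its differential has all entries in $(U_1,\ldots,U_n,V_1,\ldots,V_n)$, so it vanishes after tensoring with $\F$. One small correction: the grading $-M+2A_{\mathrm{tot}}$ gives each $U_i$ degree $-(-2)+2(-1)=0$, so use $-M+A_{\mathrm{tot}}$ instead, under which every $U_i$ and $V_i$ has degree $1$ and graded Nakayama applies to the finitely generated module $\HFL(T(n,n))$.
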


See Figure \ref{fig: T33} for the $n=3$ example.

\begin{figure}[h]
\begin{tikzcd}[row sep=1.4cm,column sep=.9cm,labels=description]
&&&[.8cm]&
e
	\ar[dllll,"\scU_1"]
	\ar[dlll,"\scU_2"]
	\ar[dll,"\scU_3"]
	\ar[drr,"\scV_1"]
	\ar[drrr,"\scV_2"]
	\ar[drrrr,"\scV_3"]
&&[.8cm]&&&\\
c_1
	\ar[d,"\scU_2", pos=.7]
	\ar[drrr,dashed,"\scV_1", pos=.75]
	\ar[dr,"\scU_3", pos=.7]
&
c_2
	\ar[dl,crossing over,"\scU_1", pos=.7]
	\ar[dr,crossing over,"\scU_3", pos=.7]
	\ar[drrrr,dashed,"\scV_2"]
&c_3
	\ar[d,crossing over,"\scU_2", pos=.7]
	\ar[dl,crossing over,"\scU_1", pos=.7]%This was a typo
	\ar[drrr,dashed,"\scV_3",crossing over,pos=.45]
	\ar[dr,dashed,"\scV_3",crossing over, pos=.58]
&&&&
d_1
	\ar[dlll,dashed,crossing over,"\scU_1", pos=.45]
&d_2
	\ar[dll,dashed,crossing over,"\scU_2", pos=.6]
&d_3
	\ar[d,"\scV_2", pos=.7,crossing over]
	\ar[dlll,dashed,"\scU_3",pos=.7, crossing over]
	\ar[dlllll,dashed,"\scU_3",pos=.5, crossing over]
	\\[1cm]
b_3
	\ar[drrrr,dashed,"\scV_1\scV_2", pos=.65]
	\ar[dr,"\scU_3",crossing over]
&
b_2
	\ar[drrr,dashed,"\scV_1\scV_3"]
	\ar[d,"\scU_2",crossing over]
&
b_1
	\ar[drr,dashed,"\scV_2\scV_3", pos=.35]
	\ar[dl,"\scU_1",crossing over]
&Z_1
	\ar[dr,dashed,"\scU_2\scV_2+\scU_3\scV_3" near start]
&
&Z_2
	\ar[dl,dashed,"\scU_1\scV_1+\scU_3\scV_3" near start]
&
B_3
	\ar[dll,dashed,"\scU_1\scU_2", pos=.35]
	\ar[from=u,crossing over, "\scV_2", pos=.7]
	\ar[from=ur,crossing over, "\scV_1", pos=.7]
&B_2
	\ar[dlll,dashed,"\scU_1\scU_3"]
	\ar[from=ul,crossing over, "\scV_3", pos=.7]
	\ar[from=ur,crossing over, "\scV_1", pos=.7]	
&B_1
	\ar[dllll,dashed,"\scU_2\scU_3", pos=.65]
	\ar[from=ul, "\scV_3",crossing over, pos=.7]
\\[1cm]
&X_1
	\ar[from=u, crossing over,"\scU_2"]
	%\ar[from=ur, crossing over]
&&&X_2&&&
X_3
	\ar[from=ul,crossing over,"\scV_3"]
	\ar[from=u, crossing over,"\scV_2"]
	\ar[from=ur, crossing over,"\scV_1"]
\end{tikzcd}
\caption{The complex $\CFL(T(3,3))$ as a free resolution.}
\label{fig: T33}
\end{figure}

As a consequence of Corollary \ref{cor:T(n,n)hat}, the hat homology $\HFLhat(T(n,n))$ acquires an additional {\bf level} grading (see Section \ref{sec: L space background}).
For example, in Figure \ref{fig: T33} the classes $X_{1},X_{2},X_{3}$ all have level zero, while the class $e$ has level 3.

If we identify some of the variables on the chain complex level then we can create higher homology. However, we have the following.

\begin{lemma}
\label{lem: identify U not V}
Suppose we identify $U_i=U_j$ in $\CFL(T(n,n))$ but keep all other variables free. Then the resulting complex is still a resolution of its homology, that is, it only has homology at level $0$.
\end{lemma}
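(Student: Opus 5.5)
Since $\CFL(T(n,n))$ is a free resolution of $\HFL(T(n,n))$ (Theorem \ref{thm: BLZ}), identifying $U_i=U_j$ amounts to tensoring over $R_n$ with $R_n/(U_i-U_j)$. So the homology of the quotient complex is $\mathrm{Tor}^{R_n}_\ast\bigl(\HFL(T(n,n)),R_n/(U_i-U_j)\bigr)$. I will use the Koszul resolution $R_n\xrightarrow{U_i-U_j}R_n$ of $R_n/(U_i-U_j)$, which gives
$$
\mathrm{Tor}_1=\{x\in \HFL(T(n,n)) : (U_i-U_j)x=0\},\qquad \mathrm{Tor}_k=0 \text{ for } k\ge 2 .
$$
The level grading on the quotient is inherited, and higher homology sits at level $k$ precisely as $\mathrm{Tor}_k$. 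The lemma is therefore equivalent to the claim that $U_i-U_j$ is a non-zero-divisor on $\HFL(T(n,n))$.

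To prove this, I will work one Alexander grading at a time. The element $U_i-U_j$ is homogeneous for Maslov grading but not for the Alexander multi-grading, since $U_i$ lowers $A_i$ and $U_j$ lowers $A_j$. I will therefore use the L-space structure from Section \ref{sec: L space background}: for each $s\in\HH_L$ we have $H_*(\mathfrak{A}^-(L,s))\simeq \F[\UU]$, with all $U_k$ acting by injective maps $\F[\UU]_s\to\F[\UU]_{s-e_k}$. On $H_*(\mathfrak A^-(s))$, multiplication by $U_k$ is injective, because it is multiplication by a power of $\UU$ followed by an isomorphism of towers. Moreover $\HFL$ is torsion-free over $\F[U_1,\dots,U_n]$.

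The main obstacle is to show that a kernel element $x=\sum_s x_s$ of $U_i-U_j$ must vanish. Here I will use a filtration by the Alexander grading. Choose $s$ appearing in $x$ with $s_i$ minimal, say, and among those with $s_i$ minimal choose one that is extremal for a suitable linear order. The component of $(U_i-U_j)x$ in grading $s-e_i$ receives $U_ix_s$ and $U_jx_{s-e_i+e_j}$. But $x_{s-e_i+e_j}$ has smaller $i$-th coordinate, so it is absent by minimality. Hence $U_ix_s=0$, and injectivity forces $x_s=0$, a contradiction.

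Making this rigorous needs only the grading bookkeeping. I will order by $A_i$ lexicographically first, which works because the Alexander support of a finite sum is finite. Alternatively, I can observe directly from the relations \eqref{eq: HFL full relations} that $\HFL(T(n,n))$ embeds in a direct sum of copies of $\F[U_1,\ldots,U_n,V_1,\ldots,V_n]$-localizations where $U_i-U_j$ is visibly regular. This concludes that the quotient has homology only in level $0$.
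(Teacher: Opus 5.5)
Your proposal is correct and follows the paper's proof. Both compute $\mathrm{Tor}^{R_n}_*\bigl(\HFL(T(n,n)),R_n/(U_i-U_j)\bigr)$ via the two-term resolution, which reduces the lemma to injectivity of $U_i-U_j$, and then use that $U_i$ and $U_j$ are each injective while shifting the Alexander multi-grading in different directions. Your minimal-$A_i$ extremal argument simply makes explicit the paper's ``no cancellation'' remark; the extra tie-breaking order, the torsion-freeness claim and the localization alternative are not needed.
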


\begin{proof}
We need to prove that the homology of $\CFL(T(n,n))\otimes_{R} R/(U_i-U_j)$ is concentrated in level zero. By definition of the $\mathrm{Tor}$ functor, we can compute $\mathrm{Tor}\left(\HFL(T(n,n)),R/(U_i-U_j)\right)$ in two ways: first, we consider the resolution $\CFL(T(n,n))$ of $\HFL(T(n,n))$, tensor it with $R/(U_i-U_j)$ and take homology. Second, we consider the resolution $R\xrightarrow{U_i-U_j}R$ of $R/(U_i-U_j)$, tensor it with $\HFL(T(n,n))$ and take homology. 

So we need to prove that the homology of the two-term complex $$
\HFL(T(n,n))\xrightarrow{U_i-U_j}\HFL(T(n,n))
$$
is concentrated in level zero, or equivalently the multiplication by $U_i-U_j$ is injective in $\HFL(T(n,n))$. But this is clear since the multiplications by $U_i$ and $U_j$ are both injective and shift the Alexander grading in different directions, so there is no cancellation.
\end{proof}

%\textcolor{red}{add the formal definition of $\Phi$ in Section 3.}

\begin{remark}
Note that if $\CFL(T(n,n))$ is a minimal free resolution of $\HFL(T(n,n))$, then  
\[\CFL(T(n,n))\otimes_{\F}\left(B\xrightarrow{U_{n+1}V_{n+1}-U_1V_1}T\right)\]
is chain homotopic to $\CFL(T(n,n)\sqcup O)$ and a minimal free resolution of $\HFL(T(n,n)\sqcup O)$. Furthermore, 
\[\lvl(x\otimes T)=\lvl(x)\ \ \text{and}\ \ \lvl(x\otimes B)=\lvl(x)+1.\]
\end{remark}

\begin{proposition}\label{prop:level-preserving}
a) Up to a homotopy, the maps $f_{-P}$ and $\varphi_1,\varphi_2$ in $\CFL$ are level-filtered  while $f_P(-\otimes T)$ and $f_P(-\otimes B)$ increase the level at most by 1 and 2 respectively.

b) The induced maps $F_{-P}$  and $\Phi_1,\Phi_2$ in $\HFLhat$ are level-filtered while $F_P(-\otimes T)$ and $F_P(-\otimes B)$ increase the level at most by 1 and 2 respectively.
\end{proposition}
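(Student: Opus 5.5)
The plan is to apply Corollary \ref{cor: homotopic to level filtered} to each map, after checking its hypotheses. We work in the model where $\CFL(T(n,n))$ and $\CFL(T(n+2,n+2))$ are free resolutions of their homology (Theorem \ref{thm: BLZ}). We also use the Remark above: $\CFL(T(n,n)\sqcup O)=\CFL(T(n,n))\otimes\left(B\to T\right)$, with $\lvl(x\otimes T)=\lvl(x)$ and $\lvl(x\otimes B)=\lvl(x)+1$. All complexes involved are bounded complexes of free $R$-modules, and the targets $\CFL(T(n+2,n+2))$ and $\CFL(T(n,n)\sqcup O)$ have homology concentrated in level $0$. Everything is done over the ring in which $U_{n+1},V_{n+1}$ are identified with the variables of the relevant components.

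For $\varphi_1,\varphi_2$ and $f_{-P}$, part (b) of Corollary \ref{cor: homotopic to level filtered} applies directly. Each is a chain map from a bounded free complex to a resolution of its homology, hence homotopic to a level-filtered map. For $\varphi_i=f_{P_i}\circ f_B$, the source is $\CFL(T(n,n))$ (tensored with the free polynomial ring in the new variables). For $f_{-P}$, the target is $\CFL(T(n,n)\sqcup O)$, which is a minimal resolution of $\HFL(T(n,n)\sqcup O)$ by the Remark.

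For $f_P$, the source $\CFL(T(n,n)\sqcup O)$ splits as the summands $x\otimes T$ and $x\otimes B$ at the level of modules, though not as complexes, since $\partial(x\otimes B)$ contains $(U_{n+1}V_{n+1}-U_1V_1)x\otimes T$. I would first apply Corollary \ref{cor: homotopic to level filtered}(b) to the whole map $f_P$, using the level grading on the source in which $B$ has level $1$. This makes $f_P$ level-filtered with respect to that grading. Consequently, restricted to $x\otimes T$, which has the level of $x$, it does not increase level. Restricted to $x\otimes B$, which has level $\lvl(x)+1$, it increases $\lvl(x)$ by at most $1$.

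This already gives bounds at least as strong as the claimed ones ($1$ and $2$), so the statement follows. If the intended source grading for $f_P(-\otimes B)$ is instead computed differently, say through $f_P(-\otimes B)\simeq h_{\gamma''}f_P(-\otimes T)$, then one must also bound the level change of the homotopy $h_{\gamma''}$. By Lemma \ref{lem: UV homotopy arc}, $h_{\gamma''}$ is a homotopy between $U_iV_i$ and $U_jV_j$ on a resolution. Such a homotopy can be chosen to raise level by at most $1$, by the standard lifting argument: $U_iV_i-U_jV_j$ is level preserving and null on homology, so a null-homotopy can be built inductively raising level by exactly $1$. Adding this to the bound for $f_P(-\otimes T)$ gives the bound of $2$.

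The main obstacle is making the homotopies compatible. When we replace $f_P$ by a homotopic level-filtered map, the relations of Lemmas \ref{lem:relations} and \ref{lem:h-PP-1} must survive up to homotopy, which they do since they are only stated up to homotopy. We also need to handle the variable identifications $U_{n+1}=U_{n+2}$ and $V_{n+1}=V_{n+2}$. By Lemma \ref{lem: identify U not V} (and its analogue for $V$), identifying $U$ variables preserves level-zero concentration. Identifying both $U$ and $V$ may create higher homology, though. To avoid this, I would apply Corollary \ref{cor: homotopic to level filtered} before identifying variables, treating $U_{n+1},V_{n+1}$ as free via the model with free $O$-variables. I would then specialize, since specialization preserves level-filteredness.

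Part (b) follows from part (a) by tensoring with $\F$. The level grading on $\HFLhat$ is inherited from the resolution by Corollary \ref{cor:T(n,n)hat}, and setting all variables to zero preserves the level bounds of the chain maps.
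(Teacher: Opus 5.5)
\textbf{There is a genuine gap, in the $f_P$ step.} Your treatment of $\varphi_1,\varphi_2$ and $f_{-P}$, and the passage to part (b), are fine and agree with the paper.

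\textbf{The target of $f_P$ is not a resolution.} The map $f_P$ is a chain map only into $\CFL(L')\otimes_R R/(U_{n+1}-U_{n+2},V_{n+1}-V_{n+2})$, with \emph{both} identifications. That complex has homology in positive levels, as the paper remarks after Proposition \ref{prop: Phi1 full homology}. So Corollary \ref{cor: homotopic to level filtered} does not apply.

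\textbf{The workaround does not work.} You propose to apply the corollary ``before identifying variables'' and then specialize. This is not available. If $f_P$ lifted to a chain map into the complex where $U_{n+1},U_{n+2}$ and $V_{n+1},V_{n+2}$ are independent, the corollary plus specialization would prove the conclusion below, and that conclusion is false.

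\textbf{The conclusion is false.} Your argument applies verbatim to the unlink $O_0(k_+,k_-)$, whose complex is also a free resolution of its homology. There Section \ref{sec: unknot} gives $F_P(v\otimes T)=v\otimes(\xi_{n+1}+\eta_{n+2})$ and $F_P(v\otimes B)=v\otimes\xi_{n+1}\eta_{n+2}$. These raise $\lvl(v)$ by exactly $1$ and $2$. The element $\xi_{n+1}+\eta_{n+2}$ is precisely a level-one cycle of the identified target that is not a boundary. Homotopic maps agree on $\HFLhat$ of the minimal complex, so no homotopy makes $f_P$ level-filtered with $B$ placed in level $1$. The bounds $1$ and $2$ in the statement are attained; they are not weak versions of $0$ and $1$. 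The paper likewise uses later that $F_P(-\otimes B)$ raises level by exactly $2$ on bottom diagonal blocks.

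\textbf{The missing idea: route $f_P$ through maps whose targets are resolutions.} This is what Lemma \ref{lem:h-PP-1} is for.
\begin{itemize}
\item By part (2), $U_{n+1}f_P(-\otimes T)\simeq h\circ\varphi_1$.
\item Here $\varphi_1$ is genuinely level-filtered, because its target identifies only $U_{n+1}=U_{n+2}$, so Lemma \ref{lem: identify U not V} applies.
\item The homotopy $h$ can be chosen to raise level by at most $1$; your lifting argument for $h$ is the right one.
\item Cancelling $U_{n+1}$, which acts without torsion, gives the bound $1$ for $f_P(-\otimes T)$.
\item Part (4), together with the dot map $x\otimes T\mapsto x\otimes B$, gives $f_P(-\otimes B)\simeq h\circ f_P(-\otimes T)$, hence the bound $2$.
\end{itemize}
Your fallback paragraph contains this last step. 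However, it adds it to a bound for $f_P(-\otimes T)$ that came from the invalid direct application. The relation $U_{n+1}f_P\simeq h\circ f_{P_1}$ is the ingredient you are missing.
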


\begin{proof}
Part (a): By Lemma \ref{lem: identify U not V} identifying $U_{n+1}$ and $U_{n+2}$ in $\CFL(T(n+2,n+2))$ we still get a free resolution of its homology. Both $\varphi_1$ and $\varphi_2$ are lifts of $\Phi_1$ and $\Phi_2$, respectively, which  are homotopic to level filtered chain maps by Corollary \ref{cor: homotopic to level filtered}.

Applying Corollary \ref{cor: homotopic to level filtered}, $h$ can be chosen to raise level at most 1.  The statements for $F_P(-\otimes T)$ and $F_P(-\otimes B)$ now follow from Lemma \ref{lem:h-PP-1}, since $B$ and $h$ both increase the level by 1.

Part (b) follows from (a), Corollary  \ref{cor:T(n,n)hat} and Lemma \ref{lem:h-PP-1}. 
\end{proof}

\subsection{Hat homology}

We start by recalling the description of $\HFLhat(T(n,n))$ with all strands oriented the same way from \cite{GHom}.  

\begin{theorem}\cite[Theorem 3]{GHom}
\label{thm: GH} The homology $\HFLhat(T(n,n))$ has the following form:

a) In the Alexander grading $A(X_j)$ for $1\le j\le \frac{n+1}{2}$ the homology is isomorphic to 
$$
\bigoplus_{i=0}^{j-1}\binom{n-1}{i}\F_{-j(j-1)-i}\oplus \bigoplus_{i=0}^{j-2}\binom{n-1}{i}\F_{-j(j-1)-n+2+i}.
$$

b) In the Alexander grading $A(X_j)$ for $\frac{n+1}{2}\le j\le n$ the homology is isomorphic to 
$$
\bigoplus_{i=0}^{n-j}\binom{n-1}{i}\F_{-j(j-1)-i}\oplus \bigoplus_{i=0}^{n-j-1}\binom{n-1}{i}\F_{-j(j-1)-n+2+i}.
$$

c) Suppose that the Alexander grading has $s$ coordinates equal to $\frac{n+1}{2}-j-1$ and $n-s$ coordinates equal to $\frac{n+1}{2}-j$, for $1\le j\le n-1$ and $1\le s\le  n-1$. Then the  homology is isomorphic to 
$$
\binom{n-2}{j-1}\F_{-j^2-s+1}.
$$
 The (hat) homology in all other Alexander gradings is zero. 
\end{theorem}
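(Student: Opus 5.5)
The plan is to compute $\HFLhat(T(n,n))$ as $\mathrm{Tor}^{R}(\HFL(T(n,n)),\F)$, which is justified by Theorem \ref{thm: BLZ} and Corollary \ref{cor:T(n,n)hat}. I would evaluate this Tor one Alexander grading at a time using the cube models of Remark \ref{rem: cube model ss}. Since $T(n,n)$ is an L-space link, $H_*(\mathfrak{A}^-(T(n,n),s))\simeq \F[\UU][-2h(s)]$ for every $s\in\HH_L$. So the first step is to write down the $h$-function of $T(n,n)$ explicitly from the presentation \eqref{eq: HFL full relations}.

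For the first step, $h(s)$ is determined by the smallest Maslov-grading shift needed to reach Alexander grading $s$ from the generators $X_j$. The generator $X_j$ sits at the diagonal point $A(X_j)$, and $U_i$, $V_i$ move $s$ by $\mp e_i$. The relations $U_AX_j=V_{\overline A}X_{j+1}$ and $U_iV_iX_j=U_kV_kX_j$ then give a formula of the form
$$h(s)=\min_j\Big(\tfrac{j(j-1)}{2}+\sum_i \max\big(0,\tfrac{n+1}{2}-j-s_i\big)\Big).$$
Consequently, for the cube around any $s$, each edge map $U_a\colon H_*(\mathfrak{A}^-(s-e_A))\to H_*(\mathfrak{A}^-(s-e_A-e_a))$ is multiplication by $\UU^{0}$ or $\UU^{1}$.

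Next I would apply Lemma \ref{lem: spectral sequences}(a), which collapses. For each $s$ the $\mu$-cube is an $\F[\UU]$-complex $\bigoplus_A \F[\UU]\mu_A$ whose edges are multiplication by $1$ or by $\UU$. At the diagonal points, and at the points with $s$ coordinates equal to $\frac{n+1}{2}-j-1$ and $n-s$ equal to $\frac{n+1}{2}-j$, the exponent depends only on $|A|$ (after placing one distinguished coordinate appropriately). The cube is then exactly the deformed cube complex $\partial^{(k)}$ on $n$ or $n-1$ letters, with $k$ read off from $j$. Lemma \ref{lem: deformed cube} then gives $\HFLminus(s)$ as a sum of copies of $V_{n,i}$ (or $V_{n-1,i}$), of dimensions $\binom{n-1}{i}$ (resp.\ $\binom{n-2}{i}$), in the degrees dictated by $h$. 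The truncation $i<k$ produces the ranges $i\le j-1$ for $j\le\frac{n+1}{2}$ and $i\le n-j$ for $j\ge\frac{n+1}{2}$, coming from which of the two competing terms in the min for $h$ wins. At all other $s$ the cube is acyclic, which should give the vanishing statement.

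Then I would run Lemma \ref{lem: spectral sequences}(b), the $\nu$-cube $\bigoplus_A\HFLminus(s+e_A)\nu_A$, to pass to $\HFLhat$. The second family of summands in (a) and (b), with degrees $-j(j-1)-n+2+i$, should arise from the $\nu$-direction. The corresponding pieces are $\HFLminus$ at the off-diagonal gradings of type (c) adjacent to $A(X_j)$, which survive with a shift by $n-2$. As a consistency check I would compare Euler characteristics with the multivariable Alexander polynomial.

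The main obstacle is this last step. The $\nu$-spectral sequence is not asserted to collapse, so I must show that its higher differentials vanish, or compute them. I would try a grading argument first: using the level grading together with Maslov and Alexander gradings, show that no two surviving $E_2$-classes can be connected by a differential. If that fails, I would instead compute $\mathrm{Tor}(\HFL,\F)$ directly from the Koszul complex $\HFL\otimes\wedge(\mu,\nu)$, filtering only by the number of $\nu$'s at the diagonal gradings, where everything is explicit.
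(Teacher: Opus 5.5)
Your framework ($h$-function, then the $\mu$-cube of Lemma~\ref{lem: spectral sequences}(a), then the $\nu$-cube of Lemma~\ref{lem: spectral sequences}(b)) is the route of \cite{GHom} that the paper cites, and your formula for $h$ is correct. The gap is that the proposal stops exactly where the theorem is decided, and your account of what the $\mu$-step produces is wrong.

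At $A(X_j)$ only $X_j$ and $X_{j+1}$ compete, so $h(A(X_j)-e_A)=\tfrac{j(j-1)}{2}+\min(|A|,j)$. Hence the $\mu$-cube is $\partial^{(j)}$ on $n$ letters, and $\HFLminus(A(X_j))=\bigoplus_{i=0}^{j-1}\binom{n-1}{i}\F_{-j(j-1)-i}$ for \emph{every} $j$: nothing truncates at $n-j$.

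Away from the listed gradings $\HFLminus$ is far from zero. Once $V_i=0$ no relation involves $X_n$, so $\F[U_1,\ldots,U_n]X_n$ injects into $\HFLminus$. For $T(2,2)$ one gets $\HFLminus=\F X_1\oplus\F[U_1,U_2]X_2\oplus\F[U_1,U_2]Y$. Here $Y$, in Maslov grading $-3$ at $(-\tfrac12,-\tfrac12)$, is the $V_{2,1}$-class represented by $U_1X_2\mu_1+U_2X_2\mu_2$. So "the cube is acyclic at all other $s$" is false, and cannot give the vanishing statement.

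Conversely, take an off-diagonal grading with $s\ge j$. The $V_b$-edges in the $n-s$ directions with coordinate $\tfrac{n+1}{2}-j$ are isomorphisms, so the $\mu$-cube is acyclic and $\HFLminus=0$ there, although $\HFLhat\neq0$ (e.g.\ $\HFLhat(T(2,2))=\F_{-1}$ at $(\tfrac12,-\tfrac12)$). As Lemma~\ref{lem: levels off diagonal} records, off-diagonal classes sit in $\nu$-degree $c\ge1$, i.e.\ they come from $\HFLminus$ at neighbouring gradings. Thus the vanishing statement, all of (c), the top blocks, and the cut-off $i\le n-j$ for $j>\tfrac{n+1}{2}$ are produced entirely by the $\nu$-step, which you leave undone.

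That step is not a matter of checking degeneration. The spectral sequence of Lemma~\ref{lem: spectral sequences}(b) has nonzero higher differentials, so the grading argument you propose cannot succeed. Take $j>\tfrac{n+1}{2}$. Then $A(X_j)+e_a$ is an off-diagonal grading with $s=n-1\ge j-1$, where $\HFLminus=0$, so nothing hits the $\nu$-degree-$0$ column at $E_1$. That column of $E_2$ at $A(X_j)$ is therefore all of $\HFLminus(A(X_j))$. Its classes with $n-j<i\le j-1$ must be killed by some $d_r$ with $r\ge2$.

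Concretely, for $T(2,2)$ at $(-\tfrac12,-\tfrac12)$, in $\HFL(T(2,2))\otimes\wedge(\mu_1,\mu_2,\nu_1,\nu_2)$ one has $\partial(X_1\nu_1\nu_2+X_2\mu_1\nu_1+X_2\mu_2\nu_2)=U_1X_2\mu_1+U_2X_2\mu_2$. Hence $d_2(X_1\nu_1\nu_2)=Y$. Your fallback, the Koszul complex filtered by $\nu$-degree, is the same spectral sequence, so it does not help by itself.

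What is missing is:
\begin{enumerate}
\item[(i)] an actual computation of the $\nu$-cubes for $j\le\tfrac{n+1}{2}$, at the off-diagonal gradings and at all remaining gradings, which is the content of \cite[Theorems 4.22, 4.25]{GHom};
\item[(ii)] the symmetry $\HFLhat_M(L,s)\cong\HFLhat_{M-2\sum_i s_i}(L,-s)$, which exchanges $j\leftrightarrow n+1-j$ and yields part (b) from part (a) without having to control those higher differentials.
\end{enumerate}
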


Some comments and notations are in order. We call the homology in (a) and (b) {\bf diagonal} and the homology in (c) {\bf off-diagonal}. For $1\le j\le n$ we call the sum 
$$\bigoplus_{i=0}^{\min(j-1,n-j)}\binom{n-1}{i}\F_{-j(j-1)-i}
$$
the {\bf bottom diagonal block} at $j$, and the sum 
$$
\bigoplus_{i=0}^{\min(j-2,n-j-1)}\binom{n-1}{i}\F_{-j(j-1)-n+2+i}
$$
the {\bf top diagonal block} at $j$, so that the diagonal part of the homology is the direct sum of the top and bottom diagonal blocks.

\begin{example}
\label{ex: T33}
In Figure \ref{fig: T33} the bottom diagonal blocks are spanned by $X_1$ (for $j=1$), $X_2,Z_1,Z_2$ (for $j=2$) and $X_3$ (for $j=3$),  while the only class in the top diagonal block is $e$ for $j=2$.

The off-diagonal classes are organized as follows:
\begin{itemize}
\item The classes $b_i$ correspond to $j=1,s=1$ and have Maslov grading $-1$.
\item The classes $c_i$ correspond to $j=1,s=2$ and have Maslov grading $-2$.
\item The classes $d_i$ correspond to $j=2,s=1$ and have Maslov grading $-4$.
\item The classes $B_i$ correspond to $j=2,s=2$ and have Maslov grading $-5$.
\end{itemize}
\end{example}

\begin{convention}
\label{conv: complex}
Following Corollary \ref{cor:T(n,n)hat} we will always consider the model for the chain complex $\CFL(T(n,n))$ with generators corresponding to some basis in $\HFLhat(T(n,n))$. We will classify these generators as off-diagonal, top diagonal and bottom diagonal. 
\end{convention}

The following result determines the level grading for diagonal blocks in $\HFLhat(T(n,n))$. 

\begin{lemma}
\label{lem: levels}
a) The subspace $\binom{n-1}{i}\F_{-j(j-1)-i}$ in the bottom diagonal block has level $i$.

b) The subspace $\binom{n-1}{i}\F_{-j(j-1)-n+2+i}$ in the top diagonal block has level $n-i$. 
\end{lemma}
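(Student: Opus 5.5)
The level of a class in $\HFLhat(T(n,n))$ is its homological degree in the minimal free resolution $\CFL(T(n,n))$ of $\HFL(T(n,n))$ (Theorem \ref{thm: BLZ}, Corollary \ref{cor:T(n,n)hat}). I will compute it with the spectral sequences of Lemma \ref{lem: spectral sequences}, using their cube form (Remark \ref{rem: cube model ss}) in a fixed Alexander grading $s=A(X_j)$. In these spectral sequences the cube grading $|A|$ of $\mu_A$ or $\nu_A$ records the level. Each surviving class in $E_\infty$ therefore has level equal to the total number of $\mu$'s and $\nu$'s it carries. Since the collapse is at $E_2$ (part (a)) and the resulting homology matches Theorem \ref{thm: GH} in dimension, no ambiguity arises.

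\textbf{Step 1: the minus homology.} First I compute $\HFLminus(T(n,n))$ in the relevant Alexander gradings. This uses the cube $\bigoplus_A H_*(\Am(L,s-e_A))\mu_A$, where each term is $\F[\UU]$ shifted by $-2h(s-e_A)$. The maps are given by the $V$-actions, which are multiplication by appropriate powers of $\UU$ determined by $h$. Near $A(X_j)$ the $h$-function is symmetric in the coordinates and depends only on $|A|$, so the cube becomes a deformed cube complex of the type in Lemma \ref{lem: deformed cube}. There the differential is $\partial$ for $|A|>k$ and $\UU\partial$ for $|A|\le k$, for a threshold $k=\min(j,n-j+1)$ read off from $h$ (from the relations $U_AX_j=V_{\overline A}X_{j+1}$, the maps become nontrivial powers of $\UU$ exactly past level $j$). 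Lemma \ref{lem: deformed cube} then gives $\HFLminus(s)\cong\bigoplus_{i<k}V_{n,i}$, concentrated in cube degree $i$. Its Maslov grading is $-j(j-1)-i$, because $\mu$ has Maslov grading $1$ in the conventions above while the generator $\mu_A$ sits on the appropriate shifted copy. Since $\dim V_{n,i}=\binom{n-1}{i}$, these pieces are exactly the bottom diagonal summands, each at level $i$.

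\textbf{Step 2: the hat homology.} Next I run the second spectral sequence $\bigoplus_A\HFLminus(s+e_A)\nu_A\Rightarrow\HFLhat(s)$. The $\nu$-cube contributes $\nu_{\emptyset}$ at level $0$ and $\nu_{\{1,\dots,n\}}$ at level $n$, with cancellations in between. The bottom block is the image of $\nu_\emptyset$-terms, so its level stays $i$. The top block $\binom{n-1}{i}\F_{-j(j-1)-n+2+i}$ should come from classes in the terms carrying $\nu$'s of total cube degree $n-i$ in combination. I would check this by matching Maslov gradings. A $\nu$-cube term of degree $d$ applied to a $V_{n,i'}$ piece shifts Maslov grading by $-d$ plus the $h$-difference. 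Equating with the top-block grading forces the total level to be $n-i$. Only one combination of $(i',d)$ is compatible, because levels and Maslov gradings are rigidly linked.

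\textbf{Main obstacle.} The hard part is Step 2: pinning down the top block's level without fully computing the differentials. A cleaner route I would try first is duality. $\HFLhat$ has a symmetry reversing the resolution (the minimal resolution of this Cohen–Macaulay-type module is self-dual up to shift $n$). Under this symmetry level $i$ goes to level $n-i$ and Maslov grading $-j(j-1)-i$ goes to $-j(j-1)-n+2+i$ in the appropriate conjugate block, so part (b) would follow from part (a). If that symmetry is not available, I would fall back to the Maslov-grading bookkeeping of Step 2. That bookkeeping can be checked against Example \ref{ex: T33}, where $e$ has level $3=n-0$.
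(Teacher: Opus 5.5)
Your argument for part (a) matches the paper's: the bottom summand sits in $\mu$-cube degree $i$ of the deformed cube complex and in $\nu$-cube degree $0$, so its level is $i$. Even there, two repairs are needed.

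First, the threshold in the $\mu$-cube is $k=j$, not $\min(j,n-j+1)$. The diagonal part of $\HFLminus$ at $A(X_j)$ is $\bigoplus_{i=0}^{j-1}V_{n,i}$. For $j>\frac{n+1}{2}$ this is larger than the bottom block, and the excess is killed in the second spectral sequence. Unlike the first spectral sequence, the second need not collapse at $E_2$. Already for the Hopf link with $n=j=2$, $\HFLminus$ in grading $A(X_2)$ is $\F_{-2}\oplus\F_{-3}$, and the $\F_{-3}$ is hit by a $d_2$ from $X_1\nu_1\nu_2$.

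Second, the paper avoids this by reducing to $j\le\frac{n+1}{2}$ before citing \cite{GHom}. It does so using the $U_i\leftrightarrow V_i$ symmetry, which preserves levels, and you should do the same.

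The genuine gap is part (b): neither of your routes works.

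\emph{Maslov bookkeeping.} Step 2 assumes that Maslov grading determines level. It does not. A class $\UU^k z\,\mu_B\nu_A$ in Alexander grading $s$ has level $|A|+|B|$ and Maslov grading $-2h(s+e_A-e_B)-2k+|B|-|A|$, which fixes only the parity of the level. The example you propose to check against already refutes the claim. In $T(3,3)$ (Example \ref{ex: T33}), the classes $Z_1,Z_2$ at level $1$ and $e$ at level $3$ all have Alexander grading $A(X_2)$ and Maslov grading $-3$. This is exactly the coincidence of Corollary \ref{cor: Maslov separation} at $j=\frac{n+1}{2}$.

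\emph{Duality.} The minimal resolution of $\HFL(T(n,n))$ is not self-dual. Level $0$ has rank $n$ (the $X_j$), while level $n$ has rank $n-2$ (for $n=3$, only $e$). Moreover, at each $A(X_j)$ the top block is strictly smaller than the bottom block, so no symmetry of $\HFLhat(T(n,n))$ can exchange them. The duality of \cite{JZ} relates $T(n,n)$ to $T(n,-n)$, not to itself.

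\emph{What is missing.} You need the actual position of the top block in the two spectral sequences, which the paper imports from \cite[Theorem 4.22]{GHom}. For $j\le\frac{n+1}{2}$, the summand $\binom{n-1}{i}\F_{-j(j-1)-n+2+i}$ survives from the $\nu$-cube terms with $|A|=n-j+2$, on $\HFLminus$ classes of $\mu$-cube degree $j-2-i$. Its level is therefore $(j-2-i)+(n-j+2)=n-i$. Without computing which terms of the $\nu$-cube survive, or citing that computation, the level of the top block is not determined.
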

 
\begin{proof}
We follow the computation in \cite{GHom} which considered two spectral sequences from Lemma \ref{lem: spectral sequences}. 
Each of the spectral sequences has an additional ``cube" grading defined in \cite{GHom} and the level grading is the sum of the two cube gradings. 

By symmetry we can assume that $j\le \frac{n+1}{2}$. Then by \cite[Theorem 5]{GHom} the terms in (a) have cube grading $i$ in the spectral sequence $\HFL(T(n,n))\Rightarrow \HFLminus(T(n,n))$ and by \cite[Theorem 4.22]{GHom} it has cube grading $0$ in the spectral sequence $\HFLminus(T(n,n))\Rightarrow \HFLhat(T(n,n))$, so the level equals $i$.

Similarly, the terms in (b) have cube grading $j-2-i$ in the spectral sequence $\HFL(T(n,n))\Rightarrow \HFLminus(T(n,n))$ and the cube grading $n-j+2$ in the spectral sequence $\HFLminus(T(n,n))\Rightarrow \HFLhat(T(n,n))$, so the level equals $(j-2-i)+(n-j+2)=n-i$.
\end{proof}

For each $j$ we can schematically draw the diagonal blocks as in Figure \ref{fig: diagonal blocks}. The vertical axis denotes the level grading which runs from $0$ to $n$.  The bottom diagonal block runs from level $0$ to 
$$
m_j=\min(j-1,n-j)
$$
and its dimension at level $i$ equals $\binom{n-1}{i}$. The top diagonal block runs from level
$$
M_j=n-\min(j-2,n-j-1)=\max(n-j+2,j+1)
$$
to level $n-0=n$, and the dimension at level $n-i$ equals $\binom{n-1}{i}$.

\begin{figure}[ht!]
\begin{tikzpicture}
\draw (0,0)--(-2,2)--(2,2)--(0,0);
\draw (0,4)--(-1,3)--(1,3)--(0,4);
\draw[->] (-3,0)--(-3,5);
\draw [dotted] (0,0)--(-3,0);
\draw [dotted] (-2,2)--(-3,2);
\draw [dotted] (-1,3)--(-3,3);
\draw [dotted] (0,4)--(-3,4);
\draw (-3.5,0) node {$0$};
\draw (-4.5,2) node {$\min(j-1,n-j)$};
\draw (-4.7,3) node {$\max(n-j+2,j+1)$};
\draw (-3.5,4) node {$n$};
\draw (0,0) node {$\bullet$};
\draw (0,4) node {$\bullet$};
\draw (0,-0.5) node {$X_{j}$};
\draw (0,4.5) node {$D_{j}$};
\end{tikzpicture}
\caption{Diagonal blocks at Alexander grading $A(X_j)$.}
\label{fig: diagonal blocks}
\end{figure}

\begin{remark}
\label{rem: M-m}
We have $M_j-m_j\ge 2$ and $M_j-m_j=2$ if and only if $j=\frac{n+1}{2}$.
\end{remark}

\begin{corollary}
\label{cor: Maslov separation}
Suppose that $x$ and $y$ are two diagonal classes with Alexander grading $A(X_j)$ and the same Maslov grading $M(x)=M(y)$, and $\lvl(x)\ge \lvl(y)$. Then either $\lvl(x)=\lvl(y)$ or $j=\frac{n+1}{2},\lvl(x)=M_j$ and $\lvl(y)=m_j$.
\end{corollary}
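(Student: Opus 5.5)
The plan is to read everything off Theorem \ref{thm: GH} (a),(b) and Lemma \ref{lem: levels}, which give the Maslov grading of each diagonal class at Alexander grading $A(X_j)$ as an explicit function of its level. Then two diagonal classes with equal Maslov grading but different levels are possible only in the situation Remark \ref{rem: M-m} singles out.

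The first step is to express Maslov grading in terms of level. By Lemma \ref{lem: levels}(a), a bottom diagonal class of level $i$, with $0\le i\le m_j$, has Maslov grading $-j(j-1)-i$. By Lemma \ref{lem: levels}(b), a top diagonal class of level $\ell=n-i$, with $M_j\le \ell\le n$, has Maslov grading
$$
-j(j-1)-n+2+i=-j(j-1)+2-\ell.
$$
So within each block the Maslov grading is a strictly decreasing function of the level. Consequently two classes in the same block with the same Maslov grading have the same level.

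The remaining case is $x$ in the top block with level $\ell=\lvl(x)\ge M_j$, and $y$ in the bottom block with level $i=\lvl(y)\le m_j$. The hypothesis $\lvl(x)\ge\lvl(y)$ is consistent with this, since $M_j>m_j$ by Remark \ref{rem: M-m}. Equating Maslov gradings gives $-i=2-\ell$, that is, $\ell-i=2$.

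On the other hand,
$$
\ell-i\ge M_j-m_j\ge 2
$$
by Remark \ref{rem: M-m}. So equality holds throughout: $\ell=M_j$, $i=m_j$, and $M_j-m_j=2$. By Remark \ref{rem: M-m} the last equality forces $j=\frac{n+1}{2}$.

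The only real subtlety is checking the inequality $M_j-m_j\ge 2$ and its equality case. Writing $m_j=\min(j-1,n-j)$ and $M_j=\max(n-j+2,j+1)$, we get
$$
M_j-m_j\ge (n-j+2)-(n-j)=2 \quad\text{and}\quad M_j-m_j\ge (j+1)-(j-1)=2.
$$
Equality requires $M_j-m_j$ to be exactly $2$. If $j-1\ne n-j$, then one of $M_j$ and $m_j$ attains its extreme value on the side that makes the difference at least $3$. Equality therefore requires $j-1=n-j$, that is, $j=\frac{n+1}{2}$, as stated in the remark.
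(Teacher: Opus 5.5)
Your proposal is correct and follows the paper's proof. Within one block the Maslov grading determines the level. In the mixed case ($x$ top, $y$ bottom; the reverse assignment is excluded by $\lvl(x)\ge\lvl(y)$, since all bottom levels lie below all top levels), equating $-j(j-1)-\lvl(y)$ with $-j(j-1)+2-\lvl(x)$ gives $\lvl(x)-\lvl(y)=2$, and Remark~\ref{rem: M-m} then forces $j=\frac{n+1}{2}$, $\lvl(x)=M_j$, $\lvl(y)=m_j$. Your explicit check that $M_j-m_j\ge 2$, with equality only at $j=\frac{n+1}{2}$, fills in what the paper leaves to that remark.
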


\begin{proof}
If $x$ and $y$ are both in the bottom (resp. top) diagonal block at $A(X_j)$ then they have the same Maslov grading if and only if they have the same level. 

Assume now that $x$ is at the top diagonal block at level $n-i$ and $y$ is at the bottom diagonal block at level $i'.$ We have
$$
-j(j-1)-i'=-j(j-1)-n+2+i,
$$
so $i'=(n-i)-2$ and $\lvl(y)=\lvl(x)-2$. By Remark \ref{rem: M-m} this is only possible for $j=\frac{n+1}{2},\lvl(x)=M_j$ and $\lvl(y)=m_j$.
\end{proof}

The level structure for the off-diagonal homology is more complicated.

\begin{lemma}
\label{lem: levels off diagonal}
The off-diagonal homology for a given $j,s$  is supported in levels 
$$
\lvl(j,s,c)=j-s+2c-1
$$
where $c$ runs over the interval
\begin{equation}
\label{eq: c j inequalities}
\max(1,s-j+1)\le c\le \min(s,n-j).
\end{equation}
For a fixed $c$, the rank of the off-diagonal summand equals
$$
\binom{s-1}{s-c}\binom{n-s-1}{j+c-s-1}=\binom{s-1}{c-1}\binom{n-s-1}{n-j-c}.
$$
\end{lemma}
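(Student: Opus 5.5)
We will follow the proof of Lemma \ref{lem: levels} and compute the level of an off-diagonal class as the sum of its two cube gradings, one from each spectral sequence in Lemma \ref{lem: spectral sequences} (see Remark \ref{rem: cube model ss}). The level grading is additive, $\mu_a$ and $\nu_a$ have level $1$, and by part (a) of Lemma \ref{lem: spectral sequences} the first spectral sequence collapses at $E_2$. So it is enough to locate each surviving off-diagonal generator in the two cubes.

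Fix the Alexander grading $s'$ with $s$ coordinates equal to $\frac{n+1}{2}-j-1$ and $n-s$ coordinates equal to $\frac{n+1}{2}-j$. Let $S$ be the set of the $s$ lower coordinates. The second spectral sequence is
$$
\bigoplus_{B}\HFLminus(T(n,n),s'+e_B)\otimes\nu_B\Rightarrow \HFLhat(T(n,n),s').
$$
From the description of $\HFLminus(T(n,n))$ in \cite{GHom}, its only nonzero terms here come from $B\subset S$. At $B=S$ one gets the diagonal grading $A(X_j)$. For other $B$ one gets intermediate gradings of the same off-diagonal type, with $s-|B|$ lowered coordinates.

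In the first spectral sequence the contributions come from cubes $\bigoplus_A H_*(\Am(L,\cdot-e_A))\otimes\mu_A$. Using the relations \eqref{eq: HFL full relations}, the surviving part is controlled by subsets $A$ disjoint from $S$, reflecting monomials $V_{\overline{A}}$ and $U_A$ of complementary shape.

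Next we compute the $E_2$ pages. Following \cite[Section 4]{GHom}, the differentials $\partial_\mu$ and $\partial_\nu$ restricted to these subcubes are cube complexes: deformed as in Lemma \ref{lem: deformed cube} in the $S$-directions and undeformed in the complementary directions. The homology of a cube complex on $s$ coordinates, truncated as in Lemma \ref{lem: deformed cube}, has dimension $\binom{s-1}{i}$ in degree $i$. Similarly, the complementary cube on $n-s$ coordinates contributes $\binom{n-s-1}{i'}$. Label a surviving generator by $c$, where $c-1=s-c'$ records its degree in the $S$-cube, so that the factor $\binom{s-1}{c-1}$ appears, and $n-j-c$ records its degree in the complementary cube, giving $\binom{n-s-1}{n-j-c}$. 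Adding the two cube gradings, with the shifts coming from the $\nu_S$-part and the $\mu$-part, should give $\lvl=j-s+2c-1$. As a check, in Example \ref{ex: T33} the classes $b_i$ ($j=s=1,c=1$) have level $1$, $c_i$ ($j=1,s=2$) have level $2$ when $c=2$, and $B_i$ ($j=2,s=2$) have $\lvl(2,2,1)=1$. These are consistent with Figure \ref{fig: T33}, where $b_i,c_i$ are at levels $1,2$ and $B_i$ at level $1$.

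The range \eqref{eq: c j inequalities} is just where both binomial coefficients are nonzero: $1\le c\le s$ from the first and $0\le n-j-c\le n-s-1$ from the second. The identity $\binom{s-1}{s-c}\binom{n-s-1}{j+c-s-1}=\binom{s-1}{c-1}\binom{n-s-1}{n-j-c}$ is symmetry of binomials. Summing over $c$ via Vandermonde gives $\binom{n-2}{j-1}$, which matches the total rank in Theorem \ref{thm: GH}(c) and serves as a consistency check.

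The main obstacle is the bookkeeping in the first spectral sequence: determining exactly which cube degree in the $\mu$-cube carries each surviving class, which requires extracting the $E_2$ computations of \cite{GHom} in the off-diagonal gradings and matching the cube gradings with the $c$-parametrization. We would first verify the formula on $T(3,3)$ and $T(4,4)$ and then carry out the general case with $S$ and its complement treated as two independent cubes.
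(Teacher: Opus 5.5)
Your framework matches the paper's: the level of a class in $\HFLhat(T(n,n))$ is the sum of its cube gradings in the two spectral sequences of Lemma \ref{lem: spectral sequences}, and the off-diagonal $E_\infty$ terms come from the computation in \cite{GHom}. But the proposal stops exactly where the lemma has content. You never determine the two cube gradings of the summand labelled by $c$; you only say their sum ``should give'' $j-s+2c-1$.

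The tentative assignment you do write down cannot give this. Suppose the $\binom{s-1}{c-1}$ part sat in degree $c-1$ of one cube and the $\binom{n-s-1}{n-j-c}$ part in degree $n-j-c$ of the other. Then the level would be $(c-1)+(n-j-c)=n-j-1$ plus constant shifts, which is independent of $c$. The claimed level, however, grows by $2$ whenever $c$ grows by $1$. So one of your two identifications runs in the wrong direction, and no fixed shift ``from the $\nu_S$-part and the $\mu$-part'' can repair it.

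The missing input is what the paper extracts from the proof of \cite[Theorem 4.25]{GHom}:
\begin{itemize}
\item In the spectral sequence $\HFLminus\Rightarrow\HFLhat$, the summand with parameter $c$ has cube grading $c$, not $c-1$. It is built from terms $\HFLminus(T(n,n),s'+e_B)\otimes\nu_B$ with $B\subset S$ and $|B|=c$, and the factor $\binom{s-1}{c-1}$ appears in this degree $c$; compare \eqref{eq: V as cokernel}.
\item In the spectral sequence $\HFL\Rightarrow\HFLminus$, the same summand has cube grading $j+c-s-1=(n-s-1)-(n-j-c)$, where it contributes $\binom{n-s-1}{j+c-s-1}$.
\end{itemize}
Only with these values does additivity give $\lvl=c+(j+c-s-1)=j-s+2c-1$. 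For instance, in Example \ref{ex: T33} the class $b_3$ corresponds to $\nu_3X_1$, with cube gradings $1$ and $0$. The class $B_3$ corresponds to $\nu_1U_2X_2\sim\nu_2U_1X_2$, again with cube gradings $1$ and $0$.

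Your remaining steps are consistency checks that presuppose this identification rather than prove it. These are: reading off the range of $c$ as the support of the binomials, the Vandermonde identity, and plugging the formula into Figure \ref{fig: T33}.
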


\begin{proof}
This follows from the proof of \cite[Theorem 4.25]{GHom}. For a fixed $c$, the corresponding summand has cube grading $j+c-s-1$ in the spectral sequence $\HFL\Rightarrow \HFLminus$, and cube grading $c$ in the spectral sequence $\HFLminus\Rightarrow \HFLhat$, so the total level equals $(j+c-s-1)+c=j+2c-s-1$.
\end{proof}

\begin{remark}
Observe that 
$$
\sum_{c}\binom{s-1}{c-1}\binom{n-s-1}{n-j-c} =\binom{n-2}{n-j-1}=\binom{n-2}{j-1},
$$
in agreement with Theorem \ref{thm: GH}(c).
\end{remark}

\begin{example}
\label{ex: extremal off-diagonal 1}
Consider the case $s=1$. By \eqref{eq: c j inequalities} this forces $c=1$, so all the off-diagonal summands in the corresponding Alexander grading are supported at the level 
$$
j-1+2-1=j.
$$
\end{example}

\begin{example}
\label{ex: extremal off-diagonal}
Consider the case $s=n-1$. By \eqref{eq: c j inequalities} this forces $c=n-j$, so all the off-diagonal summands in the corresponding Alexander grading are supported at the level 
$$
j-(n-1)+2(n-j)-1=n-j.
$$
\end{example}

\subsection{Changing orientations on components}
\label{sec: change orientation Tnn}

Let $O_1(k_{+},k_{-})$ be the $(n,n)$ torus link with $k_{+}$ strands in positive direction and $k_{-}$ strands in negative direction. It is a cable of the $1$-framed unknot $O_1$, and  $n=k_{+}+k_{-}$.  By Proposition \ref{prop: reversing orientation} the generators in $\HFLhat(O_1(k_{+},k_{-}))$ are in bijection with those of $\HFLhat(T(n,n))$ after some grading shifts which we determine below. In particular, the diagonal blocks  (resp. off-diagonal) in $\HFLhat(T(n,n))$ correspond to some blocks in $\HFLhat(O_1(k_{+},k_{-}))$ which by abuse of notation we also call diagonal (resp. off-diagonal).  

\begin{lemma}
\label{lem: change orientation n n}
a) After changing the orientation on $k_{-}$ strands, the generator $X_j$ for $\HFL(T(n,n))$ corresponds to the generator $X'_j$ for $\HFL(O_1(k_{+},k_{-}))$ of Maslov grading
$$
-(j-k_{-})(j-k_{-}-1).
$$

b) More generally, the bottom diagonal block at Alexander grading $A(X_j)$ in $\HFLhat(T(n,n))$ corresponds to the block 
$$\bigoplus_{i=0}^{\min(j-1,n-j)}\binom{n-1}{i}\F_{-(j-k_{-})(j-k_{-}-1)-i}
$$
 in $\HFLhat(O_1(k_{+},k_{-}))$ with Alexander grading $A(X'_j)$.

c) The top  diagonal block at $A(X_j)$  in $\HFLhat(T(n,n))$ corresponds to the block 
$$
\bigoplus_{i=0}^{\min(j-2,n-j-1)}\binom{n-1}{i}\F_{-(j-k_{-})(j-k_{-}-1)-n+2+i}
$$
in $\HFLhat(O_1(k_{+},k_{-}))$ with Alexander grading $A(X'_j)$.
\end{lemma}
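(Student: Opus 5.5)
The plan is to apply Corollary \ref{cor: change orientation many components} with $A$ the set of $k_{-}$ reversed strands, and then track the Maslov shift block by block. In $T(n,n)$ with all strands oriented the same way, every pair of components has linking number $1$. The reversed set $A$ has $|A|=k_{-}$ and its complement has $k_{+}$ elements, so the linking term equals $\sum_{i\in A,j\in\overline{A}}\lk(L_i,L_j)=k_{-}k_{+}$. This is computed in the original link, as the corollary requires.

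Next I evaluate the Alexander term on the diagonal classes. By \eqref{eq: Alex Xj}, every coordinate of $A(X_j)$ equals $\frac{n+1}{2}-j$. Hence
$$
-2\sum_{i\in A}A_i=-k_{-}(n+1-2j).
$$
Every class in the top and bottom diagonal blocks at $A(X_j)$ has this same Alexander multi-grading. So the whole diagonal part at $A(X_j)$ is shifted uniformly by
$$
\Delta_j=-k_{-}(n+1-2j)+k_{-}k_{+}.
$$
Substituting $n=k_{+}+k_{-}$ gives $\Delta_j=-k_{-}(k_{-}+1-2j)$.

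For part (a), I check the identity
$$
-j(j-1)+\Delta_j=-(j-k_{-})(j-k_{-}-1).
$$
Expanding the right-hand side gives $-j^2+j+2jk_{-}-k_{-}^2-k_{-}$. This equals $-j(j-1)-k_{-}^2-k_{-}+2jk_{-}$, which is exactly the left-hand side.

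The image of $X_j$ under the isomorphism of Proposition \ref{prop: reversing orientation}(a),(c) is then $X'_j$, with Alexander grading $A(X'_j)$ obtained by negating the coordinates indexed by $A$. Parts (b) and (c) follow at once, because Theorem \ref{thm: GH} gives every summand of each block Maslov grading $-j(j-1)$ plus a term independent of orientation, and adding $\Delta_j$ simply replaces $-j(j-1)$ by $-(j-k_{-})(j-k_{-}-1)$.

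The only real point to verify is that the shift is uniform across each diagonal block. This holds because all diagonal classes at $A(X_j)$ share the same constant Alexander vector. No such uniformity is claimed for the off-diagonal classes, whose coordinates differ.
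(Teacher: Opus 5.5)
Your proposal is correct and matches the paper's proof essentially step for step. You apply Corollary~\ref{cor: change orientation many components} with linking term $k_{+}k_{-}$ and Alexander term $-k_{-}(n+1-2j)$ to get (a), then deduce (b) and (c) from the fact that the shift is uniform on the constant Alexander multi-grading $A(X_j)$.
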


\begin{proof}
Parts (b) and (c) follow from (a) since for a fixed Alexander grading the Maslov grading is shifted by the same amount.

To prove (a), we use Corollary \ref{cor: change orientation many components}. We change orientation on $k_{-}$ components, each of them has linking number $1$ with each of $k_{+}$ remaining ones. Therefore  
$$
M(X'_j)=M(X_j)-2k_{-}\left(\frac{n+1}{2}-j\right)+k_{+}k_{-}=-j(j-1)-k_{-}\left(k_{-}+k_{+}+1-2j\right)+k_{+}k_{-}=
$$
$$
-j^2+(2k_{-}+1)j-k_{-}(k_{-}+1)=-(j-k_{-})(j-k_{-}-1).
$$
\end{proof}

\begin{remark}
\label{rem: jj}
It is useful to change the variables $\jj=\frac{n+1}{2}-j$. Then all diagonal terms at $j$ will have total Alexander  grading $\jj(k_{+}-k_{-})=\jj\alpha$. We have
$$
j-k_{-}=\frac{n+1}{2}-\jj-k_{-}=\frac{k_{+}-k_{-}+1}{2}-\jj=\frac{\alpha+1}{2}-\jj,
$$
so in the bottom diagonal block we have Maslov gradings
$$
-(j-k_{-})(j-k_{-}-1)-i=-\left(\frac{\alpha+1}{2}-\jj\right)\left(\frac{\alpha-1}{2}-\jj\right)-i,\ 
 $$
 and in the top diagonal block we have Maslov gradings
 $$
-(j-k_{-})(j-k_{-}-1)-n+2+i=-\left(\frac{\alpha+1}{2}-\jj\right)\left(\frac{\alpha-1}{2}-\jj\right)-n+2+i.
$$
\end{remark}

Next, we compute the Maslov grading for the off-diagonal terms after changing orientation.

\begin{lemma}
\label{lem: off diagonal change orientation}
Given a class $a$ in $\HFLhat(T(n,n))$ with $s$ components of the Alexander grading equal to $\frac{n+1}{2}-j-1$ and $n-s$ equal to $\frac{n+1}{2}-j$, let us change the orientation on $s_{-}$ strands in the former group and $k_{-}-s_{-}$ strands in the latter group. The resulting class $a'$ has Maslov grading 
\begin{equation}
\label{eq: off diagonal change orientation}
-(j-k_{-})^2-s-k_{-}+2s_{-}+1.
\end{equation}
\end{lemma}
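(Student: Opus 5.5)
The plan is to imitate the proof of Lemma \ref{lem: change orientation n n}(a) and apply Corollary \ref{cor: change orientation many components} directly to the class $a$, keeping track of which Alexander coordinates belong to the reversed strands. Three inputs are needed. The original Maslov grading $M(a)=-j^2-s+1$ comes from Theorem \ref{thm: GH}(c). The Alexander coordinates of $a$ are $A_i(a)=\frac{n+1}{2}-j-1$ on the $s$ distinguished strands and $A_i(a)=\frac{n+1}{2}-j$ on the remaining $n-s$ strands. Finally, all pairwise linking numbers in $T(n,n)$ equal $1$.

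The reversed set consists of $s_{-}$ strands from the first group and $k_{-}-s_{-}$ strands from the second. Hence
$$
\sum_{i\in A}A_i(a)=s_{-}\left(\frac{n+1}{2}-j-1\right)+(k_{-}-s_{-})\left(\frac{n+1}{2}-j\right)=k_{-}\left(\frac{n+1}{2}-j\right)-s_{-}.
$$
The linking term is $\sum_{i\in A,\,j'\in\overline{A}}\lk=k_{-}k_{+}$, exactly as in Lemma \ref{lem: change orientation n n}. Corollary \ref{cor: change orientation many components} then gives
$$
M(a')=-j^2-s+1-2k_{-}\left(\frac{n+1}{2}-j\right)+2s_{-}+k_{+}k_{-}.
$$

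It remains to simplify using $n=k_{+}+k_{-}$. We have $-k_{-}(n+1)+k_{+}k_{-}=-k_{-}^2-k_{-}$, so
$$
M(a')=-j^2+2jk_{-}-k_{-}^2-k_{-}-s+2s_{-}+1=-(j-k_{-})^2-s-k_{-}+2s_{-}+1,
$$
which is \eqref{eq: off diagonal change orientation}.

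There is no real obstacle beyond bookkeeping. The one subtle point is the sign convention in Corollary \ref{cor: change orientation many components}: the $A_i$ there are the Alexander gradings in the original link $L$. This is consistent with how the formula was applied in Lemma \ref{lem: change orientation n n}, so the same reading is used here.
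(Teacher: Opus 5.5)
Your proposal is correct and is essentially the paper's proof: the paper likewise starts from $M(a)=-j^2-s+1$ (Theorem~\ref{thm: GH}(c)), applies Corollary~\ref{cor: change orientation many components} with Alexander contribution $-2s_{-}\left(\frac{n+1}{2}-j-1\right)-2(k_{-}-s_{-})\left(\frac{n+1}{2}-j\right)$ and linking term $k_{+}k_{-}$, and simplifies using $n=k_{+}+k_{-}$. Your algebra matches the paper's line by line.
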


\begin{proof}
The original class $a$ has grading $M(a)=-j^2-s+1$. Similarly to Lemma \ref{lem: change orientation n n} we get
$$
M(a')=M(a)-2s_{-}\left(\frac{n+1}{2}-j-1\right)-2(k_{-}-s_{-})\left(\frac{n+1}{2}-j\right)+k_{+}k_{-}=$$
$$
-j^2-s+1-2k_{-}\left(\frac{n+1}{2}-j\right)+2s_{-}+k_{+}k_{-}=
$$
$$
-j^2-s+1-k_{-}^2-k_{+}k_{-}-k_{-}+2jk_{-}+2s_{-}+k_{+}k_{-}=
$$
$$
-(j-k_{-})^2-s-k_{-}+2s_{-}+1.
$$
\end{proof}

We will often use the following consequence of naturality for cobordism maps.
Recall that in our conventions the cobordism maps $\Phi_1,\Phi_2,F_{-P}$ and $F_P$ only affect the components indexed by $n+1$ and $n+2$. In particular, it is sufficient to assume that the first $n$ components are oriented positively, and we consider these maps between $O_1(n,0)$ and $O_1(n+1,1)$.

\begin{proposition}
\label{prop: cob commute with orientation reversal}
The cobordism maps $\Phi_1,\Phi_2,F_{-P}$ and $F_P$ commute with the isomorphisms from Proposition \ref{prop: reversing orientation} which reverse orientation on the strands $1,\ldots,n$.
\end{proposition}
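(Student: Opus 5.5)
The approach is to use naturality of Zemke's link cobordism maps with respect to the identification of Proposition \ref{prop: reversing orientation}. The key observation is that reversing orientation on a component $L_i$ with $i\le n$ does not change the underlying decorated surface away from $L_i\times[0,1]$. All the cobordisms $B$, $P$, $-P$, $P_1$, $P_2$ are disjoint unions of a product cylinder on $L_i$ (decorated as a product, with one $w$-region and one $z$-region) and a nontrivial piece supported near the new components $n+1,n+2$. Reversing the orientation of $L_i$ reverses the orientation of the product cylinder and swaps the roles of $w_i$ and $z_i$ on it. This is exactly the isomorphism of Proposition \ref{prop: reversing orientation}(a),(b), which swaps $U_i\leftrightarrow V_i$. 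So the plan is to show that the cobordism map for the reoriented cylinder agrees with the original one after conjugating by this swap.

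First I would recall how the isomorphism in Proposition \ref{prop: reversing orientation} is realized (following \cite[Proposition 5.4]{MOST}). It uses the same Heegaard diagram for $L$ and $L'$, with the basepoints $w_i,z_i$ interchanged. Thus $\CFL(L)$ and $\CFL(L')$ have literally the same generators and differential, and the $R$-actions differ by the swap of $U_i$ and $V_i$.

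Next I would compute each cobordism map by the same decomposition into elementary pieces, using the same diagrams for both orientations. The relevant pieces are births and deaths for $O$, the band maps $f_{P_z}$, $f_{P_w}$, the quasi-stabilizations $S^\pm$, $T^\pm$ on the new components, and identity on the rest. Lemma \ref{lem:map-relations} reduces $f_P$, $f_{P_1}$, $f_{P_2}$ to $f_{P_z}$ and $f_{P_w}$ composed with quasi-stabilizations, and $f_{-P}$ is handled dually. Each of these elementary maps is defined by counting triangles or by formal formulas on a Heegaard diagram in which $L_i$ appears only through its basepoints $w_i,z_i$, and these are untouched by the handle or band. The only place the orientation of $L_i$ enters is the bookkeeping of which basepoint is labeled $w$ versus $z$. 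Relabeling therefore commutes with each elementary map, provided the swap $U_i\leftrightarrow V_i$ is applied to the coefficients. Since the maps act as identity on the variables $U_i,V_i$, the relabeled map is the same map.

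The step needing care is the gradings and the coloring of the cobordism maps. The maps are defined over the universal coloring, where the product cylinder on $L_i$ carries its own pair of colors. Swapping $w$ and $z$ regions on a product cylinder yields the product cylinder for $L'$, so the coloring transforms consistently. The Maslov shift formula of \cite[Theorem 1.4]{Zemke2} is compatible with this swap, since both sides shift by the same $-2A_i+\ell_i$ correction. The reason is that the cobordism maps preserve $A_i$ for $i\le n$ by Lemma \ref{lem:PoP-grading}, and the linking of $L_i$ with the new components is unaffected. This is where I expect the main subtlety.

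Finally, I would pass to $\HFLhat$ and to $\HFL$, using Proposition \ref{prop: reversing orientation}(c). For plumbed links I would also note that levels are preserved by part (d), so the identification is compatible with the level-filtered representatives of Proposition \ref{prop:level-preserving}. Induction on the number of reversed strands then gives the statement for all strands $1,\ldots,n$.
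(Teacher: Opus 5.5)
Your proposal is correct and matches the paper, which gives no separate argument and treats this as an immediate consequence of naturality: the four cobordisms are product cylinders on strands $1,\ldots,n$ and only affect components $n+1,n+2$, so relabeling $w_i\leftrightarrow z_i$ (with $U_i\leftrightarrow V_i$) commutes with every elementary piece, and your diagram-level argument just spells this out. The grading check you flag as the main subtlety is only a consistency check, not the crux: once the maps agree as ungraded maps, compatibility with the shift $-2A_i+\ell_i$ is automatic. The check does hold, but the correct reason is that $\lk(L_i,L'_{n+1})+\lk(L_i,L'_{n+2})=0$, not that each linking number with the new components is individually unaffected.
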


\subsection{Braid group action}

\begin{lemma}
\label{lem: Sn action diagonal blocks}
The action of the braid group $\Br_n$ in the diagonal blocks of $\HFLhat(T(n,n))$ is level-filtered and in fact level-preserving unless $j=\frac{n+1}{2}$ and $\lvl=M_j$. The level-preserving component of the braid group action factors through $S_n$, and yields the following $S_n$ representations:

a) In the bottom diagonal block at $A(X_j)$, we have the representation
$$
\bigoplus_{i=0}^{\min(j-1,n-j)}V_{n,i}
$$

b) In the top diagonal block at $A(X_j)$, we have the representation 
$$
\bigoplus_{i=0}^{\min(j-2,n-j-1)}V_{n,n-1-i}.
$$
\end{lemma}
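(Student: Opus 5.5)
The plan is to compute the braid action on the resolution $\CFL(T(n,n))$ by lifting the action on $\HFL(T(n,n))$, and then read off the hat homology. Since $\CFL(T(n,n))$ is a free resolution of $\HFL(T(n,n))$ (Theorem \ref{thm: BLZ}), Corollary \ref{cor: homotopic to level filtered} shows that each braid cobordism map is homotopic to a level-filtered chain map. Its level-preserving component $F_0$ is determined by the induced map on $\HFL(T(n,n))$. That induced map is the $R_n$-semilinear automorphism permuting $U_i,V_i$ via the underlying permutation and fixing the generators $X_j$. Indeed, $X_j$ spans the unique lowest-Maslov class in its Alexander grading $A(X_j)$, which is $S_n$-symmetric, so $X_j$ must be fixed. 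Hence $F_0$ depends only on the image of the braid in $S_n$, giving the factorization through $S_n$.

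Next I would check that the action is level-preserving on diagonal blocks. The braid maps preserve Alexander and Maslov gradings, and a component $F_k$ with $k<0$ must send a diagonal class to another class with the same Alexander and Maslov grading but strictly lower level. First I would note that a braid preserves the multiset of Alexander coordinates, so diagonal Alexander gradings are sent to diagonal ones. By Corollary \ref{cor: Maslov separation}, two diagonal classes in $A(X_j)$ with the same Maslov grading have equal levels, except when $j=\frac{n+1}{2}$, $\lvl=M_j$, and the target lies at level $m_j$. This gives the exceptional case in the statement.

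To identify the representations, I would use the cube model of Remark \ref{rem: cube model ss}, which computes the levels as in Lemma \ref{lem: levels}. In the bottom block at $A(X_j)$ with $j\le\frac{n+1}{2}$, the relevant part of the first spectral sequence is the cube $\bigoplus_A H_*(\mathfrak{A}^-(L,s-e_A))\mu_A$. Each $H_*(\mathfrak{A}^-)\simeq\F[\UU]$, and the edge maps $U_a$ are isomorphisms or multiplication by $\UU$ according to the $h$-function. For $s=A(X_j)$ these edges are isomorphisms for $|A|\ge j$ and multiplication by $\UU$ for $|A|<j$. The resulting complex is exactly the deformed cube $C_n[\UU]$ with $\partial^{(j)}$, with $S_n$ permuting the $\mu_a$. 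Lemma \ref{lem: deformed cube} then gives $H_i\simeq V_{n,i}$ for $i<j$. I would also check that the symmetric lift of the braid action is compatible with this identification, since the spectral sequence is natural for permutation of variables. The second spectral sequence contributes cube grading $0$, so it does not change the representation. The case $j\ge\frac{n+1}{2}$ follows by the symmetry $U\leftrightarrow V$, which swaps the roles of $j$ and $n+1-j$.

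For the top block, the classes have $\nu$-cube grading $n-j+2$ and $\mu$-cube grading $j-2-i$, as in the proof of Lemma \ref{lem: levels}. I expect to realize them through the duality $V_{n,n-1-i}\simeq V_{n,i}^\ast$ of Lemma \ref{lem: Vni}(c), where the top exterior power $\wedge^n(\nu)$ or $\wedge^n(\mu)$ is $S_n$-trivial, or alternatively via the Koszul-type symmetry of the hat complex. The main obstacle is this top-block identification. I would first try to show that the top block at $j$ is the image of the bottom block under multiplication-by-the-top-form, twisted by the $\mu$-complement isomorphism $e_A\mapsto e_{\overline A}$, which turns $\Ker\partial_i$ into $\Coker$. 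By \eqref{eq: V as cokernel}, this cokernel is $V_{n,n-1-i}$.
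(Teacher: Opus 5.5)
Your handling of the level structure, the factorization through $S_n$, and part (a) is essentially the paper's argument: Corollaries \ref{cor: homotopic to level filtered} and \ref{cor: Maslov separation}, then the deformed cube of Lemma \ref{lem: deformed cube}. You fix $X_j$ because it spans the one-dimensional top Maslov piece of the $S_n$-invariant grading $A(X_j)$; this is an equivalent variant of the paper's $z_v\mapsto z_{\sigma(v)}$. There are two small slips. $X_j$ is the \emph{highest}, not lowest, Maslov class there. The edges of the $\mu$-cube are the maps $V_a$, since $\partial_\mu\mu_a=V_a$.

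The gap is part (b), which you only sketch, and the sketched mechanism cannot work. Multiplication by a top form $\nu_1\cdots\nu_n$ (or $\mu_1\cdots\mu_n$) is $S_n$-equivariant, because the top exterior power is trivial in characteristic $2$. The complement map $e_A\mapsto e_{\overline{A}}$ is an $S_n$-equivariant isomorphism. The top block at level $n-i$ has dimension $\binom{n-1}{i}=\dim V_{n,i}$, so realizing it as the image of $V_{n,i}$ under such a composite would force it to be isomorphic to $V_{n,i}$. Rewriting a kernel as a cokernel via \eqref{eq: V as cokernel} does not change the isomorphism type.

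But $V_{n,n-1-i}\simeq V_{n,i}^{\ast}$ is a dual, and over $\F$ it is in general not isomorphic to $V_{n,i}$. For even $n\ge 4$ we have $\Coinv(V_{n,1})=0$ while $\dim\Coinv(V_{n,n-2})=1$ (Lemma \ref{lem: W1 W2}). This is exactly the distinction the later lasagna computations rely on. The gradings do not fit either. By the proof of Lemma \ref{lem: levels}, for $j\le\frac{n+1}{2}$ the top-block piece at level $n-i$ has $\nu$-cube grading $n-j+2$ and $\mu$-cube grading $j-2-i$. So it can lie in the image of a top form only when $j=2$.

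What is needed, and what the paper uses via \cite[Theorem 4.22]{GHom}, is an actual computation of the second spectral sequence $\HFLminus\otimes\wedge(\nu_1,\ldots,\nu_n)\Rightarrow\HFLhat$ at $A(X_j)$. There the top block arises as a genuine cokernel of $\partial_\nu$-edge maps. Their source is the diagonal block of $\HFLminus$ at $A(X_{j-1})$, which is $\bigoplus_{i\le j-2}V_{n,i}$ by your part (a) argument. Their targets are $\HFLminus$ groups at adjacent off-diagonal Alexander gradings, which assemble into permutation modules.

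For $j=3$ this gives $\Coker\left[\F\oplus V_{n,1}\to\F^n\oplus\F^n\right]\simeq V_{n,n-2}\oplus\F$ (Example \ref{ex: second block rep}). It is this passage to a cokernel inside a permutation module, not an equivariant image, that turns $V_{n,i}$-data into $V_{n,n-1-i}$. Until you identify these edge maps and their cokernels equivariantly, part (b) remains unproved.
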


\begin{proof}
By Corollary \ref{cor: homotopic to level filtered} the action of $\Br_n$ is level-filtered. It preserves the Maslov grading so by Corollary \ref{cor: Maslov separation} it is level-preserving unless $j=\frac{n+1}{2}.$

The rest of the proof follows from the results of \cite{GHom}. First, since $T(n,n)$ is an L-space link we have 
$$
\Am(L,v)\simeq \F[\UU][-2h(v)]
$$
where $h(v)$ is the $h$-function computed in \cite[Theorem 4.3]{GHom}. We denote the generator of $\Am(L,v)$ by $z_v$.

The $h$-function is clearly symmetric with respect to the permutation of components of $v$. Given a braid $\beta\in \Br_n$ with the corresponding permutation $\sigma\in S_n$, the action of the cobordism map $F_{\beta}$ yields an isomorphism  
$$
F_{\beta}:\Am(L,v)\to \Am(L,\sigma(v))
$$
and therefore $F_{\beta}(z_v)=z_{\sigma(v)}$. We conclude that the action of $\Br_n$ on $\HFL(T(n,n))$ factors through $S_n$. 

Next, as in Lemma \ref{lem: levels} one can use the spectral sequences $\HFL(L)\Rightarrow \HFLminus(L)$ and   $\HFLminus(L)\Rightarrow \HFLhat(L)$ to determine the latter homology. All these spectral sequences are built from $\Am$ by purely algebraic methods and hence commute with the $S_n$ (resp. braid group) action as well. On the other hand, their $E_{\infty}$ pages only compute the associated graded of the corresponding representations with respect to level filtration.
Thus the level-preserving component of the braid group action factors through $S_n$, but one needs to compute the corresponding representations.

By Lemma \ref{lem: deformed cube} and \cite[Theorem 5]{GHom} the diagonal block in $\HFLminus(T(n,n))$ is isomorphic to 
$
\bigoplus_{i=0}^{j-1}V_{n,i}.
$

For $j\le \frac{n+1}{2}$ the bottom diagonal block in $\HFLhat(T(n,n))$ is isomorphic to the corresponding block in $\HFLminus(T(n,n))$.
Similarly, using \cite[Theorem 4.22]{GHom} and Lemma \ref{lem: Vni} one can check that the top diagonal block  in $\HFLhat(T(n,n))$ is given by 
$
\bigoplus_{i=0}^{j-2}V_{n,n-1-i}.
$
See also Example \ref{ex: second block rep} below.

For $j\ge \frac{n+1}{2}$ the result follows from the symmetry of $\HFLhat$ which exchanges $j\leftrightarrow n+1-j$ and commutes with the action of $S_n$.
\end{proof}

\begin{example}
\label{ex: second block rep}
Following the proof of \cite[Theorem 4.22]{GHom}, one can determine the representations for diagonal blocks for small $j$. We assume that $n$ is sufficiently large.

For $j=1$ the bottom diagonal block in $\HFLhat$ agrees with the diagonal block in $\HFLminus$ and is isomorphic to $\F$, while the top diagonal block vanishes.

For $j=2$ the bottom diagonal block in $\HFLhat$ (resp. diagonal block in $\HFLminus$) is isomorphic to $\F\oplus V_{n,1}$ while the top diagonal block is isomorphic to the diagonal block in $\HFLminus$ at $j=1$, that is, $\F$.

For $j=3$  the bottom diagonal block in $\HFLhat$ (resp. diagonal block in $\HFLminus$) is isomorphic to $\F\oplus V_{n,1}\oplus V_{n,2}$. The top diagonal block is isomorphic as $S_n$-representation to the cokernel 
$$
\Coker\left[\F\oplus V_{n,1}\to \F^n\oplus \F^n\right]=\Coker\left[\F\to \F^n\right]\oplus \Coker[V_{n,1}\to \F^n]\simeq V_{n,n-2}\oplus \F.
$$
Here $\F\oplus V_{n,1}$ in the first equation corresponds to the diagonal block in $\HFLminus$ at $j=2$.
\end{example}

\subsection{Basepoint moving maps}
We recall Zemke's formula \cite[Theorem B]{ZemkeBasept} for the basepoint moving map around $L_i$
$$
F_{m_i}=1+\Phi_{w_i}\Psi_{z_i},\ i=1,\ldots,n
$$
 where $\Phi_{w_i}$ and $\Psi_{z_i}$ are respectively the partial derivatives of the differential $\partial$ with respect to $U_i$ and $V_i$.  
The map $F_{m_i}$ has Maslov and Alexander grading zero.

\begin{lemma}
\label{lem: basepoint moving}
Assume $j\neq \frac{n+1}{2}$. Then $F_{m_i}=1$ in the diagonal part of $\HFLhat(T(n,n))$.
\end{lemma}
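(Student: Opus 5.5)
We need $F_{m_i}=1$ on the diagonal part when $j\neq\frac{n+1}{2}$. We have $F_{m_i}=1+\Phi_{w_i}\Psi_{z_i}$, so the task is to show that $\Phi_{w_i}\Psi_{z_i}$ vanishes on the diagonal classes. I will use gradings first and levels second.

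\textbf{Step 1: gradings.} By definition $\Phi_{w_i}=\partial\partial/\partial U_i$ and $\Psi_{z_i}=\partial\partial/\partial V_i$. Since $\partial$ lowers the Maslov grading by $1$ and $U_i$ has bigrading $(-2,-e_i)$, the map $\Phi_{w_i}$ has Maslov shift $+1$ and Alexander shift $+e_i$. Likewise $\Psi_{z_i}$ has Maslov shift $-1$ and Alexander shift $-e_i$. In our model $\CFL(T(n,n))$ is a free resolution, and the differential lowers the level by $1$ while $U_i,V_i$ preserve it. Hence both $\Phi_{w_i}$ and $\Psi_{z_i}$ lower the level by exactly $1$. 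So on $\HFLhat$ the composite $\Phi_{w_i}\Psi_{z_i}$ preserves the Alexander and Maslov gradings and lowers the level by exactly $2$.

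\textbf{Step 2: the diagonal block.} Take a diagonal class $x$ at Alexander grading $A(X_j)$. Its image $\Phi_{w_i}\Psi_{z_i}(x)$ lies at the same Alexander grading, and all classes there are diagonal by Theorem~\ref{thm: GH}. The image also has the same Maslov grading as $x$ and level $\lvl(x)-2$. By Corollary~\ref{cor: Maslov separation}, two diagonal classes at $A(X_j)$ with equal Maslov gradings have equal levels unless $j=\frac{n+1}{2}$. Therefore, for $j\neq\frac{n+1}{2}$, every component of $\Phi_{w_i}\Psi_{z_i}(x)$ must vanish, and $F_{m_i}(x)=x$.

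\textbf{Main obstacle.} The delicate point is that the level bookkeeping in Step 1 is only valid in a model where the differential has pure level degree $-1$. The basepoint maps are defined on an arbitrary Heegaard diagram complex, not on our model. I would handle this by transporting $F_{m_i}$ to the resolution model of Convention~\ref{conv: complex}, as follows. The formula $1+\Phi_{w_i}\Psi_{z_i}$ is natural under homotopy equivalences of curved $R$-complexes, since the derivatives are defined from the $R$-module structure. In the resolution model the derivatives of $\partial$ therefore do have pure level degree $-1$. If naturality only holds up to homotopy, I would instead argue with filtrations: by Corollary~\ref{cor: homotopic to level filtered}, the homotopy equivalence between the two models can be taken to be level-filtered, and a level-filtered correction cannot raise the level. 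So the map still lowers the level by at least $2$, and the same Maslov separation argument applies.
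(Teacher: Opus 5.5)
Your proof is correct and takes essentially the same route as the paper's: $\Phi_{w_i}\Psi_{z_i}$ preserves the Alexander and Maslov gradings and lowers the level by $2$, so on the diagonal part it could only send the top block to the bottom block, and Remark~\ref{rem: M-m} (which is what Corollary~\ref{cor: Maslov separation} packages) rules this out for $j\neq\frac{n+1}{2}$. Your extra paragraph on moving $F_{m_i}$ to the resolution model addresses a point the paper takes for granted, and your naturality sentence already suffices, because formal derivatives commute with chain maps up to homotopy and that is enough on homology. The filtration fallback is therefore unnecessary, and as written it does not quite make sense, since the Heegaard-diagram complex carries no level grading.
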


\begin{proof}
We need to check that $\Phi_{w_i}\Psi_{z_i}=0$. The map $\Phi_{w_i}\Psi_{z_i}$ preserves the Alexander multi-grading and Maslov grading $M$, and decreases the level by $2$. Therefore it vanishes on both diagonal blocks, but may in principle send the top block to the bottom one. By Remark \ref{rem: M-m} this is only possible when $j=\frac{n+1}{2}$.
\end{proof}

\begin{remark}
\label{rem: basepoint}
If $j=\frac{n+1}{2}$ then at least for $n=3$ the composition $\Phi_{w_i}\Psi_{z_i}$ is not zero.  Indeed, in the notation of Figure \ref{fig: T33} we get $\Psi_{z_1}(e)=d_1$ and $\Phi_{w_1}(d_1)=Z_1$, so $F_{m_1}(e)=e+Z_1$.
\end{remark}

\subsection{Cobordism maps $\Phi_1,\Phi_2,F_{-P}$: full homology}
Let $L=O_1(n,0)$ and $L'=O_1(n+1,1)$ and assume the $(n+2)$-nd component of $L'$ is oriented negatively. 
%Furthermore,
%assume $\HFL(L')$ be the link Floer homology of the $L'$ with the coefficient ring that identify $U_{n+1}=U_{n+2}$ and $V_{n+1}=V_{n+2}$.

%\AA{For $\Phi_1$ and $\Phi_2$ we don't want to identify both $U_{n+1}=U_{n+2}$ and $V_{n+1}=V_{n+2}$, right?}
\begin{proposition}
\label{prop: Phi1 full homology}
The cobordism maps 
\[\Phi_1:\HFL(O_1(n,0))\to \HFL\left(O_1(n+1,1);R_{n+2}/(U_{n+2}-U_{n+1})\right)\]
\[\Phi_2:\HFL(O_1(n,0))\to \HFL\left(O_1(n+1,1);R_{n+2}/(V_{n+2}-V_{n+1})\right)\]
 are given by 
\begin{equation}
\label{eq: phi guess T}
\Phi_1(X'_{j,n})=U_{n+1}X'_{j+1,n+2},\ \Phi_2(X'_{j,n})=V_{n+1}X'_{j+1,n+2}.
\end{equation}
Note that we denote the generators by $X'_{j,n}$, resp. $X'_{j,n+2}$ to keep track of the number of strands. 
\end{proposition}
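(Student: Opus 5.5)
The plan is to use the L-space structure: each Alexander summand $\Am(L,s)$ of the full homology has homology $\F[\UU]$ (Section \ref{sec: L space background}), so a homogeneous map is pinned down by its gradings up to a scalar, and the scalar can be recovered by capping off. I will treat $\Phi_1$ in detail; $\Phi_2$ is symmetric, exchanging the roles of $U$ and $V$ and of $P_1$ and $P_2$.

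\textbf{Step 1 (gradings).} By Lemma \ref{lem:PoP-grading}, $\Phi_1=f_{P_1}\circ f_B$ shifts the Maslov grading by $-2$. It preserves $A_i$ for $i\le n$ and shifts $A_{n+1}+A_{n+2}$ by $-1$. Lemma \ref{lem: change orientation n n}(a) gives
$$
M(X'_{j,n})=-j(j-1), \qquad M(X'_{j+1,n+2})=-j(j-1),
$$
since $k_-=1$ for $L'$. Hence $M(U_{n+1}X'_{j+1,n+2})=-j(j-1)-2$, which is the correct target grading.

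For the Alexander grading, the first $n$ coordinates of $X'_{j+1,n+2}$ are $\frac{n+3}{2}-(j+1)=\frac{n+1}{2}-j$, matching $X'_{j,n}$. The two new coordinates are $\pm(\frac{n+1}{2}-j)$, so their sum is $0$, and multiplying by $U_{n+1}$ lowers it to $-1$. So $U_{n+1}X'_{j+1,n+2}$ lies in exactly the Alexander multi-grading required for $\Phi_1(X'_{j,n})$. Its Maslov grading is also the top one in the relevant summand, because the summand is a free $\F[\UU]$-module.

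\textbf{Step 2 (the homology of $L'$ stays a tower).} I need the homology of $L'$ with coefficients $R_{n+2}/(U_{n+2}-U_{n+1})$ in this Alexander grading. By Lemma \ref{lem: identify U not V} and its proof, multiplication by $U_{n+1}-U_{n+2}$ is injective on $\HFL$. Therefore the quotient in each grading is the cokernel of this map, computed one summand at a time from the $h$-function, and it is still a single copy of $\F[\UU]$ in the relevant gradings. It follows that the target group in the bidegree of $\Phi_1(X'_{j,n})$ is at most one-dimensional, and that $U_{n+1}X'_{j+1,n+2}$ spans it. This gives
$$
\Phi_1(X'_{j,n})=c_j\, U_{n+1}X'_{j+1,n+2}
$$
for some $c_j\in\F$. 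I expect this step to be the main obstacle: after identifying $U_{n+1}=U_{n+2}$, the relations \eqref{eq: HFL full relations} could in principle glue towers from neighbouring Alexander gradings, and this has to be ruled out by a careful $h$-function count.

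\textbf{Step 3 ($c_j=1$).} Compose with $f_{-P}$. Lemma \ref{lem:relations} gives
$$
F_{-P}\circ\Phi_1=U_{n+1}\,\mathrm{id},
$$
up to identifying the birth-cobordism factor with $T$. Because $\HFL(O_1(n,0))$ is $U$-torsion free, this forces $\Phi_1(X'_{j,n})\neq 0$, so $c_j=1$.

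The relations \eqref{eq: HFL full relations} show that the $X'_{j,n}$ generate $\HFL(O_1(n,0))$ over $R_n$. Since $\Phi_1$ is $R_n$-linear, it is therefore determined by its values on the $X'_{j,n}$. The same argument with $V_{n+1}$, $P_2$ and Lemma \ref{lem:relations} gives the formula for $\Phi_2$.
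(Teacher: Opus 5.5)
Your argument is essentially the paper's proof. The paper also uses Lemma \ref{lem: identify U not V} to identify the target with $\HFL(O_1(n+1,1))/(U_{n+1}-U_{n+2})$ in level $0$, invokes the L-space structure together with the grading check from Lemma \ref{lem: change orientation n n} to determine $\Phi_1(X'_{j,n})$ up to a scalar, and uses Lemma \ref{lem:relations} to show that scalar is nonzero. The one-dimensionality of the target in the coarsened bidegree, which you flag as the main obstacle, is exactly what the paper asserts without further argument ("determined by its Alexander and Maslov grading shifts, up to a scalar"), so your write-up is no less complete than the paper's. Two small corrections: the count is not literally summand-by-summand, because $U_{n+1}-U_{n+2}$ sends each multi-graded summand into two different ones. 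Also, $U_{n+1}X'_{j+1,n+2}$ being the top element of its tower follows from the $h$-function, not from freeness over $\F[\UU]$.
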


\begin{proof}
It follows from Lemma \ref{lem: identify U not V} that 
\[\HFL\left(O_1(n+1,1);R_{n+2}/(U_{n+2}-U_{n+1})\right)\cong \HFL(O_1(n+1),1)/(U_{n+1}-U_{n+2})\] in level $0$. Since 
 $O_1(n,0)$ and $O_1(n+1,1)$ are both L-space links, the map $\Phi_1$ is completely determined by its Alexander and Maslov grading shifts, up to a scalar, and is nonzero by Lemma \ref{lem:relations}. 
We can use Lemma \ref{lem: change orientation n n} to verify that the gradings indeed agree. The proof for $\Phi_2$ is analogous. 
\end{proof}

Note that if we identify both $U_{n+1}=U_{n+2}$ and $V_{n+1}=V_{n+2}$ in $\CFL(O_1(n+1,1))$, we will have non-trivial homology in levels higher than $0$. However, since $\Phi_1$ and $\Phi_2$ are both level preserving (Proposition \ref{prop:level-preserving}) they are given by Equation \eqref{eq: phi guess T}.

\begin{proposition}
\label{prop: Phi full homology} After identifying $U_{n+1}=U_{n+2}$ and $V_{n+1}=V_{n+2}$ we have 
\begin{equation}
\label{eq: def Phi hat}
F_{-P}(X'_{j,n+2})=\begin{cases}
V_1\cdots V_n X'_{1,n}\otimes T & j=1,\\
X'_{j-1,n}\otimes T & 1<j<n+2\\
U_1\cdots U_nX'_{n,n}\otimes T & j=n+2.
\end{cases}
\end{equation}
\end{proposition}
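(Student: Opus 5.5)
The plan is to use the fact that both sides are L-space links whose full homologies are determined by the $h$-function, so $F_{-P}$ is pinned down on each generator by its Alexander and Maslov grading shifts, up to a scalar. The scalar is then fixed by composing with $\Phi_1$ or $\Phi_2$ and using Lemma \ref{lem:relations}. Concretely, $\HFL(O_1(n,0)\sqcup O)$ with $U_{n+1}=U_1$, $V_{n+1}=V_1$ identified is $\HFL(O_1(n,0))\otimes\F\langle T,B\rangle$. By Lemma \ref{lem:PoP-grading} and the pair of pants table, $F_{-P}$ preserves the Alexander grading on the first $n$ strands, collapses $A_{n+1}+A_{n+2}$ to $A_{n+1}$, and raises Maslov by $1$ (it is the reverse of $f_P$, which lowers Maslov by $1$). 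By Proposition \ref{prop:level-preserving} it is level-filtered, so on the level-zero generators $X'_{j,n+2}$ it lands in level zero, i.e. in $\HFL(O_1(n,0))\otimes T$.

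First I compute the grading of $X'_{j,n+2}$ in $O_1(n+1,1)$. By Lemma \ref{lem: change orientation n n} with $k_-=1$, its Maslov grading is $-(j-1)(j-2)$. Its Alexander grading on the first $n$ strands is $\frac{n+3}{2}-j$ each, and after the orientation reversal the two new strands have Alexander coordinates summing to $0$. On the target side, $X'_{j-1,n}=X_{j-1}$ in $T(n,n)$ has Maslov grading $-(j-1)(j-2)$ and Alexander coordinates $\frac{n+1}{2}-(j-1)=\frac{n+3}{2}-j$, matching on the nose. The Maslov shift is $0$ rather than $+1$; I expect the discrepancy to be absorbed by the convention that $T$ carries the extra unknot's grading and by the normalization of $-P$. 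I would double-check this against the table, and as a sanity check against $\Phi_1$ from Proposition \ref{prop: Phi1 full homology}.

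For $1<j<n+2$, the space $\Am(O_1(n,0),s)\otimes T$ in that bigrading is one-dimensional over $\F$, spanned by $X'_{j-1,n}\otimes T$, since an L-space link has homology $\F[\UU]$ in each Alexander grading. Hence $F_{-P}(X'_{j,n+2})=c\,X'_{j-1,n}\otimes T$. To see $c=1$, apply Lemma \ref{lem:relations} and Proposition \ref{prop: Phi1 full homology}:
\[
U_{n+1}X'_{j-1,n}\otimes T = F_{-P}\Phi_1(X'_{j-1,n})=U_{n+1}F_{-P}(X'_{j,n+2}),
\]
and $U_{n+1}$ acts injectively. For the boundary cases $j=1$ and $j=n+2$, the Alexander grading $\frac{n+1}{2}$ (respectively $\frac{n+1}{2}-(n+1)$) on the first $n$ strands lies beyond the top generator $X_1$ (respectively the bottom generator $X_n$) of $T(n,n)$. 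The unique element in the right bigrading is then $V_1\cdots V_nX'_{1,n}$ (respectively $U_1\cdots U_nX'_{n,n}$), and its Maslov grading matches because $V$'s have Maslov $0$ and $U$'s have Maslov $-2$, using $M(X_n)=-n(n-1)$. For the scalar, the relation with $\Phi_1$ or $\Phi_2$ does not apply directly since $X'_{1,n+2}$ is not in the image. Instead I would use $\Phi_2(X'_{0})$-type arguments: multiply $X'_{1,n+2}$ by $U_{n+1}$, use the relation $U_AX_1=V_{\overline A}X_2$ from \eqref{eq: HFL full relations} with $A=\{n+1\}$ to rewrite it as $V_1\cdots V_nV_{n+2}X'_{2,n+2}$, and apply the already-known value on $X'_{2,n+2}$ together with linearity.

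The main obstacle is making the grading bookkeeping exact, especially the Maslov shift of $-P$ and how $T$ versus $B$ are graded, and rigorously handling the boundary cases via the module relations. I would treat these carefully, including the identification $U_{n+1}=U_{n+2}$ on the relation $U_AX_j=V_{\overline A}X_{j+1}$ after orientation reversal, which swaps $U_{n+2}\leftrightarrow V_{n+2}$.
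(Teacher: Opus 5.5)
Your proof is correct and is essentially the paper's: the decisive steps are exactly composing with $\Phi_1$ via Lemma \ref{lem:relations} to get $U_{n+1}F_{-P}(X'_{j,n+2})=U_{n+1}X'_{j-1,n}\otimes T$ and cancelling $U_{n+1}$ by torsion-freeness, and, for $j=1,n+2$, rewriting $U_{n+1}X'_{1,n+2}=V_1\cdots V_nU_{n+1}X'_{2,n+2}$ and $U_{n+1}X'_{n+2,n+2}=U_1\cdots U_{n+1}X'_{n+1,n+2}$ before cancelling. The preliminary grading and one-dimensionality analysis is unnecessary, since injectivity of $U_{n+1}$ already determines the value; the Maslov discrepancy you flagged goes away because $F_{-P}$ has Maslov shift $0$, not $+1$, as $f_{-P}\circ f_{P_1}\simeq U_{n+1}$ and both $f_{P_1}$ and $U_{n+1}$ have degree $-2$.
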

\begin{proof}
By Lemma \ref{lem:relations} the compositions of $F_{-P}$ with $\Phi_1$ and $\Phi_2$ are given by $U_{n+1}F_{B}$ and $V_{n+1}F_{B}$, where $F_{B}$ is the birth map. So, by \eqref{eq: phi guess T} for any $1<j<n+2$ we have
\[F_{-P}(U_{n+1}X'_{j,n+2})=U_{n+1}X'_{j-1,n}\otimes T.\]
Since $\HFL(O_1(n,0))$ has no $U_{n+1}$-torsion we have $F_{-P}(X'_{j,n+2})=X'_{j-1,n}\otimes T$. 

Furthermore, for $j=1$ and $j=n+2$, using the relations of $\HFL(O_1(n+1,1))$ and \eqref{eq: phi guess T} we have
\[F_{-P}(U_{n+1}X'_{1,n+2})=F_{-P}(V_1\cdots V_{n}U_{n+1}X'_{2,n+2})=U_{n+1}(V_1\cdots V_{n}X'_{1,n}\otimes T)\]
\[F_{-P}(U_{n+1}X'_{n+2,n+2})=F_{-P}(U_1\cdots U_{n+1}X'_{n+1,n+2})=U_{n+1}(U_1\cdots U_n X'_{n,n}\otimes T).\]
Again, the claim follows from the fact that $\HFL(O_1(n,0))$ has no $U_{n+1}$-torsion.\end{proof}

\begin{remark}
One can check algebraically that the maps \eqref{eq: phi guess T} and \eqref{eq: def Phi hat} preserve all relations \eqref{eq: HFL full relations}.
\end{remark}

\section{$(-1)$-framed unknot}
\label{sec: -1 framed unknot}

\subsection{Duality}

Given a link $L$, we define $\overline{L}$ as the mirror of $L$ with the orientations of all components reversed. In particular, we have $\overline{T(n,n)}=T(n,-n)$ and 
$$
\overline{K_{m}(k_{+},k_{-})}=\overline{K}_{-m}(k_{+},k_{-})
$$
for any oriented knot $K$. For the link $\overline{K}_{-m}(k_{+},k_{-})$, we have $k_+$ link components with the same orientation as $\overline{K}$ and $k_-$ link components with opposite orientation. 

Recall that by \cite[Theorem 1.1]{JZ} we have a canonical isomorphism
\begin{equation}
\label{eq: duality HFL}
\HFLhat(\overline{L})\simeq \HFLhat(L)^{\vee}[1-n]
\end{equation}
where $N^{\vee}$ denotes the graded dual of a vector space $N$ with all gradings reversed. More explicitly, 
$$\HFLhat_{M}(L,s)^{\vee}\simeq \HFLhat_{1-n-M}(\overline{L},-s).$$
For example, 
$$
\HFLhat(T(n,-n))=\HFLhat(T(n,n))^{\vee}[1-n].
$$

\begin{remark}
In \cite{BaldwinGrigsby} and \cite[Lemma 5.4]{McCulloch} the authors used a slightly different isomorphism
$$
\HFLhat_{M}(L,s)\simeq \HFLhat_{1-n-M+2|s|}(\overline{L},s)
$$
which agrees with ours after applying the symmetry
$$
\HFLhat_{1-n-M+2|s|}(\overline{L},s)\simeq \HFLhat_{1-n-M}(\overline{L},-s).
$$
However, it will be important for us to distinguish between the vector spaces and their duals, so we use the variant from \cite{JZ} instead. 
\end{remark}

The duality interacts well with the cobordism maps. Let $$\mathcal X=(W,\mathcal F):(S^3, L_1, \mathbf w_1, \mathbf z_1)\longrightarrow(S^3, L_2, \mathbf w_2, \mathbf z_2)$$ be a decorated link cobordism with $\mathcal F=(\Sigma,\mathcal A)$. The  dividing set $\mathcal A$ decomposes $\Sigma$ into the
subsurfaces $\Sigma_{\mathbf w}$ and $\Sigma_{\mathbf z}$. The turned-around decorated cobordism is
\[
\mathcal X^\vee=(W^\vee,\mathcal F^\vee):
(-S^3,- L_2, \mathbf w_2, \mathbf z_2)\longrightarrow(-S^3,- L_1, \mathbf w_1, \mathbf z_1),
\]
where $W^\vee$ is the same oriented $4$-manifold as $W$, with the
incoming and outgoing ends interchanged, and $\mathcal F^\vee=(\Sigma,\mathcal A).$
In particular, the orientation of $\Sigma$, the dividing set
$\mathcal A$, the subsurfaces $\Sigma_{\mathbf w}$ and
$\Sigma_{\mathbf z}$, and the labels of the $w$- and $z$-basepoints
are unchanged. Only the roles of the incoming and outgoing ends are
reversed. By~\cite[Theorem~1.3]{JZ}, the induced map
is the dual map:
\[
F_{\mathcal X^\vee}=F_{\mathcal X}^{\vee}.
\]
Fix an orientation-reversing diffeomorphism
$\rho:S^3\to S^3$, and define $\overline{L}:=\rho(-L)$. Let 
\[
\iota:S^3\times[0,1]\longrightarrow S^3\times[0,1],
\qquad
\iota(x,t)=(\rho(x),1-t),
\]
and set $\overline\Sigma=\iota(\Sigma), \quad \overline{\mathcal A}=\iota(\mathcal A).$
We transport the orientation of $\Sigma$, the decomposition into
the $\mathbf w$- and $\mathbf z$-regions, and all basepoint labels
under $\iota$. Then $\overline{\mathcal X}=\bigl(S^3\times[0,1],
(\overline\Sigma,\overline{\mathcal A})\bigr)$  is a decorated cobordism from $(S^3, \overline{L}_2)$ to $(S^3, \overline{L}_1)$, and 
\[
F_{\overline{\mathcal X}}:
\HFLhat(\overline L_2)
\longrightarrow
\HFLhat(\overline L_1)
\]
corresponds to $F_{\mathcal X}^{\vee}$ under the duality
isomorphisms.

Then we have the following  commutative diagram
\begin{equation}
\label{eq: duality maps}
\begin{tikzcd}
\HFLhat(L_1)^{\vee}[1-n_1] \arrow{d}{\simeq}& \HFLhat(L_2)^{\vee}[1-n_2] \arrow{d}{\simeq} \arrow[swap]{l}{(F_{\mathcal{X}})^{\vee}}\\
\HFLhat(\overline{L}_1) & \arrow{l}{F_{\overline{\mathcal X}}} \HFLhat(\overline{L}_2).
\end{tikzcd}
\end{equation}

This implies the following. 

\begin{proposition}
\label{prop: dual cobordisms}
Let $K$ be a framed knot, fix $k_{+},k_{-}$ and $n=k_{+}+k_{-}$. We have the following dualities between the elementary cobordisms:

a) The pair-of-pants cobordism 
$$
P:K_m(k_{+},k_{-})\sqcup O\to K_m(k_{+}+1,k_{-}+1),
$$
is dual to the reverse pair-of-pants cobordism
$$
-P: \overline{K}_{-m}(k_{+}+1,k_{-}+1)\to \overline{K}_{-m}(k_{+},k_{-})\sqcup O.
$$

b) The birth cobordism
$$
B:K_{m}(k_{+},k_{-})\to K_{m}(k_{+},k_{-})\sqcup O
$$
is dual to the death cobordism 
$$
D:\overline{K}_{-m}(k_{+},k_{-})\sqcup O\to \overline{K}_{-m}(k_{+},k_{-}).
$$

c) The dot cobordism for $K_{m}(k_{+},k_{-})\sqcup O$ is dual to the dot cobordism for $\overline{K}_{-m}(k_{+},k_{-})\sqcup O$.
%{\AA{Added the definition of the dot map to section 3, definition \ref{def:dot map}}}
%Here the dot map $\mathcal B: \widehat{HFL}(L\sqcup O)\rightarrow \widehat{HFL}(L\sqcup O)$ is defined as 
%$$\mathcal B(x\otimes T)=x\otimes B, \quad \mathcal B(x\otimes B)=0. $$
\end{proposition}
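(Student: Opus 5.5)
The plan is to obtain all three parts from the Juhász–Zemke duality $F_{\mathcal X^\vee}=F_{\mathcal X}^{\vee}$ and the diagram \eqref{eq: duality maps}. For each elementary decorated cobordism $\mathcal X$ between cables of $K$, I will identify the mirrored cobordism $\overline{\mathcal X}=(S^3\times[0,1],\iota(\Sigma),\iota(\mathcal A))$, with $\iota(x,t)=(\rho(x),1-t)$, as the elementary cobordism named in the statement. Since $\iota$ reverses the $[0,1]$ direction, $\overline{\mathcal X}$ runs from $\overline{L}_2$ to $\overline{L}_1$. So the content is topological: $\iota$ carries the surface, with its dividing set and $\w/\z$ regions, to the claimed decorated surface.

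I will first use $\overline{K_m(k_+,k_-)}=\overline{K}_{-m}(k_+,k_-)$ to match the ends. The mirror $\rho$ sends the $m$-framed parallel copies of $K$ to $(-m)$-framed copies of $\overline K$. Reversing all orientations keeps the count of strands parallel to, versus opposite to, the core: $k_\pm$ are preserved. The unknot $O$ goes to an unknot, and the basepoints $w^\pm_{i,j}, z^\pm_{i,j}$ go to the corresponding basepoints of the mirrored cable, with labels transported. With the ends fixed, the three cases are handled separately.

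(b) The birth disk $B$ lies in $S^3\times[0,1]$, is capped in the past and bounds $O\times\{1\}$, and carries one dividing arc. Flipping $t\mapsto 1-t$ turns it into a disk bounding $O$ at the incoming end and capped in the future, with one dividing arc. That is exactly the death cobordism $D$, and the product pieces along the other components stay products.

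(a) The surface of $P$ is a product on the first $n$ strands, plus a pair of pants: an annulus $\mathcal A$ cobounded by $L'_{n+1},L'_{n+2}$ with a disk removed, whose puncture is the incoming $O$. Under $\iota$ it becomes a pair of pants from the two new strands of $\overline K_{-m}(k_++1,k_-+1)$ to $O$. This is the underlying surface of $-P$.

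The remaining check in (a) concerns the decorations. The mirrored dividing arcs must agree with the decoration of $-P$, which by definition reverses both $S^3\times I$ and $\Sigma$. Here $\iota$ is orientation-preserving on $S^3\times I$, since it reverses both factors, and it preserves the $\w$ and $\z$ regions. I expect to check this against Figure \ref{fig:pair-of-pants} by confirming that the two arcs of $P$, each joining the $\w$ and $\z$ sides of the three boundary circles, go to the arcs of $-P$ up to isotopy rel boundary. Any isotopy of decorations gives homotopic maps by Zemke's functoriality.

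(c) The dot cobordism of Example \ref{ex:simple-X} is an $X$-shape $H_\gamma$ for a local unknotted arc $\gamma$ between $L_i$ and $O$, built from a band, then $T^+T^-$, then the dual band. It is symmetric under reversing time: reversing $[0,1]$ swaps the band and the dual band and keeps the middle stabilize–destabilize piece of the same type. The mirror sends a local unknotted arc to a local unknotted arc. Hence $\overline{H_\gamma}$ is again a dot cobordism for $\overline K_{-m}(k_+,k_-)\sqcup O$.

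As a consistency check, $\mathcal B$ is nilpotent with $T\mapsto B$, and under duality $T^\vee\leftrightarrow B$ up to the grading reversal. So the dual $B^\vee\mapsto T^\vee$ has the same form, matching Definition \ref{def:dot map}.

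I expect the main obstacle to be the decoration bookkeeping in (a) and (c), that is, verifying that $\iota$ really produces the same dividing-arc pattern and not, say, $P_1$-type arcs. The underlying surfaces match immediately.
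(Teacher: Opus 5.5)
Your proposal is correct and takes the same route as the paper. The paper presents this proposition as a direct consequence of the Juh\'asz--Zemke identity $F_{\mathcal X^\vee}=F_{\mathcal X}^{\vee}$ transported by $\iota(x,t)=(\rho(x),1-t)$, and leaves implicit the identification of $\overline{\mathcal X}$ with the named elementary cobordisms, which you spell out. One small slip: the decorated pair of pants $P$ carries three dividing arcs, not two, since each of its three boundary circles contributes two endpoints. Also, for (c) you can avoid the band bookkeeping by writing the dot cobordism as $P'\circ(-P')$ as in Example~\ref{ex:simple-X}; time reversal together with (a) then returns the same composite.
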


% \begin{proposition}
% \label{prop: dual cobordisms}
% Let $K$ be a framed knot, fix $k_{+},k_{-}$ and $n=k_{+}+k_{-}$. We have the following dualities:

% a) The pair-of-pants cobordism
% $$
% f_P:\CFL(K_m(k_{+},k_{-})\sqcup O)\to \CFL(K_m(k_{+}+1,k_{-}+1))\otimes R_{n+1},
% $$
% is dual to the reverse pair-of-pants cobordism
% $$
% f_{-P}: \CFL(\overline{K}_{-m}(k_{+}+1,k_{-}+1))\otimes R_{n+1}\to \CFL(\overline{K}_{-m}(k_{+},k_{-})\sqcup O).
% $$
% where $R_{n+1}=R_{n+2}/(U_{n+2}-U_{n+1},V_{n+2}-V_{n+1})$.

% b) The birth cobordism
% $$
% f_B:\CFL(K_{m}(k_{+},k_{-}))\otimes R_{n+1}\to \CFL(K_{m}(k_{+},k_{-})\sqcup O)
% $$
% is dual to the death cobordism 
% $$
% f_D:\CFL(\overline{K}_{-m}(k_{+},k_{-})\sqcup O)\to \CFL(\overline{K}_{-m}(k_{+},k_{-}))\otimes R_{n+1}.
% $$

% c) The dot map $\mathcal B$ on 
% $
% \HFLhat\left(K_{m}(k_{+},k_{-})\sqcup O\right)
% $
% is dual to the dot map on $\widehat{HFL}\left(\overline{K}_{-m}(k_{+},k_{-})\sqcup O\right).$ {\AA{Added the definition of the dot map to section 3, definition \ref{def:dot map}}}
% \end{proposition}

\begin{lemma}
We have commutative diagrams
\begin{equation}
\label{eq: duality Phi undotted}
\begin{tikzcd}
\HFLhat(T(n+2,n+2))^{\vee}[-1-n] \arrow{d}{\simeq}& &
\HFLhat(T(n,n))^{\vee}[1-n] \arrow{d}{\simeq} \arrow[swap]{ll}{[F_D\circ F_{-P}]^{\vee}}\\
\HFLhat(T(n+2,-(n+2))) & & \arrow{ll}{F_P(-\otimes T)} \HFLhat(T(n,-n)).
\end{tikzcd}
\end{equation}
\begin{equation}
\label{eq: duality Phi}
\begin{tikzcd}
\HFLhat(T(n+2,n+2))^{\vee}[-1-n] \arrow{d}{\simeq}& &\HFLhat(T(n,n))^{\vee}[1-n] \arrow{d}{\simeq} \arrow[swap]{ll}{\Phi^{\vee}}\\
\HFLhat(T(n+2,-(n+2))) & & \arrow{ll}{F_P(-\otimes B)} \HFLhat(T(n,-n)).
\end{tikzcd}
\end{equation}
Here, $\Phi=F_D\circ \mathcal{B}\circ F_{-P}$ is the composition of $F_{-P}$ and the dotted death cobordism. 
\end{lemma}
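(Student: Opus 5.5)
The plan is to apply the general duality square \eqref{eq: duality maps} to suitable composite cobordisms $\mathcal{X}$ from $T(n,n)$ to $T(n+2,n+2)$, and to read off $\overline{\mathcal X}$ using Proposition \ref{prop: dual cobordisms}. Throughout, $T(n+2,n+2)$ is oriented as $O_1(n+1,1)$. Its mirror with all orientations reversed is $\overline{O_1}(n+1,1)=O_{-1}(n+1,1)$, which is $T(n+2,-(n+2))$ with the appropriate orientations.

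For \eqref{eq: duality Phi undotted}, I take $\mathcal X=D\circ(-P)$, which goes from $T(n+2,n+2)$ to $T(n,n)$. Then $F_{\mathcal X}=F_D\circ F_{-P}$ by functoriality of Zemke's cobordism maps. Reversing the composite gives $\overline{\mathcal X}=\overline{-P}\circ\overline{D}$. By Proposition \ref{prop: dual cobordisms}(a) the turned-around mirror of $-P$ is $P$, and by (b) the turned-around mirror of $D$ is the birth $B$. Hence $F_{\overline{\mathcal X}}=F_P\circ F_B$. By definition of the birth map, $F_B(v)=v\otimes T$, so $F_{\overline{\mathcal X}}(v)=F_P(v\otimes T)$. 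The square \eqref{eq: duality maps} with $n_1=n+2$, $n_2=n$ then gives exactly \eqref{eq: duality Phi undotted}, and the shifts $[1-n_i]$ match $[-1-n]$ and $[1-n]$.

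For \eqref{eq: duality Phi}, the same argument applies with $\mathcal X=D\circ\mathcal{B}\circ(-P)$. The dual of this composite is $P\circ\overline{\mathcal B}\circ B$. By Proposition \ref{prop: dual cobordisms}(c) the dot cobordism is self-dual under this operation. So $F_{\overline{\mathcal X}}(v)=F_P(\mathcal{B}(v\otimes T))=F_P(v\otimes B)$ by Definition \ref{def:dot map}.

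The main point needing care is the identification of the dual basis: the element $T$ on the mirrored side is the image under $F_B$, and $\mathcal B$ acts on it by the same formula. We also need the duality isomorphism of \cite{JZ} for the disjoint union $L\sqcup O$ to be compatible with the splitting $\HFLhat(L)\otimes\langle T,B\rangle$. I would check this by a Maslov grading argument, since $T$ and $B$ are distinguished by their gradings $0$ and $-1$, together with naturality of \eqref{eq: duality maps} under composition, which follows from \cite[Theorem~1.3]{JZ} applied factor by factor.
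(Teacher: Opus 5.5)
Your proposal is correct and is essentially the paper's proof. The paper simply invokes Proposition \ref{prop: dual cobordisms} together with the decompositions $F_P(-\otimes T)=F_P\circ F_B$ and $F_P(-\otimes B)=F_P\circ\mathcal{B}\circ F_B$, which is exactly what you spell out by applying \eqref{eq: duality maps} to the composites $D\circ(-P)$ and $D\circ\mathcal{B}\circ(-P)$.

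The extra compatibility check you flag at the end is not needed. The duality square is applied to the composite cobordism, so the intermediate group $\HFLhat(L\sqcup O)$ never appears in the statement. The elements $T,B$ and the maps $F_B,\mathcal{B}$ on the mirror side are defined in the mirror's own model.
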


\begin{proof}
This follows from Proposition \ref{prop: dual cobordisms} and the decompositions
$$
F_P(-\otimes T)=F_P\circ F_B,\ F_P(-\otimes B)=F_P\circ \mathcal{B}\circ  F_B.
$$
\end{proof}

The duality allows us to define a level grading in homology of $T(n,-n)$, and more generally mirrors of plumbed L-space links.
Let
$F_\bullet \simeq \CFL(T(n,n))$
be the minimal graded free resolution of
\(\HFL(T(n,n))\) over
$R_n=\mathbb{F}[U_1,\ldots,U_n,V_1,\ldots,V_n]$,
where \(F_k\) is placed in level \(k\).  The duality gives a
graded chain homotopy equivalence
\[
\CFL(T(n,-n))
\simeq
F_{\bullet}^{\vee}=\Hom_{R_n}(F_\bullet,R_n),
\]
up to the above Maslov and Alexander grading changes.  Since the
dual differential maps
\[
F_k^\vee \longrightarrow F_{k+1}^\vee,
\]
we place \(F_k^\vee\) in level \(n-k\).   After setting all the variables equal to zero we get 
\[
\HFLhat(T(n,-n))_{\lvl=n-k}
\simeq
\left(
\HFLhat(T(n,n))_{\lvl=k}
\right)^\vee[1-n].
\]
This implies the following:

\begin{corollary}
The braid group action in the diagonal part of $\HFLhat(T(n,-n))$ is level-filtered and level-preserving unless $j=\frac{n+1}{2}$ and $\lvl=n-M_j$. The level-preserving part of the action factors through $S_n$ and yields the following representations:

a) The dual bottom block decomposes into $V_{n,i}^{\vee}\simeq V_{n,n-1-i}$  in level $n-i$ and Maslov grading
$$
\left(\frac{\alpha+1}{2}-\jj\right)\left(\frac{\alpha-1}{2}-\jj\right)+i+1-n.
$$

b) The dual top block decomposes into $V_{n,n-i-1}^{\vee}\simeq V_{n,i}$  in level $i$ and Maslov grading
$$
\left(\frac{\alpha+1}{2}-\jj\right)\left(\frac{\alpha-1}{2}-\jj\right)-1-i.
$$

All these classes have total Alexander grading $-\jj\alpha$. 
\end{corollary}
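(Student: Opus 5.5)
The plan is to transport everything from $\HFLhat(T(n,n))$ through the duality isomorphism $\HFLhat(T(n,-n))\simeq \HFLhat(T(n,n))^{\vee}[1-n]$. The key input is that this isomorphism is natural with respect to cobordism maps, which is \cite[Theorem 1.3]{JZ} and diagram \eqref{eq: duality maps}. A braid $\beta$ viewed as a cobordism from $T(n,-n)$ to itself is the mirror $\overline{\mathcal X}$ of the braid cobordism $\mathcal X$ for $T(n,n)$; that is, of the braid $\beta^{-1}$ read in reverse, up to relabeling. Hence the action of $\Br_n$ on $\HFLhat(T(n,-n))$ is identified with the dual (transpose) of the action of $\Br_n$ on $\HFLhat(T(n,n))$, precomposed with the inversion anti-automorphism. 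Since $\sigma\mapsto\sigma^{-1}$ followed by transposition turns a representation $V$ into its contragredient $V^{\ast}$, the representation on $\HFLhat(T(n,-n))$ is the dual of the one on $\HFLhat(T(n,n))$.

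Next I handle the level structure. I place $F_k^\vee$ in level $n-k$, as just explained. The dual of a level-filtered map (one that does not raise level) again does not raise level in this reversed indexing, because $\lvl\mapsto n-\lvl$ reverses the direction and transposition reverses it once more. So Lemma \ref{lem: Sn action diagonal blocks} transfers directly. The action is level-filtered, and the failure of level preservation on $T(n,n)$ occurs only at $j=\frac{n+1}{2}$, $\lvl=M_j$, so on the dual side it occurs only at $\lvl=n-M_j$. The level-preserving part is the transpose of a level-preserving part that factors through $S_n$, so it also factors through $S_n$. Its representations are the duals of the ones found there: $V_{n,i}^{\vee}$ and $V_{n,n-1-i}^{\vee}$. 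Lemma \ref{lem: Vni}(c) then gives $V_{n,i}^{\vee}\simeq V_{n,n-1-i}$ and $V_{n,n-1-i}^{\vee}\simeq V_{n,i}$. The bottom block piece in level $i$ goes to level $n-i$, and the top block piece in level $n-i$ goes to level $i$.

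It remains to compute the gradings, working in $O_1(k_+,k_-)$ with the orientation changes of Lemma \ref{lem: change orientation n n} and Remark \ref{rem: jj}. Write $Q=\left(\frac{\alpha+1}{2}-\jj\right)\left(\frac{\alpha-1}{2}-\jj\right)$. The duality sends a class of Maslov grading $M$ and Alexander grading $s$ to one of grading $1-n-M$ and $-s$. For the bottom block, $M=-Q-i$, so the dual grading is $Q+i+1-n$. For the top block, $M=-Q-n+2+i$, so the dual grading is $Q-1-i$. The total Alexander grading $\jj\alpha$ becomes $-\jj\alpha$.

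I expect the main obstacle to be the compatibility of the orientation-reversal isomorphisms with duality. Concretely, I need the mirror of $O_1(k_+,k_-)$ to be $O_{-1}(k_+,k_-)$ with the same labelling of positive and negative strands, and the braid-group factors $\Br_{k_+}\times\Br_{k_-}$ to match on both sides. I would check this using Proposition \ref{prop: reversing orientation} together with the definition $\overline{L}=\rho(-L)$, which reverses all orientations and therefore preserves the partition into $k_+$ and $k_-$ strands. Once that is verified, the remaining points (inversion of braids and the contragredient identity) are formal.
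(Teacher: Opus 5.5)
Your proposal is correct and follows essentially the same route as the paper: you transport everything through the duality of \cite{JZ}, observe that transposing a level-filtered map while re-indexing levels by $n-\lvl$ keeps it level-filtered (the one exceptional component now runs from level $n-m_j$ to level $n-M_j$), and then read off the representations from Lemma \ref{lem: Sn action diagonal blocks} and Lemma \ref{lem: Vni}(c) and the gradings from Remark \ref{rem: jj}. Your explicit explanation of why the braid action on $T(n,-n)$ is the contragredient is somewhat more careful than the paper: time-reversal gives an anti-homomorphism, so you compose with inversion before transposing, whereas the paper simply asserts that $x^{\vee}$ appears in $\tau(y^{\vee})$.
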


\begin{proof}
Choose a homogeneous basis in $\HFLhat(T(n,n))$ and the dual basis in $\HFLhat(T(n,-n))$. Suppose that $\tau\in \Br_n$ and $x$ and $y$ are two basis vectors in $\HFLhat(T(n,n))$ such that $y$ appears in $\tau(x)$. Dually, $x^{\vee}$ appears in $\tau(y^{\vee})$.

If $\lvl(x)=\lvl(y)$ then $\lvl(x^{\vee})=\lvl(y^{\vee})$. Otherwise by Corollary \ref{cor: homotopic to level filtered} we can assume $\lvl(x)>\lvl(y)$, so 
$\lvl(x^{\vee})< \lvl(y^{\vee})$ and the action of $\tau$ on $\HFLhat(T(n,-n))$ is level-filtered. Furthermore, by Corollary \ref{cor: Maslov separation} this is only possible when $j=\frac{n+1}{2}$, $\lvl(x)=M_j$ and $\lvl(y)=m_j$, so 
$\lvl(x^{\vee})=n-M_j$ and $\lvl(y_j)=n-m_j$.

The rest of the proof follows from Remark \ref{rem: jj}, Lemma \ref{lem: Sn action diagonal blocks} and Lemma \ref{lem: Vni}(c). 
\end{proof}

Furthermore, we have the following.

\begin{lemma}
\label{lem: levels for dual maps}
With the above level gradings, the maps $F_P(-\otimes T)$ and $F_P(-\otimes B)$ for $T(n,-n)$ increase the levels at most by 1 and 2 respectively.
\end{lemma}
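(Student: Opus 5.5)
The plan is to transport the level estimates of Proposition \ref{prop:level-preserving} for $T(n,n)$ through the duality diagrams \eqref{eq: duality Phi undotted} and \eqref{eq: duality Phi}. These diagrams identify $F_P(-\otimes T)$ for $T(n,-n)$ with $[F_D\circ F_{-P}]^{\vee}$, and $F_P(-\otimes B)$ with $\Phi^{\vee}=[F_D\circ\mathcal{B}\circ F_{-P}]^{\vee}$. The maps $F_{-P}$, $F_D$ and $\mathcal{B}$ act on $\HFLhat(T(n+2,n+2))$ and $\HFLhat(T(n,n)\sqcup O)$. The level grading on the mirror side is defined by placing the dual of level $k$ in level $N-k$, where $N$ is the number of components. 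So the first step is to record the elementary fact about duals: suppose $G:A\to B$ shifts level by at most $d$ (that is, $G=\sum_{k\le d}G_k$ with respect to bases). Then $G^{\vee}:B^{\vee}\to A^{\vee}$ has a component from $y^{\vee}$ to $x^{\vee}$ only if $\lvl(y)\le\lvl(x)+d$. With the reversed levels this reads
$$(N_A-\lvl(x))-(N_B-\lvl(y))\le d+N_A-N_B.$$
Hence $G^{\vee}$ shifts the dual level by at most $d+N_A-N_B$.

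The second step is to bound the level shifts of the maps being dualized, using the conventions that $\lvl(x\otimes T)=\lvl(x)$ and $\lvl(x\otimes B)=\lvl(x)+1$ on $T(n,n)\sqcup O$:
\begin{itemize}
\item By Proposition \ref{prop:level-preserving}(b), $F_{-P}:\HFLhat(T(n+2,n+2))\to\HFLhat(T(n,n)\sqcup O)$ is level-filtered, i.e. shift $\le 0$.
\item The death map $F_D$ sends $x\otimes T\mapsto 0$ and $x\otimes B\mapsto x$, so it lowers level by exactly $1$ on its support; its shift is $-1$.
\item The dot map $\mathcal{B}$ sends $x\otimes T\mapsto x\otimes B$, so it raises level by exactly $1$.
\end{itemize}
Therefore $F_D\circ F_{-P}$ has shift $\le -1$, and $F_D\circ\mathcal{B}\circ F_{-P}$ has shift $\le 0$. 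In both cases the source is $T(n+2,n+2)$ with $N_A=n+2$ and the target is $T(n,n)$ with $N_B=n$, and this is where the main care is needed. I must make sure the resolution level on $T(n,n)\sqcup O$ used by $F_{-P}$ matches the conventions of the remark about $\CFL(T(n,n)\sqcup O)$. I also need the level on $\HFLhat(T(n+2,n+2))$ to be the one from Corollary \ref{cor:T(n,n)hat}, even though Proposition \ref{prop:level-preserving} identifies variables. This compatibility should hold because level-filteredness was proved after identification, through Lemma \ref{lem: identify U not V} and Corollary \ref{cor: homotopic to level filtered}.

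The third step is to combine. The dual maps go from $T(n,-n)$ to $T(n+2,-(n+2))$. Applying the formula with $d+N_A-N_B=d+2$ gives a bound of $-1+2=1$ for $F_P(-\otimes T)$ and $0+2=2$ for $F_P(-\otimes B)$, which is the claim. One subtlety is that the duality isomorphism \eqref{eq: duality HFL} comes from a chain-level identification $\CFL(T(n,-n))\simeq F_\bullet^{\vee}$. Since we only need level bounds for the induced maps on $\HFLhat$, computed in dual homogeneous bases, the homology-level argument above suffices. The only remaining point is that the identification in diagram \eqref{eq: duality maps} respects the chosen bases, which holds by construction of the dual level grading.
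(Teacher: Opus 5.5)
Your proposal is correct and matches the paper's proof: you bound the level shifts of $F_D\circ F_{-P}$ by $-1$ and of $F_D\circ\mathcal{B}\circ F_{-P}$ by $0$, using Proposition \ref{prop:level-preserving} and the action of $F_D$ and $\mathcal{B}$ on $T$ and $B$. You then dualize through \eqref{eq: duality Phi undotted} and \eqref{eq: duality Phi}, and the reversed levels $n-\lvl$ and $(n+2)-\lvl$ supply the extra $+2$. The compatibility checks you add about which level grading is used are harmless but go beyond what the paper's argument needs.
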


\begin{proof}
We choose a homogeneous basis in $\HFLhat(T(n,n))$ and the dual basis in $\HFLhat(T(n,-n))$. Suppose that $x$ is a class in $\HFLhat(T(n+2,n+2))$, $y_1$ is a basis vector appearing in $F_D\circ F_{-P}(x)$ and $y_2$ is a basis vector appearing in $F_D\circ \mathcal{B}\circ F_{-P}(x)$. Since $F_{-P}$ is level-filtered, $F_D$ decreases the level by 1 and $\mathcal{B}$ increases the level by 1, we get
$$
\lvl(y_1)\le \lvl(x)-1,\ \lvl(y_2)\le \lvl(x).
$$
In the dual basis, we have 
$$
\lvl(y_1^{\vee})=n-\lvl(y_1),\ \lvl(y_2^{\vee})=n-\lvl(y_2),\ \lvl(x^{\vee})=n+2-\lvl(x),
$$
so
$$
\lvl(x^{\vee})=n+2-\lvl(x)\le n+2-\lvl(y_1)-1=\lvl(y_1^{\vee})+1
$$
and similarly
$$
\lvl(x^{\vee})\le \lvl(y_2)+2.
$$
By \eqref{eq: duality Phi undotted} and \eqref{eq: duality Phi} $x^{\vee}$ appears in $F_P(y_1\otimes T)$ and $F_P(y_2\otimes B)$, and the result follows.
\end{proof}

\subsection{Cobordism maps for the bottom diagonal block}

In this subsection we compute the action of the cobordism map $F_{-P}$ on the bottom diagonal block for $O_1(k_++1,k_-+1)$ and the dual map $F_{P}$ for $O_{-1}(k_+,k_-)$. 

First we define some convenient basis in the bottom diagonal block of $\CFL(T(n,n))$. Recall that by Lemma \ref{lem: UV homotopy arc}, for any $a\in \{2, \cdots, n\}$, there is a chain homotopy $h_{1,a}$ between $U_a V_a$ and $U_1 V_1$ in $\CFL(T(n,n))$, that is 
$$[\partial_T, h_{1,a}]=(U_a V_a-U_1V_1) \mathrm{id}.$$

\begin{lemma}
\label{lem: unlink action bottom diagonal}
The bottom diagonal block at $A(X_j)$ at level $0\le i\le \min(j-1,n-j)$ has basis $Z_A^j$ indexed by subsets $A\subset \{2,\ldots,n\}$ with $|A|=i$. The level $i=0$ corresponds to $A=\emptyset$ and $Z^j_{\emptyset}=X_j$.
The action of the homotopies $h_{1,a}$ is given by 
$$
h_{1,a}Z^j_A=\begin{cases}
Z^j_{A\cup a} & \mathrm{if}\ a\notin A\\
0 & a\in A.
\end{cases}
$$
The action of $S_n$ permutes $Z^j_A$ similarly to \eqref{eq: Sn action V}.
\end{lemma}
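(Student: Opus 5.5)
We will construct the basis by applying the homotopies to the generator, setting $Z^j_A:=h_A X_j$ with $h_A=\prod_{a\in A}h_{1,a}$, and then show that these classes span the bottom diagonal block. The order does not matter, since the $h_{1,a}$ anticommute and square to zero, and over $\F$ anticommuting means commuting. With this definition the displayed formula for $h_{1,a}Z^j_A$ is immediate: $h_{1,a}h_A=h_{A\cup a}$ if $a\notin A$, and $h_{1,a}h_A=0$ if $a\in A$ because $h_{1,a}^2=0$. So the real content is that the $Z^j_A$ with $|A|=i\le \min(j-1,n-j)$ form a basis of the level $i$ part of the bottom block, and that they vanish or leave the block beyond that range.

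To prove the basis claim we pass to the spectral sequence $\HFL\otimes\wedge(\mu)\Rightarrow\HFLminus$ and use Lemma \ref{lem: unknot action ss}. There $h_{1,a}$ is represented by $h'_{1,a}=U_1\mu_1-U_a\mu_a$. We may restrict to $j\le\frac{n+1}{2}$ by symmetry, in which case the bottom block of $\HFLhat$ coincides with the diagonal block of $\HFLminus$. In the cube model of Remark \ref{rem: cube model ss} at Alexander grading $A(X_j)$, every $H_*(\Am(L,s-e_B))$ is a copy of $\F[\UU]$. The $U_a$-maps between these copies are multiplication by $1$ or by $\UU$, according to the $h$-function, and this is exactly the deformed cube complex $\partial^{(k)}$ with $k=j$ from \cite[Theorem 5]{GHom} and Lemma \ref{lem: deformed cube}. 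Under this identification $X_j$ corresponds to $e_\emptyset$. Applying $h'_A$ to it gives $\prod_{a\in A}(U_1\mu_1-U_a\mu_a)$, which is $\UU^{|A|}$ times the element $\xi_A=\prod_{a\in A}(e_a-e_1)$ of Proposition \ref{prop: Sn action on xi}. To remove the power of $\UU$ we have to work with the correct normalization: in levels $i<k$ the $U$-maps are isomorphisms of the copies of $\F[\UU]$ in those Alexander gradings, so $h'_A$ produces $\xi_A$ itself, and $\xi_A$ is a cycle in $V_{n,i}$.

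By Proposition \ref{prop: Sn action on xi} the $\xi_A$ with $A\subset\{2,\ldots,n\}$, $|A|=i$, form a basis of $V_{n,i}$. By Lemma \ref{lem: deformed cube} they survive precisely for $i<j$, and therefore give a basis of the level $i$ summand of dimension $\binom{n-1}{i}$. A nonzero class that lives at level $i$ on the $E_\infty$ page is nonzero in $\HFLminus$. The possible correction terms from the spectral sequence only lower the level, and by Corollary \ref{cor: Maslov separation} they cannot land in the bottom block. Hence the $Z^j_A$ form a basis. The $S_n$ statement then follows because the $h_{1,a}$ transform under $\sigma$ as $e_a-e_1$ do, namely $\sigma h_{1,a}\sigma^{-1}=h_{\sigma(1),\sigma(a)}=h_{1,\sigma(a)}+h_{1,\sigma(1)}$. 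This is the same rule that gives \eqref{eq: Sn action V}.

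The main obstacle is the normalization step: checking that $h'_{1,a}$ acts on the cube model exactly by wedging with $e_a-e_1$, with no stray factor of $\UU$, within the allowed range $i\le\min(j-1,n-j)$. For this I would compute the $h$-function values at $s-e_B$ from \cite[Theorem 4.3]{GHom} and verify that, for $|B|<j$, the maps $U_a$ in these Alexander gradings are isomorphisms on $\F[\UU]$.
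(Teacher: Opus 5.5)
Your proof is correct. Its second half is the paper's argument: both identify $h_{1,a}$ on the diagonal block of $\HFLminus$ at $A(X_j)$ with wedging by $e_a-e_1$ in the deformed cube $\partial^{(j)}$ via Lemma \ref{lem: unknot action ss}, and then read off the basis $\xi_A$ from Proposition \ref{prop: Sn action on xi} and Lemma \ref{lem: deformed cube}.

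Where you differ is in how the classes are produced. You set $Z^j_A:=h_AX_j$, so the formula for $h_{1,a}Z^j_A$ follows immediately from the exterior-algebra relations. The paper instead lifts $1\mapsto X_j$ to a level-preserving chain map $\mathfrak f_j:\CFL(O(n))\to\CFL(T(n,n))$ (Lemma \ref{lem: lifting to resolutions}) and defines $Z^j_A=\widehat{\mathfrak F}_j(\xi_A)$. It then proves $h_{1,a}\mathfrak f_j\simeq\mathfrak f_jm_a$ by applying Corollary \ref{cor: homotopic to level filtered} to $h_{1,a}\mathfrak f_j+\mathfrak f_jm_a$. The two constructions give the same classes, since $\widehat{\mathfrak F}_j(\xi_A)=h_A\widehat{\mathfrak F}_j(1)=h_AX_j$.

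What each route buys:
\begin{itemize}
\item Yours is shorter, and it makes explicit the normalization (no stray power of $\UU$) that the paper only asserts.
\item The paper's realizes the $Z^j_A$ as images of a level-preserving chain map (the full twist map of Remark \ref{rmk:full-twist}), so they are level-homogeneous by construction.
\item The paper's chain-level map is also reused in Lemma \ref{lem: F minus P}. There $\mathfrak f_j$ is extended to $\mathfrak f^{n,o}_j$ and compared with $f_{-P}\circ\mathfrak f^{n+2}_{j+1}$.
\end{itemize}

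Some small corrections.
\begin{itemize}
\item The cube differential is built from the $V_a$-maps, since $\partial_\mu\mu_a=V_a$. The $U_a$-maps enter only through $h'_{1,a}$.
\item Your ``main obstacle'' needs no separate $h$-function computation. Every $V_a$-edge out of a vertex $B$ with $|B|\le j$ carries a factor $\UU$, which says $h(s-e_B)=h(s)+|B|$ for $|B|\le j$. This forces $U_a$ to send generator to generator on every edge into a vertex of size $\le j$.
\item Consequently the coefficient is $1$ from the outset. Your intermediate claim of a factor $\UU^{|A|}$ is wrong, and would in fact make $h_AX_j$ vanish in $H_i(\partial^{(j)})$.
\item Level-homogeneity of $h_AX_j$ in $\HFLhat$ also comes for free. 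The $h_{1,a}$ can be chosen to raise level by at most one, and Maslov grading then forces $h_AX_j$ to sit at level exactly $|A|$ in the bottom block.
\item The conjugation rule $\sigma h_{1,a}\sigma^{-1}=h_{\sigma(1),\sigma(a)}$ needs a line of justification. Two homotopies for the same pair differ by an odd-degree chain map. By Corollary \ref{cor: homotopic to level filtered} and the even Maslov support of $\HFL$, that map is homotopic to a level-lowering one, which vanishes on the bottom block for grading reasons. Alternatively, read the $S_n$-action directly off the $S_n$-equivariant identification with the deformed cube.
\end{itemize}
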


\begin{proof}
For $a\in \{2, \cdots, n\}$, recall that $\HFLhat(O(n))\cong \Lambda(\xi_{ 2}, \cdots, \xi_{n})$ while in $\CFL(O(n))$ one has $\partial \xi_{a}=U_aV_a-U_1V_1$ for $a\in \{2, \cdots, n\}$. Consider the homomorphism $\mathfrak{F}_j: \HFL(O(n))\rightarrow \HFL(T(n,n))$ which sends $1$ to $X_j$. By Lemma \ref{lem: lifting to resolutions}, there is a level-preserving lift $\mathfrak{f}_j: \CFL(O(n))\rightarrow \CFL(T(n,n))$, and the lift is unique up to chain homotopy. %For the bottom diagonal of $\widehat{\HFL}\bigl(T_{n,n}(k_+,k_-)\bigr)$
We define
$$
Z^j_A=\widehat{\mathfrak{F}}_j (\xi_A)
$$
where $\widehat{\mathfrak{F}}_j$ denotes the induced map by $\mathfrak{f}_j$ from $\HFLhat(O(n))$ to $\HFLhat(T(n,n))$.
On the other hand, define $m_a: \CFL(O(n))\rightarrow \CFL(O(n))$ with $m_a(z)=\xi_{a} z$. Then again we have
$$[\partial_O, m_a]=U_aV_a-U_1V_1.$$
Hence, for  $D:=h_{1,a} \mathfrak{f}_j +\mathfrak{f}_j m_a$ we have $\partial_T D+D\partial_O=0$. By Corollary \ref{cor: homotopic to level filtered}, the component $D_1$ of $D$, which raises level by one,   is null-homotopic, that is, $h_{1,a} \mathfrak{f}_j$ is homotopic to $\mathfrak{f}_j m_a$. Setting all the variables to be 0, we get
$\widehat{h}_{1,a}\widehat{\mathfrak{F}}_j=\widehat{\mathfrak{F}}_jm_a$ 
and
$$
\widehat{h}_{1,a}Z^j_A=\begin{cases}
Z^j_{A\cup a} & \mathrm{if}\ a\notin A\\
0 & a\in A.
\end{cases}
$$
  
Finally, we need to prove that after specializing to $\HFLhat$ these classes are in the bottom diagonal block for  $|A|\le \min(j-1,n-j)$.
Recall that by \cite[Theorem 5]{GHom} the bottom diagonal block generates the diagonal block in $\HFLminus(T(n,n))$. Furthermore, the spectral sequence $\HFL(T(n,n))\Rightarrow \HFLminus(T(n,n))$ in  Alexander grading $A(X_j)$ is given by the deformed cube complex as in Lemma \ref{lem: deformed cube}, compare with the proofs of Lemmas \ref{lem: levels} and \ref{lem: Sn action diagonal blocks}.  
By Lemma \ref{lem: unknot action ss} the action of $h_{1,a}$ in $\HFLminus$ corresponds to the multiplication by $\xi_a$ in the setup of Lemma \ref{lem: deformed cube}, and the result follows.
\end{proof}

\begin{remark}\label{rmk:full-twist}
    The homomorphism $\mathfrak{F}_j$ is the same as the full twist map $\phi_{j-1}^n: \HFL(O(n))\rightarrow \HFL(T(n,n))$ considered in \cite[Example 3.12]{AGL2}. 
\end{remark}

Under the change of orientation, we get a similar basis for the bottom diagonal block of $\CFL(O_1(k_+,k_-))$. Specifically, if $\mathcal{P}$ and $\mathcal{N}$ denote the positively and negatively oriented components of $O_1(k_+,k_-)$, we assume $1\in \mathcal{P}$ and denote $Z^j_{A}=\widehat{\mathfrak{F}}_j(\xi_{A_+}\eta_{A_-})$, where $A_+=A\cap\mathcal{P}$ and $A_-=A\cap \mathcal{N}$.
%\AA{$B$ has been used for the bottom generator, so I swiched from $A$, $B$ to $A_+$ and $A_-$}
 
%\EG{add the range of $j$.}

\begin{lemma}
\label{lem: F minus P}
The reverse pair-of-pants map 
$$
F_{-P}:\HFLhat(O_1(k_{+}+1,k_{-}+1))\to \HFLhat(O_1(k_{+},k_{-})\sqcup O)
$$ 
is given on the bottom diagonal block by the equations
\begin{align*}
& F_{-P}\left(Z_A^{j+1}\right)=Z_A^j\otimes T\\
& F_{-P}\left(Z_{A\cup \{n+1\}}^{j+1}\right)=  F_{-P}\left(Z_{A\cup \{n+2\}}^{j+1}\right)=Z_A^j\otimes B\\
& F_{-P}\left(Z_{A\cup \{n+1, n+2\}}^{j+1}\right)=0.
\end{align*}
Here $A\subset \{2,\ldots,n\}$ and $1\le j\le n$, so that $2\le j+1\le n+1$.
\end{lemma}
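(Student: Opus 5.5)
Our plan is to lift the computation to the full complex and to use the resolution-lifting maps $\mathfrak{f}_j$ from Lemma \ref{lem: unlink action bottom diagonal}, together with Proposition \ref{prop: Phi full homology}. On level zero that proposition already gives $f_{-P}(X'_{j+1,n+2})=X'_{j,n}\otimes T$ for $1<j+1<n+2$, which is exactly the case $A=\emptyset$ of the first equation. Since $F_{-P}$ is level-filtered by Proposition \ref{prop:level-preserving}, its level-preserving part on $\CFL$ is determined up to homotopy by this level-zero map (Corollary \ref{cor: homotopic to level filtered}). It therefore suffices to identify the level-preserving composite $f_{-P}\circ\mathfrak{f}_{j+1}$ with an explicit lift of the map $\HFL(O(n+2))\to\HFL(O_1(k_+,k_-)\sqcup O)$, $1\mapsto X'_{j,n}\otimes T$.

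Concretely, we would construct a candidate lift $G:\CFL(O(n+2))\to \CFL(O_1(k_+,k_-)\sqcup O)$ (with $U_{n+1}=U_{n+2}$, $V_{n+1}=V_{n+2}$ imposed on the source side). Using the model $\CFL(L\sqcup O)=\CFL(L)\otimes(B\to T)$ with $\partial B=U_{n+1}V_{n+1}-U_1V_1$, we set
$$
G(\xi_A)=\mathfrak{f}_j(\xi_A)\otimes T,\quad G(\xi_A\xi_{n+1})=G(\xi_A\eta_{n+2})=\mathfrak{f}_j(\xi_A)\otimes B,\quad G(\xi_A\xi_{n+1}\eta_{n+2})=0 .
$$
We then check that $G$ is a chain map. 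This rests on $\partial\xi_{n+1}=\partial\eta_{n+2}=U_{n+1}V_{n+1}-U_1V_1=\partial B$, and on the fact that $\xi_{n+1}\eta_{n+2}$ maps to $0$ compatibly, since $\partial(\xi_{n+1}\eta_{n+2})$ produces two equal $\otimes B$ terms that cancel over $\F$. This $G$ is level-preserving and induces $1\mapsto X'_{j,n}\otimes T$ on $H_0$. Because the target is a resolution, the uniqueness statement of Lemma \ref{lem: lifting to resolutions} gives $f_{-P}\circ\mathfrak{f}_{j+1}\simeq G$ at the level-preserving part. Passing to $\HFLhat$ and using $Z^{j+1}_{A}=\widehat{\mathfrak{F}}_{j+1}(\xi_A)$ then yields all three formulas, because the higher-order level-decreasing corrections can only land in lower levels.

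As an alternative route for the last two formulas, we could use Lemma \ref{lem: unlink action bottom diagonal}: $Z^{j+1}_{A\cup a}=h_{1,a}Z^{j+1}_A$ for $a=n+1,n+2$. We would then combine this with a commutation of $f_{-P}$ with $h_{1,a}$, obtained from the cobordism-sliding argument of Lemma \ref{lem:h-PP-1}(4) and its reverse. For $a=n+1$ or $n+2$, the homotopy $h_{1,a}$ on the target becomes the homotopy on the new unknot, which by Example \ref{ex:simple-X} acts as the dot map $T\mapsto B$, $B\mapsto 0$. Applying it twice gives $0$, which matches the third formula.

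The main obstacle is justifying the uniqueness step. The target $\CFL(O_1(k_+,k_-)\sqcup O)$ with $U_{n+1}=U_{n+2}$, $V_{n+1}=V_{n+2}$ identified on the source has higher homology, so Lemma \ref{lem: lifting to resolutions} does not apply verbatim. We would first try to restrict to Alexander grading $A(X'_{j+1})$ and to the bottom block, where by Corollary \ref{cor: Maslov separation} and the grading shifts of Lemma \ref{lem:PoP-grading} no competing terms of the right Maslov and level degree exist. A secondary issue is the boundary range $j+1\in\{2,n+1\}$, where we would check that $A$ stays within $\min(j-1,n-j)$.
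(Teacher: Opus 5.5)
Your proposal is essentially the paper's proof: your map $G$ is exactly the paper's extension $\mathfrak{f}^{n,o}_j$ of $\mathfrak{f}_j$. The paper likewise compares it with $f_{-P}\circ\mathfrak{f}^{n+2}_{j+1}$ via Proposition~\ref{prop: Phi full homology} and uniqueness of lifts, and then uses a Maslov-grading count in Alexander grading $A(X'_{j,n})$ to show $F_{-P}$ is level-preserving on the bottom block.

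The obstacle you flag is not real. The higher homology lives only on the source side, which Lemma~\ref{lem: lifting to resolutions} permits. The target $\CFL(O_1(k_+,k_-)\sqcup O)$ has homology concentrated in level $0$, because multiplication by $U_{n+1}V_{n+1}-U_1V_1$ is injective on $\HFL(O_1(k_+,k_-))[U_{n+1},V_{n+1}]$. What genuinely needs the grading argument of your last paragraph is discarding the level-lowering components, which need not be null-homotopic; ``can only land in lower levels'' does not by itself kill them.
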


\begin{remark}
Lemma \ref{lem: F minus P} does not apply to $F_{-P}\left(Z_{A}^{1}\right)$ and $F_{-P}\left(Z_{A}^{n+2}\right)$. However, for $j=1$ and $j=n+2$ the inequality $|A|\le \min(j-1,n+2-j)$ implies $A=\emptyset$, and Proposition \ref{prop: Phi full homology} implies that $F_{-P}(X'_{1,n+2})$ and $F_{-P}(X'_{n+2,n+2})$ vanish in $\HFLhat$.
\end{remark}

\begin{proof}
Let $\mathcal{P}$ and $\mathcal{N}$ denote the positively and negatively oriented components of $O_1(k_+,k_-)$. Suppose $\mathcal{P}\sqcup\{n+1\}$ and $\mathcal{N}\sqcup\{n+2\}$ are the positively and negatively oriented components of $O_1(k_++1,k_-+1)$, respectively. Consider $O_0(k_+,k_-)$ (resp. $O_0(k_++1,k_-+1)$) such that its positively oriented components are exactly $\mathcal{P}$ (resp. $\mathcal{P}\sqcup\{n+1\}$).

Consider the full chain complex 
$$
\CFL(O_1(k_{+},k_{-})\sqcup O)\simeq \CFL(O_1(k_{+},k_{-}))\otimes \left[B\xrightarrow{U_{n+1}V_{n+1}-U_1V_1} T\right]. 
$$
%Recall that we identify $U_{n+1}=U_{n+2}$ and $V_{n+1}=V_{n+2}$. 
We extend the chain map 
\[\mathfrak{f}^n_j:\CFL(O_0(k_+,k_-))\to\CFL(O_1(k_+,k_-))\] from the proof of Lemma \ref{lem: unlink  action bottom diagonal} to a chain map 
\[
\mathfrak{f}_j^{n,o}:\CFL(O_0(k_++1,k_-+1)/(U_{n+2}-U_{n+1},V_{n+2}-V_{n+1})\to \CFL(O_1(k_{+},k_{-})\sqcup O)
\]
by setting
\begin{align*}
& \mathfrak{f}_j^{n,o}(1)=X'_{j,n}\otimes T,  \quad     \mathfrak{f}_j^{n,o}\left(\xi_{A_+}\eta_{A_-}\right)=\mathfrak{f}_j(\xi_{A_+}\eta_{A_-})\otimes T\\
& \mathfrak{f}_j^{n,o}\left(\xi_{A_+\cup\{n+1\}}\eta_{A_-}\right)=\mathfrak{f}_j^{n,o}\left(\xi_{A_+}\eta_{A_-\cup\{n+2\}}\right)=\mathfrak{f}_j(\xi_{A_+}\eta_{A_-})\otimes B\\
& \mathfrak{f}_{j}^{n,o}\left(\xi_{A_+\cup\{n+1\}}\eta_{A_-\cup\{n+2\}}\right)=0.
\end{align*}
for  $A\subset \{2,\ldots,n\}$. 
%\AA{Why is this a chain map?}
We also have the chain map 
$$
\mathfrak{f}^{n+2}_{j+1}:\CFL(O_0(k_++1,k_-+1)\to \CFL(O_1(k_{+}+1,k_{-}+1))
$$
which is a lift of $\mathfrak{F}^{n+2}_{j+1}: \HFL(O_0(k_++1,k_-+1)\rightarrow \HFL(O_1(k_{+}+1,k_{-}+1))$ sending $1$ to $X'_{j+1, n+2}$.
By Proposition \ref{prop: Phi full homology} 
the chain maps $f_{-P}\circ \mathfrak{f}_{j+1}^{n+2}$ and $\mathfrak{f}_{j}^{n,o}$ induce the same map in homology, so the level-preserving components are homotopic. Note that the level $i$ elements in the bottom block of $X'_{j,n}$ have Maslov grading $M(X'_{j,n})-i$, and the level $r$ elements in the bottom (resp. top) block of $X'_{j+1, n+2}$ have Maslov grading $M(X'_{j,n})-r$ (resp. $M(X'_{j,n})-r+2$). Since $F_{-P}$ preserves the Maslov grading and is a level-filtered map, its image has to be level $i$ in the bottom block of $X'_{j+1, n+2}$. Hence $F_{-P}$ is level-preserving. So $f_{-P}\circ \mathfrak{f}_{j+1}^{n+2}$ and $\mathfrak{f}_{j}^{n,o}$ are homotopic and induce the same map on $\widehat{HFL}$, thus the result follows. 
\end{proof}

\begin{remark} 
One can use Remark \ref{rmk:full-twist} and compute $F_{-P}$ on the bottom diagonal block topologically.  By functoriality of link Floer homology we have a commutative diagram
\[\begin{tikzcd}
\HFLhat(O_0(k_++1,k_-+1)) \arrow[r, "F_{-P}"] \arrow[d, "\widehat{\mathfrak{F}}_{j+1}"'] & \HFLhat(O_0(k_+,k_-)\sqcup O) \arrow[d, "\widehat{\mathfrak{F}}_{j}\otimes \mathrm{id}_V"] \\
\HFLhat(O_1(k_++1,k_-+1)) \arrow[r, "F_{-P}"']                 & \HFLhat(O_1(k_+,k_-)\sqcup O)
\end{tikzcd}\]
Note that by \cite[Figure 17]{JM} the map $F_{-P}$ in the top row is given by 
\[F_{-P}(\xi_{A_+}\eta_{A_-})=\begin{cases} \xi_{A_+}\eta_{A_-}\otimes T&n+1\notin A_+,\ n+2\notin A_-\\
\xi_{A_+}\eta_{A_-}\otimes B&n+1\in A_+,\ n+2\notin A_-\\
\xi_{A_+}\eta_{A_-}\otimes B&n+1\notin A_+,\ n+2\in A_-\\
0&n+1\in A_+,\ n+2\in A_-\\
\end{cases}\]
Thus, the Lemma \ref{lem: F minus P} follows from the commutativity of the diagram and the definition of the vertical maps.
\end{remark}

Let $\left(Z^j_A\right)^{\vee}$ denote the dual basis to $Z^j_A$ in the dual bottom block for $O_{-1}(k_{+},k_{-})$.

\begin{lemma}
\label{lem: Fp for dual bottom}
For $A\subset \{2,\ldots,n\}$ and $1\le j\le n$ we have
$$
F_{P}\left(\left(Z_A^j\right)^{\vee}\otimes T\right)=\left(Z_{A\cup \{n+1\}}^{j+1}\right)^{\vee}+\left(Z_{A\cup \{n+2\}}^{j+1}\right)^{\vee},
$$
modulo the off-diagonal and dual top diagonal block, and
$$
F_{P}\left(\left(Z_A^j\right)^{\vee}\otimes B\right)=\left(Z_{A}^{j+1}\right)^{\vee}.
$$
This matches the formulas for the 0-framed unknot after identifying $\left(Z_{A}^j\right)^{\vee}$ in $O_{-1}(k_{+},k_{-})$  (resp. $\left(Z_{B}^{j+1}\right)^{\vee}$ in $O_{-1}(k_{+}+1,k_{-}+1)$) with $\xi_{\mathcal{P}\setminus A_+}\eta_{\mathcal{N}\setminus A_-}$
%$\xi_{\{2,\ldots,n\}\setminus A}$ 
(resp.  $\xi_{\mathcal{P}'\setminus B_+}\eta_{\mathcal{N}'\setminus B_-}$), where $\mathcal{P}'=\mathcal{P}\sqcup \{n+1\}$, $\mathcal{N}'=\mathcal{N}\sqcup  \{n+2\}$. 
\end{lemma}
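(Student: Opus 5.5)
The plan is to dualize Lemma \ref{lem: F minus P} using the commutative diagrams \eqref{eq: duality Phi undotted} and \eqref{eq: duality Phi}. By these diagrams, $F_P(-\otimes T)$ on $\HFLhat(O_{-1}(k_+,k_-))$ is the dual of $F_D\circ F_{-P}$, and $F_P(-\otimes B)$ is the dual of $\Phi=F_D\circ\mathcal{B}\circ F_{-P}$, both acting on $\HFLhat(O_1(k_++1,k_-+1))$. Orientation reversal commutes with these maps by Proposition \ref{prop: cob commute with orientation reversal}, so it is enough to work with $O_1(k_+,k_-)$ and the dual bases. Pairing with $(Z^j_A)^\vee$, the coefficient of a basis element $y^\vee$ in $F_P((Z^j_A)^\vee\otimes T)$ is the coefficient of $Z^j_A$ in $F_D F_{-P}(y)$. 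The analogous statement holds for $B$ with $F_D\mathcal{B}F_{-P}$.

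The first step is to compute these compositions on the bottom diagonal block of $O_1(k_++1,k_-+1)$ using Lemma \ref{lem: F minus P}. Recall $F_D(x\otimes T)=0$, $F_D(x\otimes B)=x$, $\mathcal{B}(x\otimes T)=x\otimes B$ and $\mathcal{B}(x\otimes B)=0$. Hence $F_DF_{-P}$ sends $Z^{j+1}_{A\cup\{n+1\}}$ and $Z^{j+1}_{A\cup\{n+2\}}$ to $Z^j_A$, and kills $Z^{j+1}_A$ and $Z^{j+1}_{A\cup\{n+1,n+2\}}$. Similarly, $F_D\mathcal{B}F_{-P}$ sends $Z^{j+1}_A$ to $Z^j_A$ and kills the other three types. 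Dualizing these gives exactly the two displayed formulas, restricted to the components along dual bottom-block basis vectors $y^\vee$.

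The second step is to control the components of $F_P((Z^j_A)^\vee\otimes\cdot)$ along $y^\vee$ with $y$ in the top diagonal block or off-diagonal. For the $T$-formula the statement only holds modulo these, so nothing more is needed. For the $B$-formula, $F_P(-\otimes B)$ preserves normalized Maslov and Alexander gradings, so $y$ must have the same Alexander grading $A(X'_{j+1})$ as $Z^{j+1}_A$. Off-diagonal classes have different Alexander multigradings, and the Alexander grading is multigraded, with $F_P$ shifting only the new components. This excludes off-diagonal $y$.

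For top diagonal $y$ I would use the level and Maslov constraints. The Maslov grading of $y$ must equal that of $Z^{j+1}_A$, namely $M(X'_{j+1})-|A|$. By Corollary \ref{cor: Maslov separation} this forces $j+1=\frac{n+3}{2}$ and $\lvl(y)=M_{j+1}=\lvl(Z^{j+1}_A)+2$. I would then use that $\Phi$ is a composition of the level-filtered map $F_{-P}$ (Proposition \ref{prop:level-preserving}) with the level-preserving maps $\mathcal{B}$ and $F_D$, which respectively raise and lower the level by one. So $\Phi$ is level-filtered on $O_1$. One must check that a top-block $y$ of level $M_{j+1}$ can only map into the bottom block at level $m$ by a level-lowering term. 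Its contribution to $Z^j_A$ at level $|A|=M_{j+1}-2$ would then be a level drop of $2$, consistent with filtering. Excluding this term is the main obstacle.

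I would try to exclude it by showing directly that $F_{-P}$ sends top-block classes into the top block plus lower terms landing in $-\otimes T$. Alternatively, I would compute $F_{-P}$ on the top block in the middle case $j+1=\frac{n+3}{2}$ via the lift argument of Lemma \ref{lem: F minus P}, using level-preserving components and Lemma \ref{lem: lifting to resolutions}. If that fails, I would weaken the $B$-statement to also hold modulo the dual top block in that single Alexander grading.

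The final step is the identification with the $0$-framed unknot. Sending $(Z^j_A)^\vee\mapsto\xi_{\mathcal{P}\setminus A_+}\eta_{\mathcal{N}\setminus A_-}$ turns the formulas into \eqref{eq: pants zero framing}: the complement of $A$ in the enlarged index sets gains $n+1$, $n+2$, or both. This is a direct check.
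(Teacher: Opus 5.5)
Your first step is exactly the paper's proof. The paper computes $F_D\circ F_{-P}$ and $F_D\circ\mathcal{B}\circ F_{-P}$ on the bottom block from Lemma~\ref{lem: F minus P}, dualizes via \eqref{eq: duality Phi undotted} and \eqref{eq: duality Phi}, and stops there; your final identification with the $0$-framed formulas is also fine. Your second step goes beyond what the paper does. As written it contains one false claim and one obstacle that does not actually exist, and both are settled by a Maslov count.

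\textbf{Off-diagonal classes.} These are \emph{not} excluded by Alexander gradings. The map $F_P$ (dually $\Phi$) fixes only $A_1,\dots,A_n$ and the sum $A_{n+1}+A_{n+2}$, not each new coordinate. Two families of off-diagonal classes in $\HFLhat(O_1(k_++1,k_-+1))$ have exactly the required Alexander grading: those with parameters $(j+1,s=2)$ and lower group $\{n+1,n+2\}$, and those with parameters $(j,s=n)$ and lower group $\{1,\dots,n\}$. To rule them out you need Lemma~\ref{lem: off diagonal change orientation}. It gives their Maslov gradings as $M(X'_{j,n})-j$ and $M(X'_{j,n})-(n+1-j)$ respectively. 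On the other hand, $\Phi$ preserves $M$, and $M(Z^j_A)=M(X'_{j,n})-|A|$ with $|A|\le\min(j-1,n-j)$, so neither family can contribute.

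\textbf{Top-block classes.} Your ``main obstacle'' here is vacuous. Your own application of Corollary~\ref{cor: Maslov separation} forces $|A|=\lvl(Z^{j+1}_A)=m_{j+1,n+2}=\min(j,n+1-j)$. But $Z^j_A$ exists only for $|A|\le m_{j,n}=m_{j+1,n+2}-1$. Equivalently, a top class at $A(X'_{j+1,n+2})$ has Maslov grading $M(X'_{j,n})-n+i$ with $0\le i\le\min(j-1,n-j)$. Matching $M(Z^j_A)$ would require $|A|+i=n$, which is impossible since $|A|+i\le n-1$. So no level-filtration analysis of $F_{-P}$ on the top block is needed, and you should not weaken the statement. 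The $B$-formula holds exactly, not merely modulo the dual top block.
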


\begin{proof}
After applying the death cobordism to $F_{-P}$ computed in Lemma \ref{lem: F minus P}, we get
\begin{align*}
& F_{D}\circ F_{-P}\left(Z_A^{j+1}\right)=0\\
& F_{D}\circ F_{-P}\left(Z_{A\cup \{n+1\}}^{j+1}\right)=F_{D}\circ F_{-P}\left(Z_{A\cup \{n+2\}}^{j+1}\right)=Z^j_A\\
& F_{D}\circ F_{-P}\left(Z^{j+1}_{A\cup \{n+1, n+2\}}\right)=0.
\end{align*}
Similarly, after applying the dotted death cobordism we get
\begin{align*}
& F_D\circ \mathcal{B}\circ F_{-P}\left(Z^{j+1}_A\right)=Z^j_A\\
& F_D\circ \mathcal{B}\circ F_{-P}\left(Z^{j+1}_{A\cup \{n+1\}}\right)=F_D\circ \mathcal{B}\circ F_{-P}\left(Z^{j+1}_{A\cup \{n+2\}}\right)=0\\
& F_D\circ \mathcal{B}\circ F_{-P}\left(Z^{j+1}_{A\cup \{n+1,n+2\}}\right)=0.
\end{align*}
Now the result follows from \eqref{eq: duality Phi undotted} and \eqref{eq: duality Phi}.
\end{proof}

\subsection{Cabled link Floer homology}

We now compute the cabled  link Floer homology of the
$(-1)$-framed unknot. 
\begin{lemma}
\label{lem: Fp from off-diagonal}
The map
$$
F_P(-\otimes B):\HFLhat(O_1(k_{+},k_{-}))\to \HFLhat(O_1(k_{+}+1,k_{-}+1))
$$
vanishes on the off-diagonal part of $\HFLhat$ and the image of 
$$
\Phi:\HFLhat(O_1(k_{+}+1,k_{-}+1)
)\to \HFLhat(O_1(k_{+},k_{-}))
$$
does not contain any off-diagonal terms.
%vanishes on the off-diagonal part of $\HFLhat$. \AA{Do you mean the image does not contain any off-diagonal terms?}
\end{lemma}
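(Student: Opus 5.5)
The plan is to argue purely from gradings: Alexander multi-grading, Maslov grading, and the level bounds already established. First observe that the two claims are dual to each other. By \eqref{eq: duality Phi}, $F_P(-\otimes B)$ on the mirror side is dual to $\Phi=F_D\circ\mathcal{B}\circ F_{-P}$. The statement as written concerns $O_1$, so I treat $F_P(-\otimes B)$ and $\Phi$ on $O_1$ directly, but the structure of the argument is the same for either.

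\textbf{Off-diagonal source versus diagonal target.} Start with $F_P(-\otimes B)$ applied to an off-diagonal class $a$ in $\HFLhat(O_1(k_+,k_-))$, indexed by $(j,s)$ with $1\le s\le n-1$.

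The maps $F_P(-\otimes B)$ and $F_P(-\otimes T)$ do not change the Alexander components $A_1,\ldots,A_n$. The new components $n+1,n+2$ receive a total shift recorded in Lemma \ref{lem:PoP-grading}. In Alexander grading, after orientation reversal, the first $n$ coordinates of $a$ take two distinct values, $\frac{n+1}{2}-j-1$ and $\frac{n+1}{2}-j$ (up to the sign flips on reversed strands). A diagonal class in $\HFLhat(O_1(k_++1,k_-+1))$ has all $n+2$ coordinates equal, up to the same sign convention. Since $F_P$ preserves $A_1,\ldots,A_n$ and $s\neq 0,n$, the image of $a$ cannot have any diagonal component.

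\textbf{Off-diagonal source and off-diagonal target.} The image could still land in the off-diagonal part at $(j',s')$ with $j'\in\{j,j+1\}$. Matching the first $n$ coordinates forces $j'=j+1$ with $s'=s$, or $j'=j$ with $s'=s+2$, or similar, with the two new strands lying in a prescribed group.

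For these cases I compare Maslov gradings using \eqref{eq: off diagonal change orientation}:
\begin{itemize}
\item $F_P(-\otimes B)$ shifts $M$ by $-2$.
\item The formula gives $M$ as $-(j-k_-)^2-s-k_-+2s_-+1$.
\item Passing to $k_-+1$, $n+2$ and the allowed $(j',s',s'_-)$ produces a fixed discrepancy.
\end{itemize}
The expected outcome is either a parity mismatch or an offset that is incompatible with $-2$. If some case does match in Maslov grading, I fall back on the level grading. Proposition \ref{prop:level-preserving} says $F_P(-\otimes B)$ raises level by at most $2$, and Lemma \ref{lem: levels off diagonal} gives the exact levels $j-s+2c-1$. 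Together these should exclude the remaining case, using the ranges \eqref{eq: c j inequalities} for $c$.

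\textbf{The claim about $\Phi$.} Here I check that no basis vector of the off-diagonal part of $\HFLhat(O_1(k_+,k_-))$ occurs in $\Phi(x)$, for any $x$ in $\HFLhat(O_1(k_++1,k_-+1))$.

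The map $\Phi$ preserves $A_1,\ldots,A_n$, since $F_D$ forgets the unknot. For a diagonal $x$, the first $n$ coordinates are all equal, so its image is diagonal. For an off-diagonal $x$ at $(j',s')$, the first $n$ coordinates take two values with multiplicities $s'-t$ and $n-s'+t$, where $t$ counts how many of the two new strands lie in the first group. So the image could be off-diagonal at $(j',s'-t)$.

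To exclude this I again compare Maslov gradings. Lemma \ref{lem:PoP-grading} gives the shifts: $F_{-P}$ shifts by $+1$ (as the reverse of $P$), $\mathcal{B}$ by $-1$, and $F_D$ by a fixed amount. I then check the result against \eqref{eq: off diagonal change orientation}, and use the level constraint ($F_{-P}$ level-filtered, $\mathcal{B}$ raising level by $1$, $F_D$ lowering it by $1$) when Maslov grading alone is not enough.

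\textbf{Main obstacle.} I expect the hardest step to be this off-diagonal-to-off-diagonal case. It needs a careful case analysis of where the two new strands sit, which determines $t$ and the change in $s_-$. I would first try to show that the Maslov discrepancy always has the wrong parity. Only if that fails would I bring in the exact level formula.
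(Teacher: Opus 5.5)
Your treatment of $F_P(-\otimes B)$ follows the paper's argument. The first $n$ Alexander coordinates rule out diagonal targets, and the constraint $A_{n+1}+A_{n+2}=0$ forces both new strands into the same group. Then \eqref{eq: off diagonal change orientation} gives $M(y)=M(x)-1$ in both surviving cases, which is incompatible with degree $-2$. One bookkeeping correction: both consecutive values $\frac{n+1}{2}-j-1$ and $\frac{n+1}{2}-j$ occur among the first $n$ coordinates, so the target always has $j'=j+1$. The two cases are therefore $(j+1,s,s_-)$ and $(j+1,s+2,s_-+1)$, never $j'=j$. The Maslov comparison settles both cases, so no level argument is needed there.

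The part about $\Phi$ has a genuine gap.

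\textbf{The degree of $F_{-P}$.} $F_{-P}$ has Maslov degree $0$, not $+1$. Lemma \ref{lem:PoP-grading} says nothing about $-P$. The correct degree follows from $f_{-P}\circ f_{P_1}\simeq U_{n+1}$ in Lemma \ref{lem:relations}, since $f_{P_1}$ and $U_{n+1}$ both have degree $-2$; Proposition \ref{prop: Phi full homology} confirms it. Hence $\Phi=F_D\circ\mathcal{B}\circ F_{-P}$ has degree $0-1+1=0$, whereas your values give $+1$. Now take an off-diagonal input at $(j+1,s',s'_-)$ and an off-diagonal output at $(j,s,s_-)$. By \eqref{eq: off diagonal change orientation}, $M(y)-M(x)=(s'-s)-2(s'_--s_-)+1$. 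This equals $+1$ when the two new strands lie in the same group, which is exactly your incorrect degree, so your comparison yields no contradiction.

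\textbf{The case $t=1$.} You allow $t=1$. If only the negatively oriented new strand lies in the first group, the offset is $0$, matching the true degree of $\Phi$, so Maslov grading alone cannot exclude it. It must be excluded by Alexander grading. $\HFLhat(L\sqcup O)$ is supported in $A_{n+1}=0$, so $F_{-P}$ kills inputs with $A_{n+1}+A_{n+2}\neq 0$. This forces $t\in\{0,2\}$.

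\textbf{The level fallback.} It cannot repair either point, because $\Phi$ is only level-filtered. Lemma \ref{lem: levels off diagonal} gives levels $j-s'+2c'$ for the input and $j-s+2c-1$ for the output. The inequality $\lvl(y)\le\lvl(x)$ then holds for admissible choices such as $c'=c$ when $t=0$, or $c'=c+1$ when $t=2$.

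With degree $0$ and $t\in\{0,2\}$, the parity mismatch $+1\neq 0$ closes the argument; this is the ``similar'' proof the paper intends. Note also that the output off-diagonal parameter is $j'-1$, not $j'$.
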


\begin{proof}
Let $x$ be a class in the off-diagonal part of $\HFLhat(O_1(k_{+},k_{-}))$, with parameters $j$ and $s$. Up to a sign, its Alexander grading has $s$ components equal to 
$$
\frac{n+1}{2}-j-1=\frac{(n+2)+1}{2}-(j+1)-1
$$
and $n-s$ components equal to 
$$
\frac{n+1}{2}-j=\frac{(n+2)+1}{2}-(j+1).
$$
As in Lemma \ref{lem: off diagonal change orientation}, let $s_-$ denote the number of components with grading $-(\frac{n+1}{2}-j-1)$.

%we change signs for $s_{-}$ and $k_{-}-s_{-}$ of these respectively.

The map $F_P(-\otimes B)$ preserves the $n$-component Alexander grading $(A_1,\ldots,A_n)$ and has {\bf total} grading zero for the two new components: $A_{n+1}+A_{n+2}=0$. Thus, $F_P(x\otimes B)$ is off-diagonal and for any term $y$ in $F_P(x\otimes B)$ we have two cases for $A_{n+1}(y)$ and $A_{n+2}(y)$.

{\bf Case 1:} 
$$A_{n+1}(y)=-A_{n+2}(y)=\frac{(n+2)+1}{2}-(j+1)-1.$$ This means that $y$ is in an off-diagonal block with parameters $(j+1,s+2,s_-+1)$ and so 
%$$
%(j,k_{-},s,s_{-})\mapsto (j+1,k_{-}+1,s+2,s_{-}+1)
%$$
and  by \eqref{eq: off diagonal change orientation}, we have 
\[M(y)=-(j-k_-)^2-(s+2)-(k_-+1)+2(s_-+1)+1=M(x)-1.\]
%$$
%-2-1+2=-1.
%$$
However, $F_P(-\otimes B)$ drops Maslov grading by $2$, so this is not possible. 
 
{\bf Case 2:}  
$$
A_{n+1}(y)=-A_{n+2}(y)=\frac{(n+2)+1}{2}-(j+1).$$ 
So $y$ is in an off-diagonal block with parameters $(j+1,s,s_-)$ and therefore by \eqref{eq: off diagonal change orientation}, we have 

%$$
%(j,k_{-},s,s_{-})\mapsto (j+1,k_{-}+1,s,s_{-})
%$$
%and by \eqref{eq: off diagonal change orientation}, its Maslov grading changes by 
\[M(y)=-(j-k_-)^2-s-(k_-+1)+2s_-+1=M(x)-1.\]

%$$
%-0-1+0=-1.
%$$
Again, since $F_P(-\otimes B)$ drops Maslov grading by $2$, this is not possible. 

Consequently, $F_{P}(-\otimes B)$ vanishes on the off-diagonal part. The proof for $\Phi$ is similar.
\end{proof}

Now we are ready to prove Theorem \ref{thm: unknot negative intro}. We first show that only the dual-bottom blocks contribute to the
cabled homology.

\begin{lemma}\label{lem:negative-nonbottom-vanishes}
The off-diagonal and dual-top parts represent zero in the cabled hat
link Floer homology of the $(-1)$-framed unknot, that is for $r\gg 0$ the $r$-fold iterate of the pair of pants map $F_P(-\otimes B)$ vanishes on the dual-top and off-diagonal part of $\HFLhat(O_{-1}(k_+,k_-))$.
\end{lemma}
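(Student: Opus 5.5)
The plan is to treat the two pieces separately: first kill the off-diagonal part in one step, then kill the dual-top part by a level-counting argument along iterated stabilizations.

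\emph{Off-diagonal part.} By the duality diagram \eqref{eq: duality Phi}, the map $F_P(-\otimes B)$ for $O_{-1}$ is dual to $\Phi=F_D\circ\mathcal{B}\circ F_{-P}$ for $O_1$. Lemma \ref{lem: Fp from off-diagonal} says that the image of $\Phi$ contains no off-diagonal terms. Dually, $F_P(-\otimes B)$ annihilates the dual off-diagonal part of $\HFLhat(O_{-1}(k_+,k_-))$. One could also redo the Maslov and Alexander count of Lemma \ref{lem: Fp from off-diagonal} directly in the dual gradings. So the off-diagonal part dies after a single application, and I first record that off-diagonal classes are sent to zero, not merely to the dual-top part.

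\emph{Dual-top part.} The key input is Lemma \ref{lem: levels for dual maps}: $F_P(-\otimes B)$ raises the level by at most $2$.
\begin{itemize}
\item A dual-top class at diagonal index $j$ on $n$ strands has level $i$ with $0\le i\le \min(j-2,n-j-1)$.
\item After $r$ iterations we sit on $n+2r$ strands at diagonal index $j+r$, with level at most $i+2r$.
\item For the same $\jj$-data, the dual-bottom block on $n+2r$ strands occupies levels between $n+2r-\min(j+r-1,n-j+r)$ and $n+2r$.
\end{itemize}

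The main obstacle is that level is only an upper bound. So the proof must rule out that the image lands in the dual-bottom block without restarting the count.

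To do this I compare Maslov gradings within a fixed Alexander grading, as in Corollary \ref{cor: Maslov separation}. Since $F_P(-\otimes B)$ preserves the normalized Maslov grading, a dual-top class has a fixed normalized Maslov grading along the iteration. Specifically, it equals $\left(\frac{\alpha+1}{2}-\jj\right)\left(\frac{\alpha-1}{2}-\jj\right)+1-i$ after adding $n$, while $\jj$ stays fixed. Dual-bottom classes in that Alexander grading have normalized Maslov grading equal to the same quadratic term plus $i'+1$ with $i'\ge 0$. The two values agree only when $i'=-i$, which forces $i=i'=0$.

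\begin{itemize}
\item For $i\ge 1$, the image can never reach the dual-bottom block. By the off-diagonal step and Lemma \ref{lem: Fp from off-diagonal} it cannot reach the off-diagonal part either. So the image stays in the dual-top part, and its level grows at most like $i+2r$.
\item The borderline case $i=i'=0$ corresponds to $j=\frac{n+1}{2}$-type coincidences. These should be handled by the explicit formula of Lemma \ref{lem: Fp for dual bottom}, which shows the dual-bottom classes are hit exactly by dual-bottom classes via $(Z^j_A)^\vee\otimes B\mapsto (Z^{j+1}_A)^\vee$.
\end{itemize}

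Finally I show the dual-top component eventually vanishes. Dualizing, it suffices to show that for large $r$ the $r$-fold composite of $\Phi$ sends the bottom-diagonal classes (which, by Lemma \ref{lem: F minus P}, are preserved by $F_{-P}$) onto everything, so that nothing maps to the top block.

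A cleaner route is to use Lemma \ref{lem: F minus P} dually. The top-diagonal block of $O_1$ at index $j+r$ on $n+2r$ strands has levels at least $\max(n+2r-j-r+2,\,j+r+1)$, which tends to infinity. Meanwhile $\Phi$ is level-filtered. Then $\Phi^r$ restricted to the top block lands in levels that, after the Maslov constraint, must coincide with top-block levels of the original link. This forces the top level to be $\le n$ and gives a contradiction for $r\gg0$ unless the component is zero. Dualizing yields that $(F_P(-\otimes B))^r$ vanishes on the dual-top part.

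I expect the delicate point to be controlling the non-level-preserving components at $j=\frac{n+1}{2}$, where Corollary \ref{cor: Maslov separation} allows top-to-bottom mixing. I would handle this by first iterating once so that $j+1\neq\frac{n+3}{2}$ generically, and checking the exceptional diagonal by hand.
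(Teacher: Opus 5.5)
Your treatment of the off-diagonal part is correct and is the paper's argument: duality with $\Phi$ together with Lemma \ref{lem: Fp from off-diagonal}. The dual-top part, however, has a genuine gap.

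\textbf{The Maslov bookkeeping is wrong.} A dual-top class at level $i$ has Maslov grading $Q_\alpha(\jj)-1-i$. Its normalized Maslov grading is therefore $Q_\alpha(\jj)-1-i+n$, not $Q_\alpha(\jj)+1-i$. This matters in two ways.
\begin{itemize}
\item With your formula, preservation of normalized Maslov grading would force the level $i$ of a dual-top class to stay constant under iteration, so nothing would ever die. With the correct formula, $n-i$ is invariant: inside the dual-top block the level rises by exactly $2$ per step. That is the actual mechanism for vanishing.
\item A dual-top class at $(j,n)$ and a dual-bottom class at $(j+1,n+2)$ have equal normalized Maslov grading when $i'=n-2-i$, not $i'=-i$. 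Maslov grading alone does not exclude this when $M_{j,n}-m_{j,n}\le 3$. For example, $n=4$, $j=2$, $i=0$, $i'=2$ is such a coincidence.
\end{itemize}

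\textbf{Two further steps fail.} Your plan for the exceptional diagonal does not work, because $\jj=\frac{n+1}{2}-j$ is invariant under $(j,n)\mapsto(j+1,n+2)$. Iterating therefore never moves you off $j=\frac{n+1}{2}$. Your last paragraph (dualizing, $\Phi^r$ ``onto everything'') also does not actually produce the vanishing.

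\textbf{The missing idea} is to use the two gradings in the opposite roles from the ones you assign them, and to argue one step at a time.
\begin{itemize}
\item The upper bound of Lemma \ref{lem: levels for dual maps} is exactly what prevents landing in the dual-bottom block. If $\lvl(x)\le n-M_{j,n}$, every term of $F_P(x\otimes B)$ has level at most $n-M_{j,n}+2$.
\item The dual-bottom block at $-A(X'_{j+1,n+2})$ begins at level $n+2-m_{j+1,n+2}=n+1-m_{j,n}\ge n+3-M_{j,n}$, by Remark \ref{rem: M-m}. This also holds at $j=\frac{n+1}{2}$.
\item So the diagonal part of the image lies in the dual-top block. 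There the Maslov grading forces the level to be exactly $\lvl(x)+2$.
\item The top of the dual-top range grows only by one per step: $(n+2r)-M_{j+r,n+2r}=n+r-M_{j,n}$. Hence the dual-top component of the $r$-th iterate vanishes once $\lvl(x)+2r>n+r-M_{j,n}$.
\item Any off-diagonal components created along the way are killed at the following step by your off-diagonal argument.
\end{itemize}
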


\begin{proof}
Let
\[
G_B:=F_P(-\otimes B): \HFLhat(O_{-1}(k_+,k_-))\longrightarrow \HFLhat(O_{-1}(k_{+}+1, k_{-}+1))
\]
By \eqref{eq: duality Phi}, the map $G_B$ is identified with
$\Phi^\vee$.

By Lemma \ref{lem: Fp from off-diagonal}, the image of $\Phi$ does not contain any off-diagonal terms. Therefore, the dual map $G_B$ vanishes on the dual off-diagonal part. 

Next, we consider the dual-top block. Let $x$ be a class in the dual-top block with Alexander grading $-A(X'_{j,n})$. Since $G_B$ preserves $A_1,\cdots, A_n$, every diagonal term in $G_B(x)$ must have Alexander grading $-A(X'_{j+1,n+2})$. The lowest level of the top diagonal block of $\HFLhat(O_1(k_+, k_-))$ at $A(X_{j,n}')$ is  $M_{j,n}=\max (n-j+2, j+1)$.  For $O_1(k_{+}+1, k_{-}+1)$, the lowest level of the top diagonal block at $A(X_{j+1,n+2}')$ is $M_{j+1,n+2}=M_{j,n}+1$. Therefore, the highest level for the dual top block with Alexander grading $-A(X_{j,n}')$ equals $n-M_{j,n}$, while the highest level for the dual top block with Alexander grading $-A(X_{j+1,n+2}')$ is equal to 
$$
(n+2)-M_{j+1,n+2}=n-M_{j,n}+1,
$$
see Figure \ref{fig: pants}. 

By Lemma \ref{lem: levels for dual maps} the map $G_B$ increases the level at most by $2$. We claim that on the dual top block it increases the level exactly by 2. Indeed, the lowest level in the dual bottom block at  $-A(X_{j+1,n+2}')$ equals 
$$
n+2-m_{j+1,n+2}=n+1-m_{j,n}\ge n+3-M_{j,n}.
$$
Therefore if $x$ is in the dual top block then $G_B(x)$ cannot be in the dual bottom block and must be in the dual top block. By considering Maslov gradings and applying Corollary \ref{cor: Maslov separation} we get $\lvl(G_B(x))=\lvl(x)+2$.

So if $\lvl(x)=\ell$ then $r$-fold iterate $G^r_B(x)$ is in the level $\ell+2r$. The highest level of the dual top diagonal block at $-A(X'_{j+r,n+2r})$ is
 $$
 (n+2r)-M_{j+r,n+2r}=(n+2r)-(M_{j,n}+r)=n+r-M_{j,n}.
 $$ 
 For $r\gg 0$, we have 
 $$ 
 \ell+2r>n+r-M_{j,n},
 $$
and so if $G_{B}^r(x)$ is non-zero then it has to be in the off-diagonal block. Since, $G_B$ vanishes on the off-diagonal block, $G_{B}^r(x)=0$ for $r\gg 0$.

\end{proof}

\begin{figure}[ht!]
\begin{tikzpicture}
\filldraw[color=lightgray] (0,0)--(-2,2)--(2,2)--(0,0);
\draw (0,0)--(-2,2)--(2,2)--(0,0);
%\filldraw[color=lightgray] (0,4)--(-1,3)--(1,3)--(0,4);
%\draw (0,4)--(-1,3)--(1,3)--(0,4);
\draw[->] (-3,0)--(-3,4);
\draw [dotted] (0,0)--(-3,0);
\draw [dotted] (-2,2)--(-3,2);
%\draw [dotted] (-1,3)--(-3,3);
%\draw [dotted] (0,4)--(-3,4);
\draw (-3.5,0) node {$0$};
\draw (-3.7,2) node {$n-M_{j,n}$};
%\draw (-3.5,3) node {$M_j$};
%\draw (-3.5,4) node {$n$};
\draw (0,0) node {$\bullet$};
%\draw (0,4) node {$\bullet$};
\draw (0,-0.5) node {$-A(X'_{j,n})$};
%\draw (0,4.5) node {$D'_{j,n}$};

\draw (3,2.5) node {$\xrightarrow{F_P(-\otimes B)}$};

\draw (7,0)--(4.5,2.5)--(9.5,2.5)--(7,0);
\filldraw[color=lightgray] (7,1)--(5,3)--(9,3)--(7,1);
\draw (7,1)--(5,3)--(9,3)--(7,1);

%\draw (7,5)--(5.5,3.5)--(8.5,3.5)--(7,5);
%\filldraw[color=lightgray] (7,5)--(6,4)--(8,4)--(7,5);
%\draw (7,5)--(6,4)--(8,4)--(7,5);
 
\draw (7,0) node {$\bullet$};
\draw (7,1) node {$\bullet$};
%\draw (7,5) node {$\bullet$};
\draw (7,-0.5) node {$-A(X'_{j+1,n+2})$};
%\draw (7,5.5) node {$D'_{j+1,n+2}$};

\draw[->] (10,0)--(10,4);

\draw[dotted] (7,0)--(10,0);
\draw (10.5,0) node {$0$};
\draw[dotted] (7,1)--(10,1);
\draw (10.5,1) node {$2$};
\draw[dotted] (7,2.5)--(10,2.5);
\draw (11,2.5) node {$n-M_{j,n}+1$};
\draw[dotted] (7,3)--(10,3);
\draw (11,3) node {$n-M_{j,n}+2$};
%\draw[dotted] (7,3.5)--(10,3.5);
%\draw (10.7,3.5) node {$M_j+1$};
%\draw[dotted] (7,4)--(10,4);
%\draw (10.7,4) node {$M_j+2$};
%\draw[dotted] (7,5)--(10,5);
%\draw (10.7,5) node {$n+2$};

\end{tikzpicture}
\caption{Action of $F_P(-\otimes B)$ on dual top diagonal blocks, here $M_{j,n}=\max(n-j+2,j+1)$.  
}
\label{fig: pants}
\end{figure}

 Fix $\alpha=k_+-k_-$ and let $n=k_++k_-$, then $n$ has the same parity as $\alpha$.  We follow the notation of Remark \ref{rem: jj} and index the diagonal
blocks by
\[
\jj=\dfrac{n+1}{2}-j\in\mathcal J_\alpha,
\qquad
\mathcal J_\alpha=
\begin{cases}
\mathbb Z, & \alpha\text{ odd},\\[2mm]
\mathbb Z+\frac12, & \alpha\text{ even},
\end{cases}
\]
so that the total Alexander grading of the block labeled by $\jj$ equals
$\jj\alpha$ for $O_1(k_{+},k_{-})$ and $-\jj\alpha$ for $O_{-1}(k_{+},k_{-})$.  Set
\[
Q_\alpha(\jj)
:=
\left(\frac{\alpha+1}{2}-\jj\right)
\left(\frac{\alpha-1}{2}-\jj\right).
\]

\begin{proof}[Proof of Theorem \ref{thm: unknot negative intro}] By Lemma \ref{lem:negative-nonbottom-vanishes} we only need to consider the dual bottom diagonal block. Moreover, the Alexander and level gradings imply that the sum of the off-diagonal and dual-top parts is preserved by the map
$F_P(-\otimes T)$.  In particular, the relations arising from these parts do not impose any additional relation on the dual-bottom quotient.

We first consider the relation \eqref{eq: lasagna 3} with the basepoint moving maps $F_{m_i}$. If $v$ is a class in the dual bottom block then similar to the proof of  Lemma \ref{lem: basepoint moving} the only nontrivial terms in $F_{m_i}(v)-v$ could be in the dual top diagonal block and vanish in the limit. 

Next, we consider the equivalence relations \eqref{eq: lasagna 1} and \eqref{eq: lasagna 2}. The action of the braid group on the dual bottom block factors through $S_n$ (modulo dual top block) and as $S_n$ representation it has summands $V_{n,i}^{\vee}\simeq V_{n,n-1-i}$  in level $n-i$ and Maslov grading
$
Q_\alpha(\jj)+i+1-n,
$
so the normalized Maslov grading equals
$
Q_\alpha(\jj)+i+1.
$
Note that $0\le i\le \min(j-1,n-j)$, so by increasing $n$ we can assume that $i$ is arbitrarily large. 
Lemma \ref{lem: Fp for dual bottom} shows that, up to the overall grading shift, the system of dual-bottom blocks is identical to the truncation of the cube system used in the computation of the $0$-framed unknot in
Section \ref{sec: unknot}.  %We may assume that
%$k_+>0$ and $k_->0$.  Choose a distinguished positively oriented
%strand, and use the basis
%\[
%\xi_2,\ldots,\xi_{k_+},
%\eta_1,\ldots,\eta_{k_-}
%\]
We use the basis of monomials $\xi_{A_+}\eta_{A_-}$ with $|A_+|+|A_-|=n-i-1$
as in Section \ref{sec: unknot}.  
Modulo the action of $S_{k_+}\times S_{k_-},$ a monomial is represented by a class $g_n(a,b)$, where $a=|A_+|, b=|A_-|$   and $a+b=n-i-1$. These classes satisfy
\eqref{eq: equivalence}.
The two maps in Lemma \ref{lem: Fp for dual bottom} become
\begin{align}
F_P\bigl(g_n(a,b)\otimes B\bigr)
&=
g_{n+2}(a+1,b+1),
\label{eq:negative-g-B}\\
F_P\bigl(g_n(a,b)\otimes T\bigr)
&=
g_{n+2}(a,b+1)
+
g_{n+2}(a+1,b).
\label{eq:negative-g-T}
\end{align}
Notice that \eqref{eq:negative-g-B} preserves $i$, while
\eqref{eq:negative-g-T} increases $i$ by one.

The braid relations, together with $F_P(g_n(a,b)\otimes T)=0$ and $g_n(a,b)\sim G_B(g_n(a,b)),$ are therefore exactly the relations appearing in the proof of
Theorem \ref{thm: zero framed unknot}. Hence the proof of Theorem \ref{thm: zero framed unknot} applies verbatim, and nontrivial homology classes in cabled Floer homology correspond to $g(a,b)$ where $n-i-1$ is even or $n-i-1=n-1$.
For $n$ even this implies $i\in \{0,1,3,\ldots\}$ and for $n$ odd we get $i\in \{0,2,4,\ldots\}$.
\end{proof}

\section{1-framed unknot}
\label{sec: 1 framed unknot}

\subsection{Cabled link Floer homology}

To formulate our main result about the 1-framed unknot, we recall the notation $\jj=\frac{n+1}{2}-j$ from Remark \ref{rem: jj}. The map $F_P$ increases $n$ by $2$ and $j$ by $1$, so it does not change $\jj.$
In Appendix \ref{sec: appendix A}, we give a conjectural algebraic model for the complex $\CFL(T(n,n))$, see Conjecture \ref{conj: algebraic complex}. Assuming this conjecture, we can compute the Floer lasagna module of $\mathbb{CP}^2$ as follows.

\begin{theorem}
\label{thm: 1 framed unknot}
1) The cabled Heegaard Floer homology of the 1-framed unknot vanishes in off-diagonal and bottom diagonal blocks.

2) Assuming Conjecture \ref{conj: algebraic complex}, the cabled Heegaard Floer homology of the 1-framed unknot  does not vanish in the top diagonal block, and is described as follows:
\begin{enumerate}
\item[(a)]  For $\alpha$ odd and $\jj\in \Z$ the space $\FL(\mathbb{CP}^2,\emptyset;\alpha)$ has generators in Alexander grading $\jj\alpha$ and normalized Maslov gradings 
$$
-\left(\frac{\alpha+1}{2}-\jj\right)\left(\frac{\alpha-1}{2}-\jj\right)+2+\{0,2,4,\ldots\}
$$
\item[(b)] For $\alpha$ even and $\jj\in \Z+\frac{1}{2}$ the space $\FL(\mathbb{CP}^2,\emptyset;\alpha)$ has generators in Alexander grading $\jj\alpha$ and normalized Maslov gradings  
$$
-\left(\frac{\alpha+1}{2}-\jj\right)\left(\frac{\alpha-1}{2}-\jj\right)+2+\{0,1,3,\ldots\}
$$
\end{enumerate}
Here $\alpha=k_{+}-k_{-}$ corresponds to the grading by $H_2(W_1(O))=H_2(\mathbb{CP}^2)$.
\end{theorem}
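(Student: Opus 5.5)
Part (1) is unconditional and is the mirror image of Lemma \ref{lem:negative-nonbottom-vanishes}; part (2) mirrors the proof of Theorem \ref{thm: unknot negative intro}, with the roles of the bottom and top blocks exchanged.

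For part (1) I would study the iterates of $G_B:=F_P(-\otimes B)$ on $\HFLhat(O_1(k_+,k_-))$.

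\emph{Off-diagonal part.} By Lemma \ref{lem: Fp from off-diagonal}, $G_B$ vanishes there.

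\emph{Bottom diagonal block.} By Proposition \ref{prop:level-preserving}, $G_B$ raises level by at most $2$. Lemma \ref{lem:h-PP-1}(3) gives $V_{n+1}f_P\simeq h\circ f_{P_2}$. From this I would show that on the bottom block $G_B$ has the form $h\circ\Phi_2$, up to $T$-terms, where:
\begin{itemize}
\item $\Phi_2$ is level-preserving and given by Proposition \ref{prop: Phi1 full homology};
\item $h$ raises level by exactly $1$.
\end{itemize}
Alternatively I would argue purely by gradings. A bottom class at level $i$ with Alexander grading $A(X'_j)$ has Maslov grading $-(j-k_-)(j-k_-1)-i$. Its image has Maslov grading two lower, sits at $A(X'_{j+1,n+2})$, and must land at level $i+2$ of the bottom block, or in the top block if the Maslov gradings match. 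Iterating, the level grows like $2r$ while the bottom block at $(j+r,n+2r)$ only has height $m_{j+r,n+2r}=m_{j,n}+r$. So after finitely many steps the image must exit the bottom block. It cannot enter the top block compatibly with Maslov grading, except in the boundary case covered by Corollary \ref{cor: Maslov separation}, and it cannot enter the off-diagonal part. Hence $G_B^r(x)=0$, and since $x\sim G_B^r(x)$ by \eqref{eq: lasagna 2}, these classes die.

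For part (2) I would compute on the top diagonal block, which by Lemma \ref{lem: Sn action diagonal blocks} carries summands $V_{n,n-1-i}$ at level $n-i$. The normalized Maslov grading is
$$
-(j-k_-)(j-k_--1)-n+2+i+n=-Q_\alpha(\jj)+2+i
$$
by Remark \ref{rem: jj}. This matches the claimed gradings $-Q_\alpha(\jj)+2+i$. The key step is to compute $F_P(-\otimes T)$ and $F_P(-\otimes B)$ on the top blocks. For this I would use Conjecture \ref{conj: algebraic complex}, which gives an explicit model of $\CFL(T(n,n))$ together with:
\begin{itemize}
\item the homotopies $h_{1,a}$ on the top block, analogous to Lemma \ref{lem: unlink action bottom diagonal};
\item the map $f_P$, determined via Lemma \ref{lem:h-PP-1}(2),(3) from $\Phi_1,\Phi_2$ and $h$.
\end{itemize}
The target formulas are $F_P(D_A\otimes B)=D_A$ and $F_P(D_A\otimes T)=D_{A\cup\{n+1\}}+D_{A\cup\{n+2\}}$, for a basis $D_A$ of the top block indexed like $\xi_{\mathcal P\setminus A_+}\eta_{\mathcal N\setminus A_-}$. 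These mirror Lemma \ref{lem: Fp for dual bottom}.

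Once these formulas hold, the cube relations become identical to those of Section \ref{sec: unknot}. The proof of Theorem \ref{thm: zero framed unknot} then goes through verbatim (with $a+b=n-1-i$). It produces survivors exactly for $i$ even, together with the extra class $i=n-1$, which yields $i\in\{0,2,4,\ldots\}$ or $\{0,1,3,\ldots\}$ according to parity.

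Before this, I must check two things:
\begin{itemize}
\item Basepoint moving maps act trivially modulo vanishing terms. By Lemma \ref{lem: basepoint moving}, $F_{m_i}-1$ lowers level by $2$, so it maps top to bottom, and the bottom classes are zero by part (1).
\item Braid actions are level-preserving modulo bottom classes.
\end{itemize}

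The main obstacle is the explicit computation of $F_P$ on the top block. Unlike in the negative case, no duality with the well-understood bottom block applies directly, and $h$ genuinely mixes levels. This is exactly where Conjecture \ref{conj: algebraic complex} is needed.
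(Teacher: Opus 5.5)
Your plan is the paper's proof. Off-diagonal classes die by Lemma \ref{lem: Fp from off-diagonal}. Bottom classes die because each application of $G_B=F_P(-\otimes B)$ raises their level by exactly $2$ while the bottom block grows by only one level. The top block is reduced to the truncated $0$-framed cube via Conjecture \ref{conj: algebraic complex} and $U_{n+1}f_P(-\otimes T)\simeq h\circ\varphi_1$.

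\textbf{The gap: the lowest level of the top block.} You assert the cube formulas on the entire top block, but the model gives them only for levels $n-i$ with $i<\min(j-2,n-j-1)$ (Theorem \ref{thm: Fp top diagonal}). At the lowest level $M_j=\max(n-j+2,j+1)$ two things go wrong. First, $\varphi_1$ acquires the extra terms \eqref{eq: def phi1 D exception}. Second, by Lemma \ref{lem: h from off diagonal to top diagonal}, the homotopy $h$ can push off-diagonal terms into the top block only at its lowest level. That is exactly where $F_P(-\otimes T)$ sends a lowest-level class, since $M_{j+1,n+2}=M_{j,n}+1$. So on these classes the computation of $F_P$ need not be the cube one.

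The missing observation, which the paper uses, is a grading fact: $G_B$ raises the level of a top class by exactly $2$. Hence, via $v\sim G_B(v)$ and modulo classes killed in part (1), every lowest-level class is equivalent to one at a non-lowest level, where the formulas hold. Relatedly, the model only pins down $F_P(-\otimes B)$ modulo $\Imm F_P(-\otimes T)$ (compare Remark \ref{rem: FP well defined}). That is all the lasagna quotient needs, so do not aim for $F_P(D_A\otimes B)=D_A$ on the nose.

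\textbf{Two smaller slips.}

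First, the survivor count. By Lemma \ref{lem: Vni}(b), the summand $V_{n,n-1-i}$ at level $n-i$ has nonzero coinvariants iff $n-1-i$ is even or $i=0$. In cube terms this is $a+b$ even, plus the top class $a+b=n-1$. Your ``$i$ even, together with the extra class $i=n-1$'' swaps $i$ and $a+b$. Read literally, it gives the wrong set for $n$ even, although your final sets $\{0,2,4,\ldots\}$ and $\{0,1,3,\ldots\}$ are correct.

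Second, drop your first route for the bottom block. Lemma \ref{lem:h-PP-1}(3), precomposed with the birth map, gives $V_{n+1}f_P\circ f_B\simeq f_{H_\gamma}\circ\varphi_2$. That says nothing about $G_B$ on hat homology. Moreover, a map of the form $h\circ\Phi_2$ would raise the level by at most one, contradicting your own (correct) grading computation.

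Two refinements to the grading route:
\begin{enumerate}
\item The top-block alternative is excluded outright. A top class at $A(X'_{j+1,n+2})$ with the required Maslov grading sits at level $i+4$, beyond the bound of Proposition \ref{prop:level-preserving}.
\item You need not claim the image avoids the off-diagonal part. Gradings do permit off-diagonal components, e.g.\ parameters $(j+1,s=2,c=1)$ when $i=j-2$. These are simply killed by the next application of $G_B$.
\end{enumerate}
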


\begin{proof}
We consider separately the off-diagonal terms, the top diagonal block and the bottom diagonal block. Recall that we have the equivalence relation $v\sim F_P(v\otimes B)$.

1) The off-diagonal terms vanish after applying $F_P(-\otimes B)$ by Lemma \ref{lem: Fp from off-diagonal}. 

2) For the bottom diagonal block, we argue similarly to Lemma \ref{lem:negative-nonbottom-vanishes}. Note that the map $F_P(-\otimes B)$ increases $j$ by 1 and the level at most by 2.  Similarly to Lemma \ref{lem:negative-nonbottom-vanishes}, we can prove that it increases the level exactly by 2 on the bottom diagonal block, and can be presented similarly to Figure \ref{fig: pants}. If we start with a class $v$ at level $\ell$ and apply $F_P(-\otimes B)$ $r$ times then we get to level $\ell+2r$ and Alexander grading  $A(X'_{j+r,n+2r})$, but for $r\gg 0$ we have $\ell+2r>j+r>\min(j+r-1,n-j+r)$, and such a generator vanishes. Therefore after applying $F_P(-\otimes B)$ to any generator in the bottom block sufficiently many times we get zero. 
 
3) For the top diagonal block, we first consider the relation \eqref{eq: lasagna 3} with the basepoint moving maps $F_{m_a}$. If $v$ is a class in the top diagonal block then by Lemma \ref{lem: basepoint moving} the only nontrivial terms in $F_{m_a}(v)-v$ could be in the bottom diagonal block and vanish in the limit by the above.

Next, we consider \eqref{eq: lasagna 1} and \eqref{eq: lasagna 2}.
By Theorem \ref{thm: Fp top diagonal} for each $\jj$ the maps $F_P(-\otimes T)$ and $F_P(-\otimes B)$ coincide with the respective maps for the $0$-framed unknot (modulo the bottom diagonal block),  except possibly for the lowest level $n-\min(j-2,n-j-1)$. 
Note that Theorem \ref{thm: Fp top diagonal} only describes $F_{P}(-\otimes B)$ modulo the image of $F_{P}(-\otimes T)$, but the equivalence relation \eqref{eq: lasagna 2} allows one to take quotient by the  image of $F_{P}(-\otimes T)$.

Here we identify the level $n-i$ of the top diagonal block of $T(n,n)$ with the level $n-i-1$ for the $n$-component unlink and $i<\min(j-2,n-j-1)$. As we increase $n$ to $n+2$ and $j$ to $j+1$, the bound $\min(j-2,n-j-1)$ increases by 1 and hence in the limit $i$ could be arbitrary large. Recall that by Lemma \ref{lem: Sn action diagonal blocks} the $S_n$-representation at level $n-i$ is isomorphic to $V_{n,n-i-1}$ and hence by part (b) of Lemma \ref{lem: Vni} it has nontrivial (and one-dimensional) coinvariants for $n-i-1$ even or $i=0$.

By Theorem \ref{thm: zero framed unknot} we get exactly one generator in $\cHFL$ for $n-i-1$ even or $i=0$.
For $n$ odd (equivalently, $\alpha$ odd) this implies $i=0,2,4,\ldots$ while for $n$ even we get $i=0,1,3,\ldots$. 

By Remark \ref{rem: jj} the normalized Maslov grading of such generators equals 
$$
-\left(\frac{\alpha+1}{2}-\jj\right)\left(\frac{\alpha-1}{2}-\jj\right)+2+i.
$$
After applying $F_P(-\otimes B)$ to the lowest level it increases by 2 and becomes
$$
\max(n-j+2,j+1)+2>\max(n-j+2,j+1)+1=\max((n+2)-(j+1)+2,(j+1)+1).
$$
Therefore the new level is no longer lowest and the above arguments apply. This completes the proof.
\end{proof}

\section{More general L-space knots}
\label{sec: L space knots}

In this section we generalize these results to other L-space knots. Let $K$ be an L-space knot with framing $m>\max\{0, 2g(K)-1\}$. Then by \cite{GHom} for all $n$ the cable $K_{n,mn}$ is an L-space link. We furthermore assume that $K$ is a plumbed knot. Then $K_{n,mn}$ is a plumbed L-space link and the results of \cite{BLZ} apply to it. In the next subsections we explain how the above computations for $T(n,n)$ generalize to this case.

\subsection{Full homology}

Each component $L_i$ of $K_{n,mn}$ is a copy of $K$, and each pair of components has linking number $m$, so that $\ell_i=m(n-1)$. 
We denote 
\begin{equation}
\label{eq: def gn}
g_n(K)=\frac{m(n-1)}{2}+g(K)+1.
\end{equation}
The generators of $\HFL(K_{n,mn})$ are given by $Y_j, 1\le j\le 2g_n(K)-1$ with the Alexander grading
$$
A(Y_j)=\left(g_n(K)-j,\ldots,g_n(K)-j\right).
$$ 
We will often write 
$$
j=qm+r,\ 1\le r\le m.
$$
Note that $1\le j\le 2g_n(K)-1=m(n-1)+2g(K)+1$, so for $m>2g(K)$ we have $0\le q\le n-1$ while for $m=2g(K)$ there is a unique $j=2g_n(K)-1$ with $q=n$.

\begin{example}
For $n=1$ we get $g_1(K)=g(K)+1$, so in our normalization we have $1\le j\le 2g(K)+1$. The corresponding Alexander gradings are equal to $g(K),\ldots,-g(K)$.
\end{example}

\begin{example}
For $m=1$ and $K=O$ the unknot we have $g_n(O)=\frac{n+1}{2}$ and $1\le j\le n$, as before.
\end{example}

To describe the Maslov grading of $Y_j$, we need to introduce some notations. Let $\Delta_K(t)$ denote the Alexander polynomial of $K$, with the usual normalization such that it is symmetric around the origin. Since $K$ is an L-space knot, by \cite{OSlens} we get that the lowest grading term in $\Delta_K(t)$ equals $t^{-g(K)}$ and the highest grading term in $\Delta_K(t)$ equals $t^{g(K)}$. We define 
$$
\widetilde{H}(t)=\sum_{j}\widetilde{h}(j)t^j=\frac{t^{g(K)+1}\Delta_K(t)}{(1-t)^2}=\frac{t^{g(K)+1}}{1-t}\cdot \frac{\Delta_K(t)}{(1-t)}
$$
and 
$$
H_{n}(t)=\sum_{j}h_n(j)t^j=t\widetilde{H}(t)\frac{1-t^{mn}}{1-t^m}=t\widetilde{H}(t)\left(1+t^m+\ldots+t^{m(n-1)}\right).
$$
Equivalently, we have 
$$h_n(j)=\widetilde{h}(j-1)+\widetilde{h}(j-1-m)+\ldots+\widetilde{h}(j-1-m(n-1)).
$$

We summarize some properties of $\widetilde{h}(j)$ and $h_n(j)$ as follows.
\begin{lemma}
\label{lem: hn properties}
Assume $m>2g(K)-1$. Then the following hold:

a) We have $\widetilde{h}(j)=0$ for $j\le 0$ and $\widetilde{h}(j)=j-g(K)$ for $j\ge 2g(K)$.  Furthermore,
\begin{equation}
\label{eq: h tilde shift 0 or 1}
0\le \widetilde{h}(k+1)-\widetilde{h}(k)\le 1,
\end{equation}
so in particular $\widetilde{h}(k)\ge 0$ for $k\ge 0.$

b) The coefficients $\widetilde{h}(j)$ are symmetric: 
$$
\widetilde{h}(2g(K)-j)=\widetilde{h}(j)-j+g(K).
$$

c) Suppose $j=mq+r, 1
\le r\le m$, and $q\le n-1$. Then
$$
h_n(j)=m\frac{q(q+1)}{2}+q(r-g(K)-1)+\widetilde{h}(r-1).
$$

d) We have  %\EG{check if we need $-1$}
$$
h_n(2g_n(K)-j)=h_n(j)+n\left(g_n(K)-j\right).
$$
\end{lemma}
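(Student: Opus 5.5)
The plan is to work directly with the generating functions. The only topological input is the Ozsv\'ath--Szab\'o description of the Alexander polynomial of an L-space knot \cite{OSlens}: we have $\Delta_K(t)=\sum_{k=0}^{2\ell}(-1)^k t^{n_k}$ with $-g(K)=n_0<n_1<\cdots<n_{2\ell}=g(K)$, and $\Delta_K$ is symmetric. Write $g=g(K)$. Expanding $\Delta_K(t)/(1-t)=\sum_i c_it^i$ as a power series in $t$, I will first show the following staircase property:
\begin{itemize}
\item $c_i\in\{0,1\}$ for all $i$;
\item $c_i=0$ for $i<-g$ and $c_i=1$ for $i\ge g$;
\item $c_i+c_{-1-i}=1$ for all $i$.
\end{itemize}
The first two points come from the partial sums of an alternating sequence of $\pm1$ starting with $+1$. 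The third follows from $\Delta_K(t^{-1})=\Delta_K(t)$, which gives $\Delta_K(t)/(1-t)+t^{-1}\Delta_K(t^{-1})/(1-t^{-1})\cdot(\ldots)$ complementary. Concretely, $\Delta_K(t)/(1-t)+\Delta_K(t)/(1-t^{-1})\cdot t^{-1}$ equals the Laurent series of $\Delta_K(t)/(1-t)$ minus its ``negative'' expansion, which is $\Delta_K(1)\sum_i t^i=\sum_i t^i$. I regard this step as the main obstacle, mostly because of bookkeeping of expansions. A fallback is to verify the complementarity directly from the symmetry $n_k=-n_{2\ell-k}$.

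Multiplying by $t^{g+1}/(1-t)$ takes partial sums, so
$$\widetilde h(j)=\sum_{i=-g}^{j-g-1}c_i.$$
This gives most of (a) immediately. The sum is empty for $j\le 0$, and consecutive differences equal $c_{j-g}\in\{0,1\}$, which is \eqref{eq: h tilde shift 0 or 1}. Pairing $i\leftrightarrow -1-i$ in $[-g,g-1]$ gives $\widetilde h(2g)=g$. Since $c_i=1$ for $i\ge g$, we then get $\widetilde h(j)=j-g$ for $j\ge 2g$.

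For (b), I will prove the stronger statement that
$$\widetilde h(2g-k)=\widetilde h(k)-k+g \quad\text{for all integers } k.$$
For $0\le k\le 2g$, subtract the two partial sums and use $c_i+c_{-1-i}=1$ on the overlap. For $k<0$ or $k>2g$, the identity follows from (a), since one side vanishes and the other is linear.

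For (c), write
$$h_n(j)=\sum_{k=0}^{n-1}\widetilde h(j-1-km) \quad\text{with } j=qm+r.$$
\begin{itemize}
\item For $k>q$, the argument is $\le r-1-m\le -1$, so the term vanishes by (a).
\item For $k=q$, which is included because $q\le n-1$, the term is $\widetilde h(r-1)$.
\item For $k\le q-1$, the argument is $\ge m+r-1\ge m\ge 2g$. This is where $m>2g-1$ is used. Each such term therefore equals $j-1-km-g$.
\end{itemize}
Summing $\sum_{k=0}^{q-1}(qm+r-1-g-km)=mq^2+q(r-1-g)-m\frac{q(q-1)}{2}$ gives the formula.

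For (d), substitute $2g_n(K)-j=m(n-1)+2g+2-j$ and reindex $k'=n-1-k$. Each term becomes
$$\widetilde h\bigl(2g-(j-1-k'm)\bigr),$$
to which the all-integers version of (b) applies. The result is
$$h_n(2g_n-j)=h_n(j)-\sum_{k'=0}^{n-1}(j-1-k'm-g)=h_n(j)+n\Bigl(\tfrac{m(n-1)}{2}+g+1-j\Bigr)=h_n(j)+n(g_n(K)-j).$$
The essential point is proving (b) for all $k$, not just $0\le k\le 2g$, so that no case analysis on $q$ is needed in (d).
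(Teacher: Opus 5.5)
Your proof is correct and follows the same route as the paper. Parts (a) and (b) come from the Ozsv\'ath--Szab\'o form of $\Delta_K$, which the paper simply calls clear from the definitions; there your direct check of $c_i+c_{-1-i}=1$ via $n_k=-n_{2\ell-k}$ is cleaner than your generating-function sketch. Part (c) uses the same split of the terms $\widetilde h(j-1-km)$ into $k<q$, $k=q$, $k>q$, and (d) follows by reindexing and applying (b), exactly as the paper's one-line appeal to (b) and \cite[Lemma 4.4]{GHom} tacitly requires, including your point that (b) must hold for all integers.
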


\begin{proof}
Equation \eqref{eq: h tilde shift 0 or 1} follows from the main result of \cite{OSlens}, the rest of
parts (a) and (b) is clear from definitions. Let us prove (c).
For $i<q$ we have $j-1-mi\ge r+m-1\ge 2g(K)$, so 
$
\widetilde{h}(j-1-mi)=j-1-mi-g(K).
$
Similarly, for $i>q$ we have $j-mi-1<0$ and $\widetilde{h}(j-mi-1)=0$. Therefore
$$
h_n(j)=\widetilde{h}(j-1)+\widetilde{h}(j-m-1)+\ldots+\widetilde{h}(j-m(n-1)-1)=$$
$$
(j-g(K)-1)+\ldots+(j-(q-1)m-g(K)-1)+\widetilde{h}(r-1)=
$$
$$
(r+qm-g(K)-1)+\ldots+(r+m-g(K)-1)+\widetilde{h}(r-1)=
$$
$$
m\frac{q(q+1)}{2}+q(r-g(K)-1)+\widetilde{h}(r-1).
$$
Part (d) follows from (b), see \cite[Lemma 4.4]{GHom}.
\end{proof}

\begin{lemma}
\label{lem: hn increases}
Suppose that $j\le j'$, then we have 
$$
0\le h_n(j')-h_n(j)\le n(j'-j).
$$
\end{lemma}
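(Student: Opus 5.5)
We prove the statement by reducing it to single steps $j'=j+1$ and then summing. For the one-step estimate we use the explicit formula
$$
h_n(j)=\widetilde{h}(j-1)+\widetilde{h}(j-1-m)+\ldots+\widetilde{h}(j-1-m(n-1)),
$$
which expresses $h_n(j)$ as a sum of exactly $n$ values of $\widetilde{h}$. This formula holds for all integers $j$, not only in the range covered by Lemma \ref{lem: hn properties}(c), because it is just the coefficient extraction from $H_n(t)=t\widetilde{H}(t)(1+t^m+\ldots+t^{m(n-1)})$.

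Taking the difference for consecutive arguments gives
$$
h_n(j+1)-h_n(j)=\sum_{i=0}^{n-1}\left(\widetilde{h}(j-mi)-\widetilde{h}(j-1-mi)\right).
$$
Each summand has the form $\widetilde{h}(k+1)-\widetilde{h}(k)$ with $k=j-1-mi$. We then apply \eqref{eq: h tilde shift 0 or 1} to each summand; it lies in $\{0,1\}$.

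The one place needing care is that \eqref{eq: h tilde shift 0 or 1} must hold for every integer $k$, including $k<0$. For $k\le -1$ both $\widetilde{h}(k)$ and $\widetilde{h}(k+1)$ vanish, by part (a), since $k+1\le 0$. For $k\ge 2g(K)$ the difference equals $1$, again by part (a). In the intermediate range the bound is the cited Ozsváth–Szabó fact: the coefficients of $\Delta_K(t)/(1-t)$ are $0$ or $1$ for an L-space knot, and $\widetilde{h}$ is their partial sum shifted by $t^{g(K)+1}/(1-t)$. So the bound holds for all $k$, and hence
$$
0\le h_n(j+1)-h_n(j)\le n.
$$

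Finally, for $j\le j'$ we telescope:
$$
h_n(j')-h_n(j)=\sum_{k=j}^{j'-1}\bigl(h_n(k+1)-h_n(k)\bigr).
$$
This sum has $j'-j$ terms, each in $[0,n]$, which yields $0\le h_n(j')-h_n(j)\le n(j'-j)$. The only potential obstacle is confirming that \eqref{eq: h tilde shift 0 or 1} applies outside $k\ge 0$, which the boundary cases above settle. No hypothesis $q\le n-1$ is needed, since we never use part (c).
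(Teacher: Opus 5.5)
Your proof is correct and is the paper's argument: reduce to $j'=j+1$, write $h_n(j+1)-h_n(j)$ as a sum of $n$ increments $\widetilde{h}(k+1)-\widetilde{h}(k)$, and bound each using $0\le \widetilde{h}(k+1)-\widetilde{h}(k)\le 1$. You also checked that this bound holds for every integer $k$, including negative ones, which the paper leaves implicit, and you handled that check correctly.
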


\begin{proof}
It is sufficient to prove this for $j'=j+1$, in which case it follows from the inequality \eqref{eq: h tilde shift 0 or 1}. 
\end{proof}

\begin{definition}
We define 
$$
\beta_n(j)=h_n(j+1)-h_n(j)-1.
$$ 
\end{definition}

By Lemma \ref{lem: hn increases} we get $-1\le \beta_n(j)\le n-1$.
The symmetry in Lemma \ref{lem: hn properties}(d) translates to
\begin{equation}
\label{eq: beta symmetry}
\beta_n(j)+\beta_n(2g_n(K)-j-1)=n-2.
\end{equation}
See also \cite[Lemma 4.23]{GHom}.

%We give some examples of the above phenomena.

\begin{example}
Suppose that $K=O$ is the unknot. 
For $m=1$ we get $r=1,q=j-1$ and 
$$
h_n(j)=j(j-1)/2,\ \beta_n(j)=j-1.
$$
For $m>1$ we get 
$$
h_n(j)=m\frac{q(q+1)}{2}+(q+1)(r-1),\ \beta_n(j)=q.
$$
compare with \cite[Proof of Theorem 4.1]{AGL2}.
\end{example}
 
\begin{example}
\label{ex: trefoil h}
If $K=T(2,3)$ is a positive trefoil knot then  
$$
\widetilde{h}(j)=\begin{cases}
0 & j\le 0\\
1 & j=1\\
j-1 & j\ge 2.
\end{cases}
$$
Similarly, for $m\ge 2$ we get
$$
h_n(j)=m\frac{q(q+1)}{2}+q(r-2)+\begin{cases}
0 & r=1\\
1 & r=2\\
r-2 & r\ge 3.
\end{cases}
$$
For $m>2$ this gives sequences
$$
h_n:\ \underbrace{0,1,1,\ldots,m-2}_{m},\underbrace{m-1,m+1,m+2,m+4,\ldots,3m-4}_{m},3m-2\ldots
$$
and 
$$
\beta_n:\ 
\underbrace{0,-1,0,\ldots,0}_{m},\underbrace{1,0,1,\ldots,1}_{m},\ldots
$$
For $m=2$ we get sequences
$$
h_n:\ \underbrace{0,1}_{2},\underbrace{1,3}_{2},4\ldots\ \mathrm{and}\ 
\beta_n:\ 
\underbrace{0,-1}_{2},\underbrace{1,0}_{2},\ldots
$$
\end{example}

\begin{proposition}
The full homology $\HFL(K_{n,mn})$ is generated by $Y_j$ with Alexander gradings as above, and Maslov gradings 
\begin{equation}
\label{eq: degree Yj}
M(Y_j)=-2h_n(j).
\end{equation}
The relations between $Y_i$ are spanned by the relations 
\begin{equation}
U_AY_j=V_{\overline{A}}Y_{j+1},\ U_iV_iY_j=U_kV_kY_j
\end{equation}
for all subsets $A\subset \{1,\ldots,n\}, |A|=\beta_n(j)+1$.
\end{proposition}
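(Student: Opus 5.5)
We will derive the proposition from the L-space link structure, exactly as for $T(n,n)$. Since $K$ is an L-space knot with $m>2g(K)-1$, the cable $K_{n,mn}$ is an L-space link \cite{GHom}. So for every $s\in\HH_L$ we have $H_*(\Am(L,s))\simeq\F[\UU][-2h(s)]$, where $h$ is the $h$-function of $K_{n,mn}$. The $R$-module structure of $\HFL$ is then determined by $h$: each $U_i$ and $V_i$ maps the generator $z_s$ to a $\UU$-power multiple of $z_{s\mp e_i}$, with the exponent fixed by the values of $h$. The plan therefore has three steps.

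First, we recall the formula for $h$ on $K_{n,mn}$ from \cite[Theorem 4.3]{GHom}, which computes $h$ via \cite{GN} from Alexander polynomials of sublinks. We restrict it to diagonal gradings $s=(g_n(K)-j,\ldots,g_n(K)-j)$ and check that it equals $h_n(j)$ as defined by the generating function $H_n(t)$. Heuristically, $H_n$ is the product of the knot contribution $\widetilde H(t)$ with the geometric series $1+t^m+\cdots+t^{m(n-1)}$ coming from the linking. Lemma \ref{lem: hn properties}(c) gives the closed form needed for this comparison. The grading $M(Y_j)=-2h_n(j)$ then follows, since $z_s$ sits in Maslov grading $-2h(s)$.

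Second, we show the $Y_j$ generate. For an arbitrary $s$ we use the standard property of $h$-functions: $h(s-e_i)-h(s)\in\{0,1\}$. This lets us write $z_s$ as a monomial in the $V_i$ (or the $U_i$) applied to some diagonal $z_{s'}$. We choose $s'$ by moving all coordinates to the nearest diagonal value from above, and verify that no $\UU$-factor is forced along the way. For off-diagonal $s$ we need a formula like $h(s)=h_n(j)+(\text{number of coordinates equal to } g_n(K)-j-1)$, which is the analogue of the $T(n,n)$ computation and should drop out of the \cite{GHom} formula.

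Third, we verify the relations. $U_AY_j$ and $V_{\overline A}Y_{j+1}$ lie in the same Alexander grading exactly when the $h$-values match. In rank one, $\F[\UU]$, two homogeneous elements of equal Maslov grading coincide, so the relation holds precisely when $2h_n(j)+2|A|=2h_n(j+1)$, i.e. $|A|=h_n(j+1)-h_n(j)=\beta_n(j)+1$. The relation $U_iV_iY_j=U_kV_kY_j$ is automatic in the same way. For completeness of the relations, we compare each graded piece: in every Alexander grading the quotient of the free module on the $Y_j$ modulo these relations should be a single copy of $\F[\UU]$ shifted by $-2h(s)$. We would argue this by exhibiting a normal form, namely a minimal $j$ together with a monomial, for each grading.

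The main obstacle is the first step: extracting a usable closed formula for $h$ at all gradings of $K_{n,mn}$, not just diagonal ones, from \cite{GHom}, and confirming it is compatible with the step sizes $\beta_n(j)\in[-1,n-1]$. If that formula is available, the remaining steps are bookkeeping in rank-one $\F[\UU]$-modules.
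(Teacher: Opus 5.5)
Your overall framework is the same $h$-function framework behind the results the paper cites (\cite[Theorem 7.3]{BLZ}, \cite[Theorem 4.1]{AGL2}): you read off the $R$-module structure of $\HFL$ from the $h$-function via $U_iz_s\in\{z_{s-e_i},\UU z_{s-e_i}\}$ and $V_iz_s\in\{z_{s+e_i},\UU z_{s+e_i}\}$. Your Maslov check of the relations is also correct. One correction there: $U_AY_j$ and $V_{\overline{A}}Y_{j+1}$ lie in the same Alexander grading $A(Y_j)-e_A$ for \emph{every} $A$; only their Maslov gradings single out $|A|=\beta_n(j)+1$.

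The genuine gap is in your generation step. Your formula $h(s)=h_n(j)+\#\{i:\ s_i=g_n(K)-j-1\}$ is false. Monotonicity of $h$ gives $h(A(Y_j)-e_A)\le h(A(Y_{j+1}))=h_n(j)+\beta_n(j)+1$, so the formula fails whenever $|A|>\beta_n(j)+1$. The correct near-diagonal statement is $h(A(Y_j)-e_A)=h_n(j)+\min(|A|,\beta_n(j)+1)$. As a result, your recipe ``start at the diagonal point above and lower coordinates by $U_i$'s'' forces a factor of $\UU$.

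This already happens for $T(3,3)$ (the case $K=O$, $m=1$) at $s=(1,0,0)=A(Y_1)-e_{\{2,3\}}$. There $h(s)=1$, so $U_2U_3Y_1=\UU V_1Y_2$, and the generator $z_s=V_1Y_2$ is reached only from below. In general, $z_{A(Y_j)-e_A}$ equals $U_AY_j$ when $|A|\le\beta_n(j)+1$, but equals $V_{\overline{A}}Y_{j+1}$ when $|A|\ge\beta_n(j)+1$. Gradings whose coordinates spread over several diagonal values, or lie above $g_n(K)-1$ where no diagonal point is above, need mixed paths, with $V$'s on some coordinates and $U$'s on others.

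The same error affects your completeness argument. A normal form built from the minimal $j$ is in general a proper $\UU$-multiple of $z_s$. The cyclic generator of each graded piece must instead be the free representative described above.

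What is missing is the actual shape of $h$ on all of $\HH_L$ from \cite{GHom}. There $h(s)$ is a sum of copies of the knot $h$-function $h_K$, evaluated at the sorted coordinates of $s$ minus shifts spaced by $m$. Because $m\ge 2g(K)$, the increments $h_K(x-1)-h_K(x)$ along these arguments follow the pattern $1,\dots,1,\epsilon,0,\dots,0$, and this is exactly what produces the $\min$ above.

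A clean way to finish is as follows. First, $z_s$ is a minimal generator if and only if $h(s+e_i)=h(s)$ and $h(s-e_i)=h(s)+1$ for all $i$. Second, the monotone pattern shows these conditions force $s$ to be diagonal. Third, generation follows from graded Nakayama with respect to the grading $M-\sum_iA_i$. In that grading every $U_i$ and $V_i$ has degree $-1$, and $-2h(s)-\sum_is_i\le 0$ is bounded above.
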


\begin{proof}
The proof is similar to \cite[Theorem 7.3]{BLZ} and \cite[Theorem 4.1]{AGL2}.
\end{proof}

\subsection{Hat homology}

The hat homology of the cable link $K_{n,mn}$ is given by the following result.

\begin{theorem}\cite[Theorem 3]{GHom}
\label{thm: GH general}
Assume $m>2g(K)-1$. The homology $\HFLhat(K_{n,mn})$ has the following form:

a) In the Alexander grading $A(Y_j)$ for $\beta_n(j)+\beta_n(j-1)\le n-2$ the homology is isomorphic to 
$$
\bigoplus_{i=0}^{\beta_n(j)}\binom{n-1}{i}\F_{-2h_n(j)-i}\oplus \bigoplus_{i=0}^{\beta_n(j-1)}\binom{n-1}{i}\F_{-2h_n(j)-n+2+i}.
$$

b) In the Alexander grading $A(Y_j)$ for $\beta_n(j)+\beta_n(j-1)\ge n-2$ the homology is isomorphic to 
$$
\bigoplus_{i=0}^{n-2-\beta_n(j-1)}\binom{n-1}{i}\F_{-2h_n(j)-i}\oplus \bigoplus_{i=0}^{n-2-\beta_n(j)}\binom{n-1}{i}\F_{-2h_n(j)-n+2+i}.
$$

c) Suppose that $1\le j<2g_n(K)-1$, and the Alexander grading has $s$ coordinates equal to $A(Y_j)-1=A(Y_{j+1})$ and $n-s$ coordinates equal to $A(Y_j)$, for  $1\le s\le  n-1$. Then the  homology is isomorphic to 
$$
\binom{n-2}{\beta_n(j)}\F_{-2h_n(j)-\beta_n(j)-s}.
$$
The hat homology in all other Alexander gradings is zero.  
\end{theorem}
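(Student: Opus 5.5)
The plan is to follow the proof of Theorem \ref{thm: GH} from \cite{GHom} with the unknot data replaced by the $h$-function of $K_{n,mn}$. Since $K_{n,mn}$ is a plumbed L-space link, $\HFL(K_{n,mn})$ is determined by the generators $Y_j$ and the relations in the preceding Proposition. In Alexander grading $s$ we have $H_*(\Am(K_{n,mn},s))\simeq \F[\UU][-2h(s)]$. For symmetric $s$ near the diagonal, $h(s)$ is computed from $h_n(j)$ via the generators $Y_j$ and $Y_{j+1}$.

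The first step is to compute $\HFLminus$ with Lemma \ref{lem: spectral sequences}(a), which collapses at $E_2$. In the cube model of Remark \ref{rem: cube model ss} at $s=A(Y_j)$, the group $H_*(\Am(s-e_A))$ is generated by $U$-multiples of $Y_j$ until $|A|$ exceeds $\beta_n(j)+1$. At that point the relation $U_AY_j=V_{\overline A}Y_{j+1}$ makes the generator come from $Y_{j+1}$. So the cube differential is exactly the deformed cube complex $\partial^{(k)}$ with $k=\beta_n(j)+1$. Lemma \ref{lem: deformed cube} then gives $\bigoplus_{i=0}^{\beta_n(j)}V_{n,i}$, which has dimension $\sum_{i\le \beta_n(j)}\binom{n-1}{i}$ and Maslov gradings $-2h_n(j)-i$. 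Off the diagonal, when $s$ has $s$ coordinates equal to $A(Y_{j+1})$, the same computation with a mixed cube yields a single binomial $\binom{n-2}{\beta_n(j)}$. I would check this by reducing to a cube on the $n-1$ remaining coordinates after fixing one coordinate of each type, as in \cite[Theorem 4.25]{GHom}.

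The second step is the spectral sequence of Lemma \ref{lem: spectral sequences}(b), $\HFLminus\otimes\wedge(\nu)\Rightarrow\HFLhat$. At $A(Y_j)$ the $\nu$-cube pairs the diagonal piece at $j$ with the piece at $j-1$, shifted by $V$'s. The surviving classes are a bottom block, the kernel part truncated at $\beta_n(j)$, and a top block, the cokernel coming from $j-1$, of size $\sum_{i\le\beta_n(j-1)}\binom{n-1}{i}$ in gradings $-2h_n(j)-n+2+i$. The grading shift here uses $h_n(j)-h_n(j-1)=\beta_n(j-1)+1$.

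This truncation is valid only while the two blocks do not overlap, i.e.\ when $\beta_n(j)+\beta_n(j-1)\le n-2$, which is case (a). Case (b) I would deduce from (a) by the symmetry $j\leftrightarrow 2g_n(K)-j$ of Lemma \ref{lem: hn properties}(d) together with \eqref{eq: beta symmetry}. Under this symmetry $\beta_n(j)\mapsto n-2-\beta_n(j-1)$, which swaps the inequality and produces the stated bounds. The off-diagonal case (c) is handled similarly.

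I expect the main obstacle to be showing that all higher differentials in the second spectral sequence vanish beyond the predicted cancellations. I would try to prove this by the Maslov/level parity argument of \cite[Theorem 4.22]{GHom}, using $-1\le\beta_n\le n-1$ from Lemma \ref{lem: hn increases}. Failing that, I would verify it by a rank count against $\chi$, which is computable from the Alexander polynomial of the cable.
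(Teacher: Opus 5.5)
Your outline follows the route of \cite{GHom}, from which the paper quotes this theorem without reproving it. Two of your steps are sound.

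First, at $A(Y_j)$ one has $h(A(Y_j)-e_A)=h_n(j)+\min(|A|,\beta_n(j)+1)$. So the $\mu$-cube is $\partial^{(k)}$ with $k=\beta_n(j)+1$, and Lemma~\ref{lem: deformed cube} gives $\bigoplus_{i\le \beta_n(j)}V_{n,i}$ in Maslov gradings $-2h_n(j)-i$.

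Second, (b) does follow from (a) under $j\mapsto 2g_n(K)-j$. You also need the symmetry $\HFLhat_M(L,s)\simeq\HFLhat_{M-2|s|}(L,-s)$ together with Lemma~\ref{lem: hn properties}(d) to match the Maslov gradings.

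What is missing is everything after the diagonal $\HFLminus$ computation, and it starts with a false claim. The $\mu$-cube at an off-diagonal grading does \emph{not} produce $\binom{n-2}{\beta_n(j)}$. For $T(n,n)$, $j=2$, $s=1$, the $h$-function makes it the cone of $U\cdot\mathrm{id}$ between two deformed $(n-1)$-cubes with $k=1$. So $\HFLminus$ there is $\F_{-4}\oplus\F_{-5}$ (you can check this in Figure~\ref{fig: T33} for $n=3$), whereas (c) asserts $\binom{n-2}{1}\F_{-4}$ for $\HFLhat$.

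The off-diagonal hat groups only emerge from the $\nu$-cube, which mixes $\HFLminus$ at several gradings $v+e_C$. At $v=A(Y_j)-e_S$ the survivors sit in $\nu$-cube gradings $\max(1,s-\beta_n(j))\le c\le\min(s,n-1-\beta_n(j))$ and $\mu$-cube grading $\beta_n(j)+c-s$. For $c=s$ this is $\HFLminus(A(Y_j))\otimes\nu_S$, the top piece of the \emph{diagonal} block. The ranks are $\binom{s-1}{c-1}\binom{n-s-1}{n-1-\beta_n(j)-c}$, and they add up to $\binom{n-2}{\beta_n(j)}$ only via Vandermonde, cf.\ Lemma~\ref{lem: levels off diagonal L space}. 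So ``(c) is handled similarly'' has no content.

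The top block has the same problem. The $\nu$-differentials are multiplications by $U_i$, and the top block is not obtained by pairing the diagonal group at $j$ with the diagonal group at $j-1$. The classes of $\HFLminus(A(Y_{j-1}))\otimes\nu_{[n]}$ lie in Maslov gradings $-2h_n(j)-n+2+(2\beta_n(j-1)-t)$, $0\le t\le\beta_n(j-1)$. This is the wrong range once $\beta_n(j-1)\ge 1$: for $T(n,n)$, $j=4$, $n$ large, it gives $i\in\{2,3,4\}$ instead of $\{0,1,2\}$.

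The top block actually sits in $\nu$-cube grading $n-\beta_n(j-1)$ and $\mu$-cube grading $\beta_n(j-1)-i$ (the general form of $n-j+2$ and $j-2-i$ in the proof of Lemma~\ref{lem: levels}). So it is assembled from the off-diagonal $\HFLminus$ groups between $A(Y_{j-1})$ and $A(Y_j)$, exactly the groups your first step gets wrong. This is the content of \cite[Theorems 4.22 and 4.25]{GHom}, which your sketch defers to rather than supplies.

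Finally, your fallback of controlling higher differentials by comparing with $\chi$ cannot work. The stated answer already has total rank strictly larger than $|\chi|$ (e.g.\ $n+1$ versus at most $n-1$ for $T(n,n)$, $j=2$). So no page of the spectral sequence has rank $|\chi|$, and an Euler characteristic count can never force degeneration.
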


\begin{remark}
In case (a), if $\beta_n(j)=-1$ or $\beta_n(j-1)=-1$ then the corresponding summand vanishes. In case (b), if $\beta_n(j)=n-1$ or $\beta_n(j-1)=n-1$ then  the corresponding summand vanishes as well.

In particular, we can formally assume that $\beta_{n}(0)=-1$. Since $\beta_n(1)=0$,  for $j=1$ the first summand in Theorem \ref{thm: GH general}(a) is one-dimensional and the second summand vanishes.

By symmetry \eqref{eq: beta symmetry} we have $\beta_n(2g_n(K)-1)=n-1$ and $\beta_n(2g_n(K)-2)=n-2$, so for $j=2g_n(K)-1$ the first summand in Theorem \ref{thm: GH general}(b) is one-dimensional and the second summand vanishes. 
\end{remark}

\begin{remark}
Note that by \eqref{eq: beta symmetry} the cases (a) and (b) in Theorem \ref{thm: GH general} are exchanged by the involution
$
j\leftrightarrow 2g_n(K)-j.
$
\end{remark}

As before, we refer to
\begin{equation}
\label{eq: def mj L space}
\bigoplus_{i=0}^{m_j}\binom{n-1}{i}\F_{-2h_n(j)-i},\quad  m_j=\min(\beta_n(j),n-2-\beta_n(j-1))
\end{equation}
as the bottom diagonal block at $A(Y_j)$, and to 
\begin{equation}
\label{eq: def Mj L space}
\bigoplus_{i=0}^{n-M_j}\binom{n-1}{i}\F_{-2h_n(j)-n+2+i},\quad  
M_j=\max(n-\beta_n(j-1),\beta_n(j)+2)
\end{equation}
as to the top diagonal block at $A(Y_j)$. If $\beta_n(j)=-1$ or $\beta_n(j-1)=n-1$ then $m_j$ is not defined, and if $\beta_n(j)=n-1$ or $\beta_n(j-1)=-1$ then $M_{j}$ is not defined. The results of \cite{GHom} and the proofs of Lemma \ref{lem: levels} and Lemma \ref{lem: Sn action diagonal blocks} generalize to this case verbatim, and we get the following.

\begin{lemma}
a) Assume $m_j$ is defined, then the bottom diagonal block is supported in levels $0,\ldots,m_j.$ The corresponding $S_n$-representation at level $i$ is isomorphic to $V_{n,i}$.

b) Assume $M_j$ is defined, then the top diagonal block is supported in levels $M_j,\ldots,n.$ The corresponding $S_n$-representation at level $n-i$ is isomorphic to $V_{n,n-1-i}$.
\end{lemma}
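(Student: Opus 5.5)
The plan is to follow the proofs of Lemma \ref{lem: levels} and Lemma \ref{lem: Sn action diagonal blocks} essentially verbatim, replacing the $h$-function of $T(n,n)$ by $h_n$ and the thresholds $j-1$, $n-j$ by $\beta_n(j)$, $n-2-\beta_n(j-1)$. Since $K_{n,mn}$ is a plumbed L-space link, Theorem \ref{thm: resolution} applies. The level grading on $\HFLhat(K_{n,mn})$ is therefore the sum of the two cube gradings in the spectral sequences of Lemma \ref{lem: spectral sequences}, as in Remark \ref{rem: cube model ss}. By the symmetry $j\leftrightarrow 2g_n(K)-j$ (which exchanges cases (a) and (b) of Theorem \ref{thm: GH general} via \eqref{eq: beta symmetry} and commutes with $S_n$), we may assume we are in case (a), $\beta_n(j)+\beta_n(j-1)\le n-2$. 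Then $m_j=\beta_n(j)$ and $n-M_j=\beta_n(j-1)$.

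\textbf{Step 1: the first spectral sequence.} In Alexander grading $A(Y_j)$ the first spectral sequence $\HFL\Rightarrow\HFLminus$ is a cube whose vertices are the modules $\Am(L,A(Y_j)-e_A)\simeq\F[\UU]$, with degree shifts determined by $h$. The edge maps are $\UU$ or $1$ according to whether $h$ jumps. The $h$-function of $K_{n,mn}$ near $A(Y_j)$ is symmetric in the coordinates, by the computation in \cite{GHom}. Hence this cube is the deformed cube complex $\partial^{(k)}$ with $k=\beta_n(j)+1$, up to an overall shift. Lemma \ref{lem: deformed cube} then shows that the diagonal part of $\HFLminus$ at $A(Y_j)$ is $\bigoplus_{i\le\beta_n(j)}V_{n,i}$ in cube grading $i$.

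\textbf{Step 2: the second spectral sequence.} In case (a), the bottom block survives the second spectral sequence $\HFLminus\Rightarrow\HFLhat$ in cube grading $0$, so it sits in level $i$ and carries the representation $V_{n,i}$. This gives part (a). The top block arises as in \cite[Theorem 4.22]{GHom} from the diagonal part of $\HFLminus$ at the neighbouring Alexander grading, which is controlled by $\beta_n(j-1)$. As in Lemma \ref{lem: levels}(b), its summand indexed by $i$ has cube gradings adding up to $n-i$. The corresponding representation is a cokernel of the form computed in Example \ref{ex: second block rep}. Using \eqref{eq: V as cokernel} and Lemma \ref{lem: Vni}, this cokernel is identified with $V_{n,n-1-i}$, which gives part (b).

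\textbf{Step 3: equivariance.} Equivariance comes from the same argument as in Lemma \ref{lem: Sn action diagonal blocks}. A braid acts on $\Am(L,v)$ by sending $z_v$ to $z_{\sigma(v)}$, and the spectral sequences are built algebraically from these modules. Hence the associated graded action factors through $S_n$, and the identifications above are isomorphisms of representations.

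\textbf{Main obstacle.} The main difficulty is Step 2 for the top block. The $T(n,n)$ argument relied on an explicit identification of the cokernel, checked via \cite[Theorem 4.22]{GHom}. Here one must verify that, with general $\beta_n(j)$ and $\beta_n(j-1)$, the relevant maps in the second cube are still the standard inclusions $V_{n,i}\to C_{n,i}$ (or their complements). Only then is the cokernel $V_{n,n-1-i}$. I would check this by writing the second cube in terms of $h_n$ and using Lemma \ref{lem: hn increases} to see which edges are identities.
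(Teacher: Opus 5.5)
Your proposal follows the paper's route. The paper just asserts that the results of \cite{GHom} and the proofs of Lemmas~\ref{lem: levels} and~\ref{lem: Sn action diagonal blocks} carry over verbatim. That is exactly the two-spectral-sequence generalization you sketch: replace $j-1,\,n-j$ by $\beta_n(j),\,n-2-\beta_n(j-1)$ and use the symmetry $j\leftrightarrow 2g_n(K)-j$.

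One small point: in Step 1, symmetry of $h$ in the coordinates is not enough to identify the cube with $\partial^{(\beta_n(j)+1)}$. You need the explicit values of $h$ on $A(Y_j)-e_A$ from \cite{GHom}, showing that the $\UU$-jumps occur exactly at the first $\beta_n(j)+1$ steps.
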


Lemma \ref{lem: levels off diagonal} generalizes as follows.
 
\begin{lemma}
\label{lem: levels off diagonal L space} 
The off-diagonal homology for a given $j,s$ is supported in levels 
$$
\lvl(j,s,c)=\beta_n(j)-s+2c
$$
where $c$ runs over the interval
\begin{equation}
\label{eq: c j inequalities L space}
\max(1,s-\beta_n(j))\le c\le \min(s,n-1-\beta_n(j)).
\end{equation}
For a fixed $c$, the rank of the off-diagonal summand equals
$$
\binom{s-1}{s-c}\binom{n-s-1}{\beta_n(j)+c-s}=\binom{s-1}{c-1}\binom{n-s-1}{n-1-\beta_n(j)-c}.
$$
\end{lemma}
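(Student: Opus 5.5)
The plan is to follow the proof of Lemma \ref{lem: levels off diagonal} and re-read the proof of \cite[Theorem 4.25]{GHom} in the general setting of Theorem \ref{thm: GH general}. The level grading is the sum of the two cube gradings coming from the spectral sequences of Lemma \ref{lem: spectral sequences}, namely $\HFL\Rightarrow\HFLminus$ and $\HFLminus\Rightarrow\HFLhat$. So it suffices to locate each off-diagonal summand in both cube models of Remark \ref{rem: cube model ss} and add the two cube gradings.

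First, fix the Alexander grading $s'$ with $s$ coordinates equal to $A(Y_{j+1})$ and $n-s$ coordinates equal to $A(Y_j)$. Let $S$ be the set of the $s$ special coordinates. By the L-space property, $H_*(\Am(L,s'-e_A))\simeq\F[\UU]$, with generator determined by the $h$-function. In the first cube, the vertices $s'-e_A$ lie between the lattice points of $Y_j$ and $Y_{j+1}$. The $h$-function jumps according to whether $|A\cap \overline{S}|$ exceeds a threshold governed by the relation $U_AY_j=V_{\overline{A}}Y_{j+1}$ with $|A|=\beta_n(j)+1$. This makes the first cube a deformed cube complex as in Lemma \ref{lem: deformed cube}, now in the $\overline{S}$ directions with threshold $\beta_n(j)$, tensored with an acyclic or free cube in the $S$ directions. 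Next I would write down $\HFLminus(L,s'+e_B)$ for the second cube. The only surviving contributions come from $B\subset S$ with $|B|=c$, and these carry second cube grading $c$. Such a contribution corresponds to the $\HFLminus$ class whose first cube grading is $\beta_n(j)+c-s$, replacing the $j+c-s-1$ of the torus case (where $\beta_n(j)=j-1$). Adding the two gives $\beta_n(j)-s+2c$, as claimed.

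For the range of $c$, I would impose that both binomial factors be nonzero. The factor $\binom{s-1}{c-1}$ counts the choices of $B\subset S$ modulo the distinguished coordinate needed to kill the acyclic $S$-directions, giving $1\le c\le s$. The factor $\binom{n-s-1}{n-1-\beta_n(j)-c}$ is the rank of the relevant $V$-type kernel in the $\overline{S}$ directions. It requires $0\le \beta_n(j)+c-s$ and $\beta_n(j)+c\le n-1$, which gives the inequalities \eqref{eq: c j inequalities L space}. The two expressions for the rank agree by the symmetry of binomials. As a consistency check, Vandermonde's identity gives $\sum_c\binom{s-1}{c-1}\binom{n-s-1}{n-1-\beta_n(j)-c}=\binom{n-2}{\beta_n(j)}$, which matches Theorem \ref{thm: GH general}(c).

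The main obstacle is the bookkeeping in the second spectral sequence. There I need to check that the $E_2$ page, with summands indexed by $c$, really is supported in the stated cube gradings, and that the generalization from $T(n,n)$ only replaces $j-1$ by $\beta_n(j)$. This requires the $h$-function increments from Lemma \ref{lem: hn properties} and the bound $-1\le\beta_n(j)\le n-1$. I would verify it first in the extreme cases $s=1$ and $s=n-1$, analogous to Examples \ref{ex: extremal off-diagonal 1} and \ref{ex: extremal off-diagonal}, and then argue that the general cube decomposes as a tensor product of the $S$ and $\overline{S}$ directions.
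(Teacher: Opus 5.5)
Your strategy is the paper's. The paper gives no separate argument for this lemma; it transports the proof of Lemma~\ref{lem: levels off diagonal} (that is, of \cite[Theorem 4.25]{GHom}) with $j-1$ replaced by $\beta_n(j)$. So the summand indexed by $c$ sits in cube grading $\beta_n(j)+c-s$ for $\HFL\Rightarrow\HFLminus$ and in cube grading $c$ for $\HFLminus\Rightarrow\HFLhat$, and the level is their sum. Your range of $c$ from the binomials, the equality of the two rank expressions, and the Vandermonde check are all correct.

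The gap is in the concrete picture of the first cube that you propose to use to justify the substitution. It is wrong, and the bookkeeping would fail if carried out as written.

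\textbf{The vertices.} Write $a=|A\cap S|$ and $b=|A\cap\overline S|$. The coordinates of $s'-e_A$ indexed by $A\cap S$ equal $A(Y_{j+2})$, so the vertices do not lie between $Y_j$ and $Y_{j+1}$.

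\textbf{The $h$-function.} Using $h(v)=\min_{j'}\bigl[h_n(j')+\sum_i\max(0,g_n(K)-j'-v_i)\bigr]$, one finds
\[
h(s'-e_A)-h_n(j)=\min\bigl(a+s+b,\ \beta_n(j)+1+a,\ h_n(j+2)-h_n(j)\bigr).
\]

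\textbf{Case $s\ge\beta_n(j)+1$.} The expression is independent of $b$, so every $\overline S$-edge is an isomorphism and $\HFLminus(L,s')=0$. It is the $\overline S$-directions that are acyclic, not the $S$-directions.

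\textbf{Case $s\le\beta_n(j)$.} The expression equals $a+\min(s+b,\beta_n(j)+1)$; the third term never wins, since $\beta_n(j+1)\ge\beta_n(j)-1$. Then:
\begin{itemize}
\item Every $S$-edge is multiplication by $\UU$. The $S$-factor is therefore $(C_s[\UU],\UU\partial)$, whose homology $\bigoplus_{i<s}V_{s,i}$ is $\UU$-torsion, neither acyclic nor free.
\item The $\overline S$-factor is the deformed cube $\partial^{(k)}$ of Lemma~\ref{lem: deformed cube} on $C_{n-s}$, with threshold $k=\beta_n(j)+1-s$, not $\beta_n(j)$.
\item Both factors have torsion homology, so the K\"unneth formula over $\F[\UU]$ also produces $\mathrm{Tor}$ terms. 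Already for $T(3,3)$ with $j=2$ and $s=1$, the group $\HFLminus(L,s')$ is two-dimensional, in cube gradings $0$ and $1$.
\end{itemize}

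\textbf{The factor $\binom{s-1}{c-1}$.} Your explanation of it via ``killing the acyclic $S$-directions'' rests on the same misreading. This factor is $\dim V_{s,c-1}=\dim C_{s,c}/\Imm\partial_{c+1}$, the count of $c$-subsets of $S$ modulo the cube relations, as for the generators $B^j(I,J,M)$ in Appendix~\ref{sec: appendix A}.

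To make your sketch more than the paper's one-line transfer, redo the first-cube computation with these thresholds. Then check that every $h$-difference entering the argument of \cite{GHom} depends on $j$ only through $\beta_n(j)$ and $\beta_n(j\pm1)$. The claims that only $B\subset S$ survive in the second cube, with cube grading $|B|=c$, still have to be imported from \cite{GHom}, exactly as the paper does.
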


\begin{remark}
Observe that 
$$
\sum_{c}\binom{s-1}{c-1}\binom{n-s-1}{n-1-\beta_n(j)-c} =\binom{n-2}{n-\beta_n(j)-2}=\binom{n-2}{\beta_n(j)},
$$
in agreement with Theorem \ref{thm: GH general}(c).
\end{remark}

\begin{example}
\label{ex: extremal off-diagonal 1 L space}
Consider the case $s=1$. By \eqref{eq: c j inequalities L space} this forces $c=1$, so all the off-diagonal summands in the corresponding Alexander grading are supported at the level 
$$
\beta_n(j)-1+2=\beta_n(j)+1.
$$
\end{example}

\begin{example}
\label{ex: extremal off-diagonal L space}
Consider the case $s=n-1$. By \eqref{eq: c j inequalities L space} this forces $c=n-1-\beta_n(j)$,  so all the off-diagonal summands in the corresponding Alexander grading are supported at the level 
$$
\beta_n(j)-(n-1)+2(n-1-\beta_n(j))=n-1-\beta_n(j).
$$
\end{example}

\subsection{Changing orientation}

As in Section \ref{sec: change orientation Tnn}, let us change the orientation on $k_{-}$ components of the link $K_{n,mn}$ to get the link $K_{m}(k_{+},k_{-})$. 
By Corollary \ref{cor: change orientation many components} the generator $Y_j$ becomes $Y'_j$ with 
$$
M(Y'_j)=M(Y_j)-2k_{-}\left(\frac{m(n-1)}{2}+g(K)+1-j\right)+mk_{+}k_{-}.
$$
since each pair of components has linking number $m$. We can rewrite this as
\begin{equation}
\label{eq: Maslov Y' L space}
M(Y'_j)=-2h_n(j)-mk_{-}(k_{-}-1)-2k_{-}(g(K)+1-j).
\end{equation}
More generally, we have the following gradings for the diagonal blocks:
\begin{lemma}
\label{lem: change orientation L space diagonal}
The bottom diagonal block in $\HFLhat\left(K_m(k_{+},k_{-})\right)$ has the form
$$
\bigoplus_{i=0}^{m_j}\binom{n-1}{i}\F_{M(Y'_j)-i} 
$$
and the top diagonal block in $\HFLhat\left(K_m(k_{+},k_{-})\right)$  has the form 
$$
\bigoplus_{i=0}^{n-M_j}\binom{n-1}{i}\F_{M(Y'_j)-n+2+i},
$$
where $M(Y'_j)$ is given by \eqref{eq: Maslov Y' L space} and $m_j,M_j$ are given by \eqref{eq: def mj L space},\eqref{eq: def Mj L space}.
\end{lemma}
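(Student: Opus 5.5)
The plan is to mirror Lemma \ref{lem: change orientation n n}. By Proposition \ref{prop: reversing orientation}, reversing the orientation of some components gives an isomorphism of chain complexes, and hence of $\HFLhat$. On a class $x$, the only effect on Maslov grading is the shift of Corollary \ref{cor: change orientation many components},
$$
-2\sum_{i\in A}A_i(x)+\sum_{i\in A,\,j\notin A}\lk(L_i,L_j),
$$
where $A$ is the set of $k_{-}$ reversed strands.

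The key observation is that this shift depends only on the Alexander multi-grading of $x$. Both diagonal blocks at $A(Y_j)$ lie in the single multi-grading $(g_n(K)-j,\ldots,g_n(K)-j)$, by Theorem \ref{thm: GH general}(a),(b). So every class in both blocks is shifted by the same constant, namely the one computed for the generator $Y_j$ itself. The block descriptions then follow immediately from Theorem \ref{thm: GH general} together with the definitions \eqref{eq: def mj L space} and \eqref{eq: def Mj L space} of $m_j$ and $M_j$.

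It remains to check the formula \eqref{eq: Maslov Y' L space} for that constant. Each of the $k_{-}$ reversed components has $A_i=g_n(K)-j$, and each of the $k_{-}k_{+}$ pairs of a reversed and a non-reversed component has linking number $m$. This gives
$$
M(Y'_j)=-2h_n(j)-2k_{-}\left(\tfrac{m(n-1)}{2}+g(K)+1-j\right)+mk_{+}k_{-},
$$
using \eqref{eq: degree Yj}. Substituting $n=k_{+}+k_{-}$, the term $-mk_{-}(n-1)+mk_{+}k_{-}$ simplifies to $-mk_{-}(k_{-}-1)$, which yields \eqref{eq: Maslov Y' L space}.

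The only potential subtlety is to confirm that the top block indexing $\bigoplus_{i=0}^{n-M_j}$ matches Theorem \ref{thm: GH general}. In case (a) this is the identity $n-M_j=\min(\beta_n(j-1),n-2-\beta_n(j))$, which follows directly from the definition of $M_j$. Case (b) agrees symmetrically. Beyond this, no real obstacle is expected.
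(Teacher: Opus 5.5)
Your proposal is correct and is essentially the paper's argument. As in Lemma \ref{lem: change orientation n n}, the orientation-reversal isomorphism of Corollary \ref{cor: change orientation many components} shifts the Maslov grading by a single constant on the Alexander multi-grading $A(Y_j)$, and that constant is the one computed in \eqref{eq: Maslov Y' L space}; your extra check that $m_j$ and $n-M_j$ reproduce the summation ranges in both cases of Theorem \ref{thm: GH general} is a correct detail that the paper leaves implicit.
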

For the off-diagonal blocks we get the following:
\begin{lemma}
\label{lem: change orientation L space off diagonal}
Given a class $a$ in $\HFLhat(K_{n,mn})$ with $s$ components of the Alexander grading equal to $A(Y_j)-1$ and $n-s$ equal to $A(Y_j)$, let us change the orientation on $s_{-}$ strands in the former group and $k_{-}-s_{-}$ strands in the latter group. The resulting class $a'$ has Maslov grading 
\begin{equation}
\label{eq: off diagonal change orientation L space}
M(a')=M(Y'_{j})-\beta_n(j)-s+2s_{-}.
\end{equation}
\end{lemma}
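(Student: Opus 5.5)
We follow the proof of Lemma \ref{lem: off diagonal change orientation} essentially verbatim, replacing the torus-link data by the $L$-space data. The starting point is Theorem \ref{thm: GH general}(c): in $\HFLhat(K_{n,mn})$ the class $a$ has Maslov grading $M(a)=-2h_n(j)-\beta_n(j)-s$. We then apply Corollary \ref{cor: change orientation many components} to the set of $k_{-}$ reversed strands. The shift it produces is
$$
-2\sum_{i\ \mathrm{reversed}}A_i+\sum_{i\ \mathrm{reversed},\ l\ \mathrm{not}}\lk(L_i,L_l).
$$
Every pair of components has linking number $m$, so the linking term equals $mk_{-}k_{+}$.

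Next we evaluate the Alexander term. By assumption, $s_{-}$ of the reversed strands have $A_i=g_n(K)-j-1$ and the remaining $k_{-}-s_{-}$ have $A_i=g_n(K)-j$. Hence
$$
-2\sum A_i=-2k_{-}(g_n(K)-j)+2s_{-}.
$$
Substituting, we get
$$
M(a')=-2h_n(j)-\beta_n(j)-s-2k_{-}(g_n(K)-j)+mk_{+}k_{-}+2s_{-}.
$$

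Finally, we compare this with the formula for $M(Y'_j)$ obtained just before \eqref{eq: Maslov Y' L space}, namely
$$
M(Y'_j)=-2h_n(j)-2k_{-}(g_n(K)-j)+mk_{+}k_{-}.
$$
Its rewriting \eqref{eq: Maslov Y' L space} uses $g_n(K)=\frac{m(n-1)}{2}+g(K)+1$ and $n=k_{+}+k_{-}$. Subtracting the two expressions, the common terms cancel and we are left with
$$
M(a')-M(Y'_j)=-\beta_n(j)-s+2s_{-},
$$
which is \eqref{eq: off diagonal change orientation L space}.

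The only step requiring care is the convention in Theorem \ref{thm: GH general}(c): the off-diagonal coordinates equal to $A(Y_j)-1$ must be the ones subtracted as $g_n(K)-j-1$. This is what produces the sign of the $+2s_{-}$ term. Everything else is bookkeeping identical to the $T(n,n)$ case.
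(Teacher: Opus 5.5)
Your proof is correct and is the paper's argument: start from $M(a)=-2h_n(j)-\beta_n(j)-s$ given by Theorem~\ref{thm: GH general}(c). Then apply Corollary~\ref{cor: change orientation many components} with linking term $mk_{+}k_{-}$ and Alexander contribution $-2k_{-}(g_n(K)-j)+2s_{-}$, and compare with the unreduced expression $M(Y'_j)=-2h_n(j)-2k_{-}(g_n(K)-j)+mk_{+}k_{-}$; the paper does the same computation with $g_n(K)=\frac{m(n-1)}{2}+g(K)+1$ written out explicitly.
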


\begin{proof}
The original class $a$ has grading 
$
M(a)=-2h_n(j)-\beta_n(j)-s.
$
By Corollary \ref{cor: change orientation many components}, after changing orientation we get new Maslov grading
$$
M(a')=M(a)-2s_{-}\left(\frac{m(n-1)}{2}+g(K)-j\right)-2(k_{-}-s_{-})\left(\frac{m(n-1)}{2}+g(K)+1-j\right)+mk_{+}k_{-}=
$$
$$
M(a)-2k_{-}\left(\frac{m(n-1)}{2}+g(K)+1-j\right)+mk_{+}k_{-}+2s_{-}=
$$
$$
M(Y'_{j})-\beta_n(j)-s+2s_{-}.
$$

\end{proof}

\subsection{Adding strands}

We would like to understand the effect of adding an oppositely oriented pair of strands. Note that 
$$
g_{n+2}(K)=g_n(K)+m.
$$

\begin{lemma}
\label{lem: h and beta change}
For $m>2g(K)-1$ and $1\le j<2g_n(K)-1$ we have 
$$
h_{n+2}(j+m)=h_{n}(j)+j+m-g(K)-1,\ \beta_{n+2}(j+m)=\beta_n(j)+1.
$$
\end{lemma}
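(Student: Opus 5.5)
The plan is to compute directly from the definition
$$
h_n(j)=\sum_{i=0}^{n-1}\widetilde{h}(j-1-mi).
$$
Substituting $n+2$ and $j+m$ gives
$$
h_{n+2}(j+m)=\sum_{i=0}^{n+1}\widetilde{h}(j+m-1-mi).
$$
Reindexing with $i'=i-1$ rewrites this as
$$
h_{n+2}(j+m)=\widetilde{h}(j+m-1)+\sum_{i'=0}^{n-1}\widetilde{h}(j-1-mi')+\widetilde{h}(j-1-mn).
$$
The middle sum is exactly $h_n(j)$, so the formula for $h_{n+2}(j+m)$ reduces to evaluating the two boundary terms.

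For the boundary terms we use Lemma \ref{lem: hn properties}(a).

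\begin{itemize}
\item Since $j\ge 1$ and $m\ge 2g(K)$ (because $m>2g(K)-1$), we have $j+m-1\ge 2g(K)$. Hence $\widetilde{h}(j+m-1)=j+m-1-g(K)$.
\item Since $j<2g_n(K)-1=m(n-1)+2g(K)+1$, we get $j-1-mn<2g(K)-m\le 0$. Hence $\widetilde{h}(j-1-mn)=0$.
\end{itemize}

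Together these give $h_{n+2}(j+m)=h_n(j)+j+m-g(K)-1$, which is the first identity.

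For the second identity we apply the first one at both $j$ and $j+1$ and subtract. This gives
$$
h_{n+2}(j+m+1)-h_{n+2}(j+m)=h_n(j+1)-h_n(j)+1,
$$
so $\beta_{n+2}(j+m)=\beta_n(j)+1$.

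The only delicate point is the range of $j$. Applying the first identity at $j+1$ requires $j+1$ to satisfy the hypothesis as well, that is $j+1\le 2g_n(K)-1$, which is weaker than the strict inequality used above. So we must recheck the vanishing of the last boundary term at $j+1$. For $j+1=2g_n(K)-1$ we have $j+1-1-mn=2g(K)-m$, and $2g(K)-m\le 0$ because $m\ge 2g(K)$. Thus $\widetilde{h}(j-mn)=0$ still holds, and the argument goes through for all $1\le j<2g_n(K)-1$.
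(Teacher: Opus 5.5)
Your proof is correct and is essentially the paper's argument. The paper writes $j=mq+r$ and expands $h_n(j)$ and $h_{n+2}(j+m)$ term by term, finding that the only change is the extra summand $\widetilde{h}(j+m-1)=j+m-1-g(K)$; your reindexing isolates exactly this term, and both proofs then obtain the $\beta$ identity by differencing, where your explicit check that the first identity still holds at $j+1$ (including the endpoint $j+1=2g_n(K)-1$) makes precise a step the paper uses implicitly.
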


\begin{proof}
Suppose that $j=mq+r,1\le r\le m$, then $0\le q\le n-1$. Then
$$
h_n(j)=\widetilde{h}(j-1)+\widetilde{h}(j-m-1)+\ldots+\widetilde{h}(j-m(n-1)-1)= 
$$
$$
(j-g(K)-1)+\ldots+(j-m(q-1)-g(K)-1)+\widetilde{h}(r-1).
$$
Now $j+m=m(q+1)+r$ and 
$$
h_{n+2}(j+m)=(j+m-g(K)-1)+ 
(j-g(K)-1)+\ldots+(j-m(q-1)-g(K)-1)+\widetilde{h}(r-1)=
$$
$$
h_n(j)+(j+m-g(K)-1).
$$
Finally,
$$
\beta_{n+2}(j+m)=h_{n+2}(j+m+1)-h_{n+2}(j+m)-1=
$$
$$
\beta_n(j)+(j+m+1-g(K)-1)-(j+m-g(K)-1)=\beta_n(j)+1.
$$
\end{proof}

\begin{corollary}
\label{cor: size blocks grows L space}
Assume that $1<j<2g_n(K)-1$. Then $m_{j+m,n+2}$ and $M_{j+m,n+2}$ are defined. Whenever $m_{j,n}$ or $M_{j,n}$ are defined, we have
$m_{j+m,n+2}=m_{j,n}+1$ and
$M_{j+m,n+2}=M_{j,n}+1$.
\end{corollary}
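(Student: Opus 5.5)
The plan is to reduce everything to Lemma \ref{lem: h and beta change}, applied at the two indices $j$ and $j-1$, and then substitute into the definitions \eqref{eq: def mj L space} and \eqref{eq: def Mj L space}. The lemma gives $\beta_{n+2}(j+m)=\beta_n(j)+1$ whenever $1\le j<2g_n(K)-1$. Since we assume $1<j<2g_n(K)-1$, we also have $1\le j-1<2g_n(K)-1$, so the same lemma yields $\beta_{n+2}(j+m-1)=\beta_n(j-1)+1$. Thus both $\beta$-values entering $m_{j+m,n+2}$ and $M_{j+m,n+2}$ increase by exactly one when $n$ increases by two.

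For the formulas, I would plug these into the definitions. For the bottom block,
$$
m_{j+m,n+2}=\min\bigl(\beta_n(j)+1,\ (n+2)-2-\beta_n(j-1)-1\bigr)=\min\bigl(\beta_n(j),\ n-2-\beta_n(j-1)\bigr)+1=m_{j,n}+1 .
$$
For the top block,
$$
M_{j+m,n+2}=\max\bigl((n+2)-\beta_n(j-1)-1,\ \beta_n(j)+3\bigr)=M_{j,n}+1 .
$$
These identities hold formally as expressions, whether or not $m_{j,n}$ and $M_{j,n}$ are defined.

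The remaining, and slightly more delicate, step is definedness at level $n+2$. Recall that $m_{j',n'}$ is undefined exactly when $\beta_{n'}(j')=-1$ or $\beta_{n'}(j'-1)=n'-1$. Similarly, $M_{j',n'}$ is undefined exactly when $\beta_{n'}(j')=n'-1$ or $\beta_{n'}(j'-1)=-1$.

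By Lemma \ref{lem: hn increases} we have $-1\le\beta_n(\cdot)\le n-1$. After the shift, $\beta_{n+2}(j+m)$ and $\beta_{n+2}(j+m-1)$ therefore lie in $[0,n]$. Neither can equal $-1$, and neither can equal $(n+2)-1=n+1$. Hence none of the degenerate cases occurs at $(j+m,n+2)$, so both $m_{j+m,n+2}$ and $M_{j+m,n+2}$ are defined.

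The statement ``whenever $m_{j,n}$ or $M_{j,n}$ are defined'' then just says that the displayed identities compare genuinely defined quantities. The only point I would double-check is the hypothesis $m>2g(K)-1$ together with $1\le j-1<2g_n(K)-1$, since these are needed to invoke Lemma \ref{lem: h and beta change} at the index $j-1$. This is exactly why the corollary assumes the strict inequality $j>1$.
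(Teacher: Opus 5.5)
Your proof is correct and follows the paper's argument essentially verbatim. You apply Lemma \ref{lem: h and beta change} at both $j$ and $j-1$ (which is where $j>1$ is needed), use $-1\le\beta_n\le n-1$ to get $0\le\beta_{n+2}(j+m),\beta_{n+2}(j+m-1)\le n$ so that no degenerate case occurs, and then substitute into the definitions of $m_{j,n}$ and $M_{j,n}$.
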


\begin{proof}
Since $1<j<2g_n(K)-1$, one has 
$1\le j-1<2g_n(K)-1$. Therefore Lemma \ref{lem: h and beta change} applies both to $j$ and $j-1$, so that 
$$
\beta_{n+2}(j+m)=\beta_{n}(j)+1,\ \beta_{n+2}(j+m-1)=\beta_{n}(j-1)+1.
$$
We have $-1\le \beta_{n}(j),\beta_n(j-1)\le n-1$, so
$$
0\le \beta_{n+2}(j+m),\beta_{n+2}(j+m-1)\le n, 
$$
so $m_{j+m,n+2}$ and $M_{j+m,n+2}$ are defined. Now
$$
m_{j+m,n+2}=\min(\beta_{n+2}(j+m),(n+2)-2-\beta_{n+2}(j+m-1))=
$$
$$
\min(\beta_{n}(j)+1,n-2-\beta_{n}(j-1)+1)=m_{j,n}+1
$$
and
$$
M_{j+m,n+2}=\max((n+2)-\beta_{n+2}(j+m-1),\beta_{n+2}(j+m)+2)=
$$
$$\max(n-\beta_{n}(j-1)+1,\beta_{n}(j)+2+1)=M_{j,n}+1.
$$
\end{proof}

\begin{corollary}
\label{cor: M stable L space}
Assume that $1\le j<2g_n(K)-1$.
Then $M(Y'_{j,n})=M(Y'_{j+m,n+2})$.
\end{corollary}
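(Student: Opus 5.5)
The plan is to substitute directly into formula \eqref{eq: Maslov Y' L space}, which expresses $M(Y'_{j,n})$ in terms of $h_n(j)$, $k_{-}$, $m$ and $g(K)$, and then use Lemma \ref{lem: h and beta change} to compare the two sides. Passing from $(j,n)$ to $(j+m,n+2)$ adds one positively and one negatively oriented strand, so $k_{-}$ becomes $k_{-}+1$ while $k_{+}$ becomes $k_{+}+1$. The hypothesis $1\le j<2g_n(K)-1$ is exactly what Lemma \ref{lem: h and beta change} requires, so we may use
$$
h_{n+2}(j+m)=h_n(j)+j+m-g(K)-1 .
$$

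The main computation compares $M(Y'_{j+m,n+2})$ with $M(Y'_{j,n})$. Formula \eqref{eq: Maslov Y' L space} gives
$$
M(Y'_{j+m,n+2})=-2h_{n+2}(j+m)-m(k_{-}+1)k_{-}-2(k_{-}+1)(g(K)+1-j-m).
$$
Substituting the value of $h_{n+2}(j+m)$ contributes $-2h_n(j)-2(j+m-g(K)-1)$. The middle term equals $-mk_{-}(k_{-}-1)-2mk_{-}$. The last term equals $-2k_{-}(g(K)+1-j)+2mk_{-}-2(g(K)+1-j-m)$.

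Collecting, the extra pieces are $-2(j+m-g(K)-1)$, $-2mk_{-}$, $+2mk_{-}$ and $-2(g(K)+1-j-m)$. The first and last cancel, since $-2(j+m-g(K)-1)-2(g(K)+1-j-m)=0$, and the middle two cancel as well. What remains is exactly $M(Y'_{j,n})$.

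The only subtlety is the bookkeeping convention. Formula \eqref{eq: Maslov Y' L space} is stated for $Y'_j$ obtained by reversing $k_{-}$ strands of $K_{n,mn}$. We must confirm that $O$-type components added by the pair of pants count as one extra reversed strand, so that we apply the same formula with $k_{-}+1$ and $n+2$. This is consistent with how Lemma \ref{lem: change orientation n n} was used in the $T(n,n)$ case, where the analogous identity $(j+1)-(k_{-}+1)=j-k_{-}$ gives stability. As a sanity check, for $K=O$ and $m=1$ the formula reduces to $-(j-k_{-})(j-k_{-}-1)$, which is manifestly invariant under this shift. No real obstacle is expected beyond this algebra.
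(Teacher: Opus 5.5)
Your proposal is correct and matches the paper's proof: substitute $(j,n,k_{-})\mapsto(j+m,n+2,k_{-}+1)$ into \eqref{eq: Maslov Y' L space}, use Lemma \ref{lem: h and beta change} for $h_{n+2}(j+m)$, and check that the extra terms $-2(j+m-g(K)-1)$, $-2mk_{-}$, $+2mk_{-}$ and $-2(g(K)+1-j-m)$ cancel. Your algebra agrees term by term with the paper's computation.
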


\begin{proof}
By \eqref{eq: Maslov Y' L space} we get 
$$
M(Y'_{j,n})=-2h_n(j)-mk_{-}(k_{-}-1)-2k_{-}(g(K)+1-j).
$$
After changing $n$ to $n+2$, $j$ to $j+m$ and $k_{-}$ to $k_{-}+1$, we get
$$
M(Y'_{j+m,n+2})=-2h_{n+2}(j+m)-m(k_{-}+1)k_{-}-2(k_{-}+1)(g(K)+1-j-m)=
$$
$$
M(Y'_{j,n})-2(j+m-g(K)-1)-2mk_{-}-2(g(K)+1-j-m)+2mk_{-}=M(Y'_{j,n}).
$$
 \end{proof}

\subsection{Floer lasagna module: large negative case}

The large negative case is similar to Section \ref{sec: -1 framed unknot}, as we can use duality between homology of $K_{m}(k_{+},k_{-})$ and $\overline{K}_{-m}(k_{+},k_{-})$. In particular, we have the following description of the dual homology at the diagonal.

\begin{lemma}
\label{lem: dual diagonal L space}
Let $K$ be an L-space knot, $\overline{K}$ its mirror and $m>2g(K)-1$. Then the diagonal part of $\HFLhat(\overline{K}_{-m}(k_{+},k_{-}))$ at $j$ has two blocks which decompose as bigraded $S_n$-representations as follows:
\begin{enumerate}
\item[(a)] The dual bottom block decomposes into $V_{n,i}^{\vee}\simeq V_{n,n-1-i}$ in level $n-i$ and Maslov grading 
$$
1-n-M(Y'_{j,n})+i.
$$
\item[(b)] The dual top block decomposes into $V_{n,n-i-1}^{\vee}\simeq V_{n,i}$ in level $i$ and Maslov grading 
$$
1-n-M(Y'_{j,n})+n-2-i=-M(Y'_{j,n})-1-i.
$$
\end{enumerate}
All these classes have total Alexander grading $-\alpha(g_n(K)-j)$.
\end{lemma}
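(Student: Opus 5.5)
The plan is to transport the description of the diagonal part of $\HFLhat(K_m(k_{+},k_{-}))$ through the duality isomorphism \eqref{eq: duality HFL}, exactly as in the proof of the corresponding corollary for $T(n,-n)$ in Section \ref{sec: -1 framed unknot}. Since $\overline{K_m(k_{+},k_{-})}=\overline{K}_{-m}(k_{+},k_{-})$, we have
$$
\HFLhat_{M}(K_m(k_{+},k_{-}),s)^{\vee}\simeq \HFLhat_{1-n-M}(\overline{K}_{-m}(k_{+},k_{-}),-s).
$$
First, the total Alexander grading: the diagonal blocks at $j$ for $K_m(k_{+},k_{-})$ have every coordinate equal to $\pm(g_n(K)-j)$. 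The sign is $+$ on the $k_{+}$ positively oriented strands and $-$ on the reversed ones, by Proposition \ref{prop: reversing orientation}(a). The total is therefore $\alpha(g_n(K)-j)$, and after duality it becomes $-\alpha(g_n(K)-j)$.

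Next, the Maslov gradings. By Lemma \ref{lem: change orientation L space diagonal}, the bottom block at level $i$ sits in grading $M(Y'_{j,n})-i$, which dualizes to $1-n-M(Y'_{j,n})+i$. The top block summand $\binom{n-1}{i}\F_{M(Y'_j)-n+2+i}$ sits at level $n-i$, and dualizes to $1-n-M(Y'_{j,n})+n-2-i=-M(Y'_{j,n})-1-i$. Both agree with (a) and (b).

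For levels, we use the dual resolution $F_\bullet^{\vee}$ with $F_k^\vee$ placed in level $n-k$. This is justified because $K_{n,mn}$ is a plumbed L-space link and Theorem \ref{thm: resolution} applies. It gives $\HFLhat(\overline{K}_{-m}(k_{+},k_{-}))_{\lvl=n-k}\simeq(\HFLhat(K_m(k_{+},k_{-}))_{\lvl=k})^{\vee}[1-n]$. Hence level $i$ of the bottom block goes to level $n-i$, and level $n-i$ of the top block goes to level $i$.

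For the representations, the lemma preceding Lemma \ref{lem: levels off diagonal L space} identifies the level-preserving $S_n$-action on the bottom (resp. top) block as $V_{n,i}$ at level $i$ (resp. $V_{n,n-1-i}$ at level $n-i$). By the commutative diagram \eqref{eq: duality maps}, applied to braid cobordisms, the braid action on the mirror is the dual action. Lemma \ref{lem: Vni}(c) then gives $V_{n,i}^{\vee}\simeq V_{n,n-1-i}$ and $V_{n,n-1-i}^{\vee}\simeq V_{n,i}$.

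The main obstacle is making the phrase ``decompose as $S_n$-representations'' precise. The braid action is only level-filtered, and level-preserving modulo the exceptional case where top and bottom blocks share a Maslov grading. I would reproduce the argument of the $T(n,-n)$ corollary, using the generalized Corollary \ref{cor: Maslov separation}. That generalization needs $M_j-m_j\ge 2$, which follows from \eqref{eq: def mj L space} and \eqref{eq: def Mj L space}:
$$
M_j\ge \beta_n(j)+2\ge m_j+2.
$$
It shows that off-level components only go from top to bottom. Dually, they only go from dual bottom to dual top, so the action on the mirror is level-filtered and the associated graded pieces are the duals claimed above.
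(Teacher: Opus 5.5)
Your proposal is correct and follows essentially the same route as the paper. The paper gives no separate proof here; the lemma is meant as the direct analogue of the $T(n,-n)$ corollary in Section \ref{sec: -1 framed unknot}, transporting Lemma \ref{lem: change orientation L space diagonal} and the level/representation lemma through the duality \eqref{eq: duality HFL} with $F_k^{\vee}$ placed in level $n-k$. Your inequality $M_j\ge \beta_n(j)+2\ge m_j+2$ is exactly the replacement for Remark \ref{rem: M-m} that this transport needs.
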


\begin{lemma}
\label{lem: Fp from off-diagonal L space}
The map 
$$
F_P(-\otimes B):\HFLhat(K_m(k_{+},k_{-}))\to \HFLhat(K_m(k_{+}+1,k_{-}+1))
$$
vanishes on the off-diagonal part of $\HFLhat$ and the image of
$$
\Phi:\HFLhat(K_m(k_{+}+1,k_{-}+1))\to \HFLhat(K_m(k_{+},k_{-}))
$$
does not contain any off-diagonal terms of $\HFLhat$. 
\end{lemma}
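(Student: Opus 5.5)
The plan is to imitate the proof of Lemma \ref{lem: Fp from off-diagonal}, which is a pure Alexander/Maslov grading count, now using Lemma \ref{lem: change orientation L space off diagonal} and Lemma \ref{lem: h and beta change}.

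\emph{Setup.} Let $x$ be an off-diagonal class in $\HFLhat(K_m(k_{+},k_{-}))$ with parameters $(j,s,s_{-})$. Then $s$ of its Alexander coordinates are $\pm(A(Y_{j,n})-1)$ and $n-s$ are $\pm A(Y_{j,n})$. Since $g_{n+2}(K)=g_n(K)+m$, we have $A(Y_{j,n})=A(Y_{j+m,n+2})$ for the first $n$ coordinates. The map $F_P(-\otimes B)$ preserves $A_1,\ldots,A_n$ and $A_{n+1}+A_{n+2}$, and $B$ contributes zero to the latter. Hence every term $y$ of $F_P(x\otimes B)$ is off-diagonal at $j+m$. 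There are two possibilities for the new strands: either both lie in the ``$-1$'' group, giving parameters $(j+m,s+2,s_{-}+1)$, or both lie in the other group, giving $(j+m,s,s_{-})$. Mixed values are excluded because the two new coordinates must be negatives of each other up to the orientation change.

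\emph{Grading count.} By \eqref{eq: off diagonal change orientation L space},
$$M(x)=M(Y'_{j,n})-\beta_n(j)-s+2s_{-}.$$
Corollary \ref{cor: M stable L space} gives $M(Y'_{j+m,n+2})=M(Y'_{j,n})$, and Lemma \ref{lem: h and beta change} gives $\beta_{n+2}(j+m)=\beta_n(j)+1$. These hold for $1\le j<2g_n(K)-1$, which is exactly the range where off-diagonal classes exist by Theorem \ref{thm: GH general}(c). In the first case we obtain
$$M(y)=M(x)-1-2+2=M(x)-1.$$
In the second case we again obtain $M(y)=M(x)-1$. But $F_P(-\otimes B)$ lowers $M$ by $2$: the pair of pants $f_P$ lowers it by $1$ (Lemma \ref{lem:PoP-grading}) and $M(B)=-1$. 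So no such $y$ exists, and therefore $F_P(-\otimes B)$ vanishes on the off-diagonal part.

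\emph{The map $\Phi$.} Here $\Phi=F_D\circ\mathcal B\circ F_{-P}$ lowers $M$ by $2$ as well. The map $F_{-P}$ preserves $M$ by duality, $\mathcal B$ lowers it by $1$, and $F_D$ sends $B\mapsto 1$, lowering it by another $1$. Suppose an off-diagonal term $y$ at $(j,s,s_{-})$ appears in $\Phi(x)$ for some $x\in\HFLhat(K_m(k_{+}+1,k_{-}+1))$. The Alexander constraints force $x$ to have first $n$ coordinates off-diagonal at $j+m$, so $x$ itself is off-diagonal with parameters $(j+m,s,s_{-})$ or $(j+m,s+2,s_{-}+1)$. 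It cannot be diagonal, since all $n$ first coordinates of a diagonal class are equal, while $1\le s\le n-1$. The same count gives $M(x)=M(y)-1$, whereas we need $M(x)=M(y)+2$, a contradiction.

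The main thing to verify carefully is the Alexander bookkeeping: that $j$ shifts to exactly $j+m$ rather than a neighbouring value, and that the boundary cases $j=1$ and $j=2g_n(K)-2$ stay within the hypotheses of Lemma \ref{lem: h and beta change}. The rest is arithmetic.
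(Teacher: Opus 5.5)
Your argument is correct and is essentially the paper's proof. Both use the same Alexander bookkeeping $(j,s,s_-)\mapsto(j+m,s,s_-)$ or $(j+m,s+2,s_-+1)$, and combine \eqref{eq: off diagonal change orientation L space} with Corollary \ref{cor: M stable L space} and Lemma \ref{lem: h and beta change} to show that the relevant off-diagonal classes differ in Maslov grading by exactly $1$.

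One slip needs fixing. $\Phi=F_D\circ\mathcal B\circ F_{-P}$ preserves the Maslov grading; it does not lower it by $2$. Since $f_D(x\otimes B)=x$ and $M(B)=-1$, the death map raises $M$ by $1$, which cancels the $-1$ from $\mathcal B$. The contradiction is unaffected: as the paper puts it, any map of Maslov degree $-2$ or $0$ between these components vanishes.
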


\begin{proof}
The proof is similar to the proof of Lemma \ref{lem: Fp from off-diagonal}. We change $n$ to $n+2$, $k_{-}$ to $k_{-}+1$, $j$ to $j+m$.
By Corollary \ref{cor: M stable L space} $M(Y_j')$ does not change, and by Lemma \ref{lem: h and beta change} $\beta_n(j)$ increases by $1$.  Note that for the off-diagonal blocks we have $1\le j<2g_n(K)-1$, so Lemma \ref{lem: h and beta change} applies.

Now we use the formula \eqref{eq: off diagonal change orientation L space}.
If $(s,s_{-})\mapsto (s,s_{-})$ then overall Maslov grading \eqref{eq: off diagonal change orientation L space} changes by $(-1)$. If $(s,s_{-})\mapsto (s+2,s_{-}+1)$ then overall Maslov grading \eqref{eq: off diagonal change orientation L space} changes by $-1-2+2=-1$.

Therefore any map of Maslov grading $(-2)$ or $0$ between these components vanishes.
\end{proof}

Consider the equivalence relation $(j,n)\sim (j+m,n+2)$ on $\Z_{>0}\times \Z_{>0}$.

\begin{theorem}
\label{thm: negative L space main}
Let $K$ be a plumbed L-space knot, $\overline{K}$ its mirror and $m>\max\{0,2g(K)-1\}$.  Then the Floer lasagna module   $\FL(W_{-m}(\overline{K}),\emptyset,\alpha)$ has the  generators indexed by the representative class $(j,n)$, with  $n=\alpha\mod 2, n\geq |\alpha|$, and $1\leq j\leq 2g_n(K)-1$.  These have Alexander grading $-\alpha(g_n(K)-j)$ and the following normalized Maslov gradings:
\begin{enumerate}
\item[(a)] For $\alpha$ even, we have normalized gradings
$$
-M(Y'_{j,n})+1+i,\ i\in \{0,1,3,\ldots\} 
$$
\item[(b)] For $\alpha$ odd, we have normalized gradings
$$
-M(Y'_{j,n})+1+i,\ i\in \{0,2,4,\ldots\},
$$
with $n\geq |\alpha|$ and $1\leq j\leq 2g_n(K)-1$. 
\end{enumerate}
\end{theorem}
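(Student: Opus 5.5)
The plan is to follow the proof of Theorem \ref{thm: unknot negative intro} step by step. The unknot-specific inputs will be replaced by their L-space analogues: Theorem \ref{thm: GH general}, Lemmas \ref{lem: levels off diagonal L space}, \ref{lem: change orientation L space diagonal} and \ref{lem: dual diagonal L space}, and Corollaries \ref{cor: size blocks grows L space} and \ref{cor: M stable L space}.

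\textbf{Setup and dualization.} Via Theorem \ref{thm: chen}, $\FL(W_{-m}(\overline{K}))$ is the cabled homology of $\overline{K}_{-m}$. Duality \eqref{eq: duality HFL} together with \eqref{eq: duality maps} gives $F_P(-\otimes B)=\Phi^\vee$ and $F_P(-\otimes T)=(F_D\circ F_{-P})^\vee$. The level grading on $\HFLhat(\overline{K}_{-m}(k_+,k_-))$ is transported by duality from the resolution of Theorem \ref{thm: resolution}, placing $F_k^\vee$ in level $n-k$. The proof of Lemma \ref{lem: levels for dual maps} only uses Proposition \ref{prop:level-preserving}, which needs Lemma \ref{lem: identify U not V}. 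That lemma holds for $K_{n,mn}$ by the same Alexander-grading argument, so we again get that $F_P(-\otimes T)$ raises level by at most $1$ and $F_P(-\otimes B)$ by at most $2$. Note that $F_P$ sends $(j,n)\mapsto(j+m,n+2)$, which is exactly the equivalence relation in the statement, and it preserves the Alexander grading $-\alpha(g_n(K)-j)$ since $g_{n+2}=g_n+m$.

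\textbf{Killing the off-diagonal and dual-top parts.} By Lemma \ref{lem: Fp from off-diagonal L space}, the image of $\Phi$ has no off-diagonal terms, so $G_B=F_P(-\otimes B)$ kills the dual off-diagonal part. For the dual top block, Corollary \ref{cor: size blocks grows L space} gives $M_{j+m,n+2}=M_{j,n}+1$ and $m_{j+m,n+2}=m_{j,n}+1$. Hence the top level of the dual top block grows by $1$, and the lowest dual-bottom level is $n+1-m_{j,n}$. Using $M_j-m_j\ge 2$ (the analogue of Remark \ref{rem: M-m}, which follows from the definitions \eqref{eq: def mj L space} and \eqref{eq: def Mj L space}), $G_B$ cannot move dual-top classes into the dual bottom. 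A Maslov-grading argument as in Corollary \ref{cor: Maslov separation}, now using Lemma \ref{lem: change orientation L space diagonal} and Corollary \ref{cor: M stable L space}, shows that $G_B$ raises level by exactly $2$ on the dual top block. Iterating, the level outruns the block, so $G_B^r=0$ for $r\gg0$, exactly as in Lemma \ref{lem:negative-nonbottom-vanishes}.

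\textbf{Dual bottom block and the unlink model.} Lemma \ref{lem: unlink action bottom diagonal} and Lemma \ref{lem: F minus P} generalize as follows. Let $\mathfrak{f}_j$ be the lift of $1\mapsto Y'_j$ from $\CFL$ of the unlink. The analogue of Proposition \ref{prop: Phi full homology}, namely $F_{-P}(Y'_{j+m,n+2})=Y'_{j,n}\otimes T$, follows from L-space rigidity together with grading bookkeeping (Corollary \ref{cor: M stable L space}). Dualizing as in Lemma \ref{lem: Fp for dual bottom} then yields the unlink formulas \eqref{eq:negative-g-B} and \eqref{eq:negative-g-T} on the dual bottom block, modulo the other parts. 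Basepoint moving maps act trivially modulo the dual top block, as in Lemma \ref{lem: basepoint moving}, and the braid action factors through $V_{n,n-1-i}$ by Lemma \ref{lem: dual diagonal L space}. The argument of Theorem \ref{thm: zero framed unknot} then applies verbatim: a class survives iff $n-1-i$ is even or $i=0$. Lemma \ref{lem: dual diagonal L space} gives Maslov grading $1-n-M(Y'_{j,n})+i$, so the normalized Maslov grading is $-M(Y'_{j,n})+1+i$. Since $m_{j,n}$ grows along the equivalence class, $i$ becomes unbounded, and the parity of $n$ equals that of $\alpha$, which yields the two cases of the statement.

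\textbf{Expected obstacle.} The delicate step is the edge values of $j$, near $1$ and $2g_n(K)-1$. There Lemma \ref{lem: h and beta change} fails, and $m_j$ or $M_j$ may be undefined (for instance when $\beta_n(j)=-1$). I would handle this by passing along the equivalence class to a representative with $1<j<2g_n(K)-1$, which is possible after one step since $j+m$ lies in the interior for $n+2$. I would also check separately that the boundary generators map as in the Remark following Lemma \ref{lem: F minus P}.
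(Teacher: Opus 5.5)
Your proposal is correct and follows essentially the same route as the paper's proof. You kill the dual off-diagonal part via Lemma \ref{lem: Fp from off-diagonal L space} and duality, kill the dual top blocks by the level-counting argument of Lemma \ref{lem:negative-nonbottom-vanishes} using $M_{j+m,n+2}=M_{j,n}+1$ (Corollary \ref{cor: size blocks grows L space}), identify the dual bottom blocks with the truncated $0$-framed unlink system to get $V_{n,n-1-i}$ and the parity condition on $i$, and handle $j=1$ or $j=2g_n(K)-1$ by passing to $(j+m,n+2)$; you also spell out checks the paper leaves implicit, such as $M_j-m_j\ge 2$ from \eqref{eq: def mj L space}--\eqref{eq: def Mj L space} and the L-space analogue of Lemma \ref{lem: identify U not V}.
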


\begin{proof}
By Lemma \ref{lem: Fp from off-diagonal L space} and duality the map $F_{P}(-\otimes B)$ vanishes on the off-diagonal blocks for the mirror $\HFLhat(\overline{K}_{-m}(k_{+},k_{-}))$. Therefore, the off-diagonal blocks do not contribute to the cabled Floer homology.
Similarly to the proof of Theorem \ref{thm: unknot negative intro}, one can prove that the dual top diagonal blocks do not contribute as well.
Indeed, we change $n$ to $n+2$, $k_{-}$ to $k_{-}+1$, $j$ to $j+m$. 

Assume first $1<j<2g_n(K)-1$. By Lemma \ref{lem: h and beta change} $\beta_n(j)$ increases by $1$, so 
$
M_{j+m,n+2}=M_{j,n}+1
$
and the levels in the dual top block are bounded by
$$
(n+2r)-M_{j+mr,n+2r}=n+r-M_{j,n}.
$$
On the other hand, $F_{P}(-\otimes B)$ still increases the level by $2$, and its $r$-th iterate increases the level by $2r$.

Therefore one only gets the contributions from the dual bottom diagonal blocks. 
Similarly to the proof of Lemma \ref{lem: Fp for dual bottom} the maps $F_{P}(-\otimes T)$ and $F_{P}(-\otimes B)$ on the dual bottom blocks are given by the same formulas as for the $0$-framed unknot, truncated at an appropriate level.
By Lemma \ref{lem: dual diagonal L space} in the dual bottom blocks  we get a representation $V_{n,n-i-1}$ which has one-dimensional coinvariants if $n-i-1$ is even or $i=0$. The rest of the proof follows the proof of Theorem \ref{thm: unknot negative intro}. 

Finally, if $j=1$ or $j=2g_n(K)-1$ then we change $(j,n)$ to $(n+2,j+m)$, observe that
$1<j+m<2g_{n+2}(K)-1$, and apply the previous argument.
\end{proof}

\subsection{Floer lasagna module: large positive case}

In this subsection we give a precise conjectural description for the Floer lasagna module   for $m>2g(K)-1$.  We propose the analogues of Corollary \ref{cor: Phi1 appendix} and Theorem \ref{thm: Fp top diagonal}.

\begin{conjecture}
\label{conj: Phi1 top diagonal L space}
Assume $0\le i<n-M_{j,n}$, then there exists a spanning set $D^j(J)$ in the top diagonal block at level $n-i$,
here $J\subset \{1,\ldots,n\}$ and $|J|=n-i$. These elements satisfy linear relations
\begin{equation}
\label{eq: D relations L space}
\sum_{x\in J'}D^j(J'\setminus x)=0,\ |J'|=n-i+1,
\end{equation}
compare with 
\eqref{eq: D relations}. The map $\Phi_1$ is given by
$$
\Phi_1(D^j_n(J))=U_{n+1}D^{j+m}_{n+2}(J).
$$
modulo off-diagonal and bottom diagonal terms.
\end{conjecture}

\begin{conjecture}
\label{conj: Fp top diagonal L space}
In the notations of Conjecture \ref{conj: Phi1 top diagonal L space} the map $F_P$ is given by 
\begin{equation}
\label{eq: Fp top block alg model 1 L space}
F_P\left(D^j_n(J)\otimes T\right)=D_{n+2}^{j+m}(J\cup n+1)+D_{n+2}^{j+m}(J\cup n+2),
\end{equation}
\begin{equation}
\label{eq: Fp top block alg model 2 L space}
F_P\left(D^j_n(J)\otimes B\right)=D_{n+2}^{j+m}(J\cup n+1\cup n+2) \mod \Imm F_P(-\otimes T).
\end{equation}
modulo off-diagonal and bottom diagonal terms.
\end{conjecture}

\begin{remark}
One can check that the candidate map \eqref{eq: Fp top block alg model 1 L space} respects the relations \eqref{eq: D relations L space}, while \eqref{eq: Fp top block alg model 2 L space} respects these relations up to the image of $F_P(-\otimes T)$, compare with Remark \ref{rem: FP well defined}.
\end{remark}

In the following theorem we use the equivalence relation on $\Z_{>0}\times \Z_{>0}$ given by $(j,n)\sim (j+m,n+2)$.

 \begin{theorem}
\label{thm: L space positive}
Let $K$ be a plumbed L-space knot and $m> \max \{0, 2g(K)-1\}$. Assuming Conjecture \ref{conj: Fp top diagonal L space}, we have the following description of the cabled Floer homology:
\begin{enumerate}
\item[(a)]  For $\alpha$ even and an equivalence class $(j,n)$ the space $\FL(W_m(K),\emptyset;\alpha)$ has generators in Alexander grading $\alpha(g_n(K)-j)$ and normalized Maslov gradings 
$$
M(Y'_{j,n})+2+\{0,1,3,\ldots\}
$$
\item[(b)] For $\alpha$ odd and an equivalence class $(j,n)$ the space $\FL(W_m(K),\emptyset;\alpha)$ has generators in Alexander grading $\alpha(g_n(K)-j)$ and normalized Maslov gradings  
$$
M(Y'_{j,n})+2+\{0,2,4,\ldots\}
$$
\end{enumerate}
Here $\alpha=k_{+}-k_{-}$, $n$ has the same parity as $\alpha$, $1\leq j\leq 2g_n(K)-1$,  and $M(Y'_{j,n})$ is defined by \eqref{eq: Maslov Y' L space}.
\end{theorem}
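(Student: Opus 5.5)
The plan is to follow the proof of Theorem \ref{thm: 1 framed unknot} step by step, substituting the $T(n,n)$ bookkeeping with the $h$-function bookkeeping of this section. The three parts of $\HFLhat(K_m(k_+,k_-))$ are treated separately: off-diagonal, bottom diagonal and top diagonal. I will work on one equivalence class $(j,n)\sim(j+m,n+2)$ at a time. By Corollary \ref{cor: M stable L space}, $M(Y'_{j,n})$ is constant on a class. By Corollary \ref{cor: size blocks grows L space}, after one step $1<j<2g_n(K)-1$, so I may assume this throughout. Then $m_{j,n}$ and $M_{j,n}$ are defined and both grow by one under $F_P$.

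\textbf{Vanishing of off-diagonal and bottom parts.} First, Lemma \ref{lem: Fp from off-diagonal L space} shows that $F_P(-\otimes B)$ kills the off-diagonal part, so the relation $v\sim F_P(v\otimes B)$ makes these classes zero. Next I treat the bottom diagonal block. Proposition \ref{prop:level-preserving} carries over verbatim, since Lemma \ref{lem: identify U not V} only uses injectivity of $U_i$ on $\HFL$ of an L-space link. Hence $F_P(-\otimes B)$ raises level by at most $2$. On the bottom block it raises level by exactly $2$: a Maslov-grading comparison using Lemma \ref{lem: change orientation L space diagonal}, together with $M_j-m_j\ge 2$ (from $\beta_n(j)+\beta_n(j-1)$ bounds), gives the analogue of Corollary \ref{cor: Maslov separation}. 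After $r$ iterates the level is $\ell+2r$, but the top bottom-block level is only $m_{j,n}+r$. The image is therefore pushed into the top block or off-diagonal, and the latter is then killed.

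\textbf{Top diagonal block.} The basepoint moving maps act trivially on the top block modulo the bottom block. This follows from the level argument of Lemma \ref{lem: basepoint moving}: $\Phi_{w}\Psi_z$ drops level by $2$. By the previous step the bottom block vanishes in the limit, so relation \eqref{eq: lasagna 3} is harmless. Conjecture \ref{conj: Fp top diagonal L space} then identifies $F_P(-\otimes T)$ and $F_P(-\otimes B)$ on levels $n-i$ with $i<n-M_{j,n}$ with the pair-of-pants maps for the $0$-framed unlink. Under this identification, $D^j_n(J)$ corresponds to the monomial in the complement and the relations \eqref{eq: D relations L space} present $V_{n,n-i-1}$. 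The braid action on this level is $V_{n,n-1-i}$, and the image of $F_P(-\otimes T)$ may be quotiented. The computation is then literally that of Theorem \ref{thm: zero framed unknot}. By Lemma \ref{lem: Vni}(b), this yields exactly one surviving class for each $i$ with $n-i-1$ even or $i=0$. The bound $n-M_{j,n}$ grows by one per step, so every $i\ge0$ eventually occurs. This gives $i\in\{0,1,3,\dots\}$ for $n$ even and $\{0,2,4,\dots\}$ for $n$ odd. The normalized Maslov grading is $M(Y'_{j,n})-n+2+i+n=M(Y'_{j,n})+2+i$, and the Alexander grading is $\alpha(g_n(K)-j)$.

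\textbf{Main obstacle.} The main difficulty is the lowest level $n-M_{j,n}$ of the top block, where the conjecture gives no formula. As in the $m=1$ proof, I would argue that $F_P(-\otimes B)$ sends this level to level $M_{j,n}+2>M_{j+m,n+2}=M_{j,n}+1$, which is no longer lowest. Iterating once therefore reduces to the controlled range. A secondary care point is verifying that relations coming from the off-diagonal and bottom parts under $F_P(-\otimes T)$ do not impose extra relations on the top quotient. I would handle this via the Alexander and level filtrations, as in the proof of Theorem \ref{thm: unknot negative intro}.
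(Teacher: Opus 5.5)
Your proposal follows the paper's proof essentially step for step: off-diagonal classes die under $F_P(-\otimes B)$ by Lemma \ref{lem: Fp from off-diagonal L space}, and bottom-block classes die because the level rises by $2$ per step while the block height rises only by $1$ (Corollary \ref{cor: size blocks grows L space}). The top block is then matched via Conjecture \ref{conj: Fp top diagonal L space} with the truncated $0$-framed cube, giving one class for each $i$ such that $V_{n,n-i-1}$ has nonzero coinvariants. You supply somewhat more of the detail (basepoint maps, the lowest level) than the paper, which defers these to the proof of Theorem \ref{thm: 1 framed unknot}.

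There are two harmless slips. First, a bottom-block class is never pushed into the top block, since at each step its image lies in levels $\le m_{j,n}+2\le M_{j,n}<M_{j+m,n+2}$. Second, under the conjectured formulas $D^j_n(J)$ matches the unlink monomial indexed by $J$ itself (degree $n-i-1$), not by its complement, which is what your count via $V_{n,n-1-i}$ actually uses.
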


The value $g_n(K)-j$ only depends on the equivalence class of $(j,n)$. By Corollary \ref{cor: M stable L space} the value $M(Y'_{j,n})$  only depends on the equivalence class of $(j,n)$ as well.

\begin{proof}
The proof is similar to the proof of Theorem \ref{thm: 1 framed unknot}. By Lemma \ref{lem: Fp from off-diagonal L space} the map $F_P(-\otimes B)$ vanishes on off-diagonal blocks. 

For the bottom diagonal block, the limit vanishes similarly to the proof of Theorem \ref{thm: 1 framed unknot}. By Corollary \ref{cor: size blocks grows L space} the height of the bottom diagonal block increases by one after changing $(j,n)$ to $(j+m,n+2)$. On the other hand, $F_P(-\otimes B)$ still increases the level grading by 2.

For the top diagonal block, we use Conjecture \ref{conj: Fp top diagonal L space} to compare it with the truncated cube for the $0$-framed unknot, as in  Theorem \ref{thm: 1 framed unknot}. In particular, at level $n-i$ we get a representation $V_{n,n-i-1}$ which has one-dimensional coinvariants whenever $n-i-1$ is even or $i=0$.
\end{proof}

\begin{appendix}

\section{An algebraic model for $T(n,n)$}
\label{sec: appendix A}

In this section we define a complex $\CFL^{\alg}(T(n,n))$ which we conjecture to be an algebraic model for the full Heegaard Floer complex $\CFL(T(n,n))$. As before, we work over $\F$ and ignore signs.

We start with the following observation.

\begin{lemma}
\label{lem: subspace}
Consider the $\F$-vector space $U_{n,k}$ spanned by vectors $v_{M}$ where $M\subset \{1,\ldots,n\}$ and $|M|=k$, modulo relations
$$
\sum_{j\in M'}v_{M'\setminus j}=0,\ |M'|=k+1.
$$
Then $U_{n,k}$ is isomorphic to the $S_n$ representation $V_{n,k-1}$ from Section \ref{sec: rep theory}, in particular $\dim U_{n,k}=\binom{n-1}{k-1}$. The isomorphism sends the equivalence class of $v_M$ to 
$$
[v_M]\mapsto \begin{cases}
\xi_{M\setminus 1} & \mathrm{if}\ 1\in M\\
\sum_{m\in M}\xi_{M\setminus m} & \mathrm{if}\ 1\notin M.
\end{cases}
$$
\end{lemma}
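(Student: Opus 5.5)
The plan is to identify $U_{n,k}$ with a cokernel in the cube complex $C_n$ and then transport it to $V_{n,k-1}$ via acyclicity. Write $e_M$ for the generators of $C_{n,k}$. The relations $\sum_{j\in M'}v_{M'\setminus j}=0$ for $|M'|=k+1$ say precisely that $\partial_{k+1}(e_{M'})=0$. Hence $U_{n,k}=\Coker\partial_{k+1}=C_{n,k}/\Imm\partial_{k+1}$, and this identification is $S_n$-equivariant since $S_n$ permutes the $e_M$.

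By acyclicity of $C_n$ and \eqref{eq: V as cokernel}, we have $\Coker\partial_{k+1}\simeq \Imm\partial_k=\Ker\partial_{k-1}=V_{n,k-1}$. The map is induced by $\partial_k$ itself, $[e_M]\mapsto \partial_k e_M=\sum_{m\in M}e_{M\setminus m}$, and it is well defined because $\partial_k\partial_{k+1}=0$. It is injective since $\Ker\partial_k=\Imm\partial_{k+1}$, and surjective onto $\Imm\partial_k=\Ker\partial_{k-1}=V_{n,k-1}$. It commutes with the $S_n$ action because $\partial$ is equivariant. Proposition \ref{prop: Sn action on xi} then gives $\dim U_{n,k}=\binom{n-1}{k-1}$.

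It remains to express $\partial_k e_M$ in the basis $\xi_B$, $|B|=k-1$, $B\subset\{2,\ldots,n\}$. For this we use \eqref{eq: xi span}: an element of $\Ker\partial_{k-1}$ equals $\sum_{1\notin B}c_B\xi_B$, where $c_B$ is the coefficient of $e_B$. We then read off coefficients of $e_B$ with $1\notin B$.
\begin{itemize}
\item If $1\in M$, the only term $e_{M\setminus m}$ with $1\notin M\setminus m$ is $m=1$. Hence $\partial e_M=\xi_{M\setminus 1}$.
\item If $1\notin M$, every $e_{M\setminus m}$ avoids $1$. Hence $\partial e_M=\sum_{m\in M}\xi_{M\setminus m}$.
\end{itemize}
These are exactly the stated formulas.

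The only real subtlety is the degree bookkeeping: the relation index $k+1$ must match the cokernel \eqref{eq: V as cokernel} with $i=k-1$. The edge case $k=n$ also needs a check, where $U_{n,n}=\F=V_{n,n-1}$ and there are no relations. Otherwise the argument is routine linear algebra over $\F$, with signs irrelevant.
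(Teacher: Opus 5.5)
Your proposal is correct and follows the paper's proof: you identify $U_{n,k}$ with $\Coker\partial_{k+1}$ in the cube complex, use acyclicity to send $[v_M]\mapsto\partial(e_M)\in\Ker\partial_{k-1}=V_{n,k-1}$, and read off the $\xi$-expansion from the coefficients of $e_B$ with $1\notin B$ via \eqref{eq: xi span}. Your explicit two-case coefficient check, the $S_n$-equivariance remark and the $k=n$ edge case only spell out details the paper leaves implicit.
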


\begin{proof}
Recall the cube complex $C_n$ from Section \ref{sec: rep theory}. We can identify 
$$
U_{n,k}=C_{n,k}/\Imm \partial_{k+1}=C_{n,k}/\Ker \partial_{k}\simeq \Imm \partial_k= \Ker \partial_{k-1}=V_{n,k-1}.
$$
The isomorphism sends the equivalence class of $v_M$ to $\partial(e_M)$. To express the result as a linear combination of $\xi_A$, one can use the proof of Proposition \ref{prop: Sn action on xi} and pick up all the terms in $\partial(e_M)$ not containing $1$.
\end{proof}

\subsection{Generators}

The complex $\CFL^{\alg}(T(n,n))$ has the following generators. We follow the nomenclature of Section \ref{sec: Tnn}.

\noindent {\bf 1.} The elements $X_1,\ldots,X_{n}$ which generate the homology $\HFL(T(n,n))$.

\noindent {\bf 2.} The bottom diagonal block at $j$ and level $i$ has generators $Z^{j}(J)$ where  $|J|=i+1$ and $0\le i\le \min(j-1,n-j)$. The elements $Z^{j}(J)$ are subject to relations
$$
Z^j(J)=X_j,\ |J|=1
$$
and
\begin{equation}
\label{eq: Z relations}
\sum_{x\in J'}Z^j(J'\setminus x)=0,\ |J'|=i+2.
\end{equation}
By Lemma \ref{lem: subspace} $Z^j(J)$ span a subspace $\cZ_{i}^j$ of dimension $\binom{n-1}{i}$.

\noindent {\bf 3.} The off-diagonal block with parameters $(j,s,c)$ has  generators $B^j(I,J,M)$ where $I,J,M$ are nonempty subsets such that
$$
|I|=s,\ |J|=c,\ |M|=j+c-s,\ J\subset I, M\subset\overline{I}. 
$$
Note that the inequalities
$$
1\le c\le s,\ c\le n-j,\ c+j>s
$$
ensure that such $J,M$ exist for any $I$.
These are subject to relations 
\begin{equation}
\label{eq: B relations}
\sum_{x\in J'}B^j(I,J'\setminus x,M)=0=\sum_{y\in   M'}B^j(I,J,M'\setminus y),\ J'\subset I,M'\subset \overline{I}.
\end{equation}
for $|J'|=c+1$, $|M'|=j+c-s+1$ and they span a subspace 
$\cB^j_{s,c}=\bigoplus_{I}\cB^j_{s,c}(I)$.  
By Lemma \ref{lem: subspace} we have 
$$
\dim \cB^j_{s,c}(I)=\binom{s-1}{c-1}\binom{n-s-1}{j+c-s-1}=\binom{s-1}{c-1}\binom{n-s-1}{n-j-c}.
$$
in agreement with Lemma \ref{lem: levels off diagonal}.
If $c=1$ then the choice of $J$ does not matter (modulo relations) and we simply write $B^j(I,-,M)$. If $c=s-j+1$ then   the choice of $M$ does not matter  and we write $B^j(I,J,-)$.

\noindent {\bf 4.} The top diagonal block at $j$ and level $n-i$ has generators $D^j(J)$, where $|J|=n-i$ and $i\le \min(j-2,n-j-1)$, subject to relations
\begin{equation}
\label{eq: D relations}
\sum_{x\in J'}D^j(J'\setminus x)=0,\ |J'|=n-i+1.
\end{equation}
These span a subspace $\cD^j_{i}$ of dimension
$$
\dim \cD^j_{i}=\binom{n-1}{n-i-1}=\binom{n-1}{i},
$$
in agreement with Theorem \ref{thm: GH}.

The Alexander, Maslov and level gradings of all these generators are given by Theorem \ref{thm: GH}, Lemma \ref{lem: levels} and Lemma \ref{lem: levels off diagonal}. Note that $B^j(I,J,M)$ has Alexander grading
$A(X_j)-e_I$. 

\begin{remark}
In Lemma \ref{lem: unlink action bottom diagonal} we  used slightly different notations $Z^j_A$ for the bottom diagonal generators. They are related to the generators $Z^j(B)$ here as in Lemma \ref{lem: subspace}, in particular $|A|=|B|-1$.
\end{remark}
 
\subsection{Differential}
The differential on $\CFL^{\alg}$ is given as follows:

\noindent {\bf 1.} The differential of $Z^j(J)$ is given by
\begin{equation}
\label{eq: diff Z}
\partial Z^{j}(J)=\sum_{x\in J}U_{x}V_{x}Z^j(J\setminus x),\ j\ge 2, \quad |J|\geq 2.
\end{equation}
In particular,
$$
\partial Z^j(\{x,y\})=(U_xV_x+U_yV_y)X_j.
$$

\noindent {\bf 2.} The differentials of $B^j(I,J,M)$ break into several groups. We focus on the case $1\le j\le \frac{n+1}{2}$, the differentials for $ \frac{n+1}{2}\le j\le n$ are symmetric.

\noindent {\bf 2a. (Generic case):}  For $c>1, |M|>1$ we can write 
\begin{equation}
\label{eq: diff B generic}
\partial B^j(I,J,M)=\sum_{x\in J}U_xB^j(I\setminus x,J\setminus x,M)+\sum_{y\in  M}V_yB^j(I\cup y,J,M\setminus y).
\end{equation}

\noindent {\bf 2b. (Bottom vertex):} For $s=j$ and $c=1$ we have 
\begin{equation}
\label{eq: diff B bottom}
\partial B^j(I,-,-)=U_IX_{j}+V_{\overline{I}}X_{j+1}. 
\end{equation}
\noindent {\bf 2c. (Left edge):} %Assume $1\le j\le \frac{n+1}{2}$. 
For $1\le s<j$ and $c=1$ we have
\begin{equation}
\label{eq: diff B left}
\partial B^j(I,-,M)=\sum_{y\in  M}V_yB^j(I\cup y,-,M\setminus y)+U_IZ^j(M)
\end{equation}

\noindent {\bf 2d. (Right edge):} %Assume $2\le j\le\frac{n-1}{2}$. 
For $2\le c\le j+1$ and $s=c+j-1$ we get 
\begin{equation}
\label{eq: diff B right case 1}
\partial B^j(I,J,-)=\sum_{x\in J} U_x B^j(I\setminus x,J\setminus x,-)+V_{\overline{I}}Z^{j+1}(J),
\end{equation}
while for $j+1< c\le n-j$ and $s=c+j-1$ we get
\begin{equation}
\label{eq: diff B right case 2}
\partial B^j(I,J,-)=\sum_{x\in J} U_x B^j(I\setminus x,J\setminus x,-)+\sum_{\stackrel{I_1\subset J}{|I_1|=s-2j}}V_{\overline{I}}V_{I_1}B^{j+1}(I_1,I_1,J\setminus I_1),
\end{equation}
In the boundary case $c=j+2,s=2j+1$, we have $|I_1|=1$ and we get 
$$
\partial B^j(I,J,-)=\sum_{x\in J} U_x B^j(I\setminus x,J\setminus x,-)+\sum_{x\in J}V_{\overline{I}}V_{x}B^{j+1}(x,x,J\setminus x).
$$

\noindent {\bf 3.} The differential of $D^j(J)$ is given by 
$$
\partial D^j(J)=
\sum_{\stackrel{I_1\subset J}{|I_1|=n-j+1}}U_{J\setminus I_1}B^{j-1}(I_1\cup \overline{J},I_1,J\setminus I_1)+\sum_{x\in J}U_xV_xD^j(J\setminus x)+
\sum_{\stackrel{I_2\subset J}{|I_2|=n-j-i}}V_{I_2}B^j(I_2,I_2,J\setminus I_2).
$$
%compare with Lemmas \ref{lem: diff top to top} and \ref{lem: diff to off diagonal} below.
We write the three components of the differential as $\partial_1,\partial_2$ and $\partial_3$. If $|J|=\max\{n-j+2, j+1\}$ then $D^j(J)$ is at the lowest level of the diagonal block, and the term $\partial_2$ vanishes.
This completes the definition of the differential on $\CFL^{\alg}(T(n,n))$.

\begin{remark}
The reader is invited to verify that the differential decreases the level and Maslov gradings by 1 and preserves the Alexander grading. %See also Lemmas \ref{lem: diff top to top},\ref{lem: diff to off diagonal} and \ref{lem:diff-off-off},  constraining the differential on $\CFL(T(n,n))$ in a similar way.
\end{remark}

\begin{example}
For $n=3$ one can compare the complex with Figure \ref{fig: T33}, see also Example \ref{ex: T33}.
\end{example}

\begin{example}
For $n=4$ we can compare the generators with the ones from \cite[Section 7.4]{BLZ}. The generators $B_{ij}$ in \cite{BLZ} correspond to our  $B^2(\{i,j\},-,-)$, the generators $c_k^{ij}$ in \cite{BLZ} correspond to $B^2(\{k\},-,\{i,j\})$  and the generators $C^k_{ij}$ correspond to $B^2(\{i,j,k\},\{i,j\},-)$. Finally, the generators $B'_{ij}$ correspond to $B^2(\{i,j\},\{i,j\},\{k,l\})$ where $i,j,k,l$ are all distinct.
Note that \cite{BLZ} has relations
$$
c_{k}^{ij}+c_k^{il}+c_k^{jl}=0,\ 
C^k_{ij}+C^{i}_{jk}+C^{j}_{ik}=0
$$
which correspond to our relations \eqref{eq: B relations}.
Here is a visualization of generators for $n=4$:

\adjustbox{scale=0.8,center}{
\begin{tikzcd}
 & & & & \cD^2_0 \arrow{dl} \arrow{drr} & & & & \cD^3_0 \arrow{dll}\arrow{dr} & & & & \\
 & & &  \cB^1_{3,3} \arrow{drr} \arrow{dl}& & & \cB^2_{2,2} \arrow{dr}\arrow{dl} & & & \cB^3_{1,1} \arrow{dll} \arrow{dr} & & & \\
 & & \cB^1_{2,2}  \arrow{drr} \arrow{dl} & & & \cB^2_{1,1} \arrow{dr}\arrow{dl} & & \cB^2_{3,2} \arrow{dr}\arrow{dl} & & & \cB^3_{2,1} \arrow{dll} \arrow{dr} & & \\
 & \cB^1_{1,1}  \arrow{drrr} \arrow{dl} & & & \cZ^2_1 \arrow{d} & & \cB^2_{2,1} \arrow{dll} \arrow{drr} & &  \cZ^3_1  \arrow{d} & & & \cB^3_{3,1} \arrow{dlll} \arrow{dr} & \\
X_1 & & & & X_2 & & & & X_3 & & & & X_4 \\
\end{tikzcd}
}
\end{example}

\begin{example}
Let us zoom into a bigger example for $n=6$ and $j=2,3$. The off-diagonal blocks  $\cB^{j}_{s,c}$  are shaded, and the $U,V$-coefficients of the differentials are shown schematically in the picture:

\medskip

\adjustbox{scale=0.8,center}{
\begin{tikzcd}[execute at end picture={
\foreach \Nombre in  {A,B,C,D,E,F,G,H}
  {\coordinate (\Nombre) at (\Nombre.center);}
\fill[lightgray,opacity=0.3] 
  (A) -- (B) -- (C) -- (D) -- cycle;
\fill[lightgray,opacity=0.3] 
  (E) -- (F) -- (G) -- (H) -- cycle;
}] 
 \cD^2_0 \arrow{drrrr}{V^4} & & & & & & \cD^3_0 \arrow{dll}{U^2} \arrow{d}{UV} \arrow{drrr}{V^3} & & & & & &\\
  & & & & |[alias=B]|\cB^2_{4,4} \arrow{dl}{U} \arrow{dr}{V} & & \cD^3_1 \arrow{dl}{U} \arrow{drr}{V^2} & & & |[alias=F]| \cB^3_{3,3} \arrow{dl}{U} \arrow{dr}{V} & & &\\
  & & & \cB^2_{3,3} \arrow{dl}{U} \arrow{dr}{V} & & |[alias=C]| \cB^2_{5,4}  \arrow{dl}{U} \arrow{drr}{V^2} &  & & \cB^3_{2,2} \arrow{dl}{U} \arrow{dr}{V} & & \cB^3_{4,3} \arrow{dl}{U} \arrow{dr}{V} & & \\
& & \cB^2_{2,2}\arrow{dl}{U} \arrow{dr}{V} & & \cB^2_{4,3} \arrow{dl}{U} \arrow{drr}{V^2} & &  & |[alias=E]| \cB^3_{1,1} \arrow{dl}{U} \arrow{dr}{V} & & \cB^3_{3,2} \arrow{dl}{U} \arrow{dr}{V} & &  |[alias=G]| \cB^3_{5,3} \arrow{dl}{U}& \\
& |[alias=A]| \cB^2_{1,1} \arrow{dl}{U} \arrow{dr}{V} & & \cB^2_{3,2} \arrow{dl}{U} \arrow{drrr}{V^3} & & & \cZ^3_2 \arrow{d}{UV} & & \cB^3_{2,1} \arrow{dll}{U^2} \arrow{dr}{V} & & \cB^3_{4,2} \arrow{dl}{U} & &\\
\cZ^2_{1} \arrow{d}{UV}   & & 
|[alias=D]| \cB^2_{2,1} \arrow{dll}{U^2} \arrow{drrrr}{V^4} & & & & \cZ^3_1 \arrow{d}{UV}& & & |[alias=H]| \cB^3_{3,1} \arrow{dlll}{U^3} & & &\\
 X_2 & & & & & & X_3 & & & & & &
\end{tikzcd}
}
\end{example}

Using the definitions, one can readily check that $\partial$ is well defined. More precisely we have the following.
%\AA{Z labels in the figures don't fully match item 2} \EG{Changed the indices for $Z$ in both figures, please check}
\begin{proposition}
The differential $\partial$ preserves the relations \eqref{eq: Z relations},\eqref{eq: B relations}, \eqref{eq: D relations} between the generators.
\end{proposition}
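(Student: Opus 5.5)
To show that $\partial$ descends to the quotient, I will check, for each family of relations, that the image under $\partial$ of the defining sum is itself a combination of relations in the target. The tool throughout is Lemma \ref{lem: subspace}: a relation $\sum_{x\in J'}v_{J'\setminus x}$ is the image of $e_{J'}$ under the cube differential of $C_n$. A formula of the form $v_J\mapsto \sum_{x\in J} c_x\, w_{J\setminus x}$, with $c_x$ depending only on $x$, therefore sends relations to sums in which each term $w_{J'\setminus\{x,y\}}$ appears twice, with coefficient $c_x+c_y$ from one order and $c_y+c_x$ from the other. Over $\F$ these contributions cancel or regroup into relations. Making this precise is the "$\partial^2=0$ on the exterior algebra" identity, twisted by coefficients.

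\textbf{Z-relations.} Apply \eqref{eq: diff Z} to $\sum_{x\in J'}Z^j(J'\setminus x)$. This gives
$$
\sum_{x\neq y\in J'}U_yV_y\, Z^j(J'\setminus\{x,y\}) \;=\; \sum_{y\in J'}U_yV_y\Big(\sum_{x\in J'\setminus y}Z^j\big((J'\setminus y)\setminus x\big)\Big),
$$
and each inner sum is a relation \eqref{eq: Z relations} for $J'\setminus y$. At the bottom, $|J'|=2$, the relation is $Z^j(\{x\})+Z^j(\{y\})=2X_j=0$, which is consistent.

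\textbf{B-relations.} I go case by case through 2a--2d, and treat the $J$- and $M$-relations separately. In the generic formula \eqref{eq: diff B generic}, the $U$-terms act only on $J$ and $I$, and the $V$-terms act only on $M$ and $I$. Fix a $J$-relation $\sum_{x\in J'}B^j(I,J'\setminus x,M)$.
\begin{itemize}
\item[(i)] The $V$-part reproduces $J$-relations with $I\cup y$ in place of $I$.
\item[(ii)] The $U$-part gives $\sum_{x\ne z}U_z B^j(I\setminus z,J'\setminus\{x,z\},M)$. Regrouped by $z$, this is $\sum_z U_z$ times a $J$-relation for $(I\setminus z, J'\setminus z)$.
\end{itemize}
The $M$-relations are symmetric. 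The edge cases require checking that the boundary terms also respect relations.
\begin{itemize}
\item In 2c, $U_I Z^j(M)$ turns an $M$-relation into $U_I$ times a Z-relation.
\item In 2d, first case, $V_{\overline I}Z^{j+1}(J)$ turns a $J$-relation into a Z-relation.
\item In 2b the relation is vacuous, since $c=1$.
\item In 2d, second case, the sum over $I_1\subset J$ of $V_{\overline I}V_{I_1}B^{j+1}(I_1,I_1,J\setminus I_1)$ needs more care. Substituting $J'\setminus x$, each target $B^{j+1}(I_1,I_1,N)$ appears from every $x\in J'\setminus I_1$, with $N=J'\setminus (I_1\cup x)$. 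After collecting by $I_1$ this is $V_{\overline I}V_{I_1}$ times an $M$-relation for $B^{j+1}(I_1,I_1,\cdot)$, with $M'=J'\setminus I_1$.
\end{itemize}
The symmetric range $j\ge\frac{n+1}{2}$ then follows by the symmetry $j\leftrightarrow n+1-j$.

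\textbf{D-relations.} These I expect to be the main obstacle, because the three components $\partial_1,\partial_2,\partial_3$ index their targets by subsets of $J$ of varying sizes. The component $\partial_2$ is handled exactly as the Z case. For $\partial_3$, apply it to $\sum_{x\in J'}D^j(J'\setminus x)$ and collect terms by the target $B^j(I_2,I_2,N)$. Each pair $(I_2,N)$ with $I_2\sqcup N\sqcup\{x\}=J'$ appears once per $x$, so the coefficient of the collected sum is $V_{I_2}\sum_{x} B^j(I_2,I_2,J'\setminus(I_2\cup x))$, which is an $M$-relation. For $\partial_1$ the targets are $B^{j-1}(I_1\cup\overline{J},I_1,J\setminus I_1)$, whose first argument depends on $x$ through $\overline{J'\setminus x}=\overline{J'}\cup x$. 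I will therefore reorganize by the pair $(I_1,K=J'\setminus I_1)$. There are two kinds of terms:
\begin{itemize}
\item[(i)] If $x\in K$, the term is $U_{K\setminus x}\,B^{j-1}(I_1\cup\overline{J'}\cup x,I_1,K\setminus x)$. The first argument varies with $x$, so these do not collect into a relation for a single $I$. I expect them to cancel instead against the terms of (ii), paired with $I_1$ replaced by $I_1\cup x$.
\item[(ii)] If $x\in I_1$, the term involves $I_1\setminus x$, and its subset count does not match $|I_1|=n-j+1$.
\end{itemize}
The pairing in (i) has to be verified using the $J$-relations for $B^{j-1}$, with $J'$ running over $(I_1\cup x)$-type sets. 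I will first check this in the $n=4$ and $n=6$ examples, to confirm the cancellation pattern before writing the general regrouping. If the pairing fails, I would instead use Lemma \ref{lem: subspace} to replace $D^j(J)$ by the basis representatives $\xi$ and verify well-definedness on that basis directly.
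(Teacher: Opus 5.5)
Your arguments for the $Z$-relations, for all of cases 2a--2d of the $B$-relations, and for the $\partial_2$ and $\partial_3$ components on the $D$-relations are correct. In each of these, the image of a relation regroups by the appropriate fixed index into a combination of relations. The paper itself only says this is a routine check.

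\textbf{The gap.} The $\partial_1$ component on the $D$-relations is left open, and the mechanism you expect there does not exist. In $\partial_1 D^j(J'\setminus x)$ the sum runs over $I_1\subset J'\setminus x$, so $x$ never lies in $I_1$. Your case (ii) is therefore empty, and the terms of (i) have nothing to cancel against.

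In fact nothing cancels: the terms regroup into relations, but you grouped by the wrong index. Grouping by $I_1$ fixes the second argument and lets the first one vary. Instead, set $S=I_1\sqcup\{x\}$, an $(n-j+2)$-element subset of $J'$. Since $\overline{J'\setminus x}=\overline{J'}\cup\{x\}$ and $(J'\setminus x)\setminus I_1=J'\setminus S$, the term indexed by $(x,I_1)$ is
\[
U_{J'\setminus S}\,B^{j-1}(S\cup\overline{J'},S\setminus x,J'\setminus S).
\]
Here the coefficient and the first and third arguments depend only on $S$. Pairs $(x,I_1)$ correspond bijectively to pairs $(S,x)$ with $x\in S$. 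Hence
\[
\partial_1\Big(\sum_{x\in J'}D^j(J'\setminus x)\Big)=\sum_{S\subset J',\,|S|=n-j+2}U_{J'\setminus S}\sum_{x\in S}B^{j-1}\big(S\cup\overline{J'},\,S\setminus x,\,J'\setminus S\big).
\]
Each inner sum is an instance of the first relation in \eqref{eq: B relations}. The generators involved are $B^{j-1}(I,\cdot,M)$ with $I=S\cup\overline{J'}$, $M=J'\setminus S=\overline{I}$ and $c=n-j+1$, and $S\subset I$ has size $c+1$.

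This is the same substitution $I_1'=I_1\setminus x$ that the paper uses to handle \eqref{eq: phi_1 d top block 1}. Your last sentence about $J$-relations over $(I_1\cup x)$-type sets points in this direction, but as written the step is not proved.

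\textbf{Other points.}
\begin{itemize}
\item Your fallback would not close the gap. Defining $\partial$ on a basis is automatically well defined. The content of the proposition is that the given formula on every spanning element $D^j(J)$ agrees with that basis-defined map, which is exactly the relation check you are trying to avoid.
\item A smaller point about your opening heuristic: the net coefficient of $w_{J'\setminus\{x,y\}}$ is $c_x+c_y$, which is nonzero in general. The argument works by regrouping into relations, as you correctly do in the $Z$ case, not by cancellation.
\end{itemize}
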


\begin{lemma}
We have $\partial^2=0$, so that $(\CFL^{\alg}(T(n,n)),\partial)$ is a chain complex.
\end{lemma}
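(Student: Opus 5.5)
The claim $\partial^2=0$ will be checked generator by generator, grouping the terms of $\partial^2$ by their target block. Since $\partial$ lowers level by exactly one, $\partial^2$ lowers it by two, and each component of $\partial^2 g$ lands in a block two levels below $g$. By the preceding proposition $\partial$ respects the relations \eqref{eq: Z relations}, \eqref{eq: B relations}, \eqref{eq: D relations}, so it is enough to compute $\partial^2$ on the spanning elements $Z^j(J)$, $B^j(I,J,M)$, $D^j(J)$. The symmetry $j\leftrightarrow n+1-j$ (exchanging $U$ and $V$) reduces everything to $1\le j\le \frac{n+1}{2}$. We work over $\F$ throughout, so every check amounts to showing that each term occurs an even number of times.

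The first step is the bottom diagonal block. For $\partial^2 Z^j(J)$, formula \eqref{eq: diff Z} gives $\sum_{x\neq y\in J}U_xV_xU_yV_yZ^j(J\setminus\{x,y\})$. Each unordered pair $\{x,y\}$ contributes twice, so the sum vanishes; in the edge case $|J|=2$ the two terms are equal multiples of $X_j$. The same double-counting handles the generic off-diagonal case \eqref{eq: diff B generic}. There the $UU$ terms cancel in pairs $(x,x')$, and the $VV$ terms cancel in pairs $(y,y')$. The mixed terms $U_xV_y B^j(I\setminus x\cup y, J\setminus x, M\setminus y)$ appear once from each order of applying the two summands, because $x\in J\subset I$ and $y\in M\subset\overline I$ are distinct.

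The next step is the boundary cases 2b--2d, where the differential leaves the off-diagonal region. For the bottom vertex one needs $\partial(U_IX_j+V_{\overline I}X_{j+1})=0$, which holds because $X_j$ are cycles. For the left edge \eqref{eq: diff B left}, the new cross terms are $V_yU_{I\cup y}Z^j(M\setminus y)$ together with $U_I\partial Z^j(M)=U_I\sum_y U_yV_yZ^j(M\setminus y)$, and these cancel termwise. When $s+1=j$, the left-edge term hits a bottom vertex. Its contribution $V_y(U_{I\cup y}X_j+V_{\overline{I\cup y}}X_{j+1})$, summed over $y\in M$, must cancel against $U_I\partial Z^j(M)$ using the relations \eqref{eq: HFL full relations}, namely $U_AX_j=V_{\overline A}X_{j+1}$ and $U_xV_xX_j=U_yV_yX_j$. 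The right edge \eqref{eq: diff B right case 1} is handled in the same way, with $Z^{j+1}$ in place of $Z^j$.

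The main obstacle will be case \eqref{eq: diff B right case 2} and the top generators $D^j(J)$, whose differentials involve sums over subsets $I_1$. For $\partial^2 D^j(J)$ I will expand $(\partial_1+\partial_2+\partial_3)^2$ and sort the results by target. The $\partial_2^2$ terms cancel as for $Z$. The terms $\partial_1\partial_2+\partial B\circ\partial_1$ have the form $U_{J\setminus I_1}U_x$ or $U_{J\setminus I_1}V_y$ times $B^{j-1}$ generators, and I expect them to cancel by reindexing: removing $x$ from $J$ and then choosing $I_1$ gives the same result as choosing $I_1\cup x$ and then applying the $U_x$-branch of \eqref{eq: diff B generic}. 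The analogous reindexing should handle $\partial_3$ together with \eqref{eq: diff B right case 2}. The residual terms at the extreme indices ($c=j+2$, lowest level $|J|=\max\{n-j+2,j+1\}$ where $\partial_2=0$, and $|I_1|$ at its bounds) require using the relations \eqref{eq: B relations} to rewrite sums over $I_1$ as boundary sums, which vanish modulo the relations. I will verify these extreme cases explicitly against the $n=3,4$ pictures (Figure \ref{fig: T33} and the $n=4$ diagram) before writing the general combinatorial cancellation.
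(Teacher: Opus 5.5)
Your overall strategy (expand $\partial^2$ generator by generator, cancel terms in pairs, and use \eqref{eq: B relations} where needed) is the paper's: its proof is a list of commuting squares together with \eqref{eq: B relations}. However, two of the specific cancellation mechanisms you propose would fail as stated.

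\emph{Use of homology relations.} In the left-edge case $s=j-1$ you appeal to \eqref{eq: HFL full relations}. These relations are not available at the chain level. In $\CFL^{\alg}(T(n,n))$ the $X_j$ are free generators, and $U_AX_j=V_{\overline A}X_{j+1}$, $U_xV_xX_j=U_yV_yX_j$ hold only in $H^0$, because they are boundaries of $B^j(A,-,-)$ and $Z^j(\{x,y\})$; this is the content of the next lemma. Using them would only give $\partial^2\equiv 0$ modulo $\Imm\partial$. They are also unnecessary. For $M=\{y,y'\}$ the $X_{j+1}$-terms are $V_yV_{\overline{I\cup y}}X_{j+1}+V_{y'}V_{\overline{I\cup y'}}X_{j+1}=2V_{\overline I}X_{j+1}=0$. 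Each $V_yU_{I\cup y}X_j$ cancels the term $U_IU_yV_yX_j$ of $U_I\partial Z^j(M)$. The right edge with $c=2$, where the $U_x$-term lands on a bottom vertex, works the same way. All cancellations must happen in the free module, using only \eqref{eq: Z relations}, \eqref{eq: B relations}, \eqref{eq: D relations}.

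\emph{Top-block pairing.} The pairing you describe cannot work. The terms $\partial_1\partial_2D^j(J)$ are monomials $U_{J\setminus I_1}V_x$ with $x\in J\setminus I_1$. The $U_x$-branch of $\partial B^{j-1}(I_1\cup\overline J,I_1,J\setminus I_1)$ gives pure $U$-monomials, and in any case $|I_1\cup x|\neq n-j+1$. The correct bookkeeping for $\partial_1$ is as follows.
\begin{itemize}
\item $\partial_1\partial_2$ cancels the $V_x$-branch of $\partial B^{j-1}(I_1\cup\overline J,I_1,J\setminus I_1)$ with the same $I_1$; this is the paper's first top-block square.
\item The $U_x$-branch terms, grouped by $I_1'=I_1\setminus x$, sum to $U_{J\setminus I_1'}\sum_{x\in J\setminus I_1'}B^{j-1}(I_1'\cup\overline J,I_1',(J\setminus I_1')\setminus x)$. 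This vanishes by the relation $\sum_{y\in M'}B^j(I,J,M'\setminus y)=0$ in \eqref{eq: B relations}.
\end{itemize}
Your reindexing ``choose $I_2\cup x$, then take the $U_x$-branch'' is the right one, but for $\partial_3$: it matches $\partial_3\partial_2$. The $V_y$-branch of $\partial B^j(I_2,I_2,J\setminus I_2)$ then vanishes by the relation $\sum_{x\in J'}B^j(I,J'\setminus x,M)=0$. The $V_y$-branch of $\partial B^{j+1}(I_1,I_1,J\setminus I_1)$ in \eqref{eq: diff B right case 2} vanishes the same way. So \eqref{eq: B relations} is needed at every level, not only at extreme indices.

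Finally, at the lowest level $|J|=n-j+2$, where $\partial_2=0$, there is a $\partial_1$/$\partial_3$ interaction your sketch omits. Here $I_1=J\setminus x$, and $B^{j-1}(\overline{\{x\}},J\setminus x,-)$ is a right edge. Its $B^j$-terms from \eqref{eq: diff B right case 2}, or its $Z^j$-term from \eqref{eq: diff B right case 1} when $n=2j-1$, cancel the $U$-terms of $\partial$ applied to $\partial_3D^j(J)$.
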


\begin{proof}
The proof consists of a sequence of commuting squares and the relation \eqref{eq: B relations}. For the bottom diagonal block (1) we have a commuting square
$$
\begin{tikzcd}
Z^j(J\setminus x) \arrow{d}{U_yV_y}& 
Z^j(J) \arrow{l}{U_xV_x} \arrow{d}{U_yV_y} \\
Z^j((J\setminus x)\setminus y) & Z^j(J\setminus y)\arrow{l}{U_xV_x} . 
\end{tikzcd}
$$
For the off-diagonal generators $B^j(I,J,M)$ we have several cases as above. In the generic case (2a) we get
$$
\begin{tikzcd}
B^j(I\setminus x,J\setminus x,M) \arrow[swap]{d}{V_y}& B^j(I,J,M) \arrow{l}{U_x} \arrow{d}{V_y}\\
B^j((I\setminus x)\cup y,J\setminus x,M\setminus y) & B^j(I\cup y,J,M\setminus y) \arrow{l}{U_x}.
\end{tikzcd}
$$
%\textcolor{red}{on the top left, should it be $M$ and on the bottom left, should it be $M\setminus y$?}
For the left edge (2c) we get the square
$$
\begin{tikzcd}
Z^j(M) \arrow[swap]{d}{U_yV_y}& B^j(I,-,M) \arrow{l}{U_I} \arrow{d}{V_y}\\
Z^j(M\setminus y) & B^j(I\cup y,-,M\setminus y) \arrow{l}{U_IU_y}.
\end{tikzcd}
$$
For the right edge (2d) for $c\le j+1$ we get the square 
$$
\begin{tikzcd}
B^j(I,J,-) \arrow{r}{V_{\overline{I}}} \arrow{d}{U_x}& Z^{j+1}(J) \arrow{d}{U_xV_x}\\
B^j(I\setminus x,J\setminus x,-) \arrow{r}{V_{\overline{I}}V_x} & Z^{j+1}(J\setminus x)
\end{tikzcd}
$$
while for $c>j+1$ we get the squares
$$
\begin{tikzcd}
B^j(I,J,-) \arrow{rr}{V_{\overline{I}}V_{I_1}} \arrow{d}{U_x}& &B^{j+1}(I_1,I_1,J\setminus I_1) \arrow{d}{U_x}\\
B^j(I\setminus x,J\setminus x,-) \arrow{rr}{V_{\overline{I}}V_xV_{I_1\setminus x}} & & B^{j+1}(I_1\setminus x,I_1\setminus x,J\setminus I_1)
\end{tikzcd}
$$
Note that $V_{\overline{I}}V_xV_{I_1\setminus x}=V_{\overline{I}}V_{I_1}$. For $c=j+2$ we get a special square
$$
\begin{tikzcd}
B^j(I,J,-) \arrow{rr}{V_{\overline{I}}V_{x}} \arrow{d}{U_x}& &B^{j+1}(x,x,J\setminus x) \arrow{d}{U_x}\\
B^j(I\setminus x,J\setminus x,-) \arrow{rr}{V_{\overline{I}}V_x} & & Z^{j+1}(J\setminus x)
\end{tikzcd}
$$
For the top diagonal block (3) we have two types of squares: for $x\notin I_1$ we have
$$
\begin{tikzcd}
B^{j-1}(I_1\cup \overline{J},I_1,J\setminus I_1) \arrow{d}{V_x} & & D^j(J) \arrow{d}{U_xV_x} \arrow[swap]{ll}{U_{J\setminus I_1}}\\
B^{j-1}(I_1\cup \overline{J}\cup x,I_1,(J\setminus I_1)\setminus x)  & & D^j(J\setminus x) \arrow[swap]{ll}{U_{(J\setminus I_1)\setminus x}}
\end{tikzcd}
$$
while for $x\in I_2$ we have
$$
\begin{tikzcd}
 D^j(J) \arrow{d}{U_xV_x} \arrow{rr}{V_{I_2}}  & & B^j(I_2,I_2,J\setminus I_2) \arrow{d}{U_x}\\
  D^j(J\setminus x)  \arrow{rr}{V_{I_2\setminus x}} & &  B^j(I_2\setminus x,I_2\setminus x,J\setminus I_2)
\end{tikzcd}
$$
In a special case $i=j-2$ we have $|J|=n-j+2$ and $|I_1|=n-j+1$, so $I_1=J\setminus x$, and we get a square
$$
\begin{tikzcd}
B^{j-1}(\overline{\{x\}},J\setminus x,\{x\}) \arrow{d}{V_xV_{I_2\setminus x}} & & D^j(J)  \arrow[swap]{ll}{U_{x}} \arrow{d}{V_{I_2}}\\
B^{j}(I_2\setminus x,I_2\setminus x,J\setminus I_2)  & & B^j(I_2,I_2,J\setminus I_2) \arrow{ll}{U_x}
\end{tikzcd}
$$
Note that when $n=2j-1$ and $i=j-2$, then $|I_2|=1, I_2=\{x\}$ and we get %then $B^j(\emptyset, \emptyset, J\setminus x)=Z^j(J\setminus x)$. 

$$
\begin{tikzcd}
B^{j-1}(\overline{\{x\}},J\setminus x,\{x\}) \arrow{d}{V_x} & & D^j(J)  \arrow[swap]{ll}{U_{x}} \arrow{d}{V_{x}}\\
Z^j(J\setminus x)  & & B^j(I_2,I_2,J\setminus I_2) \arrow{ll}{U_x}
\end{tikzcd}
$$
\end{proof}

\subsection{The algebraic conjecture}

\begin{lemma}
We have 
$$
H^0\left(\CFL^{\alg}\left(T(n,n)\right),\partial\right)=\HFL(T(n,n)).
$$
Here the index $0$ refers to the level grading.
\end{lemma}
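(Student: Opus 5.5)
The plan is to compute level-$0$ homology Alexander grading by Alexander grading. By construction every generator at level $0$ is one of $X_1,\ldots,X_n$: the bottom diagonal generators at level $0$ are $Z^j(J)$ with $|J|=1$, which equal $X_j$. So $H^0$ is the free $R_n$-module on $X_1,\ldots,X_n$ modulo the images of the level-$1$ generators under $\partial$. By the level gradings, the level-$1$ generators are of two kinds. The first is $Z^j(\{x,y\})$ (bottom block, $i=1$, for $2\le j\le n-1$). The second is the off-diagonal generators with $\lvl(j,s,c)=j-s+2c-1=1$; together with $c\ge 1$ and $c\ge s-j+1$, this forces $c=1$ and $s=j$, so these are exactly the ``bottom vertex'' generators $B^j(I,-,-)$ with $|I|=j$.

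Next I will read off the relations. From \eqref{eq: diff Z}, $\partial Z^j(\{x,y\})=(U_xV_x+U_yV_y)X_j$, which gives $U_xV_xX_j=U_yV_yX_j$ for $2\le j\le n-1$. From \eqref{eq: diff B bottom}, $\partial B^j(I,-,-)=U_IX_j+V_{\overline{I}}X_{j+1}$, which gives $U_IX_j=V_{\overline I}X_{j+1}$ for $|I|=j$. These are precisely the relations \eqref{eq: HFL full relations}, except for the $U_iV_i$ relations at $j=1$ and $j=n$.

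The main obstacle is showing that these remaining relations already follow, and that no extra level-$1$ generators appear for $j>\frac{n+1}{2}$, where the differentials are defined by symmetry. For $j=1$, I will use $U_iX_1=V_{\overline{\{i\}}}X_2$. This gives
$$U_iV_iX_1=V_{\{1,\ldots,n\}}X_2=U_kV_kX_1,$$
and symmetrically for $j=n$ via $U_{\overline{\{i\}}}X_{n-1}=V_iX_n$. I will also check that for $j$ in the upper half the symmetric formulas still produce exactly the $Z^j(\{x,y\})$ and $B^j(I,-,-)$ boundaries, so the generating set of level-$1$ boundaries is the same. Finally, if the paper's $\HFL(T(n,n))$ is taken as the module presented by \eqref{eq: HFL full relations} (from \cite{BLZ}), then $H^0$ is exactly this presentation and the two sides agree, including the Maslov and Alexander gradings of the $X_j$. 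As a sanity check, the case $n=3$ can be compared against Figure \ref{fig: T33}.
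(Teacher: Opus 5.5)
Your proposal is correct and follows the paper's proof essentially step for step. You identify the level-$0$ generators as the $X_j$. You use $\lvl(j,s,c)=j-s+2c-1$ together with $c\ge s-j+1$ to show that the only level-$1$ generators are $Z^j(\{x,y\})$ and the bottom-vertex generators $B^j(I,-,-)$. You read off the relations from \eqref{eq: diff Z} and \eqref{eq: diff B bottom}, and you recover the $U_iV_i$ relations at $j=1$ and $j=n$ from $U_iX_1=V_{\overline{\{i\}}}X_2$ and its mirror. For completeness, state explicitly that the same inequality ($s-j=2c-1\le c-1$ forces $c\le 0$) rules out off-diagonal generators at level $0$, and that top-diagonal generators sit at levels at least $\max(n-j+2,j+1)\ge 3$; the paper also records both facts.
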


\begin{proof}
Let us describe all generators of the complex $\CFL^{\alg}(T(n,n))$ with levels $0$ and $1$. 

The elements $X_j$ have level $0$. The elements $Z^j(J)$ have level 1 if $|J|=2$, and all other diagonal generators $Z^j(J),D^j(J)$ are positioned at higher levels. 

For the off-diagonal generators, recall that
$$
\lvl(B^j(I,J,M))=j-s+2c-1.
$$
Since $c+j>s$, we have $j-s+2c-1>c-1\ge 0$. Therefore there are no off-diagonal elements at level $0$, and the only element at level 1 corresponds to $c=1$ and $s=j+c-1=j$, this is the bottom vertex case (2b) above.

Therefore $H^0\left(\CFL^{\alg}(T(n,n)),\partial\right)$ is generated by $X_j$ modulo relations given by the images of level 1 generators. By \eqref{eq: diff B bottom} we have $U_IX_j=V_{\overline{I}}X_{j+1}$ for $|I|=j$.

If $2\le j\le n-1$ and $1\le a,b\le n$ are distinct,  the generator $Z^{j}(\{a,b\})$ is well defined and
$$
\partial Z^{j}(\{a,b\})=(U_aV_a+U_bV_b)X_j,
$$
so $(U_aV_a+U_bV_b)X_j=0$ in the homology. 

We claim that the latter relation also holds for $j=1$ and $j=n$. Indeed,
$$
V_a(U_aX_1)=V_a(V_{\overline{a}}X_2)=V_1\cdots V_nX_2
$$
does not depend on $a$, the argument for $j=n$ is similar. 

Therefore all of the relations in $H^0\left(\CFL^{\alg}(T(n,n)),\partial\right)$ agree with those in Equation \eqref{eq: HFL full relations}, and 
$$
H^0\left(\CFL^{\alg}(T(n,n)),\partial\right)\simeq \HFL(T(n,n)).
$$
\end{proof}

\begin{conjecture}
\label{conj: algebraic complex}
The complex $\left(\CFL^{\alg}(T(n,n)),\partial\right)$ is acyclic in non-zero levels. 
\end{conjecture}

Assuming Conjecture \ref{conj: algebraic complex}, we conclude that $\CFL^{\alg}(T(n,n))$ is a minimal resolution of $\HFL(T(n,n))$, and hence it is isomorphic to the minimal complex for $\CFL(T(n,n))$.

It is plausible that one can use the complex $\CFL^{\alg}(T(n,n))$ to write an algebraic model for the homotopies $h_{a,b}$.
We will only need the following fact about them.

\begin{lemma}
\label{lem: h from off diagonal to top diagonal}
Assume Conjecture \ref{conj: algebraic complex}. Suppose that $x$ is a generator in an off diagonal block at $(j',s,c)$ and $\xi_{a,b}(x)$ has a component $y$ in the top diagonal block at $A(X_j)$. Then $\lvl(y)=\max(n-j+2,j+1)$.
\end{lemma}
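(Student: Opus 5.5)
The plan is to combine the grading constraints on $h_{a,b}$ (the map written $\xi_{a,b}$ in the statement) with the level bound coming from the resolution structure, and then show that these constraints leave exactly one possible level for $y$. Under Conjecture \ref{conj: algebraic complex}, $\CFL^{\alg}(T(n,n))$ is a minimal free resolution of $\HFL(T(n,n))$. First I would check that $h_{a,b}$ may be chosen to raise the level by at most $1$. Writing $h_{a,b}=\sum_k (h_{a,b})_k$ by level change, the relation $[\partial,h_{a,b}]=U_aV_a-U_bV_b$ is level-preserving, so $[\partial,(h_{a,b})_k]=0$ for $k\ge 2$. By Corollary \ref{cor: homotopic to level filtered}(a) these components are null-homotopic, hence can be removed by modifying $h_{a,b}$ by a homotopy, exactly as in the proof of Proposition \ref{prop:level-preserving}. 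Thus $\lvl(y)\le \lvl(x)+1$.

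Next I would write the component as $h_{a,b}(x)=r\cdot y+\ldots$ with $r=U^{u}V^{v}$ a monomial, $u,v\in\Z_{\ge0}^n$. Since $h_{a,b}$ preserves the Alexander multi-grading and lowers the Maslov grading by $1$, we get
$$
A(X_{j'})-e_I-A(X_j)=v-u,\qquad M(y)=M(x)-1+2|u|.
$$
The left side of the first identity has coordinates $j-j'-1$ for indices in $I$ and $j-j'$ for indices outside $I$. Since $1\le s\le n-1$, this forces $|u|\ge s\cdot\max(0,j'+1-j)+(n-s)\max(0,j'-j)$. Now insert the explicit values
$$
M(x)=-j'^2-s+1,\qquad \lvl(x)=j'-s+2c-1,
$$
together with the value $M(y)=-j(j-1)-n+2+i$ for $y$ at level $n-i$ of the top diagonal block (Theorem \ref{thm: GH}, Lemmas \ref{lem: levels} and \ref{lem: levels off diagonal}).

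The key step is to combine the Maslov identity with the level inequality $n-i\le j'-s+2c+\,0$ (which is $\lvl(y)\le\lvl(x)+1$) and with the constraints \eqref{eq: c j inequalities} on $c$. I expect this to give an inequality of the form $n-i\ge \max(n-j+2,j+1)$ that is opposite to the trivial bound $n-i\ge M_j$, forcing equality, i.e. $y$ at the lowest level $M_j$. I would split into the cases $j'\ge j$ and $j'<j$ (where $u$ may vanish), and use the symmetry $j\leftrightarrow n+1-j$ to reduce to $j\le\frac{n+1}{2}$.

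I expect the main obstacle to be the case analysis showing that the grading inequalities really pin down $i=\min(j-2,n-j-1)$ rather than only bounding it. If pure grading bookkeeping is insufficient, I would fall back on the explicit differential of $\CFL^{\alg}$. In that approach, the component $\partial_2$ of $\partial D^j(J)$ into the top block vanishes exactly at the lowest level. Comparing the $D$-components of both sides of $[\partial,h_{a,b}]=U_aV_a-U_bV_b$ at higher levels, and using that the $D^j(J)$ are not hit from off-diagonal generators except through $\partial_1,\partial_3$, would then show that a nonzero $y$ at a non-minimal level leads to a contradiction.
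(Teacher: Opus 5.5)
Your primary route works and differs from the paper's. Fix one slip first: the inequality you need is $n-i\le \max(n-j+2,j+1)$, i.e.\ $i\ge \min(j-2,n-j-1)$. The bound $n-i\ge\max(n-j+2,j+1)$ you wrote is the trivial one.

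The level inequality and the constraints on $c$ are superfluous. The Maslov and Alexander bookkeeping you already set up closes the argument on its own. Let $U^uV^v$ be the coefficient of $y$ in $h_{a,b}(x)$. The Maslov condition is
\[
i=2|u|+j^2-j-j'^2-s+n-2 .
\]
There are three cases.
\begin{itemize}
\item If $j'<j$, then $|u|\ge 0$ and $j'^2\le (j-1)^2$ give $i\ge j+n-s-3\ge j-2$.
\item If $j'=j$, then $|u|\ge s$ gives $i\ge (n-j-1)+(s-1)\ge n-j-1$.
\item If $j'=j+e$ with $e\ge 1$ (so $e\le n-1-j$), then $|u|\ge s+ne$ gives $i\ge s+e(2n-2j-e)+n-j-2\ge n-j$. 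This contradicts $i\le n-j-1$.
\end{itemize}
Combined with $i\le \min(j-2,n-j-1)$, this forces $i=\min(j-2,n-j-1)$, i.e.\ $\lvl(y)=\max(n-j+2,j+1)$. Tracking the equality cases also shows the coefficient must be $V_k$ (with $j'=j-1$, $s=n-1$, $\overline{I}=\{k\}$) or $U_k$ (with $j'=j$, $s=1$, $I=\{k\}$).

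The paper argues inside $\CFL^{\alg}(T(n,n))$ instead. It asserts that $[\partial,h_{a,b}]=U_aV_a-U_bV_b$ forces $x$ to occur in $\partial_1(y)$ or $\partial_3(y)$ with a linear coefficient. It then reads off from the formula for $\partial D^j(J)$ that this requires $|J\setminus I_1|=1$ or $|I_2|=1$. This is essentially your fallback. That route relies on the explicit model, hence on Conjecture \ref{conj: algebraic complex}, and leaves the ``linear coefficient'' step informal.

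Your grading route uses only the gradings from Theorem \ref{thm: GH} and Lemma \ref{lem: levels}, which hold for any minimal model by Corollary \ref{cor:T(n,n)hat}, together with the bidegree of $h_{a,b}$. So it is unconditional. It also applies to every component of any homogeneous choice of $h_{a,b}$, with no need to first arrange that it raises the level by at most one. Finally, it derives the linear-coefficient phenomenon that the paper takes as its starting point.
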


\begin{proof}
Recall that $\partial(y)=\partial_1(y)+\partial_2(y)+\partial_3(y)$ with $\partial_2(y)$ in the top diagonal block and $\partial_1(y),\partial_3(y)\neq 0$. The equation  $[\partial,\xi_{a,b}]=U_aV_a-U_bV_b$ can only be satisfied if $x$ is the term in either $\partial_1(y)$ or $\partial_3(y)$ with the coefficient linear in $U_i$ (resp. linear in $V_i$). By %Lemma \ref{lem: diff to off diagonal} 
the formula for $\partial D^j(J)$
this is only possible in the following cases:
\begin{enumerate}
\item[(1)] $n-(i+j)=1 \Leftrightarrow \lvl(y)=n-i=j+1.$  
\item[(2)] $j-i-1=1\Leftrightarrow \lvl(y)=n-i=n-j+2$.
\end{enumerate}
Since $\lvl(y)\ge \max(n-j+2,j+1)$, we have $\lvl(y)=\max(n-j+2,j+1)$ in both cases.
\end{proof}

\section{An algebraic model for $\varphi_1$}
\label{sec: appendix B}

Next, we define the chain  map 
$$
\varphi_1^{\alg}:\CFL^{\alg}(O_1(n,0))\to \CFL^{\alg}(O_1(n+1,1))
$$
lifting the map on homology from Proposition \ref{prop: Phi1 full homology}. We assume that the strands $1,\ldots,n,n+1$ are oriented positively, and the $(n+2)$-nd strand is oriented negatively. By Proposition \ref{prop: cob commute with orientation reversal} this determines the map $\varphi_1^{\alg}$ for any other orientation of the first $n$ strands. 

We add the subscripts $n,n+2$ to distinguish the generators. The variables $U_{n+2}$ and $U_{n+1}$ are identified on the right. Throughout we assume $1\le j\le \frac{n+1}{2}$, the case $j\ge  \frac{n+1}{2}$ is similar.   

\noindent {\bf 0.} For the zero level we have 
\begin{equation}
\label{eq: def phi1 X}
\varphi_1^{\alg}(X_{j,n})=U_{n+1}X_{j+1,n+2}.
\end{equation}

\noindent {\bf 1.} For the bottom diagonal block we have 
\begin{equation}
\label{eq: def phi1 Z}
\varphi_1^{\alg}(Z^j_{n}(J))=U_{n+1}Z^{j+1}_{n+2}(J).
\end{equation}

\noindent {\bf 2.} For most of the off-diagonal block we have 
\begin{equation}
\label{eq: def phi1 B generic}
\varphi_1^{\alg}(B^j_n(I,J,M))=B^{j+1}_{n+2}(I\cup n+1,J,M)
\end{equation}
For the right edge and $c\ge j+1$ we get % need to be more careful. Specifically, for $c=j+2$ we get 
%$$
%\varphi_1^{\alg}(B^j(I,J,-))=B^{j+1}(I\cup n+1,J,-)+ V_{\overline{I}}B^{j+2}(n+1,n+1,J).
%$$
%while for $c>j+2$ we get
\begin{equation}
\label{eq: def phi1 B right edge}
\varphi_1^{\alg}(B^j_n(I,J,-))=B^{j+1}_{n+2}(I\cup n+1,J,-)+ \sum_{\stackrel{I_1\subset J}{|I_1|=s-2j-1}}V_{\overline{I}}V_{I_1}B^{j+2}_{n+2}(I_1\cup n+1,I_1\cup n+1,J\setminus I_1).
\end{equation}

%\EG{Except for right edge where we have some extra terms} 

\noindent {\bf 3.} For the top diagonal block and $|J|>n-j+2$ we  have 
\begin{equation}
\label{eq: def phi1 D}
\varphi_1^{\alg}(D^j_n(J))=\varphi_1^{\diag}(D^j_n(J))+\varphi_1^L(D^j_n(J))+\varphi_1^R(D^j_n(J)),
\end{equation}
where 
$$
\varphi_1^{\diag}(D^j_n(J))=U_{n+1}D^{j+1}_{n+2}(J),
$$
$$
\varphi_1^L(D^j_n(J))=\sum_{\stackrel{I_1\subset J}{|I_1|=n-j+2}}U_{J\setminus I_1}B^{j}_{n+2}(I_1\cup \overline{J}\cup n+1,I_1,(J\setminus I_1)\cup n+2),
$$
$$
\varphi_1^R(D^j_{n}(J))=\sum_{\stackrel{I_2\subset J}{|I_2|=n-i-j-1}}V_{I_2}B^{j+1}_{n+2}(I_2\cup n+1,I_2\cup n+1,J\setminus I_2).
$$
%\EG{check last term, and different formula for the lowest level}
%\EG{specify the size of $I_1$ and $I_2$}
In the boundary case $|J|=n-j+2$ we have instead
\begin{equation}
\label{eq: def phi1 D exception}
\varphi_1^{\alg}(D^j_n(J))=B^j_{n+2}([n+1],J,-)+\sum_{\stackrel{I_2\subset J}{|I_2|=n-2j+1}}V_{I_2}B^{j+1}_{n+2}(I_2\cup n+1,I_2\cup n+1,J\setminus I_2).
\end{equation}
Here $[n+1]=\{1,\ldots,n+1\}$.
Again, one can check that the map $\varphi_1^{\alg}$ is compatible with the relations \eqref{eq: Z relations},\eqref{eq: B relations}, \eqref{eq: D relations} and therefore well-defined.

\begin{lemma}
The map $\varphi_1^{\alg}$ defined by \eqref{eq: def phi1 X}, \eqref{eq: def phi1 Z}, \eqref{eq: def phi1 B generic}, \eqref{eq: def phi1 B right edge}, \eqref{eq: def phi1 D} is a chain map assuming $U_{n+1}=U_{n+2}$ in $\CFL^{\alg}(O_1(n+1,1))$.
\end{lemma}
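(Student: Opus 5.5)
The plan is to verify $[\partial,\varphi_1^{\alg}]=0$ generator by generator, working in $\CFL^{\alg}(O_1(n+1,1))$ with $U_{n+1}=U_{n+2}$. The orientation of the $(n+2)$-nd strand only affects the gradings, not the formulas of the algebraic model: by Proposition \ref{prop: reversing orientation} it swaps the roles of $U_{n+2}$ and $V_{n+2}$. So after this swap I will treat the target differential as the one defined in Appendix \ref{sec: appendix A} for $T(n+2,n+2)$.

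For each type of generator $g$ I compute $\partial\varphi_1^{\alg}(g)$ and $\varphi_1^{\alg}(\partial g)$ and match terms. I will go in order of level, since the checks at lower levels are reused. At level $0$, $\varphi_1^{\alg}(X_{j,n})=U_{n+1}X_{j+1,n+2}$ has nothing to check. For $Z^j_n(J)$, both sides equal $U_{n+1}\sum_{x\in J}U_xV_xZ^{j+1}_{n+2}(J\setminus x)$, because \eqref{eq: def phi1 Z} is diagonal and $J\subset\{1,\ldots,n\}$.

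For off-diagonal $B^j_n(I,J,M)$ in the generic case (2a), adding $n+1$ to $I$ commutes with both the $U_x$ terms ($x\in J$) and the $V_y$ terms ($y\in M$). The only new terms are those where the target differential would remove $n+1$; these are absent because $n+1\notin J$. The cases that need work are the following.
\begin{enumerate}
\item[(2b)] In the target, $\partial B^{j+1}(I\cup n+1,-,-)=U_IU_{n+1}X_{j+1}+V_{\overline{I}\cup n+2}X_{j+2}$. One must check that $V_{\overline I}V_{n+2}X_{j+2}$ matches $U_{n+1}V_{\overline I}X_{j+1}$, or more generally compare it with the image $\varphi_1^{\alg}(V_{\overline I}X_{j+1,n})$. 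Here I use the identification $U_{n+2}=U_{n+1}$ and the swap $U_{n+2}\leftrightarrow V_{n+2}$.
\item[(2c)] Same bookkeeping, using $U_IZ^j(M)\mapsto U_IU_{n+1}Z^{j+1}(M)$.
\item[(2d)] The right edge, where the extra sum in \eqref{eq: def phi1 B right edge} is designed to absorb the terms $V_{\overline I}V_{I_1}B^{j+1}$ of \eqref{eq: diff B right case 2}. Here one checks that the $U_x$ differentials of the correction terms reproduce the correction terms of $\varphi_1^{\alg}(B(I\setminus x,J\setminus x,-))$. The boundary value $c=j+1$ versus $j+2$ is where \eqref{eq: diff B right case 1} and \eqref{eq: diff B right case 2} switch, so it must be handled separately.
\end{enumerate}
These are local commuting squares of the same shape as in the proof of $\partial^2=0$, and I would organize them that way.

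The main obstacle is the top diagonal block, where $\partial D^j(J)$ has three components $\partial_1,\partial_2,\partial_3$ and $\varphi_1^{\alg}(D)$ has three components $\varphi_1^{\diag},\varphi_1^L,\varphi_1^R$. I would expand $[\partial,\varphi_1^{\alg}](D^j_n(J))$ into the nine products and sort them by target block.
\begin{enumerate}
\item[(1)] Diagonal-to-diagonal: $\partial_2\varphi_1^{\diag}$ against $\varphi_1^{\diag}\partial_2$ cancels directly.
\item[(2)] Terms landing in $B^{j}_{n+2}$: the terms of $\partial_1\varphi_1^{\diag}$ whose $I_1$ contains $n+1$ or $n+2$ should cancel with the $V_{n+2}$ and $U$ differentials of $\varphi_1^L$. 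The remaining terms should match $\varphi_1^{\alg}\partial_1$ by \eqref{eq: def phi1 B generic}.
\item[(3)] Terms landing in $B^{j+1}_{n+2}$: the analogous argument with $\varphi_1^R$ and $\partial_3$.
\end{enumerate}
Any leftover cross terms should vanish by the relations \eqref{eq: B relations}, since sums over $x$ of $B(\ldots,J'\setminus x,\ldots)$ are zero. Finally, the lowest-level exceptional formula \eqref{eq: def phi1 D exception} must be checked separately, since there $\partial_2=0$ and $B^j([n+1],J,-)$ is a right-edge generator whose differential \eqref{eq: diff B right case 1} produces a $Z$ term that has to match $\varphi_1^{\diag}$-type contributions. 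I expect the combinatorics of the subsets $I_1,I_2$ and their sizes $n-j+2$ and $n-i-j-1$ to be the delicate point, and would first test the whole computation on $n=3$ against Figure \ref{fig: T33}.
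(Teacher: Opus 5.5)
Your overall strategy is the paper's: you check $\partial\varphi_1^{\alg}=\varphi_1^{\alg}\partial$ generator by generator, read the target differential of $T(n+2,n+2)$ with $V_{n+2}$ replaced by $U_{n+2}=U_{n+1}$, and recombine terms with \eqref{eq: B relations}. The routine cases go essentially as you say; in (2b) the term to match is $U_{n+1}V_{\overline I}X_{j+2,n+2}$, not $X_{j+1}$.

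\textbf{Top diagonal block.} This is the one genuinely nontrivial check, and you set it up with a cancellation that does not occur. The terms of $\partial_1\varphi_1^{\diag}(D^j_n(J))=U_{n+1}\partial_1D^{j+1}_{n+2}(J)$ are $U_{n+1}U_{J\setminus I_1}B^j_{n+2}(I_1\cup\overline J\cup\{n+1,n+2\},I_1,J\setminus I_1)$, with $I_1\subset J\subset\{1,\ldots,n\}$ and $|I_1|=n-j+2$. The terms of $\varphi_1^{\alg}(\partial_1D^j_n(J))$ are $U_{J\setminus I_1}B^j_{n+2}(I_1\cup\overline J\cup\{n+1\},I_1,J\setminus I_1)$ with $|I_1|=n-j+1$; here complements are taken in $\{1,\ldots,n\}$. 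So no $I_1$ contains $n+1$ or $n+2$, and the two expressions share no term. Nothing in $\partial_1\varphi_1^{\diag}$ is left over to match $\varphi_1^{\alg}\partial_1$.

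The correct bookkeeping splits $\partial\varphi_1^L(D^j_n(J))$ by which variable acts:
\begin{itemize}
\item The $U_x$-terms ($x\in I_1$) give $\varphi_1^{\alg}(\partial_1D^j_n(J))$. To see this, set $I_1'=I_1\setminus x$ and apply \eqref{eq: B relations} in the third slot with $M'=(J\setminus I_1')\cup\{n+2\}$.
\item The $V_y$-terms with $y\in J\setminus I_1$ give $\varphi_1^L(\partial_2D^j_n(J))$.
\item The single $y=n+2$ term, where $V_{n+2}$ becomes $U_{n+2}=U_{n+1}$, gives all of $\partial_1\varphi_1^{\diag}(D^j_n(J))$.
\end{itemize}
The same pattern, with $\varphi_1^R$ and $\partial_3$, handles the $B^{j+1}_{n+2}$ terms. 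The point your sketch misses is that the extra strand enters in two ways: through the swapped variable and through the third slot of the relation.

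\textbf{Lowest level.} The case $|J|=n-j+2$ is also misdescribed. There is no $\varphi_1^{\diag}$ term there, since the target has no generator $D^{j+1}_{n+2}(J)$. Moreover, $B^j_{n+2}([n+1],J,-)$ has $c=n-j+2>j+1$ unless $j=\frac{n+1}{2}$, so its differential is \eqref{eq: diff B right case 2}, not the $Z$-producing case. Its terms $U_{n+1}V_{I_1}B^{j+1}_{n+2}(I_1,I_1,J\setminus I_1)$ (or $U_{n+1}Z^{j+1}_{n+2}(J)$ when $j=\frac{n+1}{2}$) cancel against the $U_{n+1}$-part of the differential of the second sum in \eqref{eq: def phi1 D exception}. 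The rest of that differential matches two things: the correction terms that \eqref{eq: def phi1 B right edge} attaches to $\varphi_1^{\alg}(\partial_1D^j_n(J))$, and, via \eqref{eq: B relations}, $\varphi_1^{\alg}(\partial_3D^j_n(J))$. This works exactly as in the right-edge case $c\ge j+2$.

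\textbf{Generic case (2a).} Your justification overlooks one cancellation. When $|M|=2$ and $c\ge j+2$, the $V_y$-terms of $\partial B^j_n(I,J,M)$ are right-edge generators, and their images under $\varphi_1^{\alg}$ carry correction terms. These cancel in pairs only because $V_yV_{\overline{I\cup y}}=V_{\overline I}$ is independent of $y$ and we work over $\F$.
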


\begin{proof}
The equation $\partial\varphi_1^{\alg}(Z^j_n(J))=\varphi_1^{\alg}(\partial(Z^j_n(J)))$  is clear.
For the off-diagonal block, the {\bf generic case (2a)} this is clear. For the {\bf bottom vertex (2b)} we get
$$
\partial B^{j+1}_{n+2}(I\cup n+1,-,-)=U_{I}U_{n+1}X_{j+1,n+2}+V_{\overline{I}}U_{n+2}X_{j+2,n+2}=\varphi_1^{\alg}(\partial B^j_n(I,-,-)).
$$
For the {\bf left edge (2c)} we get
$$
\partial B^{j+1}_{n+2}(I\cup n+1,-,M)=\sum_{y\in M}V_yB^{j+1}_{n+2}(I\cup n+1\cup y,-,M\setminus y)+U_IU_{n+1}Z^{j+1}_{n+2}(M)=\varphi_{1}^{\alg}(\partial B^j_n(I,-,M)).
$$
The case of the {\bf right edge (2d)} and $c<j+2$ is similar. For the {\bf right edge (2d)} and $c=j+2$ we get
$$
\varphi_1^{\alg}(\partial B^j_n(I,J,-))=\sum_{x\in J}U_x\varphi_1^{\alg}(B^j_n(I\setminus x,J\setminus x,-))+\sum_{x\in J}V_{\overline{I}}V_x\varphi_1^{\alg}(B^{j+1}_n(x,x,J\setminus x))=
$$
$$
\sum_{x\in J}U_xB^{j+1}_{n+2}((I\setminus x)\cup n+1,J\setminus x,-)+\sum_{x\in J}V_{\overline{I}}V_xB^{j+2}_{n+2}(x\cup n+1,x,J\setminus x)
$$
while 
$$\partial B^{j+1}_{n+2}(I\cup n+1,J,-)=\sum_{x\in J}U_xB^{j+1}_{n+2}((I\setminus x)\cup n+1,J\setminus x,-)+V_{\overline{I}}U_{n+2}Z^{j+2}_{n+2}(J)
$$
and
$$
\partial B^{j+2}_{n+2}(n+1,-,J)=U_{n+1}Z^{j+2}_{n+2}(J)+\sum_{x\in J}V_xB^{j+2}_{n+2}(x\cup n+1,-,J\setminus x),
$$
so 
$$
\varphi_1^{\alg}(\partial B^j_n(I,J,-))=\partial B^{j+1}_{n+2}(I\cup n+1,J,-)+V_{\overline{I}}\partial B^{j+2}_{n+2}(n+1,-,J)=\partial \varphi_1^{\alg}(B^j_n(I,J,-)).
$$
For the {\bf right edge (2d)} and $c>j+2$ we similarly get
$$
\varphi_1^{\alg}(\partial B^j_n(I,J,-))=\sum_{x\in J}U_x\varphi_1^{\alg}(B^j_n(I\setminus x,J\setminus x,-))+\sum_{\stackrel{I_1\subset J}{|I_1|=s-2j}}V_{\overline{I}}V_{I_1}\varphi_1^{\alg}(B^{j+1}_n(I_1,I_1,J\setminus I_1))=
$$
\begin{multline}
\label{eq: phi1 right edge 1}
\sum_{x\in J}U_xB^{j+1}_{n+2}((I\setminus x)\cup n+1,J\setminus x,-))+\sum_{\stackrel{x\in J,I_2\subset J\setminus x}{|I_2|=s-2j-2}}U_xV_{\overline{I}}V_xV_{I_2}B^{j+2}_{n+2}(I_2\cup n+1,I_2\cup n+1,(J\setminus x)\setminus I_2)+\\
\sum_{\stackrel{I_1\subset J}{|I_1|=s-2j}}V_{\overline{I}}V_{I_1}B^{j+2}_{n+2}(I_1\cup n+1,I_1,J\setminus I_1)
\end{multline}
while 
\begin{equation}
\label{eq: phi1 right edge 2}
\partial B^{j+1}_{n+2}(I\cup n+1,J,-)=\sum_{x\in J}U_xB^{j+1}_{n+2}((I\setminus x)\cup n+1,J\setminus x,-)+\sum_{\stackrel{I_3\subset J}{|I_3|=s-2j-1}}V_{\overline{I}}U_{n+2}V_{I_3}B^{j+2}_{n+2}(I_3,I_3,J\setminus I_3),
\end{equation}
\begin{multline}
\label{eq: phi1 right edge 3}
\partial B^{j+2}_{n+2}(I_3\cup n+1,I_3\cup n+1,J\setminus I_3)=U_{n+1}B^{j+2}_{n+2}(I_3,I_3,J\setminus I_3)+\sum_{x\in I_3}U_xB^{j+2}_{n+2}((I_3\setminus x)\cup n+1,(I_3\setminus x)\cup n+1,J\setminus I_3)+\\
\sum_{y\in J\setminus I_3}V_yB^{j+2}_{n+2}(I_3\cup y\cup  n+1,I_3\cup n+1,(J\setminus I_3)\setminus y).
\end{multline}
We can identify the terms in \eqref{eq: phi1 right edge 1},\eqref{eq: phi1 right edge 2},\eqref{eq: phi1 right edge 3} by setting $I_2=I_3\setminus x$ and $I_1=I_3\cup y$ respectively, and using \eqref{eq: B relations} which implies
$$
B^{j+2}_{n+2}(I_1\cup n+1,I_1,J\setminus I_1)+\sum_{y\in I_1}B^{j+2}_{n+2}(I_1\cup n+1,(I_1\setminus y)\cup n+1,J\setminus I_1)=0.
$$
Therefore
$$
\varphi_1^{\alg}(\partial B^j_n(I,J,-))=\partial\left( B^{j+1}_{n+2}(I\cup n+1,J,-)+\sum_{\stackrel{I_3\subset J}{|I_3|=s-2j-1}}V_{\overline{I}}V_{I_3}B^{j+2}_{n+2}(I_3\cup n+1,I_3\cup n+1,J\setminus I_3)\right)=
$$
$$
\partial\varphi_1^{\alg}(B^j_n(I,J,-)).
$$

Finally, we consider the {\bf top diagonal block}.
We claim that the following equations hold:
\begin{equation}
\label{eq: phi1 chain 1}
\varphi_1^{\diag}(\partial_2(D^j_n(J)))=\partial_2(\varphi_1^{\diag}(D^j_n(J))),
\end{equation}
\begin{equation}
\label{eq: phi1 chain 2}
\varphi_1^{\alg}(\partial_1(D^j_n(J))+\varphi_1^L(\partial_2(D^j_n(J))=\partial_1\varphi_1^{\diag}(D^{j}_n(J))+\partial \varphi_1^L(D^j_n(J)),
\end{equation}
\begin{equation}
\label{eq: phi1 chain 3}
\varphi_1^{\alg}(\partial_3(D^j_n(J))+\varphi_1^R(\partial_2(D^j_n(J))=\partial_3(\varphi_1^{\diag}(D^j_n(J)))+\partial \varphi_1^R(D^j_n(J)).
\end{equation}

The equation \eqref{eq: phi1 chain 1} is clear. Let us check \eqref{eq: phi1 chain 2}. We have 
$$
\partial \varphi_1^L(D^j_n(J))=\sum_{I_1\subset J}U_{J\setminus I_1}\partial B^{j}_{n+2}(I_1\cup \overline{J}\cup n+1,I_1,(J\setminus I_1)\cup n+2)=
$$
\begin{align}
\label{eq: phi_1 d top block 1}
\sum_{I_1\subset J}U_{J\setminus I_1}\sum_{x\in I_1}U_xB^{j}_{n+2}((I_1\setminus x)\cup \overline{J}\cup n+1,I_1\setminus x,(J\setminus I_1)\cup n+2)+\\
\label{eq: phi_1 d top block 2}
\sum_{I_1\subset J}U_{J\setminus I_1}\sum_{y\in J\setminus I_1}V_yB^{j}_{n+2}(I_1\cup y\cup\overline{J}\cup n+1,I_1,((J\setminus I_1)\setminus y)\cup n+2)+\\
\label{eq: phi_1 d top block 3}
\sum_{I_1\subset J}U_{J\setminus I_1}U_{n+2}B^{j}_{n+2}(I_1\cup \overline{J}\cup n+1\cup n+2,I_1,J\setminus I_1)
\end{align}
For \eqref{eq: phi_1 d top block 1}, denote $I'_1=I_1\setminus x$, then we get %\EG{Stopped here}
$$
\sum_{I'_1,x\in J\setminus I'_1} U_{J\setminus I'_1}B^j_{n+2}(I'_1\cup \overline{J}\cup n+1,I'_1,((J\setminus I'_1)\setminus x)\cup n+2)=
$$
$$
\sum_{I'_1} U_{J\setminus I'_1}B^j_{n+2}(I'_1\cup \overline{J}\cup n+1,I'_1,J\setminus I'_1)=\varphi_1^{\alg}(\partial_1(D^j_n(J))).
$$
Here we used the relation \eqref{eq: B relations}.
For \eqref{eq: phi_1 d top block 2}, denote $J'=J\setminus y$, then we get
$$
\sum_y\sum_{I_1\subset J'}U_{J'\setminus I_1}U_yV_yB^{j}_{n+2}(I_1\cup\overline{J'}\cup n+1,I_1,(J'\setminus I_1)\cup n+2)=\varphi_1^L(\partial_2(D^j_n(J))).
$$
For \eqref{eq: phi_1 d top block 3}, we get exactly $$U_{n+1}\partial_1D^{j+1}_{n+2}(J)=\partial_1(\varphi_1^{\diag}(D^j_n(J))).$$
The proof of \eqref{eq: phi1 chain 3} is similar.

The special case $|J|=n-j+2$ is similar to the case of the right edge and we leave it to the reader.
\end{proof}

\begin{remark}
We can visualize the cancellations in \eqref{eq: phi_1 d top block 1}-\eqref{eq: phi_1 d top block 3} in the following diagram:
$$
\begin{tikzcd} 
 & D^j_n(J) \arrow[swap]{dl}{\partial_1} \arrow{d}{\partial_2} \arrow{dr}{\partial_3} 
 \arrow{rrrrr}{\varphi_1^L} \arrow[bend left,swap]{rrrrrrr}{\varphi_1^{\diag}} \arrow[bend left]{rrrrrrrrr}{\varphi_1^{R}} 
 & & & & & \bullet \arrow{dl} \arrow{d} \arrow{dr}{U_{n+1}}& & U_{n+1}D^{j+1}_{n+2}(J) \arrow[swap]{dl}{\partial_1} \arrow{d}{\partial_2} \arrow{dr}{\partial_3} & & \bullet  \arrow{dl} \arrow{d} \arrow{dr} &\\
\bullet \arrow[bend right,swap,pos=0.2]{rrrrr}{\varphi_1^{\alg}} & \bullet \arrow[bend right,swap,pos=0.8]{rrrrr}{\varphi_1^L} 
\arrow[bend right,swap,pos=0.8]{rrrrrrr}{\varphi_1^{\diag}}
\arrow[bend right,pos=0.8]{rrrrrrrrr}{\varphi_1^{R}}
& \bullet \arrow[bend right,swap,pos=0.8]{rrrrrrrrr}{\varphi_1^{\alg}} & & & \bullet & \bullet & \bullet  & \bullet & \bullet & \bullet & \bullet \\
\end{tikzcd}
$$
\end{remark}

\begin{corollary}
\label{cor: Phi1 appendix}
Assuming Conjecture \ref{conj: algebraic complex}, the map $\varphi_1^{\alg}$ defined by \eqref{eq: def phi1 X}-\eqref{eq: def phi1 D exception} is homotopic to the level preserving component of the  chain map $\varphi_1$ induced by the cobordism map $f_{P_1}$.  In particular, for $1\le j\le \frac{n+1}{2}$ and the top diagonal block at level $n-i,i<j-2$ we get
$$
\varphi_1(D^j_n(J))=U_{n+1}D^{j+1}_{n+2}(J).
$$
modulo off-diagonal and bottom diagonal blocks.
\end{corollary}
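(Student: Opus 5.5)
Under Conjecture \ref{conj: algebraic complex}, the complex $\CFL^{\alg}(T(n,n))$ is a minimal free resolution of $\HFL(T(n,n))$. The plan is to identify both $\varphi_1^{\alg}$ and the level-preserving part of $\varphi_1$ as lifts of the same map on level-zero homology, and then use uniqueness of lifts. Using the minimality, we fix homotopy equivalences $\CFL(O_1(n,0))\simeq\CFL^{\alg}(O_1(n,0))$ and $\CFL(O_1(n+1,1))/(U_{n+1}-U_{n+2})\simeq\CFL^{\alg}(O_1(n+1,1))/(U_{n+1}-U_{n+2})$. These are compatible with orientation reversal by Proposition \ref{prop: reversing orientation}(d) and Proposition \ref{prop: cob commute with orientation reversal}, and preserve the level.

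The key input is Lemma \ref{lem: identify U not V}. It says the target complex, with only $U_{n+1}=U_{n+2}$ identified, is still a resolution of its homology, so its homology is concentrated in level $0$. This lets us apply Corollary \ref{cor: homotopic to level filtered} to $\varphi_1$:
\begin{itemize}
\item $\varphi_1$ is homotopic to a level-filtered map;
\item the level-preserving component $(\varphi_1)_0$ is determined up to homotopy by the induced map on $H_0$;
\item $(\varphi_1)_0$ is homotopic to the lift supplied by Lemma \ref{lem: lifting to resolutions}.
\end{itemize}
By Proposition \ref{prop: Phi1 full homology}, the induced map on $H_0$ is $X'_{j,n}\mapsto U_{n+1}X'_{j+1,n+2}$.

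Next we check that $\varphi_1^{\alg}$ is a lift of this same map. It is a chain map by the preceding lemma. It is level-preserving, which we verify directly from the level formulas in Lemma \ref{lem: levels} and Lemma \ref{lem: levels off diagonal}. For example, $B^j_n(I,J,M)\mapsto B^{j+1}_{n+2}(I\cup n+1,J,M)$ changes $(j,s,c)$ to $(j+1,s+1,c)$, so the level $j-s+2c-1$ is unchanged; the top-block terms are checked in the same way. By \eqref{eq: def phi1 X} it induces the correct map on $H_0$. Uniqueness in Lemma \ref{lem: lifting to resolutions} then gives $(\varphi_1)_0\simeq\varphi_1^{\alg}$.

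For the displayed formula, take $D^j_n(J)$ with $|J|=n-i>n-j+2$ and $j\le\frac{n+1}{2}$, so that \eqref{eq: def phi1 D} applies. Its diagonal part is $U_{n+1}D^{j+1}_{n+2}(J)$. The terms $\varphi_1^L$ and $\varphi_1^R$ are off-diagonal $B$-generators and are discarded modulo off-diagonal blocks. The boundary case $i=j-2$ is excluded by hypothesis.

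The main obstacle is passing from ``homotopic as chain maps'' to an equality of the matrix coefficient $D\to D$ modulo the other blocks. A homotopy $H$ changes $\varphi_1^{\alg}$ by $\partial H+H\partial$. The $D$-to-$D$ entries of this difference come from the $UV$-components $\partial_2$ of $\partial$ on the top diagonal blocks, and from $H$ composed with differentials into off-diagonal blocks. We would use minimality together with Alexander and Maslov grading constraints to show these contributions either carry a nonzero coefficient multiple of $U_{n+1}$ of the wrong degree, or vanish modulo the maximal ideal. The claim is then understood modulo off-diagonal and bottom-diagonal terms and modulo such homotopy corrections. The genuinely non-routine input is that we must apply Corollary \ref{cor: homotopic to level filtered} in the partially identified setting, which is exactly why Lemma \ref{lem: identify U not V} is needed.
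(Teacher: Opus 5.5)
Your first half matches the paper. $\varphi_1^{\alg}$ is a level-preserving chain map inducing $X'_{j,n}\mapsto U_{n+1}X'_{j+1,n+2}$ on level-zero homology. Lemma \ref{lem: identify U not V} makes Corollary \ref{cor: homotopic to level filtered} applicable to the target with only $U_{n+1}=U_{n+2}$ identified. Uniqueness in Lemma \ref{lem: lifting to resolutions} then gives $(\varphi_1)_0\simeq\varphi_1^{\alg}$.

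\textbf{The gap is in the ``in particular'' statement.} Corollary \ref{cor: homotopic to level filtered} only gives $\varphi_1\simeq\varphi_1^{\alg}+\sum_{k<0}(\varphi_1)_k$. The example following that corollary shows that the level-lowering pieces $(\varphi_1)_k$, $k<0$, are chain maps that need not be null-homotopic. The displayed formula is about $\varphi_1(D^j_n(J))$, not about $(\varphi_1)_0(D^j_n(J))$. Reading it off from \eqref{eq: def phi1 D} therefore tacitly assumes that $\sum_{k<0}(\varphi_1)_k(D^j_n(J))$ contributes nothing to the top diagonal block, and your proposal never addresses these components.

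The paper's proof closes exactly this point with a Maslov grading comparison modelled on Corollary \ref{cor: Maslov separation}:
\begin{itemize}
\item In the top diagonal block at a fixed Alexander grading, the Maslov grading goes up by one each time the level goes down by one.
\item $\varphi_1$ has Maslov shift $-2$. So a component lowering the level by $k>0$ onto a top-diagonal generator in the block $A(X'_{j+1,n+2})$ would need a coefficient of Maslov grading $-2-k$, not the $-2$ of $U_{n+1}$.
\item Hence the level-lowering part cannot produce terms of the form $U_{n+1}D^{j+1}_{n+2}(J')$, and the paper relegates it to the off-diagonal and bottom-diagonal terms.
\end{itemize}
Some argument of this kind is the missing ingredient in your proposal.

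Conversely, the obstacle you single out in your last paragraph is not the real one, and as written it is only a plan (``we would use \dots''). $\varphi_1$ is only defined up to homotopy, and homotopy is compatible with sums. So you may take $\varphi_1^{\alg}+\sum_{k<0}(\varphi_1)_k$ as the representative. No comparison of matrix coefficients of $\partial H+H\partial$ is needed. What is needed is control of the $k<0$ components.
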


\begin{proof}
By \eqref{eq: def phi1 X} and Proposition \ref{prop: Phi1 full homology}, the map $\varphi_1^{\alg}$ yields a correct map in full homology. One can check that it preserves the level grading.
By Lemma \ref{lem: lifting to resolutions} and Corollary \ref{cor: homotopic to level filtered} the map $\varphi_1$ is level-filtered and its level-preserving component is homotopic to $\varphi_1^{\alg}$. 

Similarly to Corollary \ref{cor: Maslov separation}, one can check using Maslov grading that the level-lowering components of $\varphi_1(D^j_n(J))$  cannot belong to the top diagonal block, and hence are contained in off-diagonal and bottom diagonal blocks. 
\end{proof}

\begin{theorem}
\label{thm: Fp top diagonal}
Assuming Conjecture \ref{conj: algebraic complex}, for $1\le j\le \frac{n+1}{2}$ and the top diagonal block at level $n-i,i<j-2$ the map $F_P$ is given by 
\begin{equation}
\label{eq: Fp top block alg model 1}
F_P(D^j_n(J)\otimes T)=D_{n+2}^{j+1}(J\cup n+1)+D_{n+2}^{j+1}(J\cup n+2),
\end{equation}
\begin{equation}
\label{eq: Fp top block alg model 2}
F_P(D^j_n(J)\otimes B)=D_{n+2}^{j+1}(J\cup n+1\cup n+2) \mod \Imm F_P(-\otimes T).
\end{equation}
modulo off-diagonal and bottom diagonal terms.
\end{theorem}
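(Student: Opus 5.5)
The plan is to compute $F_P$ indirectly through the relations of Lemma \ref{lem:h-PP-1}, namely $U_{n+1}f_P\simeq f_{H_\gamma}\circ f_{P_1}$ and $V_{n+1}f_P\simeq f_{H_\gamma}\circ f_{P_2}$. Here $\gamma$ joins the strands $n+1$ and $n+2$, and $f_{H_\gamma}$ is the homotopy $h=h_{n+1,n+2}$ of Lemma \ref{lem: UV homotopy arc}. On the $\varphi_1$ side, Corollary \ref{cor: Phi1 appendix} gives $\varphi_1(D^j_n(J))=U_{n+1}D^{j+1}_{n+2}(J)$ modulo off-diagonal and bottom diagonal blocks. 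The symmetric model gives the analogous formula $\varphi_2(D^j_n(J))=V_{n+1}D^{j+1}_{n+2}(J)$. To obtain it, I would define $\varphi_2^{\alg}$ by exchanging the roles of $U$ and $V$, equivalently by applying the symmetry $j\leftrightarrow n+1-j$ together with Proposition \ref{prop: reversing orientation}. Thus, modulo those blocks,
$$
U_{n+1}f_P(D^j_n(J)\otimes T)\simeq U_{n+1}\,h\bigl(D^{j+1}_{n+2}(J)\bigr).
$$
Since $\CFL^{\alg}$ is free, and under Conjecture \ref{conj: algebraic complex} it is a minimal model for $\CFL$, we can cancel $U_{n+1}$ at the level of the level-preserving components. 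This reduces the problem to computing $h$ on the top diagonal block.

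The next step is to write down an explicit candidate homotopy on the top diagonal block of $\CFL^{\alg}(T(n+2,n+2))$:
$$
h\bigl(D^{j+1}(J)\bigr)=D^{j+1}(J\cup n+1)+D^{j+1}(J\cup n+2)\quad\text{for } n+1,n+2\notin J,
$$
extended by the same rule when one of the two indices already lies in $J$, with zero when both do. I would then check $[\partial,h]=U_{n+1}V_{n+1}-U_{n+2}V_{n+2}$ on these generators. The $\partial_2$ part of the differential of $D^{j+1}(J\cup a)$ contains $U_aV_aD^{j+1}(J)$, and this is exactly what produces the required terms. The remaining $\partial_2$ terms cancel in pairs, using the relations \eqref{eq: D relations}. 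The $\partial_1$ and $\partial_3$ terms must be matched by components of $h$ mapping into off-diagonal blocks. By Lemma \ref{lem: lifting to resolutions} and Corollary \ref{cor: homotopic to level filtered}, any two such homotopies differ, up to homotopy, by level-non-increasing corrections. Applying Lemma \ref{lem: h from off diagonal to top diagonal}, together with a Maslov count as in Corollary \ref{cor: Maslov separation}, shows that such corrections can only reach the top diagonal block at its lowest level $\max(n-j+2,j+1)$. That level is excluded by the hypothesis $i<j-2$. Combining this with the previous step gives \eqref{eq: Fp top block alg model 1} modulo off-diagonal and bottom terms.

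For \eqref{eq: Fp top block alg model 2}, I would use the Corollary following Lemma \ref{lem:h-PP-1}: $F_P(-\otimes B)=h_{\gamma''}F_P(-\otimes T)$, where $\gamma''$ joins strand $n$ (or any old strand $a\in J$) to strand $n+1$. The same explicit homotopy formula, now with indices $a$ and $n+1$, applied to $D(J\cup n+1)+D(J\cup n+2)$, yields $D(J\cup n+1\cup n+2)$ plus terms of the form $D(J'\cup a\cup\{n+1\text{ or }n+2\})$. Using \eqref{eq: D relations}, these extra terms regroup as $F_P(D^j_n(J')\otimes T)$ for suitable $J'$, and therefore vanish modulo $\Imm F_P(-\otimes T)$.

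The main obstacle is step two: controlling $h$ rigorously. We do not have an algebraic model for the homotopies, only for the complex. So one must both verify that the proposed top-block formula extends to an honest homotopy (that its off-diagonal completion exists) and rule out level-lowering corrections that land in the top block. My first attempt would be to construct the off-diagonal completion degree by degree, using acyclicity from Conjecture \ref{conj: algebraic complex}: the obstruction at each level is a cycle at positive level, hence a boundary. Uniqueness up to corrections would then follow from Lemma \ref{lem: h from off diagonal to top diagonal}.
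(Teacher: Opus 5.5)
Your proposal is essentially the paper's proof. Like the paper, you combine $U_{n+1}f_P\simeq h\circ\varphi_1$ (Lemma \ref{lem:h-PP-1}), the formula for $\varphi_1$ from Corollary \ref{cor: Phi1 appendix}, Lemma \ref{lem: h from off diagonal to top diagonal} to discard off-diagonal contributions, the top-block formula $h(D^{j+1}_{n+2}(J))=D^{j+1}_{n+2}(J\cup n+1)+D^{j+1}_{n+2}(J\cup n+2)$, and cancellation of $U_{n+1}$; the paper simply asserts that formula for $h$, so the obstacle you flag is not resolved there either. For \eqref{eq: Fp top block alg model 2} the paper only says the proof is ``similar'', and your reduction through $F_P(-\otimes B)=h_{\gamma''}F_P(-\otimes T)$ is a natural way to carry this out that also explains the ambiguity modulo $\Imm F_P(-\otimes T)$.
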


\begin{proof}
Recall that by Lemma \ref{lem:h-PP-1} we have 
$$
U_{n+1}f_P(-\otimes T)\simeq h\circ \varphi_1
$$
where $[\partial, h]=U_{n+1}V_{n+1}-U_{n+2}V_{n+2}$.
By Lemma \ref{lem: h from off diagonal to top diagonal}    the homotopy $h$ cannot jump from off-diagonal to diagonal terms, and on the top diagonal block $h$ is given by 
$$
h(D^j_{n+2}(J))=D^j_{n+2}(J\cup n+1)+D^j_{n+2}(J\cup n+2),
$$
assuming that $J\subset \{1,\ldots,n\}.$
Therefore
$$
U_{n+2}F_P(D^j_n(J)\otimes T)=U_{n+1}D_{n+2}^{j+1}(J\cup n+1)+U_{n+1}D_{n+2}^{j+1}(J\cup n+2)+\mathrm{off-diagonal\ terms},
$$
and \eqref{eq: Fp top block alg model 1} follows. The proof of the second equation \eqref{eq: Fp top block alg model 2} is similar.
\end{proof}

\begin{remark}
\label{rem: FP well defined}
Since $D_n^j(J)$ satisfy relations \eqref{eq: D relations}, one might question whether \eqref{eq: Fp top block alg model 1} and \eqref{eq: Fp top block alg model 2} are well defined.

If $J'$ is the set with $|J'|=|J|+1$ then 
$$
\sum_{x\in J'}D^{j+1}_{n+2}((J'\setminus x)\cup n+1)+D^{j+1}_{n+2}(J')=0,\quad \sum_{x\in J'}D^{j+1}_{n+2}((J'\setminus x)\cup n+2)+D^{j+1}_{n+2}(J')=0.
$$
We can add these equations and get
$$
\sum_{x\in J'}D^{j+1}_{n+2}((J'\setminus x)\cup n+1)+\sum_{x\in J'}D^{j+1}_{n+2}((J'\setminus x)\cup n+2)=0.
$$
Therefore \eqref{eq: Fp top block alg model 1} respects the equations \eqref{eq: D relations}.
Similarly, 
$$
\sum_{x\in J'}D^{j+1}_{n+2}((J'\setminus x)\cup n+1\cup n+2)+D^{j+1}_{n+2}(J'\cup n+1)+D^{j+1}_{n+2}(J'\cup n+2)=0,
$$
so 
$$
\sum_{x\in J'}D^{j+1}_{n+2}((J'\setminus x)\cup n+1\cup n+2)=F_P(D^j_n(J')\otimes T).
$$
Therefore \eqref{eq: Fp top block alg model 2} only respects the equations \eqref{eq: D relations} modulo the image of $F_P(-\otimes T)$. 
\end{remark}

\end{appendix}

\bibliographystyle{amsplain}
\bibliography{references}
 
\begin{comment}

\end{comment}
\end{document}